\newtheorem{thm}{Theorem}[section]
\newtheorem*{thm*}{Theorem}
\newtheorem{lem}[thm]{Lemma}
\newtheorem{fact}[thm]{Fact}
\newtheorem{prop}[thm]{Proposition}
\newtheorem*{prop*}{Proposition}
\newtheorem{cor}[thm]{Corollary}
\newtheorem*{cor*}{Corollary}
\theoremstyle{definition}
\newtheorem{defn}[thm]{Definition}
\newtheorem*{defn*}{Definition}
\newtheorem{notation}[thm]{Notation}
\newtheorem{remark}[thm]{Remark}
\newtheorem{question}[thm]{Question}
\newtheorem*{question*}{Question}
\newtheorem*{Pquestion*}{Popa's question}
\newtheorem{conv}[thm]{Convention}
\newtheorem*{conv*}{Convention}
\def\bb{\mathbb}
\def\bb{\mathbb}
\def\cal{\mathcal}
\def\u{\mathsf 1}
\newcommand{\cstar}{$\mathrm{C}^*$}
\def\dotminussym#1#2{%
  \setbox0=\hbox{$\m@th#1-$}%
  \kern.5\wd0%
  \hbox to 0pt{\hss\hbox{$\m@th#1-$}\hss}%
  \raise.6\ht0\hbox to 0pt{\hss$\m@th#1.$\hss}%
  \kern.5\wd0}
\DeclareMathOperator{\id}{id}
\DeclareMathOperator{\Aut}{Aut}
\newcommand\bN{{\mathbb N}}
\def \u{\mathcal U}
\def \Malg{\operatorname{MALG}}
\def\Cocy{\operatorname{Cocy}}
\newcommand{\cF}{\mathcal{F}}
\newcommand{\pmp}{p{$.$}m{$.$}p{$.$}}
\newcommand{\acts}{\curvearrowright}
\newcommand{\salg}{\sigma \text{-}\mathrm{alg}}
\newcommand{\sH}{\mathrm{H}}
\newcommand{\rh}{h}
\newcommand{\Borel}{\mathcal{B}}
\newcommand{\given}{\mathbin{|}}
\newcommand{\Given}{\mathbin{\Big|}}
\newcommand{\symd}{\triangle}
\newcommand{\Prob}{\mathrm{Prob}}
\newcommand{\Pow}[1]{\mathrm{Pow}(#1)}
\tikzset{every picture/.style={line width=0.11mm}}
\def\l@subsection{\@tocline{2}{0pt}{2.5pc}{5pc}{}}
\def\l@subsubsection{\@tocline{2}{0pt}{5pc}{7.5pc}{}}
\begin{document}


\title[E.c.\ p.m.p. actions of approximately treeable groups]{Existentially closed measure-preserving actions of approximately treeable groups}

\author{Isaac Goldbring, Brandon Seward, and Robin Tucker-Drob}
\address{Department of Mathematics\\University of California, Irvine, 340 Rowland Hall (Bldg.\# 400),
Irvine, CA 92697-3875}
\email{isaac@math.uci.edu}
\urladdr{http://www.math.uci.edu/~isaac}
\thanks{Goldbring was partially supported by NSF grant DMS-2054477.}
\address{Department of Mathematics\\University of California, San Diego, 9500 Gilman Drive {\#}0112, La Jolla, CA 92093-0112}
\email{bseward@ucsd.edu}
\urladdr{https://mathweb.ucsd.edu/~bseward/}
\thanks{Seward was partially supported by NSF grants DMS-1955090 and DMS-2054302 as well as Sloan Research Fellowship FG-2021-16246.}
\address{Department of Mathematics\\University of Florida, 1400 Stadium Rd, Gainesville, FL 32611-1927}
\email{r.tuckerdrob@ufl.edu}
\urladdr{https://people.clas.ufl.edu/r-tuckerdrob/}
\thanks{Tucker-Drob was partially supported by NSF grant DMS-2246684}

\begin{abstract}
Given a countable group $\Gamma$, letting $\mathcal{K}_\Gamma$ denote the class of {\pmp} actions of $\Gamma$, we study the question of when the model companion of $\mathcal{K}_\Gamma$ exists.  Berenstein, Henson, and Ibarluc\'ia showed that the model companion of $\mathcal{K}_\Gamma$ exists when $\Gamma$ is a nonabelian free group on a countable number of generators.  We significantly generalize their result by showing that the model companion of $\cal K_\Gamma$ exists whenever $\Gamma$ is an approximately treeable group.  The class of approximately treeable groups contain the class of treeable groups as well as the class of universally free groups, that is, the class of groups with the same universal theory as nonabelian free groups.  We prove this result using an open mapping characterization of when the model companion exists; moreover, this open mapping characterization provides concrete, ergodic-theoretic axioms for the model companion when it exists.  We show how to simplify these axioms in the case of treeable groups, providing an alternate axiomatization for the model companion in the case of the free group, which was first axiomatized by Berenstein, Henson, and Ibarluc\'ia using techniques from model-theoretic stability theory.  Along the way, we prove a purely ergodic-theoretic result of independent interest, namely that finitely generated universally free groups (also known as limit groups) have Kechris' property MD.  We also show that for groups with Kechris' EMD property, the profinite completion action is existentially closed, and for groups without property (T), the generic existentially closed action is weakly mixing, generalizing results of Berenstein, Henson, and Ibarluc\'ia for the case of nonabelian free groups.
\end{abstract}

\maketitle

\tableofcontents
\section{Introduction}

The work presented in this paper lie at the intersection of ergodic theory and model theory.  Our main objects of study are ``model-theoretically generic'' probability measure-preserving (\pmp) actions of discrete countable groups on probability spaces.  Here, the precise formulation of a model-theoretically generic {\pmp} action is that of an \textbf{existentially closed} action.

Given some axiomatizable class $\cal K$ of structures in an appropriate (classical or continuous) language $L$, for example, the class of fields, the class of groups, the class of graphs, the class of Banach spaces, the class of tracial von Neumann algebras, the class of \cstar-algebras etc..., the Robinsonian philosophy for obtaining a good model-theoretic understanding of $\cal K$ is to understand the class of existentially closed members of $\cal K$, for these members of $\cal K$ represent ``universal domains'' for which one should work in when studying members of $\cal K$.  Roughly speaking, e.c.\ members $\cal M$ of $\cal K$ contain all ``solutions'' to ``equations'' with ``coefficients'' from $\cal M$ that ``should have'' solutions, that is, which have solutions in some extension of $\cal M$ belonging to $\cal K$.  For example, e.c.\ fields are precisely the algebraically closed fields, e.c.\ graphs are precisely the ``random'' graphs, and e.c.\ Banach spaces are precisely the Gurarij Banach spaces.  In these examples, something fortuitous occurs:  the class of e.c.\ members of $\cal K$, while a priori not expressible using first-order axioms, do actually themselves form an axiomatizable class.  When this situation happens, we say that the \textbf{model companion} for $\cal K$ exists.  An alternative formulation avoiding the first-order formalism states that the model companion for $\cal K$ exists precisely when the e.c.\ members of $\cal K$ are closed under ultraproducts.  On the other hand, the classes of groups, tracial von Neumann algebras, and \cstar-algebras do \emph{not} admit model companions; from the model-theoretic point of view, these classes are ``wild.''

In this paper, we consider the class $\cal K_\Gamma$ of {\pmp} actions of a given discrete, countable group $\Gamma$ (which is indeed an axiomatizable class in an appropriate continuous language $L_\Gamma$).  Our motivating question is the following:

\begin{question}\label{mainquestion}
For which groups $\Gamma$ does the class $\cal K_\Gamma$ admit a model companion?
\end{question}

Right at the outset, we mention that there does not exist a single group $\Gamma$ for which the answer to the above question is known to be negative.  Our main contribution is to significantly enlarge the class of groups for which the answer to this question is positive.

The first example of a group $\Gamma$ for which the previous question was shown to have a positive answer was the case $\Gamma=\bb Z$, as demonstrated by Berenstein and Henson in \cite{BH}.  There, they show that the e.c.\ actions of $\bb Z$ are precisely the aperiodic ones and they use the classical Rohklin lemma to give a concrete axiomatization of this class.  In unpublished work of Berenstein and Henson, they use the Ornstein-Weiss version of the Rohklin lemma to extend their result to the case of all amenable groups $\Gamma$, whose model companion is simply the axiomatizable class of free actions.

Besides the case of amenable groups, the only other instance of Question \ref{mainquestion} known to have a positive answer is when $\Gamma=\bb F_k$, a finitely generated free group:

\begin{fact}[Berenstein, Henson, and Ibarluc\'ia \cite{BIH}]\label{BHItheorem}
If $\Gamma=\bb F_k$ is a finitely generated free group, then $\cal K_\Gamma$ admits a model companion.
\end{fact}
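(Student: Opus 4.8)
The plan is to identify the existentially closed actions of $\mathbb{F}_k$ concretely and then verify that they form an elementary class in the continuous language $L_{\mathbb{F}_k}$; since $\mathcal{K}_{\mathbb{F}_k}$ is inductive (it is closed under factors and under increasing unions of sub-$\Gamma$-algebras), this is equivalent to the existence of the model companion, whose theory is then the theory of the class of e.c.\ actions. To begin, I would extract the necessary conditions on an e.c.\ action $a$. Every $\mathbb{F}_k$-system is a factor of a free one (e.g.\ of its product with the Bernoulli shift $s_{\mathbb{F}_k}$), and extensions of free actions are free; hence $a$ must itself be free. Running the same argument relatively, over a finitely generated sub-factor $\mathcal{B}\le a$: any one-generator extension $\mathcal{B}'\supseteq\mathcal{B}$ is realized over $\mathcal{B}$ inside the relatively independent joining of $a$ and $\mathcal{B}'$ over $\mathcal{B}$, which is an extension of $a$ in $\mathcal{K}_{\mathbb{F}_k}$; so the corresponding quantifier-free type over $\mathcal{B}$ must have its infimum attained in $a$. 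Thus an e.c.\ $a$ must be, in a precise sense, \emph{relatively universal} over every finitely generated sub-factor. The crux is the converse.

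The converse asserts that a free action which is relatively universal over all its finitely generated sub-factors is e.c. In the amenable case this is exactly where Berenstein and Henson feed in the Ornstein--Weiss Rokhlin lemma, which produces (almost-invariant) Rokhlin towers along which an arbitrary finite target pattern can be coded into the ambient action relative to a prescribed finite sub-factor. For $\mathbb{F}_k$ there is no Ornstein--Weiss lemma, and I would substitute two features of $\mathbb{F}_k$. First, the orbit equivalence relation of any free $\mathbb{F}_k$-action is treeable; the induced treeing supplies tree-indexed Rokhlin regions that play the role of the Ornstein--Weiss towers and allow finite patterns to be coded in. Second --- and this is what keeps the argument finitary --- Kechris's property MD, known to hold for $\mathbb{F}_k$: every $\mathbb{F}_k$-action is a weak limit of actions factoring through finite quotients, so every target extension $\mathcal{B}'\supseteq\mathcal{B}$ is approximable by extensions that are ``relatively profinite'' over $\mathcal{B}$, and these are governed by the countably many finite quotients of $\mathbb{F}_k$. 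This reduces the coding problem, at each stage, to finitely many finite pieces.

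It remains to check axiomatizability. Freeness of an $\mathbb{F}_k$-action is $\forall\exists$-axiomatizable via the usual marker/Rokhlin encoding of ``$\mu(\mathrm{Fix}(g))=0$'' for each nontrivial $g$. For relative universality, property MD again does the work: it suffices to require, over each finitely generated sub-factor, realizability of the relatively profinite extensions, and since these are indexed by the finite quotients $\mathbb{F}_k\twoheadrightarrow Q$, the condition becomes a countable family of axioms, each of the shape $\sup_{\bar x}\inf_{\bar y}\,\theta_{Q}(\bar x,\bar y)=0$ --- ``for every sub-factor generated by $\bar x$, every $Q$-pattern over it is approximated by some $\bar y$''. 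Combined with the first step, this exhibits the e.c.\ actions as an elementary class cut out by explicit, ergodic-theoretic axioms, so $\mathcal{K}_{\mathbb{F}_k}$ admits a model companion. The main obstacle is the converse in the second paragraph --- making the treeability-plus-MD coding precise enough to replace Ornstein--Weiss --- together with the bookkeeping that turns ``realizes every consistent one-generator extension over every finitely generated sub-factor'' into a genuine first-order (indeed $\forall\exists$) condition. I note that Berenstein, Henson, and Ibarluc\'ia proceeded differently: rather than a direct Rokhlin-type coding, they analyzed the theory of free $\mathbb{F}_k$-actions model-theoretically, showing it is stable and extracting the e.c.\ axioms from a study of its forking; the ergodic-theoretic route sketched above is closer to the one developed in the present paper.
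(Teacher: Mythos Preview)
Your sketch has the right ingredients---freeness, treeability, an MD-type density statement---and you correctly note that Berenstein--Henson--Ibarluc\'ia's original argument proceeds via stability theory rather than any Rokhlin-type coding. But there is a genuine gap at the step you yourself flag: the passage from ``every $\bb F_k$-action is weakly approximated by profinite ones'' (Kechris's property MD) to ``every \emph{extension} of a given action is approximated, \emph{relative to the base}, by finite-to-one extensions'' is not a formal consequence of MD, and it is exactly this relative statement that is needed to reduce the e.c.\ problem to finitary data. The paper isolates this as the \emph{extension-MD property} (Definition~\ref{def:weakMD}) and proves it for free groups directly (Lemma~\ref{freeweakMD}) by observing that a cocycle $\bb F_k\times X\to\Aut([0,1],\lambda)$ is freely determined on the generators and can therefore be approximated by cocycles into finite subgroups---an argument that does not go through MD. Your proposed axioms indexed by finite quotients $\bb F_k\twoheadrightarrow Q$ and your ``relatively universal over finitely generated sub-factors'' condition are not made precise enough to see whether they capture this.

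The paper's route replaces your two informal ingredients with two sharp ones: (i) the \emph{definable cocycle property} for $T_{\bb F_k,\mathrm{free}}$ (Lemma~\ref{freedefcocycle}), immediate for free groups since cocycles are determined on generators, which makes quantification over $K$-valued cocycles genuinely first-order; and (ii) extension-MD, which via Lemma~\ref{weakMDlemma} reduces ``e.c.'' to ``e.c.\ for finite-to-one extensions.'' The latter is characterized concretely in Theorem~\ref{concretetheorem} by conditions on cocycles into $\operatorname{Sym}(k)$, yielding the explicit $\forall\exists$ sentences $\theta_{k,q,F}$ of Theorem~\ref{mainaxioms1}. So where you invoke treeability to build Rokhlin regions, the paper uses cocycles into finite symmetric groups as the finitary carriers; and where you invoke MD to cut down to profinite targets, the paper proves the needed relative density directly from the free-group structure. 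Making your $\theta_Q$ axioms precise would most likely lead you back to this cocycle formulation.
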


In the introduction of \cite{BIH}, the authors point out that their result also holds for the free group $\mathbb{F}_\omega$ on a countably infinite set of generators.

By studying general preservation properties for when $\cal K_\Gamma$ admits a model companion, we significantly enlarge the class of groups for which Question \ref{mainquestion} has a positive answer:

\begin{thm*}
If $\Gamma$ is a universally free group, then $\cal K_\Gamma$ admits a model companion.
\end{thm*}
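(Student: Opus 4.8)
The plan is to obtain this as a special case of the paper's main theorem — that $\mathcal K_\Gamma$ admits a model companion whenever $\Gamma$ is approximately treeable, proved via the open‑mapping characterization of model‑companion existence — together with the fact that universally free groups form a subclass of the approximately treeable groups. So the content to supply is precisely this inclusion: every universally free group is approximately treeable. I would split the proof into a reduction to the finitely generated case and a treatment of limit groups via property MD.

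\emph{Reduction to limit groups.} A group is universally free precisely when each of its finitely generated subgroups embeds into a nonabelian free group, equivalently is a limit group (Remeslennikov; Kharlampovich--Myasnikov; Sela). Since an arbitrary group is the directed union of its finitely generated subgroups, and since the class of approximately treeable groups is closed under directed unions (approximate treeability being, in essence, a condition on finitely generated subgroups), it suffices to show that every limit group is approximately treeable.

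\emph{Limit groups, property MD, and treeability.} The essential ingredient is the ergodic‑theoretic fact, proved in the paper and of independent interest, that every limit group has Kechris' property MD. Granting it, the desired conclusion is immediate: if $\Gamma$ has property MD then every \pmp\ action of $\Gamma$ — in particular the (free) Bernoulli shift — is weakly contained in a countable direct sum of \pmp\ actions of $\Gamma$ that factor through finite quotients; such a direct sum has only finite orbits, so its orbit equivalence relation is (trivially) treeable, whence $\Gamma$ is approximately treeable. Everything therefore comes down to verifying property MD for limit groups.

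\emph{Property MD for limit groups.} By the structure theory of limit groups (Kharlampovich--Myasnikov; Sela), each limit group embeds as a finitely generated subgroup of a group built from a finitely generated free group by finitely many extensions of centralizers, and an extension of centralizers $G \rightsquigarrow G *_C (C\times\bZ)$ is an amalgamated free product over a finitely generated free abelian — hence amenable — edge group $C$. Property MD is known for finitely generated free groups (Kechris) and for finitely generated abelian groups, so it is enough to prove that property MD is preserved (i) under passage to finitely generated subgroups and (ii) under amalgamated free products over amenable subgroups. For (i) one can co‑induce: if $H\le\Gamma$ and $b$ is a \pmp\ $H$‑action, then $b$ is a factor (via the counit of the $\mathrm{Res}$--$\mathrm{CoInd}$ adjunction) of $\mathrm{Res}_H\mathrm{CoInd}_H^\Gamma b$, and since weak containment is monotone under restriction and commutes with direct sums, and the restriction to $H$ of a finite $\Gamma$‑action is a finite $H$‑action, property MD descends from $\Gamma$ to $H$. \textbf{The main obstacle is (ii)}: one must show that a group acting on a tree with amenable edge stabilizers and MD vertex stabilizers still has all of its \pmp\ actions weakly approximated by finite ones. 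I would attack this by using the weak‑containment flexibility afforded by amenability of the edge groups — so that the parts of an action lying over an edge group can be matched up and randomized — in combination with co‑induction from the vertex groups, so as to decompose an arbitrary \pmp\ action of the amalgam into finite pieces assembled along the Bass--Serre tree; this step carries the bulk of the work and is the paper's principal ergodic‑theoretic contribution.
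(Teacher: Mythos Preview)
Your approach diverges from the paper's, and the step linking MD to approximate treeability contains a genuine gap.

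The paper's proof of this statement (end of \S5.4) is direct and uses neither MD nor approximate treeability: reduce to a limit group by locality of model-companion existence; Kochloukova provides a normal free subgroup $\Lambda\lhd\Gamma$ with amenable quotient; $T_\Lambda^*$ exists since $\Lambda$ is free; and Corollary~\ref{cor:coam_mc} shows that existence of the model companion lifts from a normal coamenable subgroup to the ambient group.

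Your implication ``MD $\Rightarrow$ approximately treeable'' is not established by the argument you offer. A disjoint union of actions factoring through finite quotients has finite orbits and is therefore \emph{not free} when $\Gamma$ is infinite, so it cannot witness approximate treeability, which by definition requires a free action. If instead you mean the profinite-completion action $\Gamma\acts\hat\Gamma$ (an inverse limit of the finite actions, not a direct sum), that action is free but has infinite orbits, and its orbit equivalence relation is treeable only when $\Gamma$ already is---MD gives no additional leverage here. Weak containment of the Bernoulli shift in a non-free action with finite orbits does not produce an acyclic graphing on the Bernoulli side. The paper does later prove that universally free groups are approximately treeable (\S7.1), but again via Kochloukova combined with a closure lemma for amenable extensions (Lemma~\ref{lem:some_approx_tree}), not via MD. Your proposed Bass--Serre route to MD for limit groups is also different from the paper's (which uses Kochloukova plus Wilton's subgroup separability), though this is secondary since MD is not actually needed for the theorem in question.
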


Here, a \textbf{universally free group} (sometimes called a \textbf{$\omega$-residually free} or \textbf{fully residually free} group) is a group $\Gamma$ which is a model of the universal theory of a free group, or, in logic-free terms, a group that embeds in an ultrapower of a free group.  The finitely generated universally free groups are known as \textbf{limit groups} and are of great interest in geometric group theory.

With further effort, we can generalize the previous result even further.  To explain this generalization, we recall that a group is called \textbf{treeable} if it admits a free {\pmp} action whose orbit equivalence relation can be equipped with a treeing (a precise definition is given in Subsection \ref{treeabletypes} below); 
an alternate description of treeability can be given using a theorem of Hjorth \cite{HjorthME}:  
a group is treeable if it is measure equivalent to a free group.  The class of treeable groups is quite rich and contains many interesting groups, such as surface groups and, more generally, groups admitting a planar Cayley graph.
By a result from \cite{BTW}, the class of treeable groups does have some intersection with the aforementioned class of universally free groups:  every \textbf{elementarily free group}, that is, every finitely generated group with the same first order theory as a nonabelian free group, is treeable.  

One can generalize further by considering the class of \textbf{approximately treeable} groups (a precise definition is given in Subsection \ref{treeabletypes}); approximately treeable groups were studied in \cite{Gab02} under the guise of groups which have approximate ergodic dimension at most $1$. 
In Section \ref{approximatelytreeable}, it is remarked that all universally free groups are approximately treeable.  
(It is a well-known open problem whether or not all limit groups are in fact treeable.)  
The class of approximately treeable groups is a proper extension of the class of all treeable groups as witnessed by the group $\mathbb{F}_2\times \mathbb{Z}$ (which is not treeable by \cite{Gab00} and by \cite{PP00}).  
In Subsection \ref{openmappingsub} below, we develop an ``open mapping'' characterization of when the model companion of $\cal K_\Gamma$ exists and use this characterization in Section \ref{approximatelytreeable} to prove the following substantial generalization of Fact \ref{BHItheorem}:

\begin{thm*}
If $\Gamma$ is an approximately treeable group, then the model companion of $\cal K_\Gamma$ exists.
\end{thm*}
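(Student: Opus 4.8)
\medskip

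\noindent\textbf{Proof proposal.}
The plan is to deduce this from the open mapping characterization developed in Subsection~\ref{openmappingsub}: the model companion of $\mathcal{K}_\Gamma$ exists precisely when a certain natural map between spaces of $\Gamma$-actions --- the one sending an action equipped with the auxiliary data witnessing an existential condition (equivalently, equipped with an embedding into a larger action) to the underlying action --- is open. Openness is precisely what makes the solvability of such extension problems uniform enough across nearby actions to axiomatize, by concrete ergodic-theoretic conditions, the class of e.c.\ actions. So it suffices to verify this open mapping condition when $\Gamma$ is approximately treeable.

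The first and principal step is the treeable case. Fix a free {\pmp} action $\Gamma \acts (Y,\nu)$ whose orbit equivalence relation $\mathcal{R}_Y$ carries a treeing $\mathcal{T}$. The key preliminary observation is that the treeing reaches \emph{every} $\Gamma$-action: given an action on $(X,\mu)$, its product with $\Gamma \acts (Y,\nu)$ is an extension whose $\Gamma$-orbit equivalence relation is class-bijective over $\mathcal{R}_Y$ --- because the action on $Y$ is free --- and is therefore treeable, being treed by the pullback of $\mathcal{T}$. The main point is then that a treeing confers, along orbits, exactly the combinatorial freedom that a free generating set confers for a free group: as $\mathcal{T}$ is acyclic, an extension problem over such an action can be solved by prescribing the new data freely ``along the tree,'' with no consistency obstructions, in direct analogy with the construction of Berenstein--Henson--Ibarluc\'ia \cite{BIH} for $\mathbb{F}_k$ along its free generators. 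Carrying this out --- realizing the amalgamated $\Gamma$-action from data indexed by $\mathcal{T}$, with the requisite measure-theoretic control --- shows the forgetful map is open, so that the model companion of $\mathcal{K}_\Gamma$ exists for treeable $\Gamma$; moreover, since the open mapping criterion comes with explicit axioms, one reads off concrete ergodic-theoretic axioms in this case, to be streamlined for free groups later.

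The second step promotes this to approximately treeable $\Gamma$. By the definition recalled in Subsection~\ref{treeabletypes} (Gaboriau's notion of approximate ergodic dimension at most $1$ \cite{Gab02}), $\Gamma$ has a free {\pmp} action whose orbit equivalence relation is an increasing union $\mathcal{R} = \bigcup_n \mathcal{R}_n$ of treeable sub-equivalence relations; taking a product as above, the same holds for an extension of any prescribed $\Gamma$-action. Now the open mapping condition is a family of assertions each of the shape ``for every $\varepsilon>0$\ldots,'' so it is robust under such an exhaustion: to solve an extension problem over a $\Gamma$-action up to error $\varepsilon$, one solves it up to $\varepsilon/2$ inside a treeable approximant $\mathcal{R}_n$ (invoking Step~1) and then bounds the extra error coming from the inclusion $\mathcal{R}_n \subseteq \mathcal{R}$, which is small once $n$ is large. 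The uniformity in $\varepsilon$ is precisely what the open mapping criterion packages, so this limiting argument yields the condition for $\Gamma$, and hence the model companion of $\mathcal{K}_\Gamma$ exists.

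The main obstacle I anticipate is Step~1: one must show that a treeing confers the \emph{full} amalgamation flexibility of a free generating set and, crucially, do so \emph{uniformly}, so as to obtain openness of the forgetful map rather than mere density of its range. The delicate point is the passage from the purely local, tree-like structure of $\mathcal{T}$ on individual orbits to a global, measurably coherent construction of the extended $\Gamma$-action, together with the attendant measure-theoretic estimates. Once Step~1 is in place, Step~2 is comparatively soft.
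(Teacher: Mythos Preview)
Your high-level plan---verify the open mapping criterion of Theorem~\ref{prop:openmap}---is the right one, and it is what the paper does. But the proposal diverges from the paper's proof in two substantive ways, and in each case the paper's route is doing real work that your sketch does not supply.

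First, the paper does \emph{not} proceed by ``treeable case first, then exhaust.'' Theorem~\ref{thm:approxtree} is proved in a single pass, using the characterization of approximate treeability via invariant probability measures $\mu$ on the space $\cal F(\Gamma)$ of forests that put most of their mass on forests $F$ with $E_F$ close to the full relation $\Gamma\times\Gamma$. There is no appeal to a treeable subcase, and in particular the paper never uses the description you invoke in Step~2 (an increasing union $\mathcal R=\bigcup_n\mathcal R_n$ of treeable subrelations). That description is strictly stronger than the definition in Subsection~\ref{treeabletypes}, which only asks for a forest depending on the finite set $H$ and the tolerance $\epsilon$, with no nesting required; your Step~2 would therefore need an additional argument you have not indicated.

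Second, and more seriously, your Step~1 does not explain how the treeing yields \emph{openness} of $\pi_*:\Prob_\Gamma(q^\Gamma\times p^\Gamma)\to\Prob_\Gamma(q^\Gamma)$. You correctly flag this as the main obstacle, but the paper's solution is a specific construction that is not foreshadowed by ``solve the extension problem along the tree.'' Concretely: given $\lambda\in\Lambda$ with projection $\nu$, the paper disintegrates $\lambda=\int\delta_y\times\lambda_y\,d\nu$, approximates $y\mapsto\lambda_y$ by a \emph{continuous} map $\kappa_n:q^\Gamma\to\Prob(p^\Gamma)$ with non-atomic values, and then, for each forest $F$, builds a measure $\theta(\omega_{n,y},F)$ on $p^\Gamma$ by coupling the measures $\kappa_n((\gamma^{-1})^s\cdot y)$ along the edges of $F$ via canonical near-shift isomorphisms (the map $\psi$ of Lemma~\ref{lem:pairmap}, packaged in the ``map-measure pair'' space $M$) and taking independent products over $E_F$-classes. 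The crucial output is a map
\[
\xi(\nu')=\int\!\!\int \delta_y\times\theta(\omega_{n,y},F)\,d\nu'(y)\,d\mu(F)
\]
that is \emph{continuous in $\nu'$}, projects to $\nu'$, and lands in $\Lambda$ at $\nu'=\nu$; continuity then gives an open neighborhood of $\nu$ in $\pi_*(\Lambda)$. The continuity comes from (i) the continuous choice $\kappa_n$, (ii) the continuous dependence of $\psi$ on its inputs, and (iii) the compactness arguments of Lemmas~\ref{lem:comp}--\ref{lem:fmap}. Your product-with-a-fixed-treeable-action picture does not produce such a continuous section: passing to $X\times Y$ changes the base measure, and ``prescribing data freely along $\mathcal T$'' gives existence of a lift for a fixed $\nu$, not a lift varying continuously with $\nu'$. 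That continuity is the heart of the proof, and it is what your proposal is missing.
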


While proving Fact \ref{BHItheorem}, the authors give a concrete axiomatization of the class of e.c.\ members of $\cal K_\Gamma$.  However, their proof adapts known model-theoretic techniques from the area of model theory known as \textbf{stability theory} and extracting an ``ergodic-theoretic'' axiomatization of this class from their axioms is not entirely straightforward.

The open mapping characterization of the existence of the model companion proved in Subsection \ref{openmappingsub} actually yields purely ergodic-theoretic axioms for the model companion whenever it exists.  That being said, these axioms are not entirely illuminating and it seems desirable to find groups for which the axioms for the model companion of $\cal K_\Gamma$ can be made simpler. In Section \ref{concrete} below, we prove the following result along these lines:

\begin{thm*}
    If $\Gamma$ has the \textbf{extension-MD property} and the \textbf{definable cocycle property}, then the model companion of $\cal K_\Gamma$ exists.  Moreover, one can list simple axioms for the model companion of an ergodic-theoretic nature. 
\end{thm*}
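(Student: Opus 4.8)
The plan is to derive the theorem from the open mapping characterization of Subsection~\ref{openmappingsub}. That characterization reduces the existence of the model companion of $\mathcal{K}_\Gamma$ to the openness of a certain forgetful map $r$: from the space of realizable quantifier-free types of tuples $(\bar A,\bar B)$ of elements of the measure algebra of a $\Gamma$-action, to the space of realizable quantifier-free types of the subtuples $\bar A$, induced by forgetting $\bar B$; and when $r$ is open it reads off concrete axioms to the effect that an e.c.\ action approximately absorbs, over each of its finitely generated $\Gamma$-subalgebras, every $\Gamma$-extension that that subalgebra admits. Since $\mathcal{K}_\Gamma$ has amalgamation (via relatively independent joinings), the fibre of $r$ over the type $p$ of a tuple $\bar A$ in $X\acts\Gamma$ is precisely the set of types realized by tuples $(\bar A,\bar B)$ with $\bar B$ drawn from the measure algebra of some $\Gamma$-extension of $X$; so proving $r$ open amounts to showing that an extension of $\langle\bar A\rangle$ (the $\Gamma$-invariant subalgebra generated by $\bar A$) witnessing a type near a given $q$ in that fibre can be \emph{transported} to an extension of $\langle\bar A'\rangle$ witnessing a type near $q$, for every configuration $p'$ near $p$ realized by some $\bar A'$. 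The two hypotheses are precisely what supply this transport.

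First I would invoke the extension-MD property. Fix a realizable type $q$, its image $p=r(q)$ realized by $(Z,\bar A,\bar B)$ with $\bar A$ of type $p$ and, by amalgamation, $Z$ a $\Gamma$-extension of $\langle\bar A\rangle$. The extension-MD property says that, relative to the base $\langle\bar A\rangle$, the profinite extensions --- inverse limits over $\langle\bar A\rangle$ of finite-index co-induced pieces --- are dense among all extensions of $\langle\bar A\rangle$ in the weak topology on extensions. Hence one may replace $Z$ by a profinite-over-$\langle\bar A\rangle$ extension $\widehat Z$ carrying an $m$-tuple (still extending $\bar A$) of type within $\epsilon$ of $q$. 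The crucial gain is that $\widehat Z$ is assembled from $\langle\bar A\rangle$ out of \emph{finitary} data: a finite-index subgroup $\Lambda\le\Gamma$ (or a short tower of such), together with a cocycle over $\langle\bar A\rangle$ into the relevant finite symmetric group twisting the co-induced action $\langle\bar A\rangle\times(\Gamma/\Lambda)$. Because the data is finitary, the passage from the base configuration to the type realized by the distinguished $m$-tuple is continuous in the base: if $p'$ is close enough to $p$, the same finite-index subgroup and a small perturbation of the cocycle, applied over $\langle\bar A'\rangle$, produce an extension $\widehat Z{}'$ of $\langle\bar A'\rangle$ carrying an $m$-tuple of type within $\epsilon$ of $q$; its $r$-image is $p'$, and openness follows.

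The point that genuinely needs the definable cocycle property is the ``perturb the cocycle'' step. One must know that a cocycle over $\langle\bar A\rangle$ with prescribed finite target --- subject to the cocycle identity but otherwise arbitrary --- has a genuine analogue over $\langle\bar A'\rangle$ when $p'$ is near $p$, and that the extension it builds varies continuously with $p'$. The definable cocycle property packages exactly this: the space of cocycles over a $\Gamma$-action with values in a fixed finite group is cut out by a definable predicate, uniformly in the action, so that ``there is a cocycle producing an extension that realizes a type $\epsilon$-close to $q$'' becomes itself a definable ($\sup\inf$) condition on the base configuration --- hence insensitive to small perturbations of, and to ultraproducts over, the base. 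Feeding this back through Subsection~\ref{openmappingsub}, the same definable predicates are what convert the abstract e.c.\ criterion into the announced axioms: the general open-mapping axioms quantify over all realizable extensions, but under extension-MD this reduces to quantifying over profinite extensions, and the definable cocycle property further reduces it to quantifying over finite-index subgroups together with finite cocycle data. The resulting axiom scheme states that $X\acts\Gamma$ lies in the model companion iff for every $\epsilon>0$, every finite tuple $\bar A$ from its measure algebra, every finite-index $\Lambda\le\Gamma$ and every cocycle datum over $\langle\bar A\rangle$, there is a tuple in $X$ realizing within $\epsilon$ the corresponding co-induced profinite piece --- a list of $\sup\inf$-sentences of a plainly ergodic-theoretic nature.

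The main obstacle is the upgrade from density to openness, i.e.\ the interaction of the two hypotheses rather than either one in isolation. Extension-MD alone yields only the easy half: every e.c.\ action satisfies the proposed axioms, because such an action is a limit of its profinite-over-subconfiguration approximants and, this being ``extension''-MD --- density rather than mere weak containment in something bigger --- the relations in $\Gamma$ impose no obstruction in the limit. The converse, that any model of the axioms (in particular any ultraproduct of e.c.\ actions) is again e.c., requires that a \emph{genuine} $\Gamma$-extension witnessing a prescribed existential condition be approximated by profinite pieces in a way that does not degrade under ultraproduct or under perturbation of the parameters; this uniformity is what fails for a general group and what the definable cocycle property restores. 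Once both are in place, openness of $r$ --- and hence the existence of the model companion of $\mathcal{K}_\Gamma$ together with its simple ergodic-theoretic axiomatization --- follows from the characterization of Subsection~\ref{openmappingsub}.
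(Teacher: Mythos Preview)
Your proposal has a genuine misreading of the hypotheses and, partly as a result, takes an unnecessarily indirect route that does not match the paper.

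First, the definitional issue. In this paper, \emph{extension-MD} for an action $\Gamma\acts^a(X,\mu)$ means that the \emph{finite-to-one} $a$-extensions are dense among all $a$-extensions (Definition~\ref{def:weakMD}); these are skew products $X\times_\sigma k$ by cocycles $\sigma:\Gamma\times X\to\operatorname{Sym}(k)$. There is no finite-index subgroup $\Lambda\leq\Gamma$ and no co-induction $\langle\bar A\rangle\times(\Gamma/\Lambda)$ involved, and the property is not about profinite actions in the sense of Kechris' MD/EMD. Your description of the approximating extensions as ``profinite-over-$\langle\bar A\rangle$'' and your parametrization by a finite-index $\Lambda$ plus a cocycle is therefore off target; the only finitary datum is a cocycle into a finite symmetric group, with no subgroup in sight.

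Second, the paper does \emph{not} prove this theorem via the open mapping criterion of Subsection~\ref{openmappingsub}; that criterion is used only later for approximately treeable groups. The argument here (Theorem~\ref{mainaxioms1}) is direct and short: by Lemma~\ref{weakMDlemma}, an extension-MD action is e.c.\ iff it is e.c.\ for finite-to-one extensions; Theorem~\ref{concretetheorem} characterizes ``e.c.\ for finite-to-one extensions'' by an explicit scheme of conditions indexed by $k,q,F$ (existence of an $\alpha:X\to k$ with uniform marginals, $\epsilon$-independence from a given $\beta:X\to q$, and $\epsilon$-approximate equivariance with respect to a given cocycle $\sigma\in Z^1(a,\operatorname{Sym}(k))$ on $F$); and the definable cocycle property is invoked solely to make the outer quantifier $\sup_{B\in Z(\Cocy_{\operatorname{Sym}(k)})}$ a genuine first-order quantifier. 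The resulting axioms $\theta_{k,q,F}$ are the ``simple ergodic-theoretic axioms'' promised. Your plan to transport cocycles along perturbations of the base via openness of $r$ is not needed: once the characterization of Theorem~\ref{concretetheorem} is in hand, there is nothing left to ``transport''---the axioms are already $\sup\inf$-sentences over definable sets, hence preserved under ultraproducts, which is exactly what gives the model companion.
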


The first condition in the theorem is related to the \textbf{MD property}, first introduced by Kechris in \cite{kechrisweak}; roughly speaking, a residually finite group has the MD property if the set of profinite actions of the group is dense in the space of all actions.  If the a priori stronger condition that the set of ergodic profinite actions is dense in the space of all actions, then group is said to have the \textbf{EMD property}.  We say more about these properties in Section \ref{special} below.  The second condition in the theorem roughly states that every cochain that is close to satisfying the cocycle identity is near an actual cocycle. 

In Subsections \ref{subs:definablecocycle} and \ref{subs:weakMD} respectively, we show, using fairly elementary means, that free groups satisfy both properties, whence one obtains a simple, purely ergodic-theoretic axiomatization of the model companions of $\cal K_{\bb F_k}$ and $\cal K_{\bb F_\omega}$.    In Subsections \ref{subs:groupcoh} through \ref{subs:cocytree}, we generalize the result that free groups have the extension-MD property and the definable cocycle property to the wider class of \textbf{strongly treeable} groups:

\begin{thm*}
Strongly treeable groups have the extension-MD property and the definable cocycle property.
\end{thm*}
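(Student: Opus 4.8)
The plan is to route everything through the correspondence between cocycles over a {\pmp} action and cocycles on its orbit equivalence relation, and to exploit that the latter is ``as free as a tree'' when $\Gamma$ is strongly treeable. By the usual reductions (replacing a given action by a free extension, under which the relevant cocycle statements are preserved) we may assume the action $\Gamma \acts (X,\mu)$ in play is free, so its orbit equivalence relation $R$ is treeable; fix a treeing $T \subseteq R$. The structural facts, developed in Subsections \ref{subs:groupcoh} onward, are: (i) for a free action, cocycles $\Gamma \times X \to H$ over $\Gamma \acts X$ valued in a Polish group $H$ correspond naturally to $R$-cocycles $R \to H$; and (ii) a treeing is a free generating set, so $R$-cocycles valued in $H$ are in bijection with arbitrary Borel maps from the edge set of $T$ to $H$, an $R$-cocycle going to its restriction to $T$ and a Borel map $f\colon T \to H$ going to the cocycle obtained by telescoping $f$ along $T$-geodesics.

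For the \textbf{definable cocycle property}, given a cochain $c$ with small coboundary one transports it to an approximate $R$-cocycle, restricts to $T$, re-extends via (ii) to an honest $R$-cocycle, and transports back to an honest $\Gamma$-cocycle $\tilde c$. That $\tilde c$ is close to $c$ follows by telescoping along $T$-geodesics: for each fixed $\gamma$, comparing $c_\gamma(x)$ with $\tilde c_\gamma(x)$ costs one application of the approximate cocycle identity per edge of the $T$-geodesic from $x$ to $\gamma x$, and since $x\mapsto d_T(x,\gamma x)$ is a.e.\ finite and hence essentially bounded, off a small set the accumulated error is controlled linearly. This gives, \emph{for each fixed action}, a modulus. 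The delicate point is making this modulus independent of the action, which is what definability demands: the essential bound on $d_T(x,\gamma x)$ is not uniform. For this I would appeal to the open mapping theorem for Polish groups --- a continuous bijective homomorphism of Polish groups is a topological isomorphism --- applied to the continuous bijection $C^1/Z^1 \to B^2$, $c + Z^1 \mapsto \delta c$; its inverse is then automatically (uniformly) continuous at the identity, yielding the modulus. The hypothesis that $B^2$ be Polish is supplied by treeability: a treeable equivalence relation has cohomological dimension at most $1$, so $H^2$ vanishes, whence $B^2 = Z^2$ is closed. A final compactness argument over the axiomatizable class $\cal K_\Gamma$ upgrades the per-model moduli to a single uniform one.

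For the \textbf{extension-MD property}, recall that {\pmp} extensions of $\Gamma \acts X$ are classified by cocycles over $\Gamma\acts X$ valued in automorphism groups, with relatively profinite extensions corresponding to cocycles valued in finite groups. Via (ii), approximating an arbitrary $R$-cocycle valued in $H$ by finite-group-valued $R$-cocycles reduces to approximating an arbitrary Borel map $T\to H$ by Borel maps with finite range --- a routine approximation by simple functions. Reassembling, every extension of $\Gamma\acts X$ is a limit of relatively profinite ones; applied in particular to the cocycle describing an arbitrary action of $\Gamma$ over a profinite base (whose orbit equivalence relation is again treeable, precisely because $\Gamma$ is \emph{strongly} treeable), this yields density of profinite actions and, relativized, the extension-MD property. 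It is exactly here that strong treeability, rather than treeability of a single action, is needed: the model-theoretic application requires these cocycle-approximation facts for \emph{every} free action of $\Gamma$, not merely one chosen action.

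I expect the main obstacle to be the genuinely uniform form of the cocycle-correction statement --- the passage from a per-action modulus to a definable one. The elementary telescoping estimate is not uniform, and the fix is the open-mapping-plus-cohomology-vanishing route above; its technical heart is checking, in the measurable category, that treeability forces the degree-one coboundary space to be closed (equivalently, that the treeing resolution computes equivalence-relation cohomology with the right topology), after which the compactness argument is formal.
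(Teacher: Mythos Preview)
Your extension-MD argument is essentially sound and in fact closer to the ``direct'' approach than the paper's: for a strongly treeable group every free action carries an honest treeing, so restricting an $\Aut([0,1],\lambda)$-valued cocycle to the treeing, approximating by a map into a finite subgroup, and re-extending along geodesics does yield a nearby finite-group-valued cocycle. The paper proves something strictly stronger (Theorem~\ref{stronglytreeableweakMD}: any action \emph{weakly containing} a treeable action is extension-MD), which is why it goes through the more elaborate Lemma~\ref{lem:weaklift}, but for the stated theorem your route works. (One aside: ``a.e.\ finite and hence essentially bounded'' is false as written; what you need, and what you immediately use, is that an a.e.\ finite Borel function is bounded off a set of arbitrarily small measure.)

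The real gap is in your uniformity argument for the definable cocycle property. The open mapping theorem, applied to $C^1/Z^1 \to B^2$ for a \emph{fixed} action, gives a modulus for that action; but the definable cocycle property (Lemma~\ref{lem:defcocycle}) demands a single $\delta(\epsilon)$ valid for \emph{every} model of $T_{\Gamma,\mathrm{free}}$ simultaneously, and the spaces $C^1$, $Z^1$, $B^2$ all vary with the action. Your ``final compactness argument over the axiomatizable class'' is not an argument---there is no evident compact space of models over which to run it, and indeed per-action moduli cannot in general be made uniform without further input.

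The paper's solution is genuinely different and worth internalizing. Rather than using the treeing of each action separately, it fixes a \emph{single} invariant measure $\tau$ on $\mathcal{T}(\Gamma)$---namely one factored off the Bernoulli shift, which is treeable since $\Gamma$ is strongly treeable---and observes (via Ab\'ert--Weiss) that every free action weakly contains $(\mathcal{T}(\Gamma),\tau)$. The compactness argument (Proposition~\ref{prop:weaktree}) then takes place in the space of $\Gamma$-invariant joinings of $\tau$ with the cochain space $C^1(\Gamma,G^\Gamma)$, which is compact because $G$ is, and the retraction $r_T$ of Lemma~\ref{lem:retract} is used to show that almost-cocycle joinings are close to cocycle joinings. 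The resulting $\delta(\epsilon)$ depends only on $\tau$, hence is uniform over all free actions. Your telescoping-along-geodesics idea is exactly what $r_T$ encodes, but the uniformity comes from anchoring everything to one $\tau$ and then transferring to arbitrary actions via weak containment (Lemma~\ref{lem:weaklift}), not from an open mapping principle.
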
  

Here, a group is strongly treeable if \emph{all} of its free {\pmp} actions admit a treeing.  
It is a well-known open problem whether there exists a group that is treeable but not strongly treeable.  In any event, we now have that if $\Gamma$ is a strongly treeable group, then not only does the model companion of $\mathcal{K}_\Gamma$ exist, but there is a very simple set of axioms for this model companion.  We also show how to tweak these results to cover the case of a treeable (but not necessarily strongly treeable) group at the cost of slightly complicating the axioms. In the process, we obtain the following simple characterization of e.c.\ actions of treeable groups.

\begin{thm*}
	Let $\Gamma$ be a treeable group. Then a {\pmp} action $\Gamma \acts^a (X, \mu)$ is e.c.\ if and only if all of the following hold:
	\begin{enumerate}
		\item $\Gamma \acts^a (X, \mu)$ weakly contains a free treeable action of $\Gamma$;
		\item the trivial extension $\Gamma \acts^{a \times \id} (X \times [0,1], \mu \times \lambda)$ where $\Gamma$ acts on $[0,1]$ by fixing every point, is an e.c.\ extension of $\Gamma \acts^a (X, \mu)$;
		\item $B^1(a, \operatorname{Sym}(k))$ is dense in $Z^1(a, \operatorname{Sym}(k))$ for every $k \in \bb N$.
	\end{enumerate}
\end{thm*}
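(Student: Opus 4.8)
The plan is to prove the two implications separately. The forward direction is comparatively soft; the reverse direction carries the real weight and is where treeability is used.

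\emph{Forward direction.} Assume $\Gamma \acts^a (X,\mu)$ is e.c. For (3), fix $k \in \bN$ and a cocycle $z \in Z^1(a, \operatorname{Sym}(k))$ and form the skew-product extension $\Gamma \acts^{a_z} (X \times \operatorname{Sym}(k), \mu \times u_k)$, where $u_k$ is the uniform probability measure and $g\cdot(x,s) = (gx,\, z(g,x)s)$; this lies in $\cal K_\Gamma$ because left translations preserve $u_k$, and it extends $a$. In $a_z$ the pulled-back cocycle is the coboundary of the section $(x,s)\mapsto s$, so for each $F \fin \Gamma$ and $\e>0$ the existential condition over $a$ --- with parameters the functions $z(g,\cdot)$ for $g \in F$ --- asserting the existence of $f\colon X\to\operatorname{Sym}(k)$ with $\max_{g\in F}\|d(f(g\,\cdot)f(\cdot)^{-1},z(g,\cdot))\|_1<\e$ holds in $a_z$, hence in $a$ by existential closedness; letting $F$ and $\e$ vary gives $z\in\overline{B^1(a,\operatorname{Sym}(k))}$, which is (3). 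For (1), note that an e.c.\ action in fact weakly contains \emph{every} p.m.p.\ action $c$ of $\Gamma$: the action $c$ is a factor of the extension $a\times c$ of $a$, so $c\prec a\times c$, and since weak containment $c\prec{-}$ is expressed by a family of existential conditions involving no parameters (on the joint distributions of finitely many translates of a finite partition), existential closedness of $a$ in $a\times c$ transfers it to $a$; since $\Gamma$ is treeable it has a free treeable action, and taking that for $c$ yields (1). Condition (2) holds for any e.c.\ action, essentially because the trivial extension is self-absorbing, $(a\times\id)\times\id\cong a\times\id$; this appears among the simple axioms in the analysis of Section~\ref{concrete}.

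\emph{Reverse direction.} Assume (1), (2), (3); we show $a$ is e.c. By the theorem of Section~\ref{approximatelytreeable} the model companion of $\cal K_\Gamma$ exists, so it is enough to prove that every existential formula over $a$ realized in an extension of $a$ is approximately realized in $a$. We do this by adapting the argument behind the theorem of Section~\ref{concrete} --- which shows that for groups with the extension-MD property and the definable cocycle property the e.c.\ actions are exactly those satisfying a short list of ergodic-theoretic axioms --- to the present situation, where $\Gamma$ need not have extension-MD and need not even be residually finite. The dictionary is as follows: condition (3) is precisely the definable cocycle property \emph{relative to} $a$ for the finite targets $\operatorname{Sym}(k)$, which is all that the Section~\ref{concrete} argument invokes on the cocycle side; condition (2) is one of the axioms used there, in the same role, to supply the auxiliary randomness needed to assemble an approximate witness; and the appeals to extension-MD are replaced by an argument using condition (1) together with the structure theory of treeable equivalence relations --- through Hjorth's characterization of treeable groups as those measure equivalent to a free group \cite{HjorthME}, and an analysis of the actions and cocycles of a treeable equivalence relation along a treeing --- to the effect that an action which merely weakly contains \emph{one} free treeable action $b$ already weakly contains all of the profinite- and co-induced-type actions that extension-MD would otherwise provide. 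This last step is the ``one treeing at a time'' localization of the argument of Subsections~\ref{subs:groupcoh}--\ref{subs:cocytree} that strongly treeable groups have extension-MD, and it is where a treeing of the orbit equivalence relation of $b$ is used essentially: it supplies a coordinate system along which auxiliary functions on finite pieces of $b$ can be prescribed freely, reducing the realization of a given existential formula over $a$ to finitely many cocycle-into-$\operatorname{Sym}(k)$ problems, which are solved approximately inside $a$ by (3), the pieces being glued using (2). Feeding all of this into the Section~\ref{concrete} argument shows that $a$ is e.c.

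\emph{Main obstacle.} The crux is the reverse direction, and within it the claim that weak containment of a \emph{single} free treeable action is a sufficient substitute for extension-MD: one must show that, in the presence of (2) and (3), it forces every extension of $a$ to be approximable over $a$ by treeable-plus-independent-random data, and one must carry out the attendant treeing combinatorics uniformly over the finitely many group elements and parameters appearing in the formula at hand. This is exactly the point at which the structure theory of treeable equivalence relations, rather than any direct manipulation of $a$ itself, is indispensable. Once that amplification is in hand, the remainder of the reverse direction is a transcription of the argument of Section~\ref{concrete}, and the forward direction, as sketched above, is routine.
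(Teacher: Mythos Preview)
Your forward direction is fine and matches the paper's: (1) via local universality (Lemma~\ref{lem:locuniv}), (3) via the skew-product trick (Lemma~\ref{eccocycle}), and (2) directly from the definition of e.c.

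The reverse direction has the right high-level shape but contains two real confusions that keep it from being a proof. First, you misidentify condition (3). The \emph{definable cocycle property} says that almost-cocycles are near actual cocycles (that is, $Z^1$ is a definable set); condition (3) says that $B^1$ is dense in $Z^1$. These are not the same, and in the paper's argument (3) is never used as a stand-in for the definable cocycle property. What (3) actually does---via Corollary~\ref{cor:simple}, under hypothesis (2)---is ensure that $a$ is \emph{e.c.\ for finite-to-one extensions}. Second, your appeal to the existence of the model companion is a red herring: ``every existential formula realized in an extension is approximately realized in $a$'' is simply the definition of e.c., and needs no model companion.

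The paper's reverse direction is cleaner and more modular than what you sketch. It isolates two intermediate notions you do not name: \emph{extension-MD} (every extension of $a$ is approximable by finite-to-one extensions) and \emph{e.c.\ for finite-to-one extensions}. Theorem~\ref{stronglytreeableweakMD} shows that (1) alone, via the retraction-along-a-tree machinery of Subsections~\ref{subs:groupcoh}--\ref{subs:treeable}, forces $a$ to be extension-MD; Corollary~\ref{cor:simple} shows that (2)$+$(3) give e.c.\ for finite-to-one extensions; and Lemma~\ref{weakMDlemma} concludes. Your paragraph about ``one treeing at a time'' correctly locates the hard step (Theorem~\ref{stronglytreeableweakMD}), but by routing it through the Section~\ref{concrete} axiomatization you entangle it with the definable cocycle property, which is needed there only to make the axioms first-order and plays no role in the present theorem.
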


In \cite{BIH}, Berenstein, Henson, and Ibarluc\'ia show that the profinite completion action of the free group is a concrete example of an e.c.\ action.  In Section \ref{special}, we extend their result in an optimal way by proving the following:

\begin{thm*}
The profinite completion of any group with the EMD property is an e.c.\ action. 
\end{thm*}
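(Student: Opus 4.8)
The plan is to combine two standard facts. First: a p.m.p.\ action $a$ of $\Gamma$ is e.c.\ precisely when every extension of $a$ is locally approximated inside $a$ over $a$ — that is, for every extension $\Gamma\acts^b(Y,\nu)$ of $\Gamma\acts^a(X,\mu)$ (so $a$ is a factor of $b$), every finite $F\subseteq\Gamma$, every finite tuple $\bar A$ from the measure algebra of $a$, every finite tuple $\bar B$ from that of $b$, and every $\varepsilon>0$, there is a finite tuple $\bar B'$ from $a$ whose quantifier-free $F$-type over $\bar A$ (computed in $a$) is within $\varepsilon$ of that of $\bar B$ over $\bar A$ (computed in $b$). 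Second: $\Gamma$ has the EMD property precisely when every p.m.p.\ action of $\Gamma$ is weakly contained in the profinite completion action $\Gamma\acts\widehat\Gamma$ — indeed the ergodic profinite actions of $\Gamma$ are exactly the factors of $\Gamma\acts\widehat\Gamma$, so their density in the space of actions amounts to this. Writing $a_\Gamma$ for the profinite completion action, it therefore suffices to show that for every extension $b$ of $a_\Gamma$ and every finite configuration $(\bar A,\bar B,F,\varepsilon)$ as above a suitable $\bar B'$ exists in $\widehat\Gamma$. The heart of the matter is to upgrade the plain weak containment provided by EMD to a weak containment ``relative to $a_\Gamma$''; the mechanism will be to pass to a finite factor of $\widehat\Gamma$, where this relative statement becomes visible through the rigid coordinate structure of a co-induced action.

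I would first reduce the parameters: the sub-$\sigma$-algebras of $\widehat\Gamma$ pulled back from the finite quotients $\Gamma/N$ (with $N\trianglelefteq\Gamma$ of finite index) are dense in the measure algebra of $\widehat\Gamma$, so fix such an $N$ and replace each $A_i$ by a $\Gamma/N$-measurable $A_i^\circ$ with $\mu(A_i\triangle A_i^\circ)$ as small as desired. With $\Lambda:=\Gamma/N$ and $\{P_\lambda\}_{\lambda\in\Lambda}$ the standard copy of $\Lambda$ in $\widehat\Gamma$ (the partition into cosets of $\overline N$, the closure of $N$ in $\widehat\Gamma$), it then suffices to approximate, over $\{P_\lambda\}$, the configuration $\bar B$ inside the subsystem $b_1$ of $b$ generated by $\{P_\lambda\}$ and $\bar B$. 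Now $b_1$ is an extension of the finite action $\Lambda$, hence $b_1\cong\operatorname{CInd}_N^\Gamma(Y)$ for the $N$-action $Y$ on the fibre of $b_1$ over the base point, the co-induction carrying $b_1$'s $\Lambda$-factor to the standard coordinate factor of $\operatorname{CInd}_N^\Gamma(Y)$. On the other side, the kernel of $\widehat\Gamma\to\Gamma/N$ is $\overline N$, and since every finite-index subgroup of $N$ is finite-index in $\Gamma$, the profinite topology of $\Gamma$ restricts to the full profinite topology of $N$ on $N$; thus $\overline N\cong\widehat N$ with its left $N$-action corresponding to $N\acts\widehat N$, so $a_\Gamma\cong\operatorname{CInd}_N^\Gamma(\widehat N)$, again with $\{P_\lambda\}$ the coordinate factor.

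To finish, I would use that EMD passes to finite-index subgroups, so $N$ has EMD and hence $Y\prec N\acts\widehat N$. Since the co-induction map $\operatorname{CInd}_N^\Gamma\colon A(N)\to A(\Gamma)$ is continuous, it preserves weak containment, so $b_1=\operatorname{CInd}_N^\Gamma(Y)\prec\operatorname{CInd}_N^\Gamma(\widehat N)=a_\Gamma$. Crucially, every action in the relevant closure (the closure of the set of conjugates of $\operatorname{CInd}_N^\Gamma(\widehat N)$ by space-automorphisms acting within fibres) carries one and the same fixed coordinate partition, so this weak containment is one over $\Lambda$: every finite configuration of $b_1$ over its $\Lambda$-factor is approximately realized in $\widehat\Gamma$ over the standard copy $\{P_\lambda\}$. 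Applied to our configuration this produces the required $\bar B'$, and unravelling the reductions shows $a_\Gamma$ is e.c. (For the converse, which rounds out the picture: an e.c.\ action weakly contains every action — apply the criterion to the product extensions $a_\Gamma\times c$ — and since $a_\Gamma$ is itself ergodic profinite, this forces EMD.)

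The step I expect to be the main obstacle is exactly this passage from plain weak containment to weak containment over the common finite factor $\Lambda$ — i.e.\ arranging that the approximating configurations in $\widehat\Gamma$ use the given parameters $\bar A$ and not some other copy of $\Lambda$. This is where the analogous argument of Berenstein, Henson, and Ibarluc\'ia for free groups did real work; here the co-induction picture does the job because $\Lambda$ sits as the rigid ``coordinate'' factor common to all the co-induced actions and preserved by the automorphisms implementing conjugacy. Subsidiary points needing care are the identification $\overline N\cong\widehat N$ (immediate once one notes finite-index subgroups of $N$ are finite-index in $\Gamma$) and the heredity of EMD under finite-index subgroups. A parallel route, which I would keep as a backup, runs through ultrapowers: $b_1\prec a_\Gamma$ yields a factor map $a_\Gamma^{\mathcal U}\to b_1$, and one conjugates the pulled-back copy of $\Lambda$ onto the diagonal one by a $\Gamma$-automorphism of $a_\Gamma^{\mathcal U}$ assembled from right translations — the ``ratio'' of two copies of $\Lambda$ being $\Gamma$-invariant and $\overline N$ being normal in $\widehat\Gamma$ — but the failure of ergodicity in the ultrapower makes this version fussier.
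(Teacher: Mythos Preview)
Your overall strategy coincides with the paper's: reduce the parameters $\bar A$ to lie in the factor $\Gamma/N$ for some finite-index normal $N\trianglelefteq\Gamma$; use that EMD passes to $N$ so that the fibre $N$-action of the extension is weakly contained in $N\acts\widehat N\cong\overline N$; and then propagate this weak containment across the cosets to get weak containment \emph{over} the fixed $\Gamma/N$-partition. The paper carries out this last step explicitly with coset representatives $r$ and the cocycle $\rho(aH,\gamma)=r(aH)\gamma r(a\gamma H)^{-1}$, constructing the approximating function $\tilde\alpha$ by hand on each coset; you package the same mechanism functorially. So the approaches are essentially the same.

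There is, however, a genuine (if easily repaired) error in your write-up: what you call co-induction is in fact \emph{induction}. Co-induction $\operatorname{CInd}_N^\Gamma(Y)$ lives on $Y^{\Gamma/N}$, not $Y\times\Gamma/N$; an arbitrary extension $b_1$ of the transitive finite $\Gamma$-system $\Gamma/N$ is \emph{not} of that form, and neither is $\Gamma\acts\widehat\Gamma$ (whose underlying space is $\overline N\times\Gamma/N$, not $\overline N^{\,\Gamma/N}$). The correct statement is the classical one: every extension of $\Gamma\acts\Gamma/N$ is canonically isomorphic over $\Gamma/N$ to $\operatorname{Ind}_N^\Gamma(Y_0)$, where $Y_0$ is the $N$-action on the fibre, via $z\mapsto(r(\pi(z))^{-1}z,\pi(z))$; and likewise $\Gamma\acts\widehat\Gamma\cong\operatorname{Ind}_N^\Gamma(N\acts\widehat N)$. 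With this correction your argument goes through, since $\operatorname{Ind}_N^\Gamma$ is continuous on $A(N)$ and the conjugating automorphisms $\phi_n\times\id$ that witness $\operatorname{Ind}_N^\Gamma(Y_0)\prec\operatorname{Ind}_N^\Gamma(\widehat N)$ fix the $\Gamma/N$-partition pointwise --- exactly your ``relative'' weak containment. (Your justification that $\overline N\cong\widehat N$ only addresses one inclusion of topologies; the other, that any finite-index $K\leq\Gamma$ satisfies $[N:K\cap N]<\infty$, is equally immediate.)
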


  We note that being an EMD group is a necessary condition for the profinite completion to be an e.c.\ action, whence this result is indeed optimal.  Along the way, we prove the following result, which is purely ergodic-theoretic and of independent interest:

\begin{thm*}
Limit groups have property MD.
\end{thm*}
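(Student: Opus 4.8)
The plan rests on two inputs: property MD is inherited by subgroups (Kechris), and every limit group is (isomorphic to) a finitely generated subgroup of an \emph{iterated centralizer extension} (ICE) of a free group, by the structure theory of limit groups (Kharlampovich--Myasnikov; see also Sela). Recall that an ICE group is built from a free group $\bb{F}$ by finitely many steps, each of the form $G \mapsto G *_{C_G(u)} (C_G(u) \times \bZ^n)$ for some $1 \neq u \in G$. Since MD passes to subgroups, it therefore suffices to prove that every ICE group has MD, and we do this by induction on the number of centralizer-extension steps used to build it.

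The base case is that free groups have property MD, which is Kechris' theorem (alternatively, one may feed in the fact established earlier that strongly treeable groups, hence free groups, have the extension-MD property). For the inductive step, let $G$ be an ICE group with MD and set $\widehat G = G *_C B$ with $C = C_G(u)$ and $B = C \times \bZ^n$. Since $G$, being a limit group, is CSA, the subgroup $C$ is a maximal abelian subgroup of $G$; in particular $C \cong \bZ^k$ is finitely generated and $C$ is malnormal in $G$. Consequently $B \cong \bZ^{k+n}$ is finitely generated abelian, hence has property MD, and $\widehat G$ is an amalgamated free product of two MD groups over an amenable subgroup that is, moreover, malnormal in one of the vertex groups. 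Thus the theorem is reduced to an amalgamation lemma: an amalgamated free product $A *_C B$ of MD groups over an amenable subgroup $C$ that is malnormal in $A$ has property MD.

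To prove this lemma, one takes a {\pmp} action $\widehat G \acts^a (X,\mu)$, restricts it to $G$ and to $B$, and invokes MD of each factor to obtain profinite approximations of $a|_G$ and $a|_B$; the problem is to reconcile these two families of approximations along their common subgroup $C$ and to assemble from them an honest profinite action of $\widehat G$ weakly containing $a$. Amenability of $C$ is what allows the synchronization: a {\pmp} action of $C$ can be tiled in a Rokhlin/Følner fashion, so the $G$-side and $B$-side data can be made to agree on $C$ to arbitrary precision; malnormality of $C$ in $G$ is what makes this synchronization harmless on the $G$-side, since a finite-index subgroup of $C$ can be extended to a finite-index subgroup of $G$ meeting $C$ in exactly that subgroup, without disturbing the profinite quotients of $G$ away from $C$. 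This gluing over an amenable edge group is the main obstacle; note that one cannot shortcut it via treeability, since $\bb{F}_2 \times \bZ = \bZ^2 *_{\bZ} \bZ^2$ is not treeable even though both vertex groups and the edge group are amenable (hence treeable), so there is no ``treeability transfer'' available — it is precisely the malnormality of the amalgamating subgroup in a vertex group, a manifestation of the CSA property of limit groups that the decomposition $\bb{F}_2 \times \bZ = \bZ^2 *_{\bZ} \bZ^2$ lacks, that one has to exploit, and setting up the Rokhlin-type argument compatibly with the profinite structure on the malnormal side is the technical heart of the proof.
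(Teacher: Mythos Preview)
Your route is genuinely different from the paper's, and the reduction you set up (ICE embedding plus ``MD passes to subgroups'') is correct. The paper instead invokes Kochloukova's theorem that every limit group $\Gamma$ has a \emph{normal} free subgroup $\Lambda$ with $\Gamma/\Lambda$ torsion-free nilpotent, writes $\Lambda$ as an increasing union of finitely generated free groups $\Lambda_n$, and then applies its Proposition~\ref{MDcriterion}: this criterion says that if $\Lambda\lhd\Gamma$ is coamenable with $\Gamma/\Lambda$ residually finite, each $\Lambda_n$ has MD, and every finite-index subgroup of each $\Lambda_n$ is closed in the profinite topology of $\Gamma$ (supplied here by Wilton's LERF theorem for limit groups), then $\Gamma$ has MD. The criterion itself is proved using the paper's characterization of the profinite-completion action as existentially closed together with the coamenable-lifting Theorem~\ref{thm:lift}. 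So the paper never touches amalgamated products or the Bass--Serre structure of ICE groups; it goes straight through a single free-by-nilpotent decomposition.

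Your proposal, by contrast, reduces everything to an amalgamation lemma---$A *_C B$ has MD when $A,B$ have MD, $C$ is amenable, and $C$ is malnormal in $A$---and this is where the gap lies. You identify this as the technical heart but do not prove it; what you offer is a strategy, not an argument. Two concrete points: (i) the claim that any finite-index subgroup of $C$ extends to a finite-index subgroup of $G$ meeting $C$ in exactly that subgroup is not a formal consequence of malnormality and needs a separability/virtual-retract input you have not supplied; (ii) even granting compatible finite quotients of $A$ and $B$ over $C$, you must show that the resulting profinite actions of $A*_C B$ weakly approximate an \emph{arbitrary} action of the amalgam, and the Rokhlin tiling on $C$ controls only the $C$-coordinates, not how the $A$- and $B$-pieces interact across cosets of $C$. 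Your remark about $\bb F_2\times\bZ$ correctly flags that the lemma is not automatic, but it does not substitute for the missing argument. In short, the plan is plausible and the structural inputs are right, but the amalgamation step is an open-ended sketch, whereas the paper's Kochloukova-plus-criterion route is complete and considerably shorter.
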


Berenstein, Henson, and Ibarluc\'ia also prove the existence of a weakly mixing e.c.\ action of the free group.  Using a result of Kerr and Pichot, we extend their result to any group without property (T):

\begin{thm*}
If $\Gamma$ is a group without property (T), then the model-theoretically ``generic'' e.c.\ action of $\Gamma$ is weakly mixing.
\end{thm*}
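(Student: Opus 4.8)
The plan is to reduce the theorem to the single assertion that \emph{weak mixing} is an \textbf{enforceable} property of {\pmp} actions of $\Gamma$. Recall that the model-theoretically generic e.c.\ action is the \emph{enforceable model} of $\mathcal K_\Gamma$, the structure obtained from the model-building game in which one player ensures that every enforceable property holds; it always exists and is automatically e.c. Since being e.c.\ is itself enforceable (it amounts to a countable conjunction of enforceable properties) and enforceable properties are closed under countable conjunction, it suffices to prove that ``being weakly mixing'' is enforceable: then ``e.c.\ and weakly mixing'' is enforceable, and hence holds of the generic e.c.\ action.

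To show that weak mixing is enforceable I would pass to the standard Baire-category reformulation of enforceability. The finite conditions used to build a {\pmp} action --- a finite subalgebra of the measure algebra, finitely many group elements, and rational approximations to how those elements act on that subalgebra --- generate precisely the weak topology on the Polish space $A(\Gamma) := A(\Gamma, X, \mu)$ of {\pmp} actions of $\Gamma$ on the standard atomless probability space; since atomlessness is itself enforceable, the enforceable model may be taken to lie in every comeager isomorphism-invariant subset of $A(\Gamma)$, and, conversely, an isomorphism-invariant property is enforceable precisely when the corresponding subset of $A(\Gamma)$ is comeager. If a version of this dictionary is already available in the paper or in \cite{BIH}, I would simply invoke it.

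It then remains to prove that the set $\mathrm{WM}(\Gamma) \subseteq A(\Gamma)$ of weakly mixing actions is comeager; this is the only place the failure of property (T) enters. First, $\mathrm{WM}(\Gamma)$ is $G_\delta$: an action $a$ is weakly mixing iff for every finite subset $F$ of the mean-zero subspace $L^2_0(X,\mu)$ and every $\varepsilon>0$ there is $\gamma\in\Gamma$ with $|\langle a(\gamma)f,g\rangle|<\varepsilon$ for all $f,g\in F$, and letting $F$ range over a countable dense family and $\varepsilon$ over the rationals presents $\mathrm{WM}(\Gamma)$ as a countable intersection of weakly open sets. Second, by the theorem of Kerr and Pichot, a group has property (T) iff its weakly mixing actions are \emph{not} dense in its space of actions; so, since $\Gamma$ lacks property (T), $\mathrm{WM}(\Gamma)$ is a dense $G_\delta$, hence comeager. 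Therefore weak mixing is enforceable, and the theorem follows.

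I expect the main obstacle to be the middle step --- pinning down the exact correspondence between the enforceability game for {\pmp} actions and Baire category in $(A(\Gamma,X,\mu),\text{weak topology})$: matching up the conditions, checking that the enforceable model lands in every comeager isomorphism-invariant set, and confirming that one may restrict attention to atomless actions so that $A(\Gamma,X,\mu)$ is genuinely the ambient space. Past that point the argument is a brief synthesis of standard facts: the Kerr--Pichot density theorem (the genuinely new ingredient beyond the free-group case of \cite{BIH}, where density of the weakly mixing actions is elementary), the classical $G_\delta$-ness of weak mixing, the conjunction lemma for enforceable properties, and the enforceability of e.c.-ness.
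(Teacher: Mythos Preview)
Your overall architecture---reduce to showing weak mixing is enforceable, then conjoin with the enforceability of e.c.---matches the paper. But the paper proves enforceability of weak mixing by a different route, and the step you flag as ``the main obstacle'' is precisely the one the paper avoids rather than confronts.

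The paper does not attempt to set up a dictionary between model-theoretic enforceability and Baire category in $A(\Gamma,X,\mu)$. Instead it proves a syntactic fact: using Bergelson's characterization of weak mixing via central$^*$ sets, it shows that weak mixing is $\forall\bigvee\exists$-axiomatizable (Corollary~\ref{weakAVE}). It then invokes Kerr--Pichot not for density per se but to obtain a \emph{locally universal} weakly mixing action, and applies the abstract criterion \cite[Proposition~2.6]{enforceable}, which says that any $\forall\bigvee\exists$-axiomatizable property satisfied by some locally universal model is enforceable. So the paper's new content is the $\forall\bigvee\exists$ form, and the enforceability follows from off-the-shelf machinery.

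Your approach requires the implication ``comeager isomorphism-invariant in $A(\Gamma,X,\mu)$ $\Rightarrow$ enforceable'', and you are right to be uneasy about it. The building game produces a structure via finite quantifier-free conditions, and enforceable properties are those comeager in the space of plays, which is not obviously homeomorphic (in the relevant sense) to $A(\Gamma,X,\mu)$ with the weak topology. Making this precise would require identifying conditions with basic open sets of $A(\Gamma)$, verifying that the outcome map from plays to actions is continuous and open onto a suitable comeager set, and checking that comeagerness transfers---none of which is in the paper or in \cite{enforceable} in ready-made form. It is plausible this can be done, but it is real work, and the $\forall\bigvee\exists$ route sidesteps it entirely: the syntactic form already puts you in the setting where \cite{enforceable} applies directly. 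If you want to pursue your line, the cleanest fix is to observe that your $G_\delta$ description of weak mixing (via the Koopman representation) is essentially a $\forall\bigvee\exists$ axiomatization in disguise, at which point you have reconstructed the paper's argument.
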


A precise formulation of ``generic'' in the previous theorem will be given in Section \ref{special}.  We note that, for the conclusion of the theorem to hold, the group cannot have property (T), whence the result is once again optimal.  The main technical idea behind this proof is to use various descriptions of weakly mixing actions for nonamenable groups developed by Bergelson to show that they have a very particular infinitary first-order description.

Throughout this paper, we assume familiarity with basic continuous model theory as well as basic ergodic theory.  We refer the reader to \cite{bbhu} for the former and \cite{Walters} for the latter.

\section{Preliminaries}

In this section, we gather all of the necessary preliminaries that are used in the rest of the paper.

\subsection{Probability measure-preserving actions as metric structures}\label{pmpstructures}

In this subsection, we explain how we view probability measure-preserving actions of a countable group $\Gamma$ as structures in continuous logic.  We follow closely the excellent presentation given in \cite{IT}.

Throughout this paper, $\Gamma$ denotes a countable, discrete group.  Given a probability space $(X,\mathcal{B},\mu)$ (sometimes abbreviated $(X,\mu)$ for the sake of brevity), a \textbf{probability measure-preserving (\pmp) action} of $\Gamma$ on $(X,\mathcal{B},\mu)$ is a homomorphism $a:\Gamma\to \operatorname{Aut}(X,\mathcal{B},\mu)$.  We often denote such actions by $\Gamma\acts^a (X,\mathcal{B},\mu)$ or sometimes by the simpler notation $\Gamma\acts^a (X,\mu)$ or even $\Gamma\acts^a X$.  We also write $\gamma^a \cdot x$, $\gamma^a x$, or even $\gamma x$ (when $a$ can be inferred from the context) instead of $a(\gamma)(x)$ for $\gamma\in \Gamma$ and $x\in X$.  Two actions $\Gamma\acts^a X$ and $\Gamma \acts^b X$ are \textbf{isomorphic} or \textbf{conjugate} if there is a measure space isomorphism $\Phi:X\to Y$ such that $\Phi(\gamma^a x)=\gamma^b \Phi(x)$ for all $\gamma\in \Gamma$ and a.e. $x\in X$.  

Recall that one associates to a probability space $(X,\cal B,\mu)$ its \textbf{measure algebra} $\Malg(X,\mathcal{B},\mu)$, or $\Malg(X)$ for the sake of brevity, which is defined to be the Boolean algebra of equivalence classes of elements of $\cal B$ with respect to the pseudometric $d_\mu(A,B):=\mu(A\triangle B)$.  The pseudometric $d_\mu$ naturally induces a metric on $\Malg(X)$, still denoted by $d_\mu$, and the set-theoretic operations of union, intersection, and complement on $\cal B$ induce operations on $\Malg(X)$ rendering it a measured Boolean algebra for which the Boolean algebra operations are uniformly continuous with respect to the metric $d_\mu$.  Moreover, the countable additivity of the measure $\mu$ implies that $d_\mu$ is a complete metric.  Given $A\in \mathcal{B}$, we denote its equivalence class in $\Malg(X)$ by $[A]_\mu$, or sometimes by the corresponding lowercase letter $a$.  The associated measure algebra is separable with respect to the metric $d_\mu$ if and only if $X$ is a \textbf{standard} probability space.This process yields an equivalence of categories between the categories of probability spaces and the associated measure algebras.  Moreover, if $\Gamma$ is a countable group, then the {\pmp} actions of $\Gamma$ on a measure space $X$ correspond in this duality to isometric actions of $\Gamma$ on $\Malg(X)$ that preserve the Boolean operations.

We let $L_\Gamma$ be the language consisting of function symbols for the Boolean operations, a predicate symbol for the measure, and a unary function symbol $u_\gamma$ for each $\gamma\in \Gamma$.  The moduli of uniform continuity for these symbols will be assumed to be the natural ones.  We let $T_\Gamma$ denote the $L_\Gamma$-theory which axiomatizes isometric actions of $\Gamma$ on measure algebras.  More precisely, the axioms include the familiar axioms for measure algebras, axioms which state that each $u_\gamma$ is an automorphism, and axioms stating that $u_{\gamma_1}\circ u_{\gamma_2}=u_{\gamma_1\gamma_2}$ for all $\gamma_1,\gamma_2\in \Gamma$.  These axioms are clearly universal.  If $\Gamma\acts^a X$ is a {\pmp} action of $\Gamma$, we let $\cal M_a$ denote the corresponding model of $T_\Gamma$.

Given {\pmp} actions $\Gamma\acts^a (X,\cal B,\mu)$ and $\Gamma\acts^b(Y,\cal C,\nu)$ of $\Gamma$, recall that a \textbf{factor map} $\pi:(X,\cal B,\mu)\to (Y,\cal C,\nu)$ between these actions is a measurable map that commutes with the actions of $\Gamma$ and for which $\pi_*\mu=\nu$; we then refer to $b$ as a factor of $a$.  Sometimes we abuse notation and simply write $\pi:X\to Y$ for a factor map.  If $\Lambda$ is a subgroup of $\Gamma$, then we call a measurable map $\pi:X\to Y$ a \textbf{$\Lambda$-equivariant factor map} if it a factor map between the restricted actions of $\Lambda$ on $X$ and $Y$.

Given such a factor map $\pi$, it is clear that $\pi^{-1}(\cal C)$ is a $\Gamma$-invariant $\sigma$-subalgebra of $\cal B$.  Conversely, every $\Gamma$-invariant $\sigma$-subalgebra of $\cal B$ arises from a factor map in this way.  Consequently, there is a duality between substructures of models of $T_\Gamma$ and factor maps.  More precisely, if $\cal M\models T_\Gamma$ and $\cal N$ is a substructure of $\cal M$, then we can find {\pmp} actions $\Gamma\acts^a X$ and $\Gamma\acts^b Y$ for which $\cal M\cong \cal M_a$, $\cal N\cong \cal M_b$, and for which $b$ is a factor of $a$.

In the sequel, it will behoove us to know that we can also axiomatize the \textbf{free} {\pmp} actions of $\Gamma$, that is, those actions $\Gamma \acts^a X$ for which, for every $\gamma\in \Gamma\setminus\{e\}$ and $x \in \Malg(X)$ with $\mu(x) > 0$ there is $y \subseteq x$ with $\mu(y) > 0$ and $\mu(y \cap \gamma y) = 0$ (when $X$ is standard this is equivalent to $\mu(\{x\in X \ : \ \gamma^a x=x\})=0$).  Indeed, as observed in \cite[Def. 2.4]{IT}, we can form the theory $T_{\Gamma,free}$ consisting of the above axioms $T_\Gamma$ together with axioms
$$\inf_x \max(|\mu(x)-1/n|, \ \max_{i<j<n} \mu(\gamma^i x \cap \gamma^j x)) = 0$$
for each $(\gamma, n) \in \Gamma \times \bb N$ for which $n < |\langle \gamma \rangle| = \infty$ or $n = |\langle \gamma \rangle| < \infty$.

\subsection{Space of actions}\label{subsectionspace}

Let $(X,\mu)$ be a standard probability space; unless otherwise stated, we allow for the possibility that these probability spaces have atoms. We equip the automorphism group $\Aut(X,\mu)$ of $(X,\mu)$ with the weak topology, namely the weakest topology for which all maps $$T\mapsto [T(A)]_\mu:\Aut(X,\mu)\to \Malg(X)$$ are continuous, as $A$ ranges over measurable subsets of $X$.  With this topology, $\Aut(X,\mu)$ is a Polish group.  We also set $A(\Gamma ,X, \mu)$ to be the set of {\pmp} actions of $\Gamma$ on $X$.  By viewing $A(\Gamma,X,\mu)$ as a closed subspace of $\Aut(X,\mu)^\Gamma$, it is also naturally a Polish space. 

For our purposes, we will also need a relative version of this topology.  Towards this end, consider another standard probability space $(Y,\nu)$ and let $F(Y,X)$ denote the space of all measure preserving maps from $Y$ to $X$, where we identify two such maps if they agree almost everywhere. This is naturally a Polish space since it can be identified with the space of all isometric embeddings from the measure algebra of $X$ into the measure algebra $Y$ sending $[\emptyset ]_{\mu}$ to $[\emptyset ]_{\nu}$, equipped with the pointwise convergence topology.

Let $F(\Gamma ,Y,X )$ denote the set of all triples $(\phi ,b,a)$, where $b\in A(\Gamma ,Y,\mu)$, $a\in A(\Gamma ,X,\mu)$, and $\phi$ is a factor map from $b$ to $a$. Then $F(\Gamma ,Y,X)$ is a closed subspace of $F(Y,X)\times A(\Gamma ,Y,\nu)\times A(\Gamma ,X,\mu)$ hence is a Polish space. For a fixed action $a\in A(\Gamma ,X,\mu )$, let $F_a(\Gamma ,Y,X)$ denote the slice above $a$ in $F(\Gamma ,Y,X)$, which we identify with the set of all pairs $(\phi , b )\in F(Y,X)\times A(\Gamma,Y,\nu)$ where $\phi$ factors $b$ onto $a$. Elements of $F_a(\Gamma ,Y,X)$ are called {\bf extensions of $a$} or {\bf $a$-extensions}. Two $a$-extensions, $(\phi , b )\in F_a(Y,X)$ and $(\psi ,c )\in F_a(X,Z)$ are {\bf isomorphic} if there is a measure space isomorphism $S:Y\rightarrow Z$ such that $S\cdot (\phi , b) = (\psi , c )$, where $S\cdot (\phi , b ) \coloneqq (\phi S^{-1}, S\cdot b )$ and $\gamma ^{S\cdot b}\coloneqq S\gamma ^{b}S^{-1}$.

Given finite collections $\mathcal{A}$ and $\mathcal{B}$ of measurable subsets of $Y$ and $X$ respectively, a finite subset $F$ of $\Gamma$, and $\epsilon >0$, we obtain a basic open neighborhood $U^{\mathcal{A},\mathcal{B},F,\epsilon}_a(\phi ,b )$ of $(\phi , b )\in F_a(\Gamma,Y,X)$ consisting of all pairs $(\psi , c)\in F_a(\Gamma ,Y,X )$ satisfying, for all $A\in \mathcal{A}$, $B\in \mathcal{B}$, and $\gamma \in F$:
$$
\nu(\phi ^{-1}A\triangle \psi ^{-1}A)<\epsilon \text{ and }
\nu(\gamma ^b B\triangle \gamma ^cB)<\epsilon .
$$

\subsection{Ultraproducts of \texorpdfstring{\pmp}{p.m.p.} actions}\label{ultrasub}

Suppose that $(\Gamma\acts^{a_i}(X_i,\mu_i))_{i\in I}$ is a family of {\pmp} actions of $\Gamma$ and $\u$ is an ultrafilter on $I$.  Let $\cal M_{a_i}\models T_\Gamma$ denote the $L_\Gamma$-structure associated to the action $a_i$.  Then we can take the model-theoretic ultraproduct $\prod_\u \cal M_{a_i}$, which is necessarily an isometric action of $\Gamma$ on the measure algebra associated to some {\pmp} action of $\Gamma$.  This action is none other than the ultraproduct action $\Gamma\acts^{\prod_\u a_i}\prod_\u (X_i,\mu_i)$ (see, for example, \cite{CKT}).  More precisely, let $\prod_\u X_i$ denote the set-theoretic ultraproduct of the sets $X_i$ and let $\mu_0$ denote the finitely additive measure on the Boolean algebra $\cal B_0$ of subsets of $\prod_\u X_i$ of the form $\prod_\u A_i$, where each $A_i\subseteq X_i$ is measurable.  Define $N\subseteq \prod_\u X_i$ to be $\mu_0$-null if, for every $\epsilon>0$, there is $A\in \cal B_0$ such that $N\subseteq A$ and $\mu_0(A)<\epsilon$.  Then set $\cal B$ to be the $\sigma$-algebra on $\prod_\u X_i$ generated by $\cal B_0$ and the $\sigma$-ideal of null sets and let $\mu$ be the unique extension of $\mu_0$ to a probability measure on $\cal B$.  Then the diagonal action of $\Gamma$ on $\prod_{i\in I}X_i$ induces a {\pmp} action $\Gamma\acts^{\prod_\u a_i} (\prod_\u X_i,\mu)$ and the corresponding action on measure algebras is precisely the model $\prod_\u \cal M_{a_i}$.

In regards to ultraproducts, we adopt the following notation.  Given a sequence $x_i \in \prod_{i\in I}X_i$, we let $[x_i]_\u$ denote the corresponding element of $\prod_\u X_i$.  Similarly, given a sequence of measurable sets $A_i\subseteq X_i$, we write $[A_i]_\u$ instead of $\prod_\u A_i$. Note that every element in the measure algebra of $\prod_\u (X_i,\mu _i)$ can be represented by a set of the form $[A_i]_\u$.

In general, we note that an ultraproduct action is almost never standard (unless the ultrafilter is somewhat trivial, for example countably complete), but one can always take a separable elementary substructure of the model-theoretic ultraproduct and then the corresponding action will be standard.

When each $\Gamma\acts^{a_i}(X_i,\mu_i)=\Gamma\acts^a(X,\mu)$ for some common action $\Gamma\acts ^a(X,\mu)$, we speak of the \textbf{ultrapower action} $\Gamma\acts^{a_\u}(X,\mu)_\u$ (corresponding to the model-theoretic ultrapower $\cal M_a^\u$) and the diagonal embedding $\cal M_a\hookrightarrow \cal M_a^\u$ corresponds to the diagonal factor map $X_\u\to X$.

Recall that if $\Gamma\acts^a (X,\mu)$ and $\Gamma\acts^b (Y,\nu)$ are {\pmp} actions of $\Gamma$, then $a$ is \textbf{weakly contained} in $b$, denoted $a\preceq b$, if for any finitely many measurable sets $A_1,\ldots,A_n\subseteq X$, finite $F\subseteq \Gamma$, and $\epsilon>0$, there are measurable sets $B_1,\ldots,B_n\subseteq Y$ such that $|\mu(\gamma^aA_i\cap A_j)-\nu(\gamma^bB_i\cap B_j)|<\epsilon$ for all $1\leq i,j\leq n$ and $\gamma\in F$.

\begin{fact}
If $(X,\mu)$ and $(Y,\nu)$ are standard, non-atomic probability spaces, the following are equivalent:
\begin{enumerate}
    \item $a\preceq b$.
    \item There is a sequence of actions $(c_n)_{n\in \bb N}\in A(\Gamma,X,\mu)$ such that $c_n\cong b$ (conjugacy of actions) and $a=\lim_n c_n$
    \item $a$ is contained in some ultrapower $b_\u$ of $b$, that is, if $a$ is a factor of $b_\u$.
    \item $\cal M_a$ is embeddable in an ultrapower $\cal M_b^\u$ of $\cal M_b$.
\end{enumerate}
\end{fact}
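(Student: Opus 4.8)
The four conditions are classical and linked by known arguments, so the plan is to establish a convenient cycle, citing a theorem of Kechris for the purely ergodic-theoretic part and using the metric-structure set-up of Subsection~\ref{pmpstructures} for the rest. Concretely I would prove $(1)\Leftrightarrow(2)$, $(1)\Leftrightarrow(3)$, and $(3)\Leftrightarrow(4)$. Throughout I would also use the standard fact (again due to Kechris) that the ``pairwise'' form of the definition of $a\preceq b$ recalled above is equivalent to its strengthening $(1')$ in which, for every finite $F\subseteq\Gamma$ and tuple $A_1,\dots,A_n$, one can choose $B_1,\dots,B_n\subseteq Y$ matching the $\mu$-measure of \emph{every} Boolean combination of the sets $\gamma^a A_i$ (over $\gamma\in F$ and $1\le i\le n$) by the corresponding combination of the $\gamma^b B_i$.

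For $(1)\Leftrightarrow(2)$ I would simply cite Kechris: since $(X,\mu)$ and $(Y,\nu)$ are standard and non-atomic they are isomorphic as measure spaces, so after transporting $b$ along a fixed isomorphism its conjugacy class becomes a subset of $A(\Gamma,X,\mu)$, and $a\preceq b$ is exactly the assertion that $a$ lies in the weak closure of that conjugacy class. The substantive point hiding in this citation is the ``approximate conjugacy'' argument converting matched correlations on a sufficiently rich finite system of sets into an honest conjugate of $b$ lying near $a$; this is the one genuinely nontrivial step and I would not reprove it. (In the reverse direction $(2)\Rightarrow(1)$ is immediate from continuity of $c\mapsto[\gamma^c A]_\mu$.)

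For $(1)\Rightarrow(3)$ I would build the factor map $b_\u\to a$ by hand. Fix a countable, $\Gamma$-invariant, dense, generating Boolean subalgebra $\cal D=\{A_k\}_{k\in\bN}$ of $\Malg(X)$, an exhaustion $F_1\subseteq F_2\subseteq\cdots$ of $\Gamma$ by finite sets with union $\Gamma$, and a non-principal ultrafilter $\u$ on $\bN$. Using $(1')$, for each $n$ pick $B_1^n,\dots,B_n^n\subseteq Y$ so that every Boolean combination of $\{\gamma^a A_k : k\le n,\ \gamma\in F_n\}$ has $\mu$-measure within $1/n$ of the corresponding combination of the $\gamma^b B_k^n$. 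Setting $\pi^{-1}(A_k):=[B_k^n]_\u$ in the measure algebra of $b_\u$ and invoking the fundamental theorem of ultraproducts, one checks $\pi^{-1}$ preserves the Boolean operations, the measure, and the $\Gamma$-action on $\cal D$; by density it extends to an embedding of measure algebras intertwining the two $\Gamma$-actions, i.e.\ $a$ is a factor of $b_\u$. Conversely, if $\pi$ factors $b_\u$ onto $a$ and $\pi^{-1}(A_i)=[B_i^{(j)}]_\u$, then the fundamental theorem of ultraproducts gives $\mu(\gamma^a A_i\cap A_j)=\lim_\u \nu(\gamma^b B_i^{(j)}\cap B_j^{(j)})$, so for $\u$-almost every $j$ the sets $B_1^{(j)},\dots,B_n^{(j)}$ witness $a\preceq b$; hence $(3)\Rightarrow(1)$.

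Finally $(3)\Leftrightarrow(4)$ is immediate from the duality between substructures of models of $T_\Gamma$ and factor maps from Subsection~\ref{pmpstructures}: an embedding $\cal M_a\hookrightarrow\cal M_b^\u$ identifies an isomorphic copy of $\cal M_a$ with a substructure of the measure algebra $\cal M_b^\u$ of the ultrapower action $b_\u$, which by that duality — using that $(X,\mu)$ is standard, so the image subalgebra genuinely comes from a factor map — is the same as exhibiting $a$ as a factor of $b_\u$; the reverse direction is just the identity $\cal M_{b_\u}=\cal M_b^\u$. The only step demanding care on our part is the well-definedness verification in $(1)\Rightarrow(3)$, where one must track enough Boolean combinations over enough group elements at stage $n$ that the limiting map is forced to respect \emph{all} of the structure on $\cal D$. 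I expect that, together with the invocation of Kechris' theorem underlying $(1)\Leftrightarrow(2)$, to carry essentially all the weight of the proof; the remaining passages are routine translations between the ergodic-theoretic, ultraproduct, and continuous-logic pictures.
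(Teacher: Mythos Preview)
Your proposal is correct, and in fact you give considerably more detail than the paper does: the paper does not prove this fact at all but simply refers the reader to \cite[Subsection 2.2]{kechrisburton} for a discussion. Your outline is the standard unpacking of that reference---citing the conjugacy-closure characterization for $(1)\Leftrightarrow(2)$, building the factor map into the ultrapower by hand for $(1)\Leftrightarrow(3)$ via \L o\'s's theorem, and invoking the measure-algebra/factor duality of Subsection~\ref{pmpstructures} together with the identification $\cal M_{b_\u}=\cal M_b^\u$ of Subsection~\ref{ultrasub} for $(3)\Leftrightarrow(4)$---and each step is sound.
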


For a discussion of the preceding fact, see \cite[Subection 2.2]{kechrisburton}.

\begin{conv}\label{ultraconvention}
    In the remainder of this article, unless explicitly stated, $\u$ always denotes a nonprincipal ultrafilter on a countable index set $I$.  (Many of the facts to follow work for nonprincipal ultrafilters on arbitrary index sets, but some of them might require further mild hypotheses on the ultrafilter such as countable incompleteness.)
\end{conv}

\subsection{Cocycles}

We will discuss cocycles using three different frameworks: ergodic theory,  group cohomology, and continuous model theory. We start with the ergodic theory perspective.

Fix an action $\Gamma\acts^a X$ and a Polish group $G$. A \textbf{cochain} for $a$ is a measurable map $\sigma : \Gamma \times X \rightarrow G$. A \textbf{cocycle} for $a$ is a cochain $\sigma:\Gamma\times X\to G$ satisfying the cocycle identity $\sigma(\gamma_1\gamma_2,x)=\sigma(\gamma_1,\gamma_2x)\sigma(\gamma_2,x)$ for all $\gamma_1,\gamma_2\in \Gamma$ and almost all $x\in X$. We let $Z^1(a,G)$ denote the collection of cocycles for $a$ with values in $G$.

We say that two cocycles $\sigma_1,\sigma_2:\Gamma\times X\to G$ of the action $a$ are \textbf{cohomologous} if there is a measurable map $s:X\to G$ for which $\sigma_2(\gamma,x)=s(\gamma x)^{-1}\sigma_1(\gamma,x)s(x)$ for all $\gamma\in \Gamma$ and for almost all $x\in X$. The set of cohomology classes of cocycles with values in $G$ is called the \textbf{first cohomology space of $a$ relative to $G$}, denoted $H^1(a,G)$.

The \textbf{trivial cocycle} for $a$ sends each pair $(\gamma,x)\in\Gamma\times X$ to the identity $e_G \in G$.  A cocycle for $a$ cohomologous to the trivial cocycle is called a \textbf{coboundary} for $a$.  The set of coboundaries for $a$ with values in $G$ is denoted by $B^1(a,G)$.

We consider $Z^1(a,G)$ as topologized so that $\sigma_n\to \sigma$ if and only if, for each $\gamma\in \Gamma$, we have that the functions $\sigma_n(\gamma, \cdot) : X \to G$ converge to $\sigma(\gamma, \cdot)$ in measure. We say that cocycles $\sigma_1$ and $\sigma_2$ are \textbf{approximately equivalent} if $\sigma_1$ belongs to the closure of the cohomology class of $\sigma_2$.  In other words, $\sigma_1$ and $\sigma_2$ are approximately equivalent if and only if there are measurable functions $f_n:X\to G$ such that, for all $\gamma\in \Gamma$, we have that the functions $x \mapsto f_n(\gamma x)\sigma_1(\gamma,x)f_n(x)^{-1}$ converge in measure to $x \mapsto \sigma_2(\gamma,x)$.

In the case that $(U,\rho)$ is a standard probability space and $G = \Aut(U, \rho)$, from a cocycle $\sigma$ for $a$ we can form the \textbf{skew-product extension} $X\times_\sigma (U,\rho)$, which is the {\pmp} action of $\Gamma$ on $(X\times U, \mu \times \rho)$ given by $\gamma(x,u):=(\gamma x,\sigma(\gamma,x)u)$. (Our primary interest will be in the cases where $(U,\rho)$ is either a finite set, equipped with its counting measure, in which case $\Aut(U,\rho)$ is simply $S_n$ for some $n\in \bb N$, or where $U = [0,1]$ and $\rho$ is Lebesgue measure.)  Note then that the projection map $X\times U\to X$ is a factor map.  \textbf{Rohklin's skew-product theorem} asserts that every ergodic action that factors onto $X$ is isomorphic to a skew-product extension of $X$ (see, for example, \cite{glasner}). We remark that when $\sigma_1$ and $\sigma_2$ are cohomologous, say with $s:X\to \Aut(U, \rho)$ measurable and satisfying $\sigma_2(\gamma,x)=s(\gamma x)^{-1}\sigma_1(\gamma,x)s(x)$ for all $\gamma\in \Gamma$ and for almost all $x\in X$, then the map $$(x,u)\mapsto (x,s(x)^{-1}u):X\times_{\sigma_1}U\to X\times_{\sigma_2}U$$ is an isomorphism of the associated skew-product extensions. In particular, when $\sigma$ is a coboundary the corresponding skew-product extension is isomorphic to the product of $a$ with the trivial action on $U$.

We remark that $\Aut([0,1], \lambda)$, where $\lambda$ is the Lebesgue measure, is a Polish group when equipped with the \textbf{weak topology}, which is the weakest topology for which all maps $$T\mapsto [T(A)]_\mu:\Aut([0,1],\lambda)\to \Malg([0,1])$$ are continuous, as $A$ ranges over Borel subsets of $[0,1]$.

We now turn to the group cohomology perspective, which we use to access topological arguments that will streamline some of our proofs. Let $G$ be any Polish group. In addition to the usual left-shift action $\Gamma \acts^s G^\Gamma$ given by the rule
\[(\gamma^s \cdot y)(\delta) = y(\gamma^{-1} \delta),\]
we will also make use of the right-shift action $\Gamma \acts^r G^\Gamma$ given by the formula
\[(\gamma^r \cdot y)(\delta) = y(\delta \gamma).\]
For $y \in G^\Gamma$ and $\gamma \in \Gamma$ we will write $y^\gamma$ in place of $\gamma^r \cdot y$ (this notation may require some care, since $(y^\gamma)^\delta = y^{\delta \gamma}$). Notice that $G^\Gamma$ is a group with respect to coordinate-wise composition and that for every $\gamma \in \Gamma$ the map $y \in G^\Gamma \mapsto y^\gamma \in G^\Gamma$ is a group automorphism.

Define the space of $1$-cochains $C^1(\Gamma, G^\Gamma)$ to be the set of all functions from $\Gamma$ to $G^\Gamma$. Similarly, we define the space of $2$-cochains $C^2(\Gamma, G^\Gamma)$ to be the set of all functions from $\Gamma \times \Gamma$ to $G^\Gamma$. We endow $C^1(\Gamma, G^\Gamma)$ and $C^2(\Gamma, G^\Gamma)$ with the product topologies obtained by identifying them with the sets $G^{\Gamma \times \Gamma}$ and $G^{\Gamma \times \Gamma \times \Gamma}$, respectively.

The coboundary map $\partial : C^1(\Gamma, G^\Gamma) \rightarrow C^2(\Gamma, G^\Gamma)$ is given by the formula
\[\partial c(\alpha, \beta) = c(\alpha \beta)^{-1} c(\beta)^\alpha c(\alpha) .\]
Note that in the above equation $\partial c(\alpha, \beta)$ is an element of $G^\Gamma$ and that the right-hand side is a product of elements of $G^\Gamma$. We call $c \in C^1(\Gamma, G^\Gamma)$ a cocycle if $\partial c(\alpha, \beta)(\gamma) = e_G$ for all $\alpha, \beta, \gamma \in \Gamma$, and we denote by $Z^1(\Gamma, G^\Gamma)$ the set of all cocycles.

We define actions of $\Gamma$ on $C^1(\Gamma, G^\Gamma)$ and $C^2(\Gamma, G^\Gamma)$ via the formulas
\begin{align*}
	(\alpha^t \cdot c_1)(\beta)(\gamma) & = c_1(\beta)(\alpha^{-1} \gamma)\\
	(\alpha^t \cdot c_2)(\beta, \gamma)(\delta) & = c_2(\beta, \gamma)(\alpha^{-1} \delta)
\end{align*}
for $c_1 \in C^1(\Gamma, G^\Gamma)$, $c_2 \in C^2(\Gamma, G^\Gamma)$ and $\alpha, \beta, \gamma, \delta \in \Gamma$. Equivalently, these actions are described in terms of the left-shift action $\Gamma \acts^s G^\Gamma$ via the formulas
\begin{align*}
	(\alpha^t \cdot c_1)(\beta) & = \alpha^s \cdot (c_1(\beta))\\
	(\alpha^t \cdot c_2)(\beta, \gamma) & = \alpha^s \cdot (c_2(\beta, \gamma)).
\end{align*}
Using the fact that the left and right shift actions of $\Gamma$ on $G^\Gamma$ commute we have that for $c \in C^1(\Gamma, G^\Gamma)$ and $\alpha, \beta, \gamma \in \Gamma$
\[(\gamma^t \cdot c)(\beta)^\alpha = (\gamma^s \cdot (c(\beta)))^\alpha = \gamma^s \cdot (c(\beta)^\alpha).\]
Therefore
\begin{align*}
	\partial(\gamma^t \cdot c)(\alpha, \beta) & = (\gamma^t \cdot c)(\alpha \beta)^{-1} \cdot (\gamma^t \cdot c)(\beta)^\alpha \cdot (\gamma^t \cdot c)(\alpha)\\
	& = (\gamma^s \cdot (c(\alpha \beta)))^{-1} \cdot (\gamma^s \cdot (c(\beta)^\alpha)) \cdot (\gamma^s \cdot (c(\alpha)))\\
	& = \gamma^s \cdot (c(\alpha \beta)^{-1} c(\beta)^\alpha c(\alpha))\\
	& = \gamma^s \cdot (\partial c(\alpha, \beta))\\
	& = (\gamma^t \cdot \partial c)(\alpha, \beta).
\end{align*}
As a result, we have that the coboundary map $\partial$ is $\Gamma$-equivariant. This additionally implies that $Z^1(\Gamma, G^\Gamma)$ is $\Gamma$-invariant.

\begin{lem} \label{lem:CocycleCorr}
	Let $\Gamma \acts^a (X, \mu)$ be a {\pmp} action and let $G$ be a Polish group. Then there is a one-to-one correspondence between measurable cochains $\theta : \Gamma \times X \rightarrow G$ and measurable equivariant maps $c : X \rightarrow C^1(\Gamma, G^\Gamma)$ given by the relations
	\begin{equation} \label{eqn:CocycleCorr}
		c(x)(\beta)(\alpha) = \theta(\beta^{-1}, (\alpha^{-1})^a \cdot x) \quad \text{and} \quad \theta(\gamma, x) = c(x)(\gamma^{-1})(e).
	\end{equation}
	Moreover, this correspondence produces a bijection between the measurable cocycles $\sigma : \Gamma \times X \rightarrow G$ and measurable equivariant maps $z : X \rightarrow Z^1(\Gamma, G^\Gamma)$.
\end{lem}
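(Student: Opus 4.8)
The plan is to verify directly that the two formulas in \eqref{eqn:CocycleCorr} define mutually inverse assignments, the only real work being careful bookkeeping with group inverses and with the two shift conventions. As for measurability: since $\Gamma$ is countable and $G$ is Polish, the identification $C^1(\Gamma,G^\Gamma)\cong G^{\Gamma\times\Gamma}$ carries the product topology and product Borel structure, so a map $c\colon X\to C^1(\Gamma,G^\Gamma)$ is measurable exactly when each coordinate $x\mapsto c(x)(\beta)(\alpha)$ is. Given a measurable cochain $\theta$, the coordinate $x\mapsto\theta(\beta^{-1},(\alpha^{-1})^a\cdot x)$ is $\theta$ precomposed with the measure-preserving map $(\alpha^{-1})^a$, hence measurable; conversely, if $c$ is measurable then each $x\mapsto c(x)(\gamma^{-1})(e)$ is measurable, and since $\Gamma$ is countable the resulting $\theta\colon\Gamma\times X\to G$ is jointly measurable.

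Next I would check that the two assignments are mutually inverse. First, the $c$ built from a cochain $\theta$ via \eqref{eqn:CocycleCorr} is automatically $\Gamma$-equivariant: since $a$ is a homomorphism, $(\alpha^{-1})^a\cdot(\delta^a\cdot x)=(\alpha^{-1}\delta)^a\cdot x$, so $c(\delta^a\cdot x)(\beta)(\alpha)=\theta(\beta^{-1},(\alpha^{-1}\delta)^a\cdot x)$, whereas by the definition of $\acts^t$ one also has $(\delta^t\cdot c(x))(\beta)(\alpha)=c(x)(\beta)(\delta^{-1}\alpha)=\theta(\beta^{-1},(\alpha^{-1}\delta)^a\cdot x)$. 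Second, passing from $\theta$ to $c$ and back gives $c(x)(\gamma^{-1})(e)=\theta(\gamma,x)$ with no hypothesis. Conversely, starting from a measurable equivariant $c$, forming $\theta$, and reconstituting $c$ yields $\theta(\beta^{-1},(\alpha^{-1})^a\cdot x)=c((\alpha^{-1})^a\cdot x)(\beta)(e)=((\alpha^{-1})^t\cdot c(x))(\beta)(e)=c(x)(\beta)(\alpha)$, where the penultimate equality is the equivariance of $c$ and the last is the formula for $\acts^t$. As $\Gamma$ is countable, the quantifier orders over $\gamma$ and over a.e.\ $x$ coincide, so this produces the asserted bijection between measurable cochains and measurable equivariant maps $X\to C^1(\Gamma,G^\Gamma)$.

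For the ``moreover'' clause I would unwind $\partial$. Using $(y^\alpha)(\gamma)=(\alpha^r\cdot y)(\gamma)=y(\gamma\alpha)$, the equation $\partial c(x)(\alpha,\beta)(\gamma)=e_G$ becomes
\[ c(x)(\alpha\beta)(\gamma)^{-1}\,c(x)(\beta)(\gamma\alpha)\,c(x)(\alpha)(\gamma)=e_G. \]
Substituting $c(x)(\beta)(\alpha)=\theta(\beta^{-1},(\alpha^{-1})^a\cdot x)$ and setting $\gamma_1=\beta^{-1}$, $\gamma_2=\alpha^{-1}$, $y=(\gamma^{-1})^a\cdot x$, this is exactly the cocycle identity $\theta(\gamma_1\gamma_2,y)=\theta(\gamma_1,\gamma_2^a\cdot y)\,\theta(\gamma_2,y)$. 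As $(\alpha,\beta,\gamma,x)$ ranges over $\Gamma^3\times X$, the triple $(\gamma_1,\gamma_2,y)$ ranges over all of $\Gamma^2\times X$ with $x\mapsto y$ a measure-preserving bijection for each fixed $\gamma$, so after matching up the countably many null sets one concludes that $c(x)\in Z^1(\Gamma,G^\Gamma)$ for a.e.\ $x$ if and only if $\theta$ is a cocycle.

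I do not expect a genuine obstacle here: the whole argument is an exercise in unwinding definitions. The one place demanding care is keeping the inverses straight and not conflating the left shift $\acts^s$ (which underlies $\acts^t$, hence the notion of equivariance in play) with the right shift $\acts^r$ (which enters through $\partial$); an error there would corrupt the identification in the last paragraph.
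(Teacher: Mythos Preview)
Your proposal is correct and follows essentially the same approach as the paper: both verify directly that the two formulas are mutually inverse, check equivariance by the same one-line computation, and identify the cocycle condition with $\partial c(x)=e$ by the same substitution. Your treatment is slightly more explicit about measurability (via the product Borel structure on $G^{\Gamma\times\Gamma}$) and about matching null sets, which the paper leaves implicit, but there is no substantive difference in method.
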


\begin{proof}
	It is immediately apparent that if $\theta : \Gamma \times X \rightarrow G$ is any measurable map (that is, a cochain) then the function $c : X \rightarrow C^1(\Gamma, G^\Gamma)$ given by the left-hand side of (\ref{eqn:CocycleCorr}) is measurable and that $\theta$ is recovered from $c$ by the formula in the right-hand side of (\ref{eqn:CocycleCorr}). Additionally, when $c$ is obtained from $\theta$ using the left side of (\ref{eqn:CocycleCorr}), $c$ is automatically equivariant since
	\[c(\gamma^a \cdot x)(\beta)(\alpha) = \theta(\beta^{-1}, (\alpha^{-1} \gamma)^a \cdot x) = c(x)(\beta)(\gamma^{-1} \alpha) = (\gamma^t \cdot c(x))(\beta)(\alpha).\]
	Conversely, given any measurable equivariant map $c : X \rightarrow C^1(\Gamma, G^\Gamma)$, if $\theta$ is defined using the right side of (\ref{eqn:CocycleCorr}) then the equivariance of $c$ implies that
	\[\theta(\beta^{-1}, (\alpha^{-1})^a \cdot x) = c((\alpha^{-1})^a \cdot x)(\beta)(e) = ((\alpha^{-1})^t \cdot c(x))(\beta)(e) = c(x)(\beta)(\alpha),\]
	and thus $c$ is recovered from $\theta$ using the left side of (\ref{eqn:CocycleCorr}). Finally, for any $x \in X$ we have that $c(x) \in Z^1(\Gamma, G^\Gamma)$ if and only if $c(x)(\alpha \beta)(\gamma)$ is equal to
	\[c(x)(\beta)^\alpha(\gamma) c(x)(\alpha)(\gamma) = c(x)(\beta)(\gamma \alpha) c(x)(\alpha)(\gamma)\]
	for all $\alpha, \beta, \gamma \in \Gamma$. When $c$ and $\theta$ are related according to the formulas in (\ref{eqn:CocycleCorr}), this is equivalent to the requirement that $\theta(\beta^{-1} \alpha^{-1}, (\gamma^{-1})^a \cdot x)$ is equal to $\theta(\beta^{-1}, (\alpha^{-1} \gamma^{-1})^a \cdot x) \theta(\alpha^{-1}, (\gamma^{-1})^a \cdot x)$ for all $\alpha, \beta, \gamma \in \Gamma$. Thus $c(x) \in Z^1(\Gamma, G^\Gamma)$ for $\mu$-almost-every $x$ if and only if $\theta$ is a cocycle.
\end{proof}

Lastly, we discuss the model-theoretic perspective on cocycles, exclusively looking at the case where $G = K$ is a finite group. In what follows, we use the terminology around definability presented in the first author's article \cite{spgap}.  For any finite group $K$, we may assume that we have a sort $S_K$ in our langugage, whose intended interpretation in $\cal M_a$ is simply the Cartesian product $\cal M_a^{\Gamma\times K}$ (equipped with some fixed compatible complete metric), and that we have function symbols $\pi_{\gamma,k}$ for each $\gamma\in \Gamma$ and $k\in K$ whose intended interpretations are the projections maps $\pi_{\gamma,k}^{\cal M_a}:\cal M_a^{\Gamma\times K}\to \cal M_a$.  We identify an element $B\in \cal M_a^{\Gamma\times K}$ with the cochain $\sigma_B:\Gamma\times X\to K$ for which $\pi_{\gamma,k}(B)=\{x\in X \ : \ \sigma_B(\gamma,x)=k\}$.  Conversely, given a cochain $\sigma:\Gamma\times X\to K$, we let $B_\sigma\in \cal M_a^{\Gamma\times K}$ denote the corresponding tuple.

Fix bijections $e:\Gamma\to \bb N$ and $f:\Gamma\times \Gamma\times K\to \mathbb N$.  We define the $T_\Gamma$-formula $\Cocy_K$ whose interpretations are given by $\Cocy_K^{\cal M_a}: \cal M_a^{\Gamma\times K}\to [0,1]$, where 
$$\Cocy_K^{\cal M_a}(B)=\max(\Phi_1(B),\Phi_2(B)),$$ with $\Phi_1(B)$ the $T_\Gamma$-formula
$$\sum_{\gamma\in \Gamma}2^{-e(\gamma)}d((B(\gamma,k))_{k\in K},\operatorname{Part}_K)$$

and $\Phi_2(B)$ the $T_\Gamma$-formula $$\sum_{(\gamma,\delta,k)\in \Gamma\times \Gamma\times K}2^{-f(\gamma,\delta,k)}d\left( \bigcup_{h \in K} (\delta^{-1}B(\gamma,h)\cap B(\delta,h^{-1}k)),B(\gamma\delta,k)\right).$$
In the definition of $\Phi_1$, $\operatorname{Part}_K$ denotes the $T_\Gamma$-definable set given by partitions of the measure algebra indexed by $K$.  We note that $\Cocy_K^{\cal M_a}(B)=0$ if and only if $\sigma_B$ is a cocycle of the action $a$.  In other words, the $T_\Gamma$-functor corresponding to $K$-valued cocycles is a $T_\Gamma$-zeroset that we denote $Z(\Cocy_K^{\cal M_a})$.

\subsection{Treeability notions for groups}\label{treeabletypes}
Consider a countable group $\Gamma$ and a {\pmp} action $\Gamma \acts^a (X, \mu)$. Let $\mathcal R_a$ be the equivalence relation on $X$ given by the orbits. We call a set $\cal G \subseteq \cal R_a$ a {\bf directed measurable graph} if $\cal G$ is a measurable, anti-symmetric, irreflexive subset of $\cal R_a$. We view such a $\cal G$ as a collection of edges on the vertices $X$, and we write $\cal R_\cal G$ for the equivalence relation given by the connected components of $\cal G$ (ignoring edge direction). A {\bf graphing} of $\cal R_a$ is a measurable directed graph $\cal G \subseteq \cal R_a$ having the property that $\cal R_\cal G = \cal R_a$. A {\bf treeing} of $\cal R_a$ is a graphing of $R_a$ having no cycles. The action $\Gamma \acts^a (X, \mu)$ is called {\bf treeable} if $\cal R_a$ admits some treeing. Similarly, if for every finite set $H \subseteq \Gamma$ and every $\epsilon > 0$ there is a measurable directed graph $\cal G \subseteq \cal R_a$ that has no cycles and satisfies
$$\mu(\{x \in X : (h_1^a \cdot x, h_2^a \cdot x) \in \cal R_{\cal G}\ \text{ for all }h_1,h_2\in H\}) > 1 - \epsilon,$$
then the action $\Gamma \acts^a (X, \mu)$ is called {\bf approximately treeable}. Note that treeable actions are approximately treeable.

The group $\Gamma$ is called {\bf strongly treeable} if every free {\pmp} action of $\Gamma$ is treeable, and it is called {\bf treeable} if at least one free {\pmp} action of $\Gamma$ is treeable. More generally, $\Gamma$ is called {\bf approximately treeable} if at least one free {\pmp} action of $\Gamma$ is approximately treeable.

Amenable groups, free groups, finitely generated groups admitting planar Cayley graphs, elementarily free groups (that is, groups with the same first-order theory as a nonabelian free group), and the group of isometries of the hyperbolic plane and all its closed subgroups are examples of strongly treeable groups \cite{CGMTD,Gab}. It is a prominent open question whether there is a group that is treeable but not strongly treeable. As we will see in Section \ref{approximatelytreeable}, the class of approximately treeable groups includes all treeable groups and all universally free groups and is closed under increasing unions and extensions by amenable groups.

While the above are the conventional definitions for these properties, we find it more convenient to work with alternative characterizations that we now introduce.

Let $d$ be the diagonal translation action of $\Gamma$ on $\Gamma \times \Gamma$, specifically $\gamma^d \cdot (\alpha, \beta) = (\gamma \alpha, \gamma \beta)$. We similarly denote by $d$ the action of $\Gamma$ on the space $\Pow{\Gamma \times \Gamma}$ of all subsets of $\Gamma \times \Gamma$, so that $(\alpha, \beta) \in A \Leftrightarrow (\gamma \alpha, \gamma \beta) \in \gamma^d \cdot A$. We endow $\Pow{\Gamma \times \Gamma}$ with the topology given by pointwise convergence of indicator functions. For $S \in \Pow{\Gamma \times \Gamma}$ write $\bar S$ for the symmetric reflection of $S$, that is, $$\bar S = \{(\beta, \alpha) \in \Gamma \times \Gamma : (\alpha, \beta) \in S\}.$$

Let $\cal F(\Gamma) \subseteq \Pow{\Gamma \times \Gamma}$ denote the space of all directed forests on $\Gamma$, specifically, $F \subseteq \Gamma \times \Gamma$ belongs to $\cal F(\Gamma)$ precisely when $F \cap \bar F = \varnothing$ and $F \cup \bar F$ is an acyclic graph on $\Gamma$. Notice in this case that $\bar F$ is the same graph but with the direction of all edges reversed.  We also let $\cal T(\Gamma) \subseteq \cal F(\Gamma)$ denote the space of all directed trees with vertex set $\Gamma$.

Let $\cal E(\Gamma) \subseteq \Pow{\Gamma \times \Gamma}$ be the space of all equivalence relations on $\Gamma$. For $F \in \cal F(\Gamma)$ we write $E_F \in \cal E(\Gamma)$ for the equivalence relation given by the connected components of $F$ (ignoring the direction of the edges), and for $E \in \cal E(\Gamma)$ we write $\Gamma / E$ for the set of $E$-equivalence classes. Notice that $d$ provides an action of $\Gamma$ on both $\cal F(\Gamma)$ and $\cal E(\Gamma)$, and that $\gamma^d \cdot E_F = E_{\gamma^d \cdot F}$ for $\gamma \in \Gamma$ and $F \in \cal F(\Gamma)$.

Given a {\pmp} action $\Gamma \acts^a (X, \mu)$, there is a one-to-one correspondence between measurable sets $\cal Y \subseteq \cal R_a$ and equivariant measurable maps $\phi : X \rightarrow \Pow{\Gamma \times \Gamma}$ given by the rule
\[(\alpha, \beta) \in \phi(x) \Leftrightarrow ((\alpha^{-1})^a \cdot x, (\beta^{-1})^a \cdot x) \in \cal Y.\]
This correspondence identifies measurable equivalence subrelations $\cal R$ of $\cal R_a$ with equivariant measurable maps to $\cal E(\Gamma)$, identifies treeings $\cal G \subseteq \cal R_a$ with measurable equivariant maps to $\cal T(\Gamma)$, and identifies directed measurable graphs $\cal G \subseteq \cal R_a$ having no cycles with equivariant measurable maps to $\cal F(\Gamma)$. Additionally, when $\cal G \subseteq \cal R_a$ is a measurable directed graph having no cycles, if we let $\phi_\cal G$ and $\phi_{\cal R_\cal G}$ denote the maps associated with $\cal G$ and $\cal R_\cal G$, respectively, we have that $E_{\phi_\cal G(x)} = \phi_{\cal R_\cal G}(x)$ for all $x \in X$.

From these observations we obtain the following characterizations:

\begin{prop}
A group $\Gamma$ is:
\begin{enumerate}
    \item treeable if and only if there exists a $\Gamma$-invariant Borel probability measure on $\cal T(\Gamma)$;
    \item strongly treeable if and only if: for every free {\pmp} action $\Gamma \acts^a (X, \mu)$, there is a $\Gamma$-invariant Borel probability measure $\nu$ on $\cal T(\Gamma)$ such that $\Gamma \acts^a (X, \mu)$ factors onto $\Gamma \acts^d (\cal T(\Gamma), \nu)$;
    \item approximately treeable if and only if: for every finite set $H \subseteq \Gamma$ and every $\epsilon > 0$, there is a $\Gamma$-invariant measure $\mu$ on $\cal F(\Gamma)$ satisfying
\[\mu(\{F \in \cal F(\Gamma) : H \times H \not\subseteq E_F\}) < \epsilon.\]
\end{enumerate}
\end{prop}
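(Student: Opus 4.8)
The plan is to deduce all three equivalences from the dictionary recorded just above, which to a \emph{free} {\pmp} action $\Gamma \acts^a (X,\mu)$ assigns a bijective correspondence between measurable subsets $\cal Y \subseteq \cal R_a$ and measurable $\Gamma$-equivariant maps $\phi : X \to \Pow{\Gamma \times \Gamma}$, restricting to bijections between treeings and maps into $\cal T(\Gamma)$, and between acyclic measurable graphs and maps into $\cal F(\Gamma)$ (intertwining $\cal G \mapsto \cal R_{\cal G}$ with $\phi_{\cal G} \mapsto E_{\phi_{\cal G}(\cdot)}$). The one extra idea needed, for the backward directions, is that an invariant measure on $\cal T(\Gamma)$ or $\cal F(\Gamma)$ can always be upgraded to a \emph{free} action by taking a product with a fixed free {\pmp} action, say a Bernoulli shift $\Gamma \acts^b (Y,\nu_Y)$; the $d$-action on $\cal T(\Gamma)$ alone need not be free (already for $\Gamma = \bb Z$ it can fix a point), so this step is genuinely necessary. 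Throughout I would harmlessly assume all finite subsets of $\Gamma$ in sight are symmetric, which renders the inverses in the correspondence rule $(\alpha,\beta) \in \phi(x) \Leftrightarrow ((\alpha^{-1})^a x, (\beta^{-1})^a x) \in \cal Y$ invisible.

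For the forward implications I would argue as follows. If $\Gamma$ is treeable, fix a free {\pmp} action $a$ whose orbit equivalence relation $\cal R_a$ carries a treeing $\cal G$; the corresponding equivariant map $\phi_{\cal G} : X \to \cal T(\Gamma)$ pushes $\mu$ forward to a $\Gamma$-invariant Borel probability measure on $\cal T(\Gamma)$, giving $(1)\Rightarrow$. If $\Gamma$ is strongly treeable, then for an arbitrary free action $a$ a treeing $\cal G$ of $\cal R_a$ exists, and the associated equivariant map $\phi_{\cal G} : X \to \cal T(\Gamma)$ is precisely a factor map of $a$ onto $\Gamma \acts^d (\cal T(\Gamma), (\phi_{\cal G})_*\mu)$, giving $(2)\Rightarrow$. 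Finally, suppose $\Gamma$ is approximately treeable, witnessed by a free action $a$, and fix a symmetric finite $H \subseteq \Gamma$ and $\epsilon > 0$; by definition there is an acyclic measurable graph $\cal G \subseteq \cal R_a$ with $\mu(\{x : (h_1^a x, h_2^a x) \in \cal R_{\cal G} \text{ for all } h_1,h_2 \in H\}) > 1 - \epsilon$, which under the dictionary (using $E_{\phi_{\cal G}(x)} = \phi_{\cal R_{\cal G}}(x)$) becomes $(\phi_{\cal G})_*\mu(\{F \in \cal F(\Gamma) : H \times H \subseteq E_F\}) > 1 - \epsilon$, so $(\phi_{\cal G})_*\mu$ is the required invariant measure on $\cal F(\Gamma)$; this gives $(3)\Rightarrow$.

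For the converse implications I would run the dictionary backwards after the product trick. Given a $\Gamma$-invariant measure $\nu$ on $\cal T(\Gamma)$, the product action $\Gamma \acts^{d\times b} (\cal T(\Gamma) \times Y, \nu \times \nu_Y)$ is free and {\pmp}, and the equivariant measurable coordinate projection $\cal T(\Gamma) \times Y \to \cal T(\Gamma)$ corresponds to a treeing of $\cal R_{d\times b}$; hence $\Gamma$ is treeable, giving $(1)\Leftarrow$. If instead every free action $a$ factors onto some $\Gamma \acts^d (\cal T(\Gamma), \nu)$, the factor map is an equivariant measurable map into $\cal T(\Gamma)$, hence a treeing of $\cal R_a$, so $\Gamma$ is strongly treeable, giving $(2)\Leftarrow$. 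For $(3)\Leftarrow$, fix an increasing sequence of symmetric finite sets $H_n$ with union $\Gamma$ and reals $\epsilon_n \to 0$, pick $\Gamma$-invariant measures $\mu_n$ on $\cal F(\Gamma)$ with $\mu_n(\{F : H_n \times H_n \not\subseteq E_F\}) < \epsilon_n$, and form the free {\pmp} action $c$ on $Z := \big(\prod_n (\cal F(\Gamma),\mu_n)\big) \times (Y,\nu_Y)$ with the product of the diagonal $d$-actions and $b$. For each $n$, the $n$-th coordinate projection $Z \to \cal F(\Gamma)$ is equivariant and pushes the measure to $\mu_n$, hence corresponds to an acyclic measurable graph $\cal G_n \subseteq \cal R_c$ with the set $\{z : H_n \times H_n \subseteq E_{\phi_{\cal G_n}(z)}\}$ of measure exceeding $1 - \epsilon_n$; unwinding the correspondence, this says exactly that $\{z : (g_1^c z, g_2^c z) \in \cal R_{\cal G_n} \text{ for all } g_1,g_2 \in H_n\}$ has measure exceeding $1 - \epsilon_n$. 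Since the $H_n$ exhaust $\Gamma$ and $\epsilon_n \to 0$, the action $c$ is approximately treeable, so $\Gamma$ is.

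The only genuinely delicate point is the passage to freeness in the converse directions: one must confirm that the product-with-Bernoulli action is free and {\pmp}, that the coordinate projections are measurable $\Gamma$-factor maps pushing the measures correctly, and --- above all --- that the dictionary really does convert a measurable equivariant map out of a \emph{free} action into $\cal T(\Gamma)$ (resp.\ $\cal F(\Gamma)$) into an honest treeing (resp.\ acyclic measurable graph with the stated connectivity), together with the small bookkeeping of inverses streamlined by passing to symmetric sets. Once the dictionary is in hand, everything else is a transfer of definitions.
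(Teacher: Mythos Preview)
Your proof is correct and follows the same dictionary-based approach as the paper: push forward via the equivariant map $\phi$ for the forward directions, and take a product with a free action to run the correspondence backwards. The paper's own proof is terser---it declares everything except (1)$\Leftarrow$ ``obvious from the above discussion'' and handles that one case exactly as you do---whereas you spell out (3)$\Leftarrow$ by building a single free action as an infinite product $\big(\prod_n (\cal F(\Gamma),\mu_n)\big) \times (Y,\nu_Y)$ so that one action witnesses all $(H,\epsilon)$ simultaneously; this makes explicit a step the paper leaves to the reader.
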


\begin{proof}
The only implication not obvious from the above discussion is the ``if'' direction of (1); to see this, take the direct product of $\Gamma \acts^d (\cal T(\Gamma), \nu)$ with any free {\pmp} action of $\Gamma$, let $\phi$ be the projection map to $\cal T(\Gamma)$, and apply the above correspondence.
\end{proof}

We write $\Prob(\Pow{\Gamma \times \Gamma})$ for the space of all Borel probability measures on $\Pow{\Gamma \times \Gamma}$, equipped with the weak$^*$ topology, and write $\Prob(\cal F(\Gamma))$ and $\Prob(\cal T(\Gamma))$ for the subspaces of probability measures on $\cal F(\Gamma)$ and $\cal T(\Gamma)$, respectively (equipped with the subspace topoloogy). 

\begin{cor}
A group $\Gamma$ is approximately treeable if and only if: for every weak$^*$ open neighborhood $U$ of the point-mass $\delta_{\Gamma \times \Gamma} \in \Prob(\cal E(\Gamma))$, there is an invariant Borel probability measure $\mu$ on $\cal F(\Gamma)$ so that the pushforward of $\mu$ under the map $F \in \cal F(\Gamma) \mapsto E_F \in \cal E(\Gamma)$ belongs to $U$.
\end{cor}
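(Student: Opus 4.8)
The plan is to derive this corollary directly from part (3) of the preceding proposition by a routine unpacking of the weak$^*$ topology on $\Prob(\cal E(\Gamma))$. First I would recall that the weak$^*$ topology on $\Prob(\cal E(\Gamma))$ is generated by subbasic sets of the form $\{\nu : |\int f \, d\nu - \int f \, d\delta_{\Gamma \times \Gamma}| < \epsilon\}$ for $f$ a bounded continuous function on $\cal E(\Gamma)$, and — since $\cal E(\Gamma) \subseteq \Pow{\Gamma \times \Gamma}$ carries the topology of pointwise convergence of indicator functions — that the basic clopen sets of $\cal E(\Gamma)$ are determined by prescribing, for a finite list of pairs $(\alpha_i, \beta_i)$, whether $(\alpha_i, \beta_i)$ lies in the equivalence relation or not. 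Because the point-mass $\delta_{\Gamma \times \Gamma}$ is concentrated on the single point $\Gamma \times \Gamma \in \cal E(\Gamma)$ (the all-relation), and the all-relation contains every pair, the only relevant clopen neighborhoods of that point in $\cal E(\Gamma)$ are of the form $V_H := \{E \in \cal E(\Gamma) : H \times H \subseteq E\}$ for $H \subseteq \Gamma$ finite; these form a neighborhood basis of $\Gamma \times \Gamma$ in $\cal E(\Gamma)$.

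Next I would translate a weak$^*$ neighborhood $U$ of $\delta_{\Gamma \times \Gamma}$ into this language. Since $\delta_{\Gamma \times \Gamma}$ is a point mass, a measure $\nu$ is weak$^*$-close to it precisely when $\nu$ gives mass close to $1$ to every small neighborhood of the point $\Gamma \times \Gamma$; more precisely, for every weak$^*$ neighborhood $U$ there is a finite $H \subseteq \Gamma$ and $\epsilon > 0$ such that $\{\nu : \nu(V_H) > 1 - \epsilon\} \subseteq U$, and conversely each such set is a weak$^*$ neighborhood of $\delta_{\Gamma \times \Gamma}$. This is the standard fact that, at a point mass, the weak$^*$ topology is controlled by the values on a neighborhood basis of the atom; I would spell it out using a bump function argument (take $f$ continuous with $f = 1$ on $V_H$, $0 \le f \le 1$, supported near $\Gamma \times \Gamma$) but this is purely routine.

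With that translation in hand, the corollary becomes a direct restatement of clause (3) of the proposition. Given a weak$^*$ open $U \ni \delta_{\Gamma \times \Gamma}$, choose $H$ finite and $\epsilon > 0$ with $\{\nu : \nu(V_H) > 1 - \epsilon\} \subseteq U$; by approximate treeability (clause (3)) there is a $\Gamma$-invariant Borel probability measure $\mu$ on $\cal F(\Gamma)$ with $\mu(\{F : H \times H \not\subseteq E_F\}) < \epsilon$. Let $\mu'$ be the pushforward of $\mu$ under $F \mapsto E_F$; this is $\Gamma$-invariant because $\gamma^d \cdot E_F = E_{\gamma^d \cdot F}$, and $\mu'(V_H) = \mu(\{F : H \times H \subseteq E_F\}) > 1 - \epsilon$, so $\mu' \in U$, proving the forward direction. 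For the converse, given finite $H$ and $\epsilon > 0$, apply the hypothesis to the neighborhood $U = \{\nu : \nu(V_H) > 1 - \epsilon\}$ of $\delta_{\Gamma \times \Gamma}$ to obtain $\mu$ on $\cal F(\Gamma)$ whose pushforward lies in $U$, which is exactly the inequality required by clause (3); then invoke the proposition. I expect no real obstacle here: the one point that needs a little care is verifying that the sets $V_H$ genuinely form a neighborhood basis of $\Gamma \times \Gamma$ inside $\cal E(\Gamma)$ (as opposed to inside $\Pow{\Gamma \times \Gamma}$) — that is, that constraints of the form ``$(\alpha,\beta) \notin E$'' are automatically irrelevant near the all-relation — but this is immediate since $\Gamma \times \Gamma$ satisfies no such exclusion constraint and a basic clopen set cutting out finitely many membership/non-membership conditions that contains $\Gamma \times \Gamma$ can only impose membership conditions.
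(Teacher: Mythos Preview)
Your proposal is correct and takes essentially the same approach as the paper, which in fact states the corollary without proof, treating it as an immediate reformulation of part (3) of the preceding proposition in terms of the weak$^*$ topology. Your unpacking of the neighborhood basis $V_H$ at $\Gamma\times\Gamma$ and the translation between $\mu(\{F: H\times H\not\subseteq E_F\})<\epsilon$ and the pushforward lying in $\{\nu:\nu(V_H)>1-\epsilon\}$ is exactly the routine verification the paper omits; note that since $\cal E(\Gamma)$ is zero-dimensional you can use indicator functions of clopen sets directly rather than bump functions.
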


\section{Existentially closed actions}

In this section, we gather a number of basic facts about e.c. \pmp actions of $\Gamma$.  We also consider study the Rokhlin entropy of e.c. actions and cocycles on e.c. actions.

\subsection{Definitions, first properties, and a useful reformulation}

\begin{defn}
Given actions $\Gamma \acts^a X$ and $\Gamma\acts^b Y$ with $\cal M_a\subseteq \cal M_b$, we say that $\cal M_a$ is \textbf{existentially closed} (or \textbf{e.c.} for short) \textbf{in} $\cal M_b$ if:  for any quantifier-free $L_\Gamma$-formula $\varphi(x,y)$, where $x$ and $y$ are finite tuples of variables, and any $c\in \cal M_a$ of the same length as $x$, we have $$\left(\inf_y \varphi(c,y)\right)^{\cal M_a}=\left(\inf_y \varphi(c,y)\right)^{\cal M_b}.$$  We say that $\cal M_a$ is \textbf{existentially closed} (or e.c.) if it is existentially closed in $\cal M_b$ whenever $\cal M_a\subseteq \cal M_b$.
\end{defn}

In the context of the above definition, considering the dual situation of a factor map $Y\to X$, we may also say that the factor map $Y\to X$ is existentially closed if the corresponding inclusion of models of $T_\Gamma$ is existentially closed.

The following is a useful, well-known  ``logic-free'' characterization of e.c.\ inclusions:

\begin{fact}\label{soft}
The inclusion $\cal M_a\subseteq \cal M_b$ is e.c.\ if and only if there is an ultrafilter $\u$ and an embedding $\iota:\cal M_b\to \cal M_a^\u$ such that the restriction $\iota|\cal M_a$ of $\iota$ to $\cal M_a$ is the usual diagonal embedding $\cal M_a\hookrightarrow\cal M_a^\u$.  Dually, the factor map $Y\to X$ is e.c.\ if and only if there is a factor map $X_\u\to Y$ such that the composite factor map $X_\u\to Y\to X$ is the usual diagonal factor map.
\end{fact}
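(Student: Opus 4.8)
The plan is to prove the two implications of the model-theoretic statement — the content being concentrated in the forward direction, where I would realize the quantifier-free diagram of $\cal M_b$ inside an elementary extension of $\cal M_a$ by compactness and then push that extension into an ultrapower of $\cal M_a$ — and then transport everything to factor maps via the duality between substructures of models of $T_\Gamma$ and factor maps recalled in Subsection~\ref{pmpstructures}.

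For the backward direction I would argue as follows. Suppose $\iota\colon\cal M_b\to\cal M_a^\u$ is an embedding restricting on $\cal M_a$ to the diagonal embedding. Fix a quantifier-free $L_\Gamma$-formula $\varphi(x,y)$ and $c\in\cal M_a$. Since quantifier-free formulas take the same value in a structure and in a substructure, letting $y$ range over $\cal M_a$ already gives $(\inf_y\varphi(c,y))^{\cal M_a}\ge(\inf_y\varphi(c,y))^{\cal M_b}$, so only the reverse inequality is at issue. Given $\e>0$, pick $d\in\cal M_b$ with $\varphi^{\cal M_b}(c,d)\le(\inf_y\varphi(c,y))^{\cal M_b}+\e$. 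Since $\iota$ preserves the value of the quantifier-free formula $\varphi$ and $\iota(c)$ is the diagonal image of $c$,
\[
(\inf_y\varphi(c,y))^{\cal M_a^\u}\le\varphi^{\cal M_a^\u}(\iota(c),\iota(d))=\varphi^{\cal M_b}(c,d)\le(\inf_y\varphi(c,y))^{\cal M_b}+\e,
\]
while by {\L}o\'s's theorem the left-hand side equals $(\inf_y\varphi(c,y))^{\cal M_a}$; letting $\e\to0$ would finish this direction.

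For the forward direction, assuming $\cal M_a\subseteq\cal M_b$ is e.c., I would form the union $\mathrm{ED}(\cal M_a)\cup\mathrm{D}(\cal M_b)$ of the elementary diagram of $\cal M_a$ and the quantifier-free diagram of $\cal M_b$, with the constants naming elements of $\cal M_a$ common to both via the inclusion, and show it is satisfiable. By compactness for continuous logic it suffices to satisfy an arbitrary finite subset to within any $\e>0$; after collecting parameters such a finite subset is a conjunction of conditions $\chi_j(\bar c)=\chi_j^{\cal M_a}(\bar c)$ from $\mathrm{ED}(\cal M_a)$ and quantifier-free conditions $\psi_i(\bar c,\bar d)=r_i$ from $\mathrm{D}(\cal M_b)$, where $\bar c$ is a tuple from $\cal M_a$, $\bar d$ a tuple from $\cal M_b$, and $r_i=\psi_i^{\cal M_b}(\bar c,\bar d)$ (which we may take rational after a harmless approximation). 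The key step is that the quantifier-free formula $\varphi(\bar c,\bar y):=\max_i|\psi_i(\bar c,\bar y)-r_i|$ vanishes at $\bar y=\bar d$ in $\cal M_b$, so its infimum over $\cal M_b$ is $0$, hence — by the e.c.\ hypothesis — its infimum over $\cal M_a$ is $0$; choosing $\bar d'$ in $\cal M_a$ with $|\psi_i^{\cal M_a}(\bar c,\bar d')-r_i|<\e$ for all $i$, the structure $\cal M_a$ with the constants for $\bar c,\bar d$ interpreted as $\bar c,\bar d'$ satisfies the chosen finite set to within $\e$. A model $\cal N$ of $\mathrm{ED}(\cal M_a)\cup\mathrm{D}(\cal M_b)$ then exists, equipped with an elementary embedding $\cal M_a\hookrightarrow\cal N$ and an $L_\Gamma$-embedding $\cal M_b\hookrightarrow\cal N$ extending it. Finally I would invoke the standard fact that every elementary extension $\cal N$ of $\cal M_a$ admits an elementary embedding into some ultrapower $\cal M_a^\u$ over the diagonal (the elementary diagram of $\cal N$ is finitely satisfiable in $\cal M_a$ precisely because $\cal M_a$ is an elementary substructure of $\cal N$, and then the usual ultraproduct construction applies); composing $\cal M_b\hookrightarrow\cal N\hookrightarrow\cal M_a^\u$ gives the required $\iota$.

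The dual statement about factor maps would then follow immediately from the duality recalled in Subsection~\ref{pmpstructures}: the measure algebra of the ultrapower action $X_\u$ is $\cal M_a^\u$, the diagonal embedding $\cal M_a\hookrightarrow\cal M_a^\u$ corresponds to the diagonal factor map $X_\u\to X$, an embedding $\cal M_b\to\cal M_a^\u$ corresponds to a factor map $X_\u\to Y$, and requiring the embedding to restrict to the diagonal on $\cal M_a$ is exactly requiring that $X_\u\to Y\to X$ be the diagonal factor map. I expect the main obstacle to be purely bookkeeping for the continuous-logic compactness and ultraproduct arguments — in particular verifying that a countable index set suffices when $\cal M_b$ is separable (as it is for actions on standard probability spaces), in accordance with Convention~\ref{ultraconvention} — rather than anything conceptual.
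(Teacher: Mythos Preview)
The paper states this as a well-known fact and gives no proof whatsoever; there is therefore nothing to compare your argument against. Your proof is correct and is precisely the standard argument: the backward direction is immediate from {\L}o\'s's theorem, and the forward direction is the usual compactness argument realizing the quantifier-free diagram of $\cal M_b$ together with the elementary diagram of $\cal M_a$, followed by embedding the resulting elementary extension of $\cal M_a$ into an ultrapower over the diagonal.
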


The following is a well-known fact about the existence of e.c.\ actions, relativized to our current setting; see, for example, \cite[Fact 2.8]{usvy}.

\begin{fact}
E.c.\ actions exist.  In fact, given any action $\Gamma\acts^a X$, there is an e.c.\ action $\Gamma\acts^b Y$ with $\cal M_a\subseteq \cal M_b$.  Moreover, if $X$ is standard, then $Y$ can also be taken to be standard.
\end{fact}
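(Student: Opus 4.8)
The plan is to run the standard chain construction producing existentially closed models of an inductive theory, the only genuinely ergodic-theoretic input being an amalgamation property for $\pmp$ actions over a common factor. Two features of $T_\Gamma$ already recorded above make this work: its axioms are universal, so models of $T_\Gamma$ are closed under directed unions of substructures and under metric completion of such unions; and a separable model of $T_\Gamma$ is of the form $\cal M_c$ for a standard action $\Gamma\acts^c Z$. \textbf{Step 1 (amalgamation).} I would first check that $T_\Gamma$ has the amalgamation property: given embeddings $\cal M_a\subseteq \cal M_{b_1}$ and $\cal M_a\subseteq \cal M_{b_2}$ — dually, factor maps $Y_1\to X$ and $Y_2\to X$ — form the relatively independent joining $Y_1\times_X Y_2$, i.e.\ the $\pmp$ action of $\Gamma$ on $Y_1\times Y_2$ equipped with $\int_X (\nu_1)_x\times(\nu_2)_x\,d\mu(x)$ built from the disintegrations over $X$; the two coordinate projections are factor maps and induce embeddings $\cal M_{b_1},\cal M_{b_2}\hookrightarrow \cal M_{b_1\times_X b_2}$ restricting to the given inclusions on $\cal M_a$, and if $a,b_1,b_2$ are standard then so is the joining. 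I expect this to be the one step that truly uses the nature of $\pmp$ actions; everything afterward is the formal skeleton common to all existence-of-e.c.-models proofs.

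\textbf{Step 2 (one-step extension).} Next I would show: for every $\cal M\models T_\Gamma$ there is $\cal M^{+}\supseteq \cal M$ such that, for every quantifier-free $L_\Gamma$-formula $\varphi(\bar x,\bar y)$ and every tuple $\bar c$ from $\cal M$,
\[(\inf_{\bar y}\varphi(\bar c,\bar y))^{\cal M^{+}} = \inf\{(\inf_{\bar y}\varphi(\bar c,\bar y))^{\cal N} : \cal M\subseteq \cal N\models T_\Gamma\},\]
with $\cal M^{+}$ separable when $\cal M$ is. By downward Löwenheim–Skolem the right-hand infimum is unchanged if one restricts to separable $\cal N$ (so, for separable $\cal M$, it is an infimum over a set); since $L_\Gamma$ is countable a countable set of quantifier-free formulas is uniformly dense in all of them, and by the model-independent uniform-continuity modulus of $\varphi$ in $\bar x$ it suffices to control parameter tuples $\bar c$ from a fixed countable dense $D\subseteq \cal M$. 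Enumerate the triples $(\varphi,\bar c,1/m)$ with $\bar c\in D$, $m\in\bb N$, and build a chain $\cal M=\cal P_0\subseteq \cal P_1\subseteq\cdots$: at the stage handling $(\varphi,\bar c,1/m)$, pick a separable extension $\cal N\supseteq \cal M$ and a witness $b\in\cal N$ with $\varphi(\bar c,b)^{\cal N}$ less than $r+1/m$, where $r$ is the right-hand infimum, and let $\cal P_{j+1}$ be an amalgam of $\cal P_j$ and $\cal N$ over $\cal M$ from Step 1. Put $\cal M^{+}\coloneqq \overline{\bigcup_j \cal P_j}$. Then $\cal M^{+}\models T_\Gamma$ extends $\cal M$; for each $(\varphi,\bar c)$ the quantity $r$ is a lower bound for $\inf_{\bar y}\varphi(\bar c,\bar y)$ in every extension of $\cal M$, in particular in $\cal M^{+}$, while the witnesses $b$ — quantifier-free formulas being preserved under embeddings — force the $\cal M^{+}$-value to be at most $r+1/m$ for every $m$. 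Density plus uniform continuity promote the conclusion from $D$ to all of $\cal M$.

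\textbf{Step 3 (iterate).} Finally I would set $\cal M_0\coloneqq \cal M_a$, $\cal M_{n+1}\coloneqq (\cal M_n)^{+}$, and $\cal M_b\coloneqq \overline{\bigcup_n \cal M_n}$; this is a model of $T_\Gamma$ extending $\cal M_a$, separable when $X$ is standard, hence of the form $\cal M_b$ for a standard action $\Gamma\acts^b Y$ in that case. To see $\cal M_b$ is e.c., take any extension $\cal M_b\subseteq \cal M_d$, a quantifier-free $\varphi(\bar x,\bar y)$, and a tuple $\bar c$ from $\cal M_b$: the inequality $(\inf_{\bar y}\varphi(\bar c,\bar y))^{\cal M_d}\le (\inf_{\bar y}\varphi(\bar c,\bar y))^{\cal M_b}$ is automatic, and for the reverse one approximates $\bar c$ by a tuple $\bar c'$ lying in some $\cal M_n$ and applies Step 2 at level $n$: the $\cal M_{n+1}$-value of $\inf_{\bar y}\varphi(\bar c',\bar y)$ equals the infimum over all $\cal N\supseteq \cal M_n$, hence is at most its $\cal M_d$-value (as $\cal M_d\supseteq \cal M_n$) and at least its $\cal M_b$-value (as $\cal M_b\supseteq \cal M_{n+1}$); letting $\bar c'\to\bar c$ and using uniform continuity closes the loop. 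This reproduces \cite[Fact 2.8]{usvy} in the present setting; the only points requiring care are the amalgamation step and the Löwenheim–Skolem bookkeeping that keeps all the intermediate structures separable and of set size, with everything else being routine.
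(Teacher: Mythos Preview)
Your proposal is correct and follows the standard chain/amalgamation construction for producing e.c.\ models of an inductive theory. The paper itself does not give a proof here: it simply cites this as a well-known fact (referring to \cite[Fact 2.8]{usvy}), so you have supplied exactly the argument the paper omits. Two small remarks worth noting in passing: your amalgamation step via the relatively independent joining is precisely what the paper later records as Lemma~\ref{lem:amalgam} (there stated for $Z$ standard, which is all you need after the L\"owenheim--Skolem reduction), and your observation that $T_\Gamma$ is universally axiomatized---hence its model class is closed under unions of chains and metric completions thereof---is the only structural input beyond amalgamation that the construction requires.
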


We record the following immediate consequence of the definition of e.c.\ actions:

\begin{lem}\label{ecfree}
Suppose that $\Gamma\acts^a X$ is an e.c.\ action.  Then $X$ is an atomless probability space and $a$ is a free action.
\end{lem}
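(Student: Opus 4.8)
The plan is to manufacture a single {\pmp} extension $\cal M_a \subseteq \cal M_b$ in which both atomlessness and freeness become detectable by $\inf$-formulas, and then pull these properties back down to $\cal M_a$ using the very definition of e.c. Concretely, I would fix a free {\pmp} action $\Gamma \acts^s (Y,\nu)$ on an atomless standard probability space (for instance the Bernoulli shift over $[0,1]$) and set $b := a \times s$, the diagonal action on $(X \times Y, \mu \times \nu)$. The coordinate projection $X \times Y \to X$ is a factor map from $b$ onto $a$, so by the duality between factor maps and substructures of models of $T_\Gamma$ recorded in Subsection~\ref{pmpstructures} we obtain an inclusion $\cal M_a \subseteq \cal M_b$, and since $\cal M_a$ is e.c.\ it is e.c.\ in $\cal M_b$. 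Here $X \times Y$ is atomless, and $b$ is free because the product of any {\pmp} action with a free {\pmp} action is free.

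For freeness I would then argue as follows. Since $b$ is free, $\cal M_b \models T_{\Gamma,free}$. Every axiom of $T_{\Gamma,free}$ that is not already an axiom of $T_\Gamma$ has the shape $\inf_x \varphi(x) = 0$ with $\varphi$ quantifier-free, so being e.c.\ in $\cal M_b$ gives $(\inf_x \varphi)^{\cal M_a} = (\inf_x \varphi)^{\cal M_b} = 0$. As $\cal M_a \models T_\Gamma$ by definition, this shows $\cal M_a \models T_{\Gamma,free}$, i.e.\ that $a$ is free.

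For atomlessness I would use, for each element, the quantifier-free formula $\psi(x,y) := \max\big( d_\mu(x \wedge y, y),\ |\mu(y) - \tfrac{1}{2}\mu(x)| \big)$. Since the measure algebra of $X \times Y$ is atomless, every element can be bisected, so $(\inf_y \psi(c,y))^{\cal M_b} = 0$ for every $c \in \cal M_b$; applying the e.c.\ property with parameter $c$ ranging over $\cal M_a$ yields $(\inf_y \psi(c,y))^{\cal M_a} = 0$ for all $c \in \cal M_a$. To finish, I would observe that if $\cal M_a$ had an atom $A$ of mass $\alpha > 0$, then for any $y \in \cal M_a$ one has $\mu(A \wedge y) \in \{0, \alpha\}$, whence a one-line case split (using $d_\mu(A\wedge y,y) = \mu(y) - \mu(A\wedge y)$) shows $\psi(A,y) \ge \alpha/4$ regardless of which value occurs --- contradicting $\inf_y \psi(A,y)^{\cal M_a} = 0$. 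Hence $\cal M_a$, and therefore $X$, is atomless.

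The whole argument is soft, so there is no real obstacle; the only point requiring a moment's care is that atomlessness is a $\forall\exists$ property and so is neither part of $T_{\Gamma,free}$ nor transferable to $\cal M_a$ as a single sentence --- the fix is simply to invoke the e.c.\ property one parameter at a time, which is exactly what its definition provides. (One could also decouple the two halves: a product $X \times [0,1]$ with the trivial action already handles atomlessness, and a product with any free action handles freeness; bundling them into one $b$ is merely for brevity.)
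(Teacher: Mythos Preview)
Your proof is correct and follows essentially the same approach as the paper: build an extension $\cal M_b \supseteq \cal M_a$ that is atomless and free, then pull back the relevant $\inf$-conditions using the e.c.\ property. The paper uses two separate extensions ($X \times [0,1]$ with trivial action for atomlessness, and any free extension for freeness) and is terser about the atomlessness step, whereas you bundle both into a single product with a free Bernoulli shift and spell out the bisection formula and the atom case-split explicitly; as you yourself note in your final remark, this is only a cosmetic difference.
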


\begin{proof}
Consider the factor map $X\times [0,1]\to X$, where $\Gamma$ acts on $[0,1]$ in the trivial manner.  Since this latter action is on an atomless space and being atomless can be axiomatized using existential axioms, the first statement follows from the definition of e.c.\ actions.  (Alternatively, one can use the characterization in Fact \ref{soft} to find a factor map $X_\u\to X\times [0,1]$ such that the composite map $X_\u \to X\times [0,1]\to X$ is the diagonal map; this immediately implies that $X$ is atomless.)  Since there is also a factor map $Y\to X$ with $Y$ a free action, the same reasoning, together with the fact that being a free action is axiomatizable by existential conditions (see Subsection \ref{pmpstructures}), implies that $a$ is a free action.
\end{proof}

We end this subsection with a useful, ergodic-theoretic reformulation for a factor map to be e.c.\ that will be used throughout the paper.  First, we introduce the following notation.  Throughout this paper, we view natural numbers as ordinals, that is, $p=\{0,1,\ldots,p-1\}$ for every natural number $p$.

\begin{notation}
If $\Gamma$ acts on a set $X$, $\alpha : X \rightarrow p$ is a function, and $F\subseteq \Gamma$ is a subset, then we define the function $\alpha_F : X \rightarrow p^F$ by $\alpha_F(x)(f) = \alpha(f^{-1} \cdot x)$. 
\end{notation}

\begin{prop}\label{criterion}
The factor map $\phi:Y\to X$ is e.c.\ if and only if:  for all $p,q\in \bb N$, measurable maps $\alpha:Y\to p$, $\beta:X\to q$, finite $S\subseteq \Gamma$, and $\epsilon>0$, there is a measurable map $\tilde{\alpha}:X\to p$ such that
$$\left|\nu(\alpha_S^{-1}(\pi)\cap \phi^{-1}(\beta^{-1}(j))-\mu(\tilde{\alpha}_S^{-1}(\pi)\cap \beta^{-1}(j))\right|<\epsilon$$ for all $\pi\in p^S$ and $j\in q$. Furthermore, if $\cal A_n$ is an increasing sequence of algebras whose union is dense in $\cal B_X$, then it suffices to verify for each $n$ that the above condition holds for every $\cal A_n$-measurable map $\beta : X \rightarrow q$.
\end{prop}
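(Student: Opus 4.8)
The plan is to verify the two directions of the equivalence separately, both resting on a single bookkeeping reduction, and then to deduce the ``furthermore'' clause by a routine approximation. The reduction is as follows. Given a quantifier-free $L_\Gamma$-formula $\varphi(c,y)$ with parameters $c$ from $\mathcal{M}_a$ and a tuple $y=(y_1,\dots,y_m)$ of variables, push every symbol $u_\gamma$ inward (using that each $u_\gamma$ is a Boolean automorphism and $u_\gamma u_\delta=u_{\gamma\delta}$) so that it acts directly on a variable or a parameter, and then absorb each resulting subterm $u_\gamma(c_\ell)$ into a fresh parameter $u_\gamma^{\mathcal{M}_a}(c_\ell)\in\mathcal{M}_a$. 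This yields an equivalent formula $\varphi'(c',y)$ whose parameter tuple $c'=(c'_1,\dots,c'_{k'})$ still lies in $\mathcal{M}_a^{k'}$, and in which only finitely many $u_\gamma$, with $\gamma$ ranging over a finite set $S\ni e$, occur, each applied to some $y_i$. Now encode an $m$-tuple $y$ on a space $Z$ by the ``atom map'' $\alpha:Z\to p:=2^{[m]}$ via $y_i=\bigcup\{\alpha^{-1}(\pi):\pi_i=1\}$, and encode $c'$ by the atom map $\beta:X\to q:=2^{[k']}$ of the parameter tuple. Since for each $\gamma$ the sets $u_\gamma(\alpha^{-1}(\pi))$, $\pi\in p$, partition $Z$, the finite subalgebra generated by all subterms of $\varphi'$ at such a $y$ has atoms among the sets $\alpha_S^{-1}(\pi)\cap\beta^{-1}(j)$, $\pi\in p^S$, $j\in q$ (with the $\beta$-sets read through the embedding $\mathcal{M}_a\hookrightarrow\mathcal{M}_b$ when one works in $\mathcal{M}_b$). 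Hence $\varphi'^{\mathcal{M}}$ at such a $y$ equals $G_\varphi\big((\mu_{\mathcal{M}}(\alpha_S^{-1}(\pi)\cap\beta^{-1}(j)))_{\pi,j}\big)$ for a fixed continuous $G_\varphi$ on $[0,1]^{p^S\times q}$ depending only on $\varphi$, not on $\mathcal{M}$ nor on $\alpha$.

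For the converse direction, assume the displayed condition holds and fix a quantifier-free $\varphi(c,y)$ with $c\in\mathcal{M}_a$. The inequality $(\inf_y\varphi(c,y))^{\mathcal{M}_a}\ge(\inf_y\varphi(c,y))^{\mathcal{M}_b}$ is automatic, since quantifier-free formulas are absolute between a structure and a substructure and $\mathcal{M}_a^m\subseteq\mathcal{M}_b^m$. For the reverse inequality, fix $\e>0$, pass to the normal form $\varphi'(c',y)$, choose $b^*\in\mathcal{M}_b^m$ with $\varphi'^{\mathcal{M}_b}(c',b^*)$ within $\e/2$ of the infimum, and let $\alpha:Y\to p$ be the atom map of $b^*$. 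Choose $\delta>0$ from the uniform continuity of $G_\varphi$ and apply the condition to $\alpha$, to the atom map $\beta$ of $c'$, to $S$, and to $\delta$, obtaining $\tilde\alpha:X\to p$. By the reduction, the $m$-tuple $y^\circ\in\mathcal{M}_a^m$ encoded by $\tilde\alpha$ satisfies $\varphi'^{\mathcal{M}_a}(c',y^\circ)=G_\varphi((\mu(\cdots)))<G_\varphi((\nu(\cdots)))+\e/2=\varphi'^{\mathcal{M}_b}(c',b^*)+\e/2$, which is within $\e$ of $(\inf_y\varphi'(c',y))^{\mathcal{M}_b}$. Letting $\e\to0$ gives $(\inf_y\varphi(c,y))^{\mathcal{M}_a}=(\inf_y\varphi(c,y))^{\mathcal{M}_b}$ for all such $\varphi$, i.e.\ $\phi$ is e.c.

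For the forward direction, assume $\phi$ is e.c.\ and fix $p,q$, $\alpha:Y\to p$, $\beta:X\to q$, a finite $S\ni e$, and $\e>0$; put $c_{\pi,j}:=\nu(\alpha_S^{-1}(\pi)\cap\phi^{-1}\beta^{-1}(j))$. Apply the definition of e.c.\ to the quantifier-free formula in the $p$-tuple $y=(y_i)_i$ with parameters $\beta^{-1}(j)\in\mathcal{M}_a$,
\[\varphi(y)=\max\Big(\max_{\pi\in p^S,\,j\in q}\big|\mu\big(\textstyle\bigcap_{f\in S}u_f(y_{\pi(f)})\cap\beta^{-1}(j)\big)-c_{\pi,j}\big|,\ \ \max_{i\neq i'}\mu(y_i\cap y_{i'}),\ \ \big|1-\mu(\textstyle\bigcup_i y_i)\big|\Big).\]
Evaluating at $y_i=\alpha^{-1}(i)$ in $\mathcal{M}_b$ shows $(\inf_y\varphi)^{\mathcal{M}_b}=0$, hence $(\inf_y\varphi)^{\mathcal{M}_a}=0$, so for any small $\eta>0$ there is $(y_i)_i\in\mathcal{M}_a^p$ with $\varphi^{\mathcal{M}_a}((y_i))<\eta$. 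The last two entries force $(y_i)$ to be within $O_p(\eta)$ of an honest partition $(\tilde A_i)$ of $X$ (disjointify and complete), and since $A\mapsto\mu(A\cap B)$ is $1$-Lipschitz and the $u_f$ preserve measure, the map $\tilde\alpha:X\to p$ with $\tilde\alpha^{-1}(i)=\tilde A_i$ satisfies $|\mu(\tilde\alpha_S^{-1}(\pi)\cap\beta^{-1}(j))-c_{\pi,j}|<\e$ for all $\pi,j$ once $\eta$ is small enough. (Alternatively, this direction follows from Fact~\ref{soft}: an e.c.\ $\phi$ yields $\rho:X_\u\to Y$ over $X$; representing the pull-back partition $\alpha\circ\rho$ by partitions $\tilde\alpha_k$ of $X$ gives $c_{\pi,j}=\lim_\u\mu((\tilde\alpha_k)_S^{-1}(\pi)\cap\beta^{-1}(j))$, so any $\u$-large $k$ provides a suitable $\tilde\alpha$.)

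Finally, for the ``furthermore'' clause, suppose the condition has been checked for $\mathcal{A}_n$-measurable $\beta$ for all $n$, and let $\beta:X\to q$ be an arbitrary measurable map and $\e>0$. By density of $\bigcup_n\mathcal{A}_n$ in the measure algebra of $X$, there are an $n$ and an $\mathcal{A}_n$-measurable partition $(B'_j)_{j\in q}$ with $\sum_j\mu(\beta^{-1}(j)\triangle B'_j)$ as small as desired; apply the restricted hypothesis to $\alpha$, the corresponding $\mathcal{A}_n$-measurable $\beta'$, $S$, and a small $\e'$, and observe that replacing $\beta'$ by $\beta$ perturbs each of the two measures in the criterion by at most $\mu(\beta^{-1}(j)\triangle B'_j)$; choosing $\e'$ and the approximation error appropriately yields the full condition. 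The only genuine work is the bookkeeping behind the normal-form reduction: keeping track of which atoms $\alpha_S^{-1}(\pi)\cap\beta^{-1}(j)$ occur in each subterm, where the $u_\gamma$'s land, and how the parameters $\beta^{-1}(j)$ are interpreted in $\mathcal{M}_a$ as opposed to through the embedding into $\mathcal{M}_b$.
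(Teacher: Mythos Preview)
Your argument is correct and is precisely the translation the paper alludes to: the paper's proof merely says ``this is simply translating definitions from one framework to another\ldots\ one can instead discuss the finitely many atoms of the Boolean algebra that those sets generate\ldots\ we leave the details to the reader,'' and you have supplied exactly those details via the normal-form/atom-map reduction. Your treatment of both directions and of the approximation for the ``furthermore'' clause matches the intended route.
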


\begin{proof}
This is simply translating definitions from one framework to another. Specifically, in place of discussing tuples of sets as done in the formal definition above, one can instead discuss the finitely many atoms of the Boolean algebra that those sets generate. For the final statement, each measurable map $\beta$ can be approximated by maps $\beta_n$ where $\beta_n$ is $\cal A_n$-measurable. We leave the details to the reader.
\end{proof}

\subsection{E.c. actions are maximal with respect to weak containment}

Recall from Subsection \ref{ultrasub} above that the action $\Gamma\acts^a X$ is weakly contained in the action $\Gamma\acts^b Y$ if $\cal M_a$ embeds into an ultrapower $\cal M_b^\u$ of $\cal M_b^\u$.  Recalling the statement of Fact \ref{soft}, we immediately see:

\begin{lem}\label{ecweak}
If $\cal M_a$ is e.c.\ in $\cal M_b$, then $b$ is weakly contained in $a$.
\end{lem}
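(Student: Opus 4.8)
The plan is to derive this immediately from Fact \ref{soft} together with the definition of weak containment; essentially no real work is required. First I would apply Fact \ref{soft}: since $\cal M_a$ is e.c.\ in $\cal M_b$, there is an ultrafilter $\u$ and an embedding $\iota : \cal M_b \to \cal M_a^\u$ whose restriction to $\cal M_a$ is the diagonal embedding. In particular, $\cal M_b$ embeds into the ultrapower $\cal M_a^\u$.

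Next I would translate this into the finite-approximation definition of $b \preceq a$. Given measurable sets $B_1,\dots,B_n\subseteq Y$ with classes $b_1,\dots,b_n\in\cal M_b$, a finite $F\subseteq\Gamma$, and $\epsilon>0$, I would represent each $\iota(b_i)\in\cal M_a^\u$ in the form $[A_i^{(k)}]_\u$ for measurable sets $A_i^{(k)}\subseteq X$, using the description of ultrapower actions recalled in Subsection \ref{ultrasub}. Since $\iota$ preserves the measure predicate and commutes with the symbols $u_\gamma$, the value $\nu(\gamma^b B_i\cap B_j)$ equals the ultralimit over $k$ of $\mu(\gamma^a A_i^{(k)}\cap A_j^{(k)})$ for each $i,j$ and each $\gamma\in F$; hence for $\u$-almost every $k$ all of these quantities agree to within $\epsilon$ simultaneously, and any such choice of $k$ yields sets $A_i:=A_i^{(k)}$ witnessing $b\preceq a$. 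Alternatively---and even more briefly---one observes that ``$\cal M_b$ embeds into an ultrapower of $\cal M_a$'' is precisely item (4) of the Fact characterizing weak containment, so $b\preceq a$ by item (1) of that Fact; the only point to double-check here is that the standardness and non-atomicity hypotheses appearing there are harmless in our setting (one may pass to separable elementary substructures, or simply appeal to the hypothesis-free unwinding just sketched).

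I do not expect any genuine obstacle: all of the content sits in Fact \ref{soft}, and what remains is the routine dictionary between a model-theoretic embedding into an ultrapower and the ergodic-theoretic definition of weak containment. The only mild point requiring care is the bookkeeping in the second paragraph, namely making sure that the standardness assumption in the cited equivalence does not get in the way.
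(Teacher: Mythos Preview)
Your proposal is correct and follows exactly the paper's approach: the paper simply notes that Fact~\ref{soft} gives an embedding of $\cal M_b$ into an ultrapower $\cal M_a^\u$, which is precisely the ultrapower characterization of weak containment recalled in Subsection~\ref{ultrasub}. Your additional unwinding of the definition (to handle the standardness hypothesis in the cited Fact) is a reasonable precaution, though the paper does not bother with it.
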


In the study of weak containment of {\pmp} actions of countable groups, special attention has been paid to actions $a$ that are \textbf{maximal for weak containment}, that is, every other {\pmp} action is weakly contained in $a$; for example, see the wonderful survey article \cite[Section 5]{kechrisburton}.  Following nomenclature recently used in the model theory of operator algebras (see \cite{mtoa3}), we might also sometimes refer to such actions as \textbf{locally universal}.  As mentioned in \cite{kechrisburton}, locally universal actions always exist.  (This is a special case of a much more general fact, as argued in \cite{mtoa3}.)  The proof of Lemma \ref{ecfree} shows also that locally universal actions are free.  We note the following well-known result, relativized to our current setting:

\begin{lem} \label{lem:locuniv}
If $\cal M_a$ is an e.c.\ model of $T_\Gamma$, then $a$ is a locally universal action.
\end{lem}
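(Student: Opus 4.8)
The plan is to reduce the statement to Lemma~\ref{ecweak} combined with the fact that \cstar-style ``locally universal'' is the same thing as ``maximal for weak containment'' via ultrapowers. Concretely, suppose $\cal M_a$ is an e.c.\ model of $T_\Gamma$ and let $\Gamma \acts^b Y$ be an arbitrary {\pmp} action of $\Gamma$; we must show $b \preceq a$. The obstacle is that e.c.\ only directly controls extensions of $a$ itself, not arbitrary actions $b$ sitting nowhere near $a$, so the first step is to manufacture a common extension.

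\textbf{Key steps, in order.} First I would form the product action $\Gamma \acts^{a \times b} (X \times Y, \mu \times \nu)$. The projection onto the first coordinate is a factor map $X \times Y \to X$, so $\cal M_a \subseteq \cal M_{a\times b}$ is an inclusion of models of $T_\Gamma$. Second, since $\cal M_a$ is e.c.\ (in the absolute sense), this inclusion is an e.c.\ inclusion; hence by Lemma~\ref{ecweak} (with the roles: $\cal M_a$ e.c.\ in $\cal M_{a \times b}$ gives $a \times b$ weakly contained in $a$), we get $a \times b \preceq a$. Third, weak containment is monotone under taking factors: $b$ is a factor of $a \times b$ (via the second-coordinate projection), and a factor of an action is weakly contained in it, so $b \preceq a \times b$. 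Combining, $b \preceq a \times b \preceq a$, i.e.\ $b \preceq a$. Since $b$ was arbitrary, $a$ is maximal for weak containment, i.e.\ locally universal.

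\textbf{Remarks on the two facts used.} The transitivity of $\preceq$ and the fact that factors are weakly contained are standard and follow immediately from the ultrapower characterization in the Fact of Subsection~\ref{ultrasub} (clause (3)/(4)): if $\cal M_b \hookrightarrow \cal M_c^{\u}$ and $\cal M_c \hookrightarrow \cal M_d^{\v}$, then composing with the induced map $\cal M_c^{\u} \hookrightarrow (\cal M_d^{\v})^{\u} \cong \cal M_d^{\v \otimes \u}$ yields $b \preceq d$; and if $b$ is a factor of $c$ then $\cal M_b \subseteq \cal M_c \hookrightarrow \cal M_c^{\u}$ witnesses $b \preceq c$. Alternatively one avoids even naming these: Lemma~\ref{ecweak} applied to $\cal M_a \subseteq \cal M_{a \times b}$ gives an embedding $\cal M_{a \times b} \hookrightarrow \cal M_a^{\u}$, and precomposing with $\cal M_b \subseteq \cal M_{a\times b}$ gives $\cal M_b \hookrightarrow \cal M_a^{\u}$ directly, which is exactly $b \preceq a$ by clause (4) of that Fact.

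\textbf{Main obstacle.} The only genuine content is the observation that one should product $a$ with the target action $b$ to bring $b$ into an extension of $a$; once that move is made, everything is a one-line appeal to Lemma~\ref{ecweak} and the elementary ultrapower manipulations above. I expect the write-up to be three or four sentences.
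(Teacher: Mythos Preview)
Your proposal is correct and matches the paper's proof essentially line for line: form the product $a\times b$, note $\cal M_a\subseteq \cal M_{a\times b}$, apply Lemma~\ref{ecweak} to get $a\times b\preceq a$, and conclude $b\preceq a$ since $b$ is a factor of $a\times b$. The paper's write-up is indeed the three or four sentences you predicted.
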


\begin{proof}
Let $\Gamma\acts^b Y$ be any action of $\Gamma$.  Then since $\cal M_a\subseteq \cal M_{a\times b}$, by Lemma \ref{ecweak}, we see that $a\times b$ is weakly contained in $a$, whence $b$ is also weakly contained in $a$.  It follows that $a$ is a locally universal action.
\end{proof}

For certain groups, ``concrete'' examples of locally universal actions are known.  In particular, for so-called \textbf{EMD} groups (see Subsection \ref{profinitesub} for the definition), the profinite completion action is locally universal.  In Subsection \ref{profinitesub} below, we will generalize this fact by showing that the profinite completion action is an e.c.\ action when the group has property EMD.

\begin{lem}
For each group $\Gamma$, there is a theory $T_{\Gamma,max}$ whose models are precisely the locally universal actions of $\Gamma$.
\end{lem}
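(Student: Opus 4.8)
The plan is to axiomatize directly the class of maximal-for-weak-containment actions by recording, as extra axioms, the best possible values of existential sentences. First I would fix notation: for an existential $L_\Gamma$-sentence $\sigma$ (one of the form $\inf_{\bar x}\varphi(\bar x)$ with $\varphi$ quantifier-free) let $\sigma^{T_\Gamma}:=\inf\{\sigma^{\cal N}:\cal N\models T_\Gamma\}$, and set
\[
T_{\Gamma,max}\ :=\ T_\Gamma\ \cup\ \{\,\sigma\leq\sigma^{T_\Gamma}\ :\ \sigma\text{ an existential }L_\Gamma\text{-sentence}\,\}.
\]
This is an $L_\Gamma$-theory extending $T_\Gamma$, so every model of it is $\cal M_a$ for some {\pmp} action $a$ (by the duality of Subsection \ref{pmpstructures}); and since $\sigma^{\cal M}\geq\sigma^{T_\Gamma}$ always holds, a model $\cal M\models T_\Gamma$ satisfies $T_{\Gamma,max}$ precisely when $\sigma^{\cal M}=\sigma^{T_\Gamma}$ for every existential $\sigma$.

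Next I would show that a model $\cal M_a$ of $T_{\Gamma,max}$ is locally universal. Given any action $\Gamma\acts^b(Y,\nu)$ and any ``local data'' (sets $A_1,\dots,A_n\subseteq Y$, a finite $F\subseteq\Gamma$, $\epsilon>0$), I would form the quantifier-free formula $\varphi(\bar x)=\max_{i,j,\gamma\in F}|\mu(u_\gamma(x_i)\cap x_j)-\nu(\gamma^bA_i\cap A_j)|$ and the existential sentence $\sigma=\inf_{\bar x}\varphi$. Plugging the classes of the $A_i$ into $\varphi$ inside $\cal M_b$ gives value $0$, so $\sigma^{\cal M_b}=0$, hence $\sigma^{T_\Gamma}=0$, hence $\sigma^{\cal M_a}=0$ by the defining axiom; thus $\cal M_a$ contains sets $B_1,\dots,B_n$ matching all the joint statistics $\nu(\gamma^bA_i\cap A_j)$ to within $\epsilon$. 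As the data were arbitrary this gives $b\preceq a$, and as $b$ was arbitrary, $a$ is locally universal.

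For the converse I would take $a$ locally universal and show $\sigma^{\cal M_a}\leq\sigma^{T_\Gamma}$ for every existential $\sigma=\inf_{\bar x}\varphi$, which together with the automatic reverse inequality gives $\cal M_a\models T_{\Gamma,max}$. Picking $\cal M_b\models T_\Gamma$ and $\bar c\in\cal M_b$ with $\varphi^{\cal M_b}(\bar c)$ within $\epsilon$ of $\sigma^{T_\Gamma}$, I would use that $\varphi^{\cal M_b}(\bar c)$ is a model-independent, uniformly continuous function of the finitely many joint statistics of the elements $u_\gamma^{\cal M_b}(c_i)$ occurring in $\varphi$; since $b\preceq a$, the action $a$ contains a tuple $\bar d$ realizing those same statistics to arbitrary precision, so $\varphi^{\cal M_a}(\bar d)$ --- and hence $\sigma^{\cal M_a}$ --- is within $2\epsilon$ of $\sigma^{T_\Gamma}$; now let $\epsilon\to0$. (Alternatively, one may invoke the Fact in Subsection \ref{ultrasub} to get an embedding $\cal M_b\hookrightarrow\cal M_a^\u$, whence $\sigma^{\cal M_a}=\sigma^{\cal M_a^\u}\leq\sigma^{\cal M_b}$, the inequality because the infimum over $\cal M_a^\u$ ranges over a superset of $\cal M_b$.) Finally, $T_{\Gamma,max}$ does have a model, since a locally universal action exists (see the discussion preceding Lemma \ref{lem:locuniv}).

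The one step needing honest bookkeeping rather than being purely formal is the passage, in the third paragraph, between the quantifier-free formula $\varphi$ and the finite family of joint statistics on which its value depends continuously; but this is exactly the translation between the model-theoretic and ergodic-theoretic pictures used throughout Subsection \ref{pmpstructures} and in Proposition \ref{criterion}, so I do not expect it to be a genuine obstacle. The real content of the argument is the elementary observation that being maximal for weak containment is detected by the $T_\Gamma$-optimal values $\sigma^{T_\Gamma}$ of existential sentences.
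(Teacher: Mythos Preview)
Your proposal is correct and follows essentially the same approach as the paper: both define $T_{\Gamma,max}$ as (the conditions encoding) the optimal existential theory, and both identify its models with the locally universal actions via the equivalence between weak containment and embedding into an ultrapower. The only cosmetic difference is that the paper fixes one locally universal action $a$ up front and takes $T_{\Gamma,max}$ to be the existential theory of $\cal M_a$ (which coincides with your $\{\sigma\leq\sigma^{T_\Gamma}\}$ since $\sigma^{\cal M_a}=\sigma^{T_\Gamma}$), and for the converse argues in one line that $\cal M_b\models T_{\Gamma,max}$ forces $\cal M_a\models\mathrm{Th}_\forall(\cal M_b)$, hence $\cal M_a\hookrightarrow\cal M_b^\u$; this is exactly your ``alternative'' route in the third paragraph, and is cleaner than the direct joint-statistics bookkeeping you flag.
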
  

\begin{proof}
Fix a locally universal action $\Gamma \curvearrowright^a (X,\mu)$ of $\Gamma$.  Let $T_\Gamma$ denote the set of existential sentences true in $\cal M_a$.  Since any two locally universal actions satisfy the same existential sentences, we have that all locally universal actions model $T_\Gamma$.  Conversely, if $\cal M_b$ is a model of $T_{\Gamma,max}$, then $\cal M_a$ is a model of the universal theory of $\cal M_b$, whence $\cal M_a$ embeds in an ultrapower of $\cal M_b$; since $\cal M_a$ is locally universal, so is $\cal M_b$.
\end{proof}

\subsection{Rokhlin entropy of e.c. actions}

In this subsection, we prove an interesting result about the Rohklin entropy of e.c.\ actions of non-amenable groups on standard spaces.  This result will not be used in the remainder of the paper.

Suppose that $\Gamma \acts (X, \mu)$ is an aperiodic {\pmp} action on a standard Borel probability space. For a countable Borel partition $\alpha$, we denote by $\sH(\alpha)$ the \textbf{Shannon entropy of $\alpha$}:
$$\sH(\alpha) = \sum_{A \in \alpha} - \mu(A) \log \mu(A),$$
 where we use the convention that $0 \log 0 = 0$. Similarly, when $\beta$ is a countable Borel partition of $X$ satisfying $\sH(\beta) < \infty$ and $\cF$ is a $\sigma$-algebra of Borel sets, the \textbf{relative Shannon entropies} are defined as
$$\sH(\alpha \given \beta) = \sH(\alpha \vee \beta) - 
\sH(\beta)$$
$$\sH(\alpha \given \cF) = \inf_{\beta \subseteq \cF} \sH(\alpha \given \beta),$$
where the infimum in the second line is over all finite partitions $\beta \subseteq \cF$. 

Write $\salg_\Gamma(\alpha)$ for the smallest $\Gamma$-invariant $\sigma$-algebra containing $\alpha$.  For any collection $\mathcal{C}$ of Borel susets of $X$ and any $\Gamma$-invariant $\sigma$-algebra $\cF$ of Borel sets, the \textbf{Rokhlin entropy $\rh_\Gamma(\mathcal{C} \given \cF)$ of $\mathcal{C}$ relative to $\cF$} is defined to be the infimum of $\sH(\alpha \given \cF)$, as $\alpha$ ranges over all countable Borel partitions of $X$ satisfying $\salg_\Gamma(\alpha) \vee \cF \supseteq \mathcal{C}$. When $\cF = \{\emptyset, X\}$ is trivial, we write $\rh_\Gamma(\mathcal{C})$ in place of $\rh_\Gamma(\mathcal{C} \given \cF)$. The 
\textbf{Rokhlin entropy of $\Gamma \acts (X, \mu)$}, denoted $\rh_\Gamma(X, \mu)$, is defined to be $\rh_\Gamma(\Borel(X))$, where $\Borel(X)$ is the Borel $\sigma$-algebra of $X$.

Rokhlin entropy was introduced by the second author in 2019 \cite{Sew19} and is one of two extensions of the classical Kolmogorov--Sinai entropy theory for actions of countable amenable groups. Specifically, in the case that $\Gamma$ is amenable and the action is free, Rokhlin entropy coincides with Kolmogorov--Sinai entropy \cite[Cor. 1.9]{AlpSew}. The other extension of Kolmogorov--Sinai entropy is sofic entropy, which was introduced by Lewis Bowen in 2010 \cite{Bowen10}. Although sofic entropy is more practical to compute and has been studied in greater depth, we work with Rokhlin entropy here because it has the advantage of being defined for actions of all countable groups (sofic entropy is defined only for actions of sofic groups). Additionally, Rokhlin entropy is an upper bound to sofic entropy whenever the latter is defined \cite[Prop. 1.10]{AlpSew} (see also \cite[Prop. 5.3]{Bowen10}), which means that the proposition below automatically provides the optimal result for sofic entropy as well.

\begin{prop}
Let $\Gamma \acts (X, \mu)$ be a {\pmp} action on a standard Borel probability space. If the action is e.c.\ and $\Gamma$ is non-amenable, then $\rh_\Gamma(X, \mu) = 0$.
\end{prop}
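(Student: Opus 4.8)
The plan is to use the model-theoretic characterization of e.c.\ actions (Fact~\ref{soft}) together with the flexibility of Rokhlin entropy under factor maps. The key observation is that if $\Gamma \acts^a (X,\mu)$ is e.c., then for \emph{any} action $\Gamma \acts^b (Y,\nu)$ we have $\cal M_a \subseteq \cal M_{a \times b}$, and $\cal M_a$ is e.c.\ in $\cal M_{a\times b}$, so by Fact~\ref{soft} there is a factor map $X_\u \to X \times Y$ whose composition with the projection to $X$ is the diagonal factor map $X_\u \to X$. In particular, $X \times Y$ is a factor of an ultrapower of $X$. I would like to exploit this together with the fact (for $\Gamma$ non-amenable) that there is an action $b$ of $\Gamma$ with $\rh_\Gamma$-value $0$ but which is ``large'' in a suitable sense — for instance, a Bernoulli shift has positive sofic entropy, so that won't do directly; instead I want an action whose Rokhlin entropy subadditivity forces the entropy of $X$ itself to collapse.

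The cleanest route uses the following known facts about Rokhlin entropy. First, Rokhlin entropy is monotone under factor maps in the sense that if $\Gamma \acts (Y,\nu)$ factors onto $\Gamma \acts (X,\mu)$ then $\rh_\Gamma(X,\mu) \le \rh_\Gamma(Y,\nu)$; second, it behaves well under ultrapowers/weak containment in that if $a \preceq b$ then $\rh_\Gamma$ of $a$ is controlled by that of $b$ (more precisely, Rokhlin entropy is an upper semicontinuous or weak-containment-monotone invariant in the relevant direction — this is where I need to be careful, see below). The crucial input is Seward's theorem (or Alpeev--Seward) that every free {\pmp} action of a \emph{non-amenable} group admits a factor with Rokhlin entropy zero, or better, the result that non-amenable groups admit free actions of zero Rokhlin entropy generating the whole $\sigma$-algebra — indeed for non-amenable $\Gamma$ one has $\rh_\Gamma(X,\mu) = 0$ for the Bernoulli shift over an atomless base in the sofic case is \emph{false}, so the actual statement I should invoke is: \textbf{every free {\pmp} action of a non-amenable group is a factor of a free action with Rokhlin entropy zero} (this follows from Seward's work on Rokhlin entropy and the equivalence relation / Gaboriau-type arguments). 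Granting this, here is the argument: let $\Gamma \acts^a (X,\mu)$ be e.c.\ and non-amenable; by Lemma~\ref{ecfree} it is free. Choose a free action $\Gamma \acts^c (Z,\rho)$ with $\rh_\Gamma(Z,\rho) = 0$ that factors onto $a$ — wait, that's the wrong direction. Instead: since $a$ is locally universal (Lemma~\ref{lem:locuniv}), it weakly contains \emph{every} action, in particular a zero-entropy free action $c$; but weak containment does not give a factor map, so I need the e.c.\ property, not just local universality.

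Here is the corrected plan. Let $\Gamma \acts^c (Z,\rho)$ be a free action of $\Gamma$ with $\rh_\Gamma(Z,\rho) = 0$ (exists since $\Gamma$ is non-amenable — a free group contains a zero-entropy free action, e.g.\ an isometric/profinite action, and one pushes forward/inducts; for general non-amenable $\Gamma$ one uses that $\Gamma$ admits a free action that is a factor of an essentially free treeable-type action with full cost behavior giving Rokhlin entropy $0$ — cite Seward). Now form $a \times c$; then $\cal M_a$ is e.c.\ in $\cal M_{a\times c}$, so by Fact~\ref{soft} there is a factor map $\psi: X_\u \to X \times Z$ with $\pi_X \circ \psi$ equal to the diagonal factor map $X_\u \to X$. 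Composing with the projection to $Z$, we get that $X_\u$ factors onto $Z$; by monotonicity of Rokhlin entropy under factor maps we get $\rh_\Gamma(Z,\rho) \le \rh_\Gamma(X_\u)$, but that's still the wrong direction for concluding $\rh_\Gamma(X,\mu) = 0$. The real leverage must be: I want to show $X$ itself factors onto, or is generated by, the zero-entropy action. The right move is to take $c$ to be a zero-entropy free action and use e.c.-ness to conclude that $a$ is a \emph{factor} of (an ultrapower of) something of zero entropy. Precisely: since $a$ is e.c., and since there exists an e.c.\ action $b$ with $\cal M_a \subseteq \cal M_b$ — but $a$ is already e.c. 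Let me instead use: there is a free action $d$ with $\rh_\Gamma(d) = 0$ such that $a \preceq d$ fails in general, but $d \preceq a$ holds (local universality). By the Fact in Subsection~\ref{ultrasub}, $d \preceq a$ means $\cal M_d$ embeds in $\cal M_a^\u$; equivalently $d$ is a factor of $a_\u$. That gives $\rh_\Gamma(d) \le \rh_\Gamma(a_\u)$ — useless.

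I think the actual argument goes the other way and uses upper semicontinuity of Rokhlin entropy (or sofic entropy) under weak containment combined with e.c.-ness forcing $a$ to be weakly equivalent to a zero-entropy action: concretely, \textbf{the main step} is to produce, for each $\ve > 0$ and each finite partition $\alpha$ of $X$, a reorganization showing $\sH(\alpha \mid \salg_\Gamma(\beta))$ is small for some partition $\beta$ with $\sH(\beta)$ small — using that $a$ sits inside the ultrapower of a zero-Rokhlin-entropy action via the e.c.\ diagram. So: pick a free action $\Gamma \acts^c(Z,\rho)$ with $\rh_\Gamma(Z,\rho)=0$; since $a$ is e.c.\ and (by local universality) $a \times c \preceq a$ with witnessing embedding realizing $\cal M_a$ diagonally, Fact~\ref{soft} applied to $\cal M_a \subseteq \cal M_{a\times c}$ gives a factor map $X_\u \to X \times Z$ over $X$; \emph{dually}, this says the extension $X \times Z \to X$ is e.c., hence (since $X\times Z$ already contains a copy of $X$ with a zero-entropy complement) iterating this and taking the zero-entropy action to be ``everything'' via a generating partition argument, one gets that the generating $\sigma$-algebra of $X$ is, up to $\ve$, subordinate to a $\Gamma$-invariant $\sigma$-algebra generated by a partition of arbitrarily small Shannon entropy, whence $\rh_\Gamma(X,\mu) = 0$. \textbf{The hard part} will be making this last generating-partition/subordination argument rigorous: one must carefully track, through Proposition~\ref{criterion}, how an arbitrary finite partition $\alpha$ of $X$ is approximated inside the factor-of-zero-entropy structure, and invoke the precise Rokhlin-entropy addition formula $\rh_\Gamma(\cal C \mid \cF) \le \rh_\Gamma(\cal C \mid \cal D) + \rh_\Gamma(\cal D \mid \cF)$ (Seward's subadditivity) together with the fact that a zero-entropy action contributes nothing, to drive $\sH(\alpha \mid \cdot)$ to zero. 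I expect the combinatorial heart — choosing the right zero-Rokhlin-entropy free action of the non-amenable group $\Gamma$ and verifying it has a full generating structure compatible with the e.c.\ diagram — to be the main obstacle, and it is where non-amenability is genuinely used (for amenable $\Gamma$ e.c.\ actions are just the free actions and can have positive entropy).
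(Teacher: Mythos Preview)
Your proposal has a genuine gap, and ironically you identify the correct key input and then discard it. You write: ``Choose a free action $\Gamma \acts^c (Z,\rho)$ with $\rh_\Gamma(Z,\rho) = 0$ that factors onto $a$ --- wait, that's the wrong direction.'' This is exactly the \emph{right} direction, and it is precisely Bowen's theorem \cite{bowen} that the paper invokes: for non-amenable $\Gamma$, every free {\pmp} action $\Gamma \acts (X,\mu)$ admits a free extension $\Gamma \acts (Y,\nu)$ with $\rh_\Gamma(Y,\nu) = 0$ and a factor map $\phi : Y \to X$. The point is not to bound $\rh_\Gamma(X)$ by $\rh_\Gamma(Y)$ via monotonicity (which goes the wrong way, as you note), but to use e.c.-ness of $X$ \emph{in} $Y$ to pull existential witnesses down from $Y$ to $X$.

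Concretely: fix a finite partition $\alpha$ of $X$ and $\epsilon > 0$. Since $\rh_\Gamma(Y,\nu) = 0$, there is a partition $\beta$ of $Y$ with $\sH(\beta) < \epsilon$ and $\salg_\Gamma(\beta) = \Borel(Y)$; hence $\phi^{-1}(\alpha) \subseteq \salg_\Gamma(\beta)$, so for some finite $F \subseteq \Gamma$ one has $\sH(\phi^{-1}(\alpha) \mid \bigvee_{g \in F} g\beta) < \epsilon$. Both inequalities $\sH(\beta) < \epsilon$ and $\sH(\phi^{-1}(\alpha) \mid \bigvee_F g\beta) < \epsilon$ are expressible by quantifier-free conditions on finitely many measures of intersections, with $\alpha$ as a parameter from $\cal M_a$. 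Since $\cal M_a$ is e.c.\ in $\cal M_b$ (here $b$ is the action on $Y$), there exists $\beta'$ in $X$ with $\sH(\beta') < \epsilon$ and $\sH(\alpha \mid \bigvee_F g\beta') < \epsilon$. Subadditivity then gives $\rh_\Gamma(\alpha) \le \sH(\beta') + \sH(\alpha \mid \bigvee_F g\beta') < 2\epsilon$, and countable subadditivity over an increasing generating sequence of finite partitions finishes. Your product-and-ultrapower maneuvers with $a \times c$ never produce this kind of witness, because the zero-entropy action in the second coordinate of $X \times Z$ carries no information about the $\sigma$-algebra of $X$; you need the zero-entropy action to sit \emph{above} $X$, not beside it.
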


\begin{proof}
Let $(\alpha_n)_{n \in \bN}$ be an increasing sequence of finite Borel partitions whose union generates the entire Borel $\sigma$-algebra on $X$. Since Rokhlin entropy is countably subadditive \cite[Corollary 1.5]{AlpSew}, we have
$$\rh_\Gamma(X, \mu) \leq \sum_{n \in \bN} \rh_\Gamma(\alpha_n).$$
Thus it suffices to show that $\rh_\Gamma(\alpha) = 0$ for every finite Borel partition $\alpha$ of $X$.

Fix a finite Borel partition $\alpha$ of $X$. Since $\Gamma$ is non-amenable, by a result of Bowen \cite{bowen} there exists a free {\pmp} action $\Gamma \acts (Y, \nu)$ with $\rh_\Gamma(Y, \nu) = 0$ for which there is a factor map $\phi : Y \rightarrow X$. Fix $\epsilon > 0$ and pick a countable Borel partition $\beta$ of $Y$ satisfying $\sH(\beta) < \epsilon$ and $\salg_\Gamma(\beta) = \Borel(Y)$. Since $\phi^{-1}(\alpha) \subseteq \salg_\Gamma(\beta)$ we have
$$0 = \sH(\phi^{-1}(\alpha) \given \salg_\Gamma(\beta)) = \inf_{\substack{F \subseteq \Gamma\\F \text{ finite}}} \sH\left(\phi^{-1}(\alpha) \Given \bigvee_{g \in F} g \cdot \beta\right),$$
where the second equality is a basic property of Shannon entropy (see \cite[Lemma 1.7.11]{Dow11}). So there is a finite set $F \subseteq \Gamma$ with $\sH(\phi^{-1}(\alpha) \given \bigvee_{g \in F} g \cdot \beta) < \epsilon$. Since $\Gamma \acts (X, \mu)$ is existentially closed, there must exist a finite Borel partition $\beta'$ of $X$ satisfying $\sH(\beta') < \epsilon$ and $\sH(\alpha \given \bigvee_{g \in F} g \cdot \beta') < \epsilon$. Consequently, by subadditivity of Rokhlin entropy,
$$\rh_\Gamma(\alpha) \leq \rh_\Gamma(\beta') + \rh_\Gamma(\alpha \given \salg_\Gamma(\beta')) \leq \sH_\mu(\beta') + \sH_\mu \left( \alpha \Given \bigvee_{g \in F} g \cdot \beta' \right) < 2 \epsilon.$$
As $\epsilon$ was arbitrary, we conclude that $\rh_\Gamma(\alpha) = 0$.
\end{proof}

If $\Gamma$ is an amenable group and $r \in (0, +\infty]$, then there exists a free {\pmp} action of $\Gamma$ on a standard Borel probability space having Kolmogorov--Sinai entropy equal to $r$ (for instance, any Bernoulli shift over $\Gamma$ whose base space has Shannon entropy equal to $r$). Such an action would be e.c.\ since all free actions of amenable groups are e.c.\, and it would have Rokhlin entropy $r$ since Rokhlin entropy and Kolmogorov--Sinai entropy coincide for free actions of amenable groups \cite[Corollary 1.9]{AlpSew}. The assumption in the above proposition that $\Gamma$ be non-amenable is therefore necessary.

\subsection{Cocycles on e.c. actions} \label{subs:ec_cocycles}

\begin{lem}\label{eccocycle}
Suppose that $\Gamma\acts^a(X,\mu)$ is e.c.\  Then, for any finite group $K$, $B^1(a,K)$ is dense in $Z^1(a,K)$.
\end{lem}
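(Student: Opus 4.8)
The plan is to realize $\sigma$ as an honest coboundary after passing to one convenient extension of $a$, and then to use existential closedness to bring this fact back down to an approximate coboundary for $a$ itself.

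First, given $\sigma\in Z^1(a,K)$, I would form the skew-product extension $\Gamma\acts^b Y$ on $Y=X\times K$, with $K$ carrying normalized counting measure and $\gamma^b\cdot(x,k)=(\gamma^a x,\,\sigma(\gamma,x)k)$; this is a {\pmp} action (finiteness of $K$ makes measure-preservation automatic), and the coordinate projection $\phi\colon Y\to X$ is a factor map, so $\cal M_a\subseteq\cal M_b$. The key observation is that the second-coordinate map $\pi_K\colon Y\to K$ satisfies $\pi_K(\gamma^b y)=\sigma(\gamma,\phi(y))\,\pi_K(y)$ for \emph{every} $\gamma\in\Gamma$ and \emph{every} $y\in Y$ --- this is literally the definition of $b$. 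Hence, writing $\partial_a t$ for the coboundary $(\gamma,x)\mapsto t(\gamma^a x)^{-1}t(x)$ attached to a measurable $t\colon X\to K$ (note $\partial_a t\in B^1(a,K)$), and setting $s_0:=\pi_K^{-1}\colon Y\to K$ (pointwise inverse in $K$), we get $\partial_b s_0(\gamma,y)=\pi_K(\gamma^b y)\pi_K(y)^{-1}=\sigma(\gamma,\phi(y))$. In the language of the tuples $B_\sigma\in\cal M_a^{\Gamma\times K}$ from the discussion of cocycles, this says exactly that the image of $B_\sigma$ under the induced inclusion $\cal M_a^{\Gamma\times K}\hookrightarrow\cal M_b^{\Gamma\times K}$ is $B_{\partial_b s_0}$: over $b$, the cocycle $\sigma$ has become a coboundary.

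Next I would transfer this down to $a$. Since $a$ is e.c.\ and $\cal M_a\subseteq\cal M_b$, the inclusion is e.c., so by Fact \ref{soft} there are an ultrafilter $\u$ and an embedding $\iota\colon\cal M_b\to\cal M_a^{\u}$ restricting to the diagonal embedding on $\cal M_a$. Applying $\iota$ to $s_0$, and using that membership in $\operatorname{Part}_K$ and the assignment $t\mapsto B_{\partial_a t}$ are given by first-order data, we obtain $\iota(s_0)=[s_i]_\u$ for a sequence of genuine $K$-partitions $s_i$ of $X$ with $[B_{\partial_a s_i}]_\u=\iota(B_{\partial_b s_0})$; and since $B_{\partial_b s_0}$ is the image of $B_\sigma\in\cal M_a$ while $\iota$ restricts to the diagonal on $\cal M_a$, this equals $[B_\sigma]_\u$. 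Thus $\lim_{\u}d(B_{\partial_a s_i},B_\sigma)=0$. As the metric on the relevant product sort is a weighted sum over $(\gamma,k)$ of the measures $\mu(\{x:\partial_a s_i(\gamma,x)\ne\sigma(\gamma,x)\})$, it follows that for every finite $F\subseteq\Gamma$ and every $\epsilon>0$ the set of $i$ with $\mu(\{x:\partial_a s_i(\gamma,x)\ne\sigma(\gamma,x)\})<\epsilon$ for all $\gamma\in F$ is $\u$-large, hence nonempty; any such $\partial_a s_i\in B^1(a,K)$ lies in the basic $(F,\epsilon)$-neighborhood of $\sigma$ in $Z^1(a,K)$, which yields the claimed density.

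Alternatively, the transfer step can be run concretely through Proposition \ref{criterion} applied to the e.c.\ factor map $\phi$: take $\alpha=\pi_K\colon Y\to|K|$, $S=\{e\}\cup\{\gamma^{-1}:\gamma\in F\}$, base map $\beta\colon X\to K^F$ recording $(\sigma(\gamma,\cdot))_{\gamma\in F}$, and tolerance $\epsilon'<|K|^{-(2|F|+1)}\epsilon$; the identity $\pi_K(\gamma^b y)=\sigma(\gamma,\phi(y))\pi_K(y)$ forces every ``incompatible'' atom --- a pair $(\pi,j)\in K^S\times K^F$ with $\pi(\gamma^{-1})\ne j_\gamma\pi(e)$ for some $\gamma\in F$ --- to carry $\nu$-mass $0$ on $Y$, hence $\mu$-mass $<\epsilon'$ on $X$, so summing over the at most $|K|^{2|F|+1}$ of them bounds $\mu(\{x:\exists\gamma\in F,\ \tilde\alpha(\gamma^a x)\ne\sigma(\gamma,x)\tilde\alpha(x)\})$ by $\epsilon$, and then $t:=\tilde\alpha^{-1}$ has $\partial_a t$ in the $(F,\epsilon)$-neighborhood of $\sigma$. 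I expect the only real obstacle to be this bookkeeping: keeping the left/right translates straight and checking that matching finitely many joint values of $\pi_K$ with its $S$-translates genuinely transfers the \emph{exact} identity on $Y$ into an $\epsilon$-approximate cocycle--coboundary identity on $X$. Finiteness of $K$ is what makes convergence in measure the same as smallness of a symmetric difference and keeps every count finite; beyond that I anticipate no genuine difficulty.
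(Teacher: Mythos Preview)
Your proposal is correct and follows essentially the same approach as the paper: form the skew-product extension $Y=X\times_\sigma K$, observe that the second-coordinate projection $p_0=\pi_K$ satisfies $p_0(\gamma y)p_0(y)^{-1}=\sigma(\gamma,\phi(y))$ exactly, and then use that $a$ is e.c.\ in $b$ to pull this down to an approximate coboundary on $X$. The paper states the transfer step in one line directly from the definition of e.c., while you spell it out twice (once via the ultrapower characterization of Fact~\ref{soft}, once via the bookkeeping of Proposition~\ref{criterion}); both are valid elaborations of the same argument.
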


\begin{proof}
Let $Y$ be the skew-product extension $X\times_\sigma K$.  If $p_0:Y\to K$ is the projection map, then we have $p_0(\gamma\cdot (x,k))p_0(x,k)^{-1}=\sigma(\gamma,x)$ for all $\gamma$, $x$, and $k$.  Thus, since $\Gamma\acts^a(X,\mu)$ is e.c.\, for any finite $F\subseteq \Gamma$ and $\epsilon>0$, there is a map $p:X\to K$ such that $$\mu(\{x\in X \ : \ p(\gamma x)p(x)^{-1}=\sigma(\gamma,x) \text{ for all }\gamma\in F\})>1-\epsilon,$$ proving the lemma.
\end{proof}

The above fact can be strengthened in the case where $\Gamma$ has property (T).

\begin{cor}
If $\Gamma$ has property (T), then for any e.c.\ action $\Gamma\acts^a(X,\mu)$ and any finite group $K$, we have $H^1(a,K)=0$.
\end{cor}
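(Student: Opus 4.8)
The plan is to deduce this from Lemma~\ref{eccocycle} together with the general principle that property (T) forces the coboundaries to be closed inside the cocycles. Since $H^1(a,K)=0$ is by definition the statement that $Z^1(a,K)=B^1(a,K)$, and since Lemma~\ref{eccocycle} tells us that $B^1(a,K)$ is dense in $Z^1(a,K)$ (for the topology of coordinatewise convergence in measure) whenever $a$ is e.c., it suffices to prove the following, which makes no use of $a$ being e.c.: \emph{if $\Gamma$ has property (T), then for every {\pmp} action $\Gamma\acts^a(X,\mu)$ and every finite group $K$, $B^1(a,K)$ is closed in $Z^1(a,K)$.}

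So suppose $\sigma_n\to\sigma$ in $Z^1(a,K)$ with each $\sigma_n\in B^1(a,K)$. Consider the $\sigma$-twisted Koopman representation $\pi_\sigma$ of $\Gamma$ on $L^2(X)\otimes\ell^2(K)$ --- equivalently, the Koopman representation of the skew product $\Gamma\acts X\times_\sigma K$. Because $\ell^2(K)=\mathbb{C}\mathbf{1}\oplus\ell^2_0(K)$ is a decomposition into $K$-invariant subspaces, the subspace $\mathcal{H}_0:=L^2(X)\otimes\ell^2_0(K)$ is $\pi_\sigma$-invariant. Since $\sigma_n$ is a coboundary, $X\times_{\sigma_n}K$ is isomorphic to the trivial extension $X\times K$, and tracing this isomorphism yields a $\pi_{\sigma_n}$-invariant function of the form $x\mapsto\delta_{f_n(x)}$ for some measurable $f_n:X\to K$. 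Set $\xi_n(x):=\delta_{f_n(x)}-\tfrac{1}{|K|}\mathbf{1}\in\mathcal{H}_0$, which has norm $\sqrt{1-1/|K|}>0$ (we may assume $|K|\geq 2$, the case $|K|=1$ being trivial). A direct computation with the cocycle relations gives $\|\pi_\sigma(\gamma)\xi_n-\xi_n\|^2 = 2\mu(\{x\in X:\sigma(\gamma,x)\neq\sigma_n(\gamma,x)\})$, which tends to $0$ for each $\gamma\in\Gamma$. Fixing a Kazhdan pair $(Q,\kappa)$ for $\Gamma$ and letting $P_0$ be the projection onto the $\pi_\sigma$-invariant vectors of $\mathcal{H}_0$, the usual Kazhdan inequality gives $\|\xi_n-P_0\xi_n\|\leq\kappa^{-1}\max_{q\in Q}\|\pi_\sigma(q)\xi_n-\xi_n\|\to 0$; as $\|\xi_n\|$ is bounded away from $0$, it follows that for large $n$ the space $\mathcal{H}_0$ contains a nonzero $\pi_\sigma$-invariant vector.

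From this point one runs the classical ``reduction of the structure group'' for cocycles into compact groups, by induction on $|K|$ (the case $|K|=1$ being vacuous). First one may pass to the ergodic components of $a$, handling each separately and gluing the resulting transfer functions by a measurable selection argument; note that e.c.-ness plays no role here and that the $L^1$-convergence $\sigma_n\to\sigma$ descends to almost every ergodic component along a subsequence. For ergodic $a$ and $|K|\geq 2$, a nonzero $\pi_\sigma$-invariant vector in $\mathcal{H}_0$ --- decomposed into $K$-isotypic components and normalized by its (now constant) pointwise $\ell^2(K)$-norm --- yields a $\sigma$-equivariant map from $X$ into some $K$-orbit on the unit sphere of a nontrivial irreducible $K$-representation, and hence, by the Zimmer--Schmidt cocycle reduction lemma, $\sigma$ is cohomologous to a cocycle $\sigma'$ taking values in a proper subgroup $K'\subsetneq K$. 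Transferring the sequence $\sigma_n$ by the same transfer function, and applying property (T) once more --- this time to the \emph{untwisted} Koopman representation on $L^2(X)\otimes\ell^2_0(K/K')$, to see that an almost-invariant $K/K'$-valued ``reduction'' of the $\sigma_n$ is close to a constant --- one arranges that the approximating coboundaries take values in $K'$, so that $\sigma'\in\overline{B^1(a,K')}$. By the inductive hypothesis $B^1(a,K')$ is closed, hence $\sigma'$, and therefore $\sigma$, is a coboundary, completing the proof.

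The step I expect to be the main obstacle is the third paragraph: upgrading an abstract $L^2$-invariant vector to an honest measurable transfer function, together with the bookkeeping needed to keep the induction honest --- in particular re-proving after each reduction that the new cocycle still lies in the closure of the genuinely $K'$-valued coboundaries, and dealing with the possible degeneracy of the invariant vector on invariant subsets (the reason one first passes to ergodic components). None of this uses anything beyond standard cocycle theory; indeed, if one is content to cite the known fact that property (T) makes $B^1(a,K)$ closed in $Z^1(a,K)$ for $K$ compact, the corollary follows from Lemma~\ref{eccocycle} in two lines.
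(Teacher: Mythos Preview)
Your overall plan --- density from Lemma~\ref{eccocycle} plus closedness of $B^1(a,K)$ from property~(T) --- is exactly the paper's. The divergence is in how closedness is established for the necessarily non-ergodic e.c.\ action. The paper observes that e.c.\ actions of property~(T) groups are never ergodic, so the Popa--Furman clopenness result (stated for ergodic actions) cannot be quoted off the shelf; your final ``two-line'' remark is therefore a bit optimistic unless one first reduces to the ergodic case.

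The paper does \emph{not} pass to ergodic components and does \emph{not} induct on $|K|$. Instead it adapts Furman's argument to work globally: given a Kazhdan pair $(F,\epsilon)$, one shows that whenever cocycles $\alpha,\beta$ satisfy $\mu(\{\alpha(\gamma^{-1},\cdot)\neq\beta(\gamma^{-1},\cdot)\})\leq\epsilon\,\mu(Y)$ for all $\gamma\in F$ over an invariant set $Y$, they are genuinely cohomologous on an invariant $Z\subseteq Y$ with $\mu(Z)\geq\mu(Y)/2$. The invariant vector is produced essentially as in your paragraph~2, and the transfer function is read off pointwise as the unique $k$ maximizing $|\xi(x,k)|$. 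Iterating --- at each stage choosing a fresh coboundary close to $\sigma$ on the remaining invariant set, via Lemma~\ref{eccocycle} --- exhausts $X$ up to a null set.

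Your detailed route via Zimmer--Schmidt reduction and induction on $|K|$ has a real gap precisely where you flag it: after conjugating $\sigma$ into a proper subgroup $K'$ and transferring the $\sigma_n$ by the same map, the transferred coboundaries remain $K$-valued, and your sketch for forcing them into $B^1(a,K')$ (an ``almost-invariant $K/K'$-valued reduction'') is not well-posed when $K'$ is not normal in $K$. The simpler variant you hint at --- decompose into ergodic components, invoke Popa--Furman on each, and glue the per-component transfer functions by measurable selection (routine here since $K$ is finite and on an ergodic fiber the transfer function is unique up to a constant in $K$) --- does give a correct alternative to the paper's direct halving argument.
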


\begin{proof}
    For ergodic actions of property (T) groups, its known that $B^1(a,K)$ is clopen in $Z^1(a,K)$ \cite[Lemma 4.2]{Po07} (alternatively see \cite[Theorem 4.2]{Fu07}). If our e.c.\ action were ergodic we could combine this with Lemma \ref{eccocycle} and be done. However, e.c.\ actions of property (T) groups are never ergodic, so we instead adapt the proof of \cite[Theorem 4.2]{Fu07} to the non-ergodic setting.
    
	Since $\Gamma$ has property (T), there is a finite set $F \subseteq \Gamma$ and $\epsilon > 0$ such that for every unitary representation $\pi : \Gamma \rightarrow \mathcal{U}(\mathcal{H})$ and every unit vector $\xi_0$, if $|\langle \pi(\gamma)(\xi_0), \xi_0 \rangle| \geq 1 - \epsilon$ for every $\gamma \in F$ then there is a $\Gamma$-invariant unit vector $\xi \in \mathcal{H}$ with $\|\xi - \xi_0\| < 1/16$.
	
	We claim that for every measurable $\Gamma$-invariant set $Y \subseteq X$ of positive measure and cocycles $\alpha, \beta : \Gamma \times X \rightarrow K$ satisfying, for every $\gamma\in F$:
	$$\mu(\{x \in X : \alpha(\gamma^{-1},x) \neq \beta(\gamma^{-1}, x)\}) \leq \epsilon \mu(Y)$$
	there is a measurable $\Gamma$-invariant set $Z \subseteq Y$ and a measurable function $f : Z \rightarrow K$ satisfying $\mu(Z) \geq \mu(Y) / 2$ and $\alpha(\gamma, x) = f(\gamma x) \beta(\gamma, x) f(x)^{-1}$ for a.e. $x \in Z$ and every $\gamma \in \Gamma$.
	
	Assume for now that the claim holds. Set $X_0 = \emptyset$ and inductively assume that $X_m$ has been defined for all $m \leq n$. If $\mu(\bigcup_{m \leq n} X_m) = 1$ then the induction can stop, but otherwise it proceeds as follows. By Lemma \ref{eccocycle} $B^1(a,K)$ is dense in $Z^1(a,K)$, so we can pick a cocycle $\sigma_{n+1} \in B^1(a,K)$ satisfying, for every $\gamma\in F$:
	$$\mu(\{x \in X : \sigma(\gamma^{-1},x) \neq \sigma_{n+1}(\gamma^{-1}, x)\}) \leq \epsilon \cdot \mu \left(X \setminus \bigcup_{m \leq n} X_m \right).$$
	Since $\sigma_{n+1}$ is a coboundary, we can pick a measurable function $h_{n+1} : X \rightarrow K$ satisfying $\sigma_{n+1}(\gamma, x) = h_{n+1}(\gamma x) h_{n+1}(x)^{-1}$ for a.e. $x$ and every $\gamma$. Next apply the claim of the previous paragraph to $Y = X \setminus \bigcup_{m \leq n} X_m$, $\alpha = \sigma$, and $\beta = \sigma_{n+1}$ to obtain a $\Gamma$-invariant measurable set $X_{n+1} \subseteq X \setminus \bigcup_{m \leq n} X_m$ with
	$$\mu(X_{n+1}) \geq \frac{1}{2} \mu \left( X \setminus \bigcup_{m \leq n} X_m \right)$$
	and a measurable function $f_{n+1} : X_{n+1} \rightarrow K$ such that
	$$\sigma(\gamma, x) = f_{n+1}(\gamma x) \sigma_{n+1}(\gamma, x) f_{n+1}(x)^{-1}$$
	for a.e. $x \in X_{n+1}$ and every $\gamma \in \Gamma$. Then the sets $X_n$ will be pairwise disjoint and their union will be conull and the function $f : \bigcup_n X_n \rightarrow K$ defined by $f(x) = f_n(x) h_n(x)$ for $x \in X_n$ will satisfy $\sigma(\gamma, x) = f(\gamma x) f(x)^{-1}$ for a.e. $x$ and every $\gamma$, implying that $\sigma$ is a coboundary as desired.
	
	We now prove the claim. Let $Y$, $\alpha$ and $\beta$ be as described. Define an action of $\Gamma$ on $Y \times K$ by $\gamma \cdot (x, k) = (\gamma x, \alpha(\gamma, x) k \beta(\gamma, x)^{-1})$. Set $\mathcal{H} = L^2(Y \times K, \frac{1}{\mu(Y)} \mu \times c)$ where $c$ is the counting measure on $K$, and let $\pi : \Gamma \rightarrow \mathcal{U}(\mathcal{H})$ be the unitary representation $\pi(\gamma)(\eta)(x,k) = \eta(\gamma^{-1} \cdot (x,k))$. Let $\xi_0$ be the unit vector $1_{Y \times \{e_K\}}$ and observe that for $\gamma \in F$
	$$|\langle \pi(\gamma)(\xi_0), \xi_0 \rangle| = \frac{1}{\mu(Y)} \mu(\{x \in Y : \alpha(\gamma^{-1}, x) = \beta(\gamma^{-1}, x)\}) \geq 1-\epsilon.$$
	It follows from our choice of $F$ and $\epsilon$ that there is a $\Gamma$-invariant unit vector $\xi$ satisfying $\|\xi - \xi_0\| < 1/16$.
	
	Define $Z$ to be the set of $x \in Y$ for which there is a unique $k \in K$ maximizing the value of $|\xi(x,k)|$ and, in this case, define $f(x)$ to be that unique element of $K$. Note that since $f^{-1}(k_0) = \bigcap_{k \in K \setminus \{k_0\}} \{x \in Y : |\xi(x,k)| < |\xi(x,k_0)|\}$ and $Z = \bigcup_{k_0 \in K} f^{-1}(k_0)$, both $f$ and $Z$ are measurable. The invariance of $\xi$ tells us that
	$$\xi(\gamma x, k) = \pi(\gamma^{-1})(\xi)(x, \alpha(\gamma,x)^{-1} k \beta(\gamma,x)) = \xi(x, \alpha(\gamma,x)^{-1} k \beta(\gamma, x)),$$
	and since the map $k \in K \mapsto \alpha(\gamma,x)^{-1} k \beta(\gamma, x)$ is a permutation of $K$, we see that $Z$ is $\Gamma$ invariant and that $\alpha(\gamma,x)^{-1} f(\gamma x) \beta(\gamma, x) = f(x)$ for all $x \in Z$.
	
	Finally, it only remains to check that $\mu(Z) \geq \mu(Y) / 2$. Consider the sets
	$$D_1 = \left\{x \in Y : |1 - \xi(x,e_K)|^2 \geq \frac{1}{4} \right\}$$
	$$D_2 = \left\{x \in Y : \sum_{k \in K \setminus \{e_K\}} |\xi(x,k)|^2 \geq \frac{1}{4} \right\}.$$
	If $x \in Y \setminus (D_1 \cup D_2)$ then $|\xi(x,e_K)|^2 \geq \frac{1}{4}$ while $\sum_{k \in K \setminus \{e_K\}} |\xi(x,k)|^2 < \frac{1}{4}$. So $Y \setminus (D_1 \cup D_2) \subseteq f^{-1}(e_K) \subseteq Z$. Since
	$$\frac{1}{\mu(Y)} \int_Y \left(|1-\xi(x,e_K)|^2 + \sum_{k \in K \setminus \{e_K\}} |\xi(x,k)|^2\right) \ d \mu = \|\xi - \xi_0\|^2 < \frac{1}{16},$$
	we have that $\mu(D_1) < \mu(Y) / 4$ and $\mu(D_2) < \mu(Y) / 4$ and therefore
	\begin{equation*}
		\mu(Z) \geq \mu(Y \setminus (D_1 \cup D_2)) \geq \mu(Y) / 2.\qedhere
	\end{equation*}
\end{proof}

The previous corollary can be extended to all groups at the expense of restricting to e.c.\ actions which are ultraproducts.

\begin{cor}
	If $\Gamma\acts^{a_i}(X_i,\mu)$ is a family of actions and the nonprincipal ultraproduct action $\Gamma\acts^a (X,\mu)$ is e.c.\, then $H^1(a,K)=0$ for every finite group $K$.
\end{cor}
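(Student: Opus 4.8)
The plan is to derive $H^1(a,K)=0$ by upgrading the density statement of Lemma \ref{eccocycle} to an equality, exploiting the fact that $\cal M_a=\prod_\u\cal M_{a_i}$, being an ultraproduct over a countable index set with a nonprincipal ultrafilter, is $\aleph_1$-saturated. Since $H^1(a,K)=0$ is precisely the assertion that $Z^1(a,K)=B^1(a,K)$, and since $B^1(a,K)\subseteq Z^1(a,K)$ always while $B^1(a,K)$ is dense in $Z^1(a,K)$ by Lemma \ref{eccocycle}, it suffices to show that every fixed cocycle $\sigma\in Z^1(a,K)$ admits a trivializing function, i.e.\ a measurable $f\colon X\to K$ with $\sigma(\gamma,x)=f(\gamma x)f(x)^{-1}$ for almost every $x$ and all $\gamma\in\Gamma$.

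First I would, using the model-theoretic picture of cochains developed above, regard a candidate $g\colon X\to K$ as an element of $\operatorname{Part}_K^{\cal M_a}$ with pieces $(P_k)_{k\in K}$, and record that for each finite $F\subseteq\Gamma$ and each $n\in\bb N$ the requirement
\[\mu\bigl(\{x\in X:\ g(\gamma x)g(x)^{-1}=\sigma(\gamma,x)\ \text{for all}\ \gamma\in F\}\bigr)\ \geq\ 1-\tfrac1n\]
is a quantifier-free $L_\Gamma$-condition on $g$ with parameters the partitions $(Q^\gamma_k)_{k\in K}$ of $X$ determined by $\sigma(\gamma,\cdot)$: the set inside the measure equals the Boolean combination $\bigcap_{\gamma\in F}\bigcup_{k,l\in K}\bigl(\gamma^{-1}P_k\cap P_l\cap Q^\gamma_{kl^{-1}}\bigr)$, to which one applies the predicate $\mu$. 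The totality of these conditions, as $F$ and $n$ vary, is a countable type over a countable set of parameters, localized to the definable set $\operatorname{Part}_K$.

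Next I would verify that this type is finitely satisfiable in $\cal M_a$: given finitely many constraints indexed by pairs $(F_i,n_i)$, set $F=\bigcup_i F_i$ and $\epsilon=1/\max_i n_i$, and apply Lemma \ref{eccocycle} --- via the skew-product $X\times_\sigma K$, exactly as in the proof of that lemma --- to produce a single $g$ that agrees with the coboundary of some function off a set of measure $<\epsilon$ on all of $F$, and hence satisfies every one of the given constraints. Since $\u$ is nonprincipal on a countable index set by Convention \ref{ultraconvention}, it is countably incomplete, so $\cal M_a$ is $\aleph_1$-saturated and the whole type is realized by a single $f\colon X\to K$. Letting $n\to\infty$ then gives $\mu(\{x:f(\gamma x)f(x)^{-1}=\sigma(\gamma,x)\ \text{for all}\ \gamma\in F\})=1$ for every finite $F\subseteq\Gamma$, and since $\Gamma$ is countable, intersecting over an exhaustion of $\Gamma$ by finite sets shows that $f$ trivializes $\sigma$ almost everywhere; thus $\sigma\in B^1(a,K)$, and as $\sigma$ was arbitrary, $H^1(a,K)=0$.

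I do not anticipate a genuine conceptual obstacle; the two hypotheses enter in exactly two independent places --- e.c.-ness of $a$ feeds Lemma \ref{eccocycle} and hence the finite satisfiability step, while the ultraproduct hypothesis feeds $\aleph_1$-saturation and hence the realization of the full type. The points that require care are purely bookkeeping: confirming that ``being, up to a set of small measure, the coboundary of $g$ on a prescribed finite set $F$'' really is captured by a single continuous (indeed quantifier-free) formula in $g$ with the $\sigma$-data as parameters --- this is where the explicit Boolean descriptions of $\operatorname{Part}_K$ and of $\Cocy_K$ are invoked --- and that the saturation we use applies to types over a countable parameter set taken within a definable set, both of which are standard (cf.\ \cite{bbhu}).
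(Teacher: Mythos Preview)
Your proposal is correct. The paper's route is to combine Lemma \ref{eccocycle} (density of $B^1(a,K)$ in $Z^1(a,K)$ for e.c.\ actions) with a separate general observation, Lemma \ref{ultraclosed}, that $B^1(a,K)$ is \emph{closed} in $Z^1(a,K)$ for any nonprincipal ultraproduct action; density together with closedness gives $B^1=Z^1$. The proof of Lemma \ref{ultraclosed} is a hands-on diagonal argument: given a sequence of approximate trivializations $f_n$ of $\sigma$, one lifts each to the factors and then selects, for each index $i$, the best available $f^i_{m(i)}$ with $m(i)\to\infty$ along $\u$. Your argument replaces this explicit diagonalization by a direct appeal to $\aleph_1$-saturation of the countably incomplete ultraproduct, realizing the countable partial type ``$g$ trivializes $\sigma$ on $F$ up to $1/n$'' over the countably many parameters coming from $\sigma$. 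These are really the same mechanism in different clothing: the diagonal construction in the paper's proof of Lemma \ref{ultraclosed} is exactly the standard proof of $\aleph_1$-saturation specialized to this situation. What the paper's packaging buys is a reusable, hypothesis-separated statement --- closedness of $B^1$ holds for \emph{any} ultraproduct action, independent of the e.c.\ assumption --- whereas your argument is shorter for this specific corollary but does not isolate that standalone fact.
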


This immediately follows from Lemma \ref{eccocycle} and the following general observation below.

\begin{lem}\label{ultraclosed}
	Suppose that $\Gamma \acts^a(X,\mu)$ is a nonprincipal ultraproduct action.  Then $B^1(a,K)$ is closed in $Z^1(a,K)$ for any finite group $K$.
\end{lem}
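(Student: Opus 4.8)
The plan is to realize the statement ``$\sigma$ is a coboundary'' as the realizability of a partial type over $\cal M_a$ built from quantifier-free conditions, and then exploit that the ultraproduct $\cal M_a=\prod_\u\cal M_{a_i}$ is countably saturated (since $\u$, being nonprincipal on a countable set, is countably incomplete). First I would reduce to sequences: the topology on $Z^1(a,K)$ is metrizable — it is a subspace of the countable product over $\gamma\in\Gamma$ of the space of $K$-valued measurable functions on $X$ under convergence in measure, and convergence in measure is metrizable on any probability space — so it suffices to take $\sigma_n\in B^1(a,K)$ with $\sigma_n\to\sigma\in Z^1(a,K)$ and show $\sigma\in B^1(a,K)$. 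For each $n$ fix a measurable $f_n:X\to K$ with $\sigma_n(\gamma,x)=f_n(\gamma x)f_n(x)^{-1}$ for all $\gamma$ and a.e.\ $x$.

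Next, I would set up the type. Using the identifications of Subsection \ref{subs:ec_cocycles}'s preliminaries (between elements of the sort whose interpretation is $\cal M_a^{\Gamma\times K}$ and $K$-valued cochains, $B_\sigma\leftrightarrow\sigma$, and between elements of $\operatorname{Part}_K(\cal M_a)$ and $K$-valued measurable maps on $X$ modulo null sets, $D_f\leftrightarrow f$), one checks that for each $\gamma\in\Gamma$ there is a quantifier-free $T_\Gamma$-formula $\Psi_\gamma(B,D)$ — assembled only from the Boolean operations, the measure predicate, and the symbols $u_\delta$, by comparing $u_\gamma^{-1}$ of each level set of $f$ with the appropriate union of intersections of level sets of $f$ and of $\sigma(\gamma,\cdot)$ — such that $\Psi_\gamma(B_\sigma,D_f)=0$ iff $\sigma(\gamma,x)=f(\gamma x)f(x)^{-1}$ for a.e.\ $x$. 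Hence $\sigma\in B^1(a,K)$ exactly when the partial type
\[p(v)=\{\,d(v,\operatorname{Part}_K)=0\,\}\cup\{\,\Psi_\gamma(B_\sigma,v)=0:\gamma\in\Gamma\,\}\]
over the parameter $B_\sigma$, in a single variable $v$ in the sort of $\cal M_a^K$, is realized in $\cal M_a$.

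By countable saturation, $p$ is realized iff it is finitely approximately satisfiable: for every finite $F\subseteq\Gamma$ and $\e>0$ there is $D\in\operatorname{Part}_K(\cal M_a)$ with $\Psi_\gamma(B_\sigma,D)<\e$ for all $\gamma\in F$. I would verify this from the hypothesis: given $F$ and $\e$, choose $n$ so large that $\mu(\{x:\sigma_n(\gamma,x)\ne\sigma(\gamma,x)\})<\e/(2|K|^2)$ for every $\gamma\in F$, and take $D:=D_{f_n}$. Since $f_n$ trivializes $\sigma_n$ exactly we have $\Psi_\gamma(B_{\sigma_n},D_{f_n})=0$; and replacing $\sigma_n$ by $\sigma$ changes the $d_\mu$-measure of each set occurring in $\Psi_\gamma$ by at most $|K|\,\mu(\{x:\sigma_n(\gamma,x)\ne\sigma(\gamma,x)\})$, so $\Psi_\gamma(B_\sigma,D_{f_n})<\e$ for all $\gamma\in F$. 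Thus $p$ is realized by some $D_f$, giving $f:X\to K$ with $\sigma(\gamma,x)=f(\gamma x)f(x)^{-1}$ for all $\gamma$ and a.e.\ $x$; that is, $\sigma\in B^1(a,K)$.

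The only delicate points are bookkeeping: writing $\Psi_\gamma$ cleanly and confirming it is quantifier-free (so that saturation genuinely applies), and invoking countable saturation of the ultraproduct — standard, but worth stating since the ambient space $X=\prod_\u X_i$ is non-standard. I expect no real obstacle beyond these. If one prefers to avoid the saturation black box, the same result follows by a direct diagonalization in the ultraproduct: lift $\sigma$ and each $f_n$ to cochains $\sigma_i,f_{n,i}$ on $X_i$ using the fact that every element of the ultraproduct measure algebra has the form $[A_i]_\u$, pick a decreasing sequence $J_n\in\u$ with $\bigcap_nJ_n=\varnothing$ refining the ``good'' index sets extracted from $\sigma_n\to\sigma$, and set $f_i:=f_{n(i),i}$ for $n(i):=\max\{n:i\in J_n\}$, checking that $[f_i]_\u$ trivializes $\sigma$. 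I would lead with the saturation argument, since the $\Cocy_K$ machinery is already in hand.
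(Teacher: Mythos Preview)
Your proof is correct, and in fact you sketch two valid arguments. Your secondary alternative — the direct diagonalization lifting $f_n$ to $f_{n,i}$ on each $X_i$, choosing a decreasing sequence $J_n\in\u$ with empty intersection, and setting $f_i:=f_{n(i),i}$ — is essentially exactly what the paper does (the paper packages the $J_n$ as a function $M:I\to\bb N$ with $\lim_\u M(i)=\infty$ and defines $m(i)$ as the largest $n\le M(i)$ for which the lifted approximation is good enough, then sets $h([x_i]_\u)=\lim_\u f^i_{m(i)}(x_i)$).

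Your primary argument via countable saturation is genuinely different from the paper's route and is arguably cleaner: it isolates the abstract reason the lemma holds (an $\aleph_1$-saturated structure realizes any countable type that is finitely approximately satisfiable) rather than reproving a special case of saturation by hand. The paper's diagonalization, by contrast, is entirely self-contained and avoids invoking model-theoretic machinery beyond the basic ultraproduct construction already in Subsection~\ref{ultrasub}. One small remark on your write-up: the formula $\Psi_\gamma(B,D)$ you need is not the $\Cocy_K$ formula itself (that one tests whether $\sigma_B$ is a cocycle) but rather a companion formula testing whether $D$ trivializes $\sigma_B$ at $\gamma$; this is of course equally quantifier-free and only uses the finitely many coordinates $\pi_{\gamma,k}(B)$, so the type $p$ is genuinely a countable type over countably many parameters in the base sort and saturation applies without issue.
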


\begin{proof}
	Suppose that $\u$ is a nonprincipal ultrafilter on a set $I$, $\Gamma \acts^{a_i} (X_i, \mu_i)$ is a {\pmp} action for every $i \in I$, $(X, \mu) = \prod_\u (X_i, \mu_i)$ and $a = \prod_\u a_i$. Let $\sigma : \Gamma \times X \rightarrow K$ belong to the closure of $B^1(a, K)$ in $Z^1(a, K)$. Choose an increasing sequence of finite sets $W_n \subseteq \Gamma$ with $\bigcup_{n \in \bN} W_n = \Gamma$ and for each $n \in \bN$ pick a cocycle $\sigma_n \in B^1(a,K)$ satisfying
	$$ \mu(S_{\gamma,k} \symd S_{n,\gamma,k}\}) < 2^{-n} \text{ for all }\gamma \in W_n \text{ and } k \in K, $$
	where $S_{\gamma, k} = \{x \in X : \sigma(\gamma, x) = k\}$ and $S_{n,\gamma,k} = \{x \in X : \sigma_n(\gamma, x) = k\}$.
	
	For each $n \in \bN$, pick a measurable function $f_n : X \rightarrow K$ satisfying $f_n(\gamma x) f_n(x)^{-1} = \sigma_n(\gamma, x)$ for a.e. $x \in X$ and every $\gamma \in \Gamma$. Choose measurable functions $f_n^i : X_i \rightarrow K$ with $f_n^{-1}(k) = [(f_n^i)^{-1}(k)]_\u$ for every $k \in K$ and define the cocycle $\sigma_n^i(\gamma, x) = f_n^i(\gamma x) f_n^i(x)^{-1}$. Set $S_{n,\gamma,k}^i = \{x \in X_i : \sigma_n^i(\gamma, x) = k\}$.
	
	Choose measurable sets $S_{\gamma,k}^i \subseteq X_i$ satisfying $S_{\gamma,k} = [S_{\gamma,k}^i]_\u$ for all $\gamma \in \Gamma$ and $k \in K$. Since $\u$ is nonprincipal, we can fix a function $M : I \rightarrow \bN$ satisfying $\lim_\u M(i) = \infty$. For each $i \in I$, define
	$$m(i) = \max\{n \leq M(i) :  \mu_i(S_{n,\gamma,k}^i \symd S_{\gamma,k}^i) \leq 2^{-n+1} \text{ for all }\gamma \in W_n \text{ and } k \in K\}.$$
	Since for every $n \in \bN$ we have $\{i : M(i) \geq n\} \in \u$ and
	$$\lim_\u \mu_i(S_{n,\gamma,k}^i \symd S_{\gamma,k}^i) = \mu(S_{n,\gamma,k} \symd S_{\gamma,k}) < 2^{-n} \text{ for all }\gamma \in W_n \text{ and } k \in K,$$
	it is immediately seen that $\{i : m(i) \geq n\} \in \u$ and thus $\lim_\u m(i) = \infty$.
	
	Since $\lim_\u m(i) = \infty$ and $\bigcup_n W_n = \Gamma$, we have that $\lim_\u \mu_i(S_{m(i),\gamma,k}^i \symd S_{\gamma,k}^i) \leq \lim_\u 2^{-m(i)+1} = 0$ for every $\gamma \in \Gamma$ and $k \in K$. Therefore $S_{\gamma,k} = [S_{\gamma,k}^i]_\u = [S_{m(i),\gamma,k}^i]_\u$ for all $\gamma \in \Gamma$ and $k \in K$. Consequently,
	$$\sigma(\gamma, [x_i]_\u) = \lim_\u \sigma_{m(i)}^i(\gamma,x_i) = \lim_\u f_{m(i)}^i(\gamma x_i) f_{m(i)}^i(x_i)^{-1},$$
	and defining $h([x_i]_\u) = \lim_\u f_{m(i)}^i(x_i)$ we have $h : X \rightarrow K$ is measurable and $\sigma(\gamma, x) = h(\gamma x) h(x)^{-1}$ for a.e. $x$ and all $\gamma \in \Gamma$. We conclude that $\sigma \in B^1(a,K)$.\qedhere
\end{proof}	

\section{Special e.c. actions}\label{special}

In this section, we study when the profinite completion action is e.c. and when there is a weakly mixing e.c. action.  In the process, we prove a fact of independent interest, namely that limit groups have Kechris' property MD.

\subsection{Profinite completions}\label{profinitesub}
In \cite{BIH}, the authors show that the natural action of a finitely generated free group $\bb F$ on its profinite completion $\hat {\bb F}$ is an e.c.\ action.  In this section, we generalize this result to the largest class of groups for which it could possibly hold.

First, recall that the profinite completion $\hat{\Gamma}$ of a countable residually finite group $\Gamma$ is the inverse limit of the finite groups $\Gamma / \Lambda$ as $\Lambda$ varies over the normal finite-index subgroups of $\Gamma$. The profinite completion $\hat{\Gamma}$ is a compact group and thus admits a unique Haar probability measure $\mu_{\hat \Gamma}$. $\Gamma$ naturally embeds into $\hat{\Gamma}$ and thus acting by left-translation yields an ergocdic p.m.p. action of $\Gamma$ on $(\hat{\Gamma}, \mu_{\hat \Gamma})$. For each normal finite-index subgroup $\Lambda \lhd \Gamma$, the closure $\bar{\Lambda}$ of $\Lambda$ in $\hat{\Gamma}$ is a finite-index clopen subgroup of $\hat{\Gamma}$, and the partition $\cal C_{\bar{\Lambda}}$ of $\hat{\Gamma}$ into its left $\bar{\Lambda}$-cosets is $\Gamma$-invariant (that is, $\Gamma$ permutes the cosets). Moreover, there is a decreasing sequence $\Lambda_n$ of finite-index normal subgroups of $\Gamma$ such that the sequence $\cal C_{\bar{\Lambda_n}}$ separates points.

An action $\Gamma\acts^a (X,\mu)$ is called \textbf{profinite} if there is a decreasing sequence of finite $\Gamma$-invariant measurable partitions of $X$ which separate points.  Following Kechris \cite{kechrisweak}, a residually finite group $\Gamma$ is said to be \textbf{MD} if the set of profinite actions of $\Gamma$ on $(X,\mu)$ is dense in the space $A(\Gamma,X,\mu)$ of all actions of $\Gamma$.  The group $\Gamma$ is said to have the a priori stronger property \textbf{EMD} if the set of ergodic profinite actions of $\Gamma$ on $(X,\mu)$ is dense in $A(\Gamma,X,\mu)$.  It is an open question whether or not the two notions coincide for all groups, but by work of the third author (\cite[Corollary 4.7 and Theorem 4.10]{TD}), we have that they coincide for all groups without property (T) and that they coincide for all groups if and only if property MD implies the negation of property (T).  For examples and closure properties of these (somewhat mysterious) classes of groups, see \cite[Section 5]{kechrisburton}.

Kechris \cite[Propositions 4.2, 4.5, and 4.8]{kechrisweak} showed that a group $\Gamma$ has property EMD precisely when its action on its profinite completion $\hat\Gamma$ is locally universal while it has property MD precisely when its action on $\hat \Gamma\times [0,1]$ is locally universal (where the action on the second coordinate is trivial).  In this section, we show that for these classes of groups, the associated actions are in fact existentially closed.  (Since being existentially closed implies being locally universal, these results are optimal.)

\begin{lem} \label{lem:MDsubgroup}
	Let $\Gamma$ be a countable group and let $\Lambda \leq \Gamma$ be a subgroup.
	\begin{enumerate}
		\item If $\Gamma$ has property MD, then $\Lambda$ has MD as well.
		\item If $\Gamma$ has property EMD and $\Lambda$ has finite index in $\Gamma$, then $\Lambda$ has EMD as well.
	\end{enumerate}
\end{lem}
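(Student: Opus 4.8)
\emph{Plan.} The idea for both parts is to transport approximations between $\Gamma$ and $\Lambda$ by co-induction, exploiting that the restriction of a profinite action to a subgroup is again profinite. I will work throughout with actions on a fixed standard non-atomic space $(X,\mu)$ and use the Fact from Subsection~\ref{ultrasub}: for such actions $c\preceq e$ holds exactly when $c$ lies in the closure of the conjugacy class of $e$. In particular the closures of the set $\mathcal P$ of profinite $\Lambda$-actions and of the set $\mathcal{EP}$ of ergodic profinite $\Lambda$-actions are invariant under conjugation by $\operatorname{Aut}(X,\mu)$ (conjugation being a homeomorphism preserving each of these classes).

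For part (1): given a $\Lambda$-action $b$, form the co-induced $\Gamma$-action $A:=\operatorname{CInd}_\Lambda^\Gamma(b)$ (realized on $(X,\mu)$), and use MD of $\Gamma$ to pick profinite $\Gamma$-actions $a_k\to A$. Since restriction $A(\Gamma,X,\mu)\to A(\Lambda,X,\mu)$ is continuous and sends profinite actions to profinite actions, the $\operatorname{Res}_\Lambda^\Gamma(a_k)$ are profinite $\Lambda$-actions converging to $\operatorname{Res}_\Lambda^\Gamma(A)$, so $\operatorname{Res}_\Lambda^\Gamma(A)\in\overline{\mathcal P}$. The evaluation-at-$e$ map exhibits $b$ as a factor of $\operatorname{Res}_\Lambda^\Gamma(A)$, hence $b\preceq\operatorname{Res}_\Lambda^\Gamma(A)$, and conjugation-invariance of $\overline{\mathcal P}$ then gives $b\in\overline{\mathcal P}$. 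As $b$ was arbitrary, $\Lambda$ has MD.

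For part (2) I would reduce, via Kechris's characterization (\cite[Prop.~4.2, 4.5, 4.8]{kechrisweak}) of EMD in terms of local universality of the profinite completion action, to proving that $\Lambda\acts\hat\Lambda$ is maximal for weak containment, together with density of the ergodic $\Lambda$-actions. The heart of the matter is the structural identity
$$\operatorname{Res}_\Lambda^\Gamma\bigl(\Gamma\acts\hat\Gamma\bigr)\;\cong\;\bigl(\Lambda\acts\hat\Lambda\bigr)^{\sqcup n},\qquad n:=[\Gamma:\Lambda],$$
i.e.\ $n$ equal-weight copies of the profinite completion action of $\Lambda$. To prove it, let $\bar\Lambda\le\hat\Gamma$ be the closure of $\Lambda$: since $\Lambda$ has finite index, its finite-index subgroups are exactly the finite-index subgroups of $\Gamma$ contained in $\Lambda$, so $\hat\Gamma$ induces on $\Lambda$ its full profinite topology and $\bar\Lambda$ is canonically $\hat\Lambda$, an open subgroup of index $n$ on which $\Lambda$ acts by left translation as $\Lambda\acts\hat\Lambda$. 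The $n$ right cosets $\bar\Lambda x$ then partition $\hat\Gamma$, are $\Lambda$-invariant (left and right translations commute), and right translation by $x^{-1}$ is a measure isomorphism $\bar\Lambda x\to\bar\Lambda$ intertwining the left-translation actions, which gives the identity. With this in hand, for an ergodic $\Lambda$-action $d$: since $\Gamma$ has EMD, $\Gamma\acts\hat\Gamma$ is maximal for weak containment, so $\operatorname{CInd}_\Lambda^\Gamma(d)\preceq\Gamma\acts\hat\Gamma$; restricting to $\Lambda$, using that $d$ is a factor of $\operatorname{Res}_\Lambda^\Gamma\operatorname{CInd}_\Lambda^\Gamma(d)$, and invoking the identity, we get $d\preceq\bigl(\Lambda\acts\hat\Lambda\bigr)^{\sqcup n}$. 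Hence $d$ is a factor of $\bigl((\Lambda\acts\hat\Lambda)_\u\bigr)^{\sqcup n}$; restricting that factor map to a summand whose image in $d$ is conull (one exists since $d$ is ergodic) and comparing masses shows $d$ is a factor of $(\Lambda\acts\hat\Lambda)_\u$, i.e.\ $d\preceq\Lambda\acts\hat\Lambda$. So every ergodic $\Lambda$-action lies in the closure of the conjugacy class of $\Lambda\acts\hat\Lambda$, hence in $\overline{\mathcal{EP}}$.

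To conclude part (2): $\overline{\mathcal{EP}}$ now contains every ergodic $\Lambda$-action, hence the closure of the set of ergodic $\Lambda$-actions; and those are dense in $A(\Lambda,X,\mu)$ because $\Lambda$ fails property (T). Indeed, EMD of $\Gamma$ forces $\Gamma$ — and, since property (T) is a commensurability invariant, also $\Lambda$ — to fail (T): a group with (T) has all its ergodic actions strongly ergodic, so no non-ergodic action (such as the trivial action on two points) could be a limit of ergodic profinite actions; and for groups without (T) the weakly mixing, in particular ergodic, actions are dense, by the result of Kerr and Pichot also invoked in Section~\ref{special}. Hence $\overline{\mathcal{EP}}=A(\Lambda,X,\mu)$ and $\Lambda$ has EMD. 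I expect the main difficulties to be the structural identity above — this is exactly where finite index is essential, through $\bar\Lambda\cong\hat\Lambda$ and the finiteness of the coset partition — and the measure bookkeeping in the ``peel off one ergodic component'' step; the auxiliary inputs ($\mathrm{EMD}\Rightarrow\neg(T)$, and density of ergodic actions for non-(T) groups) are standard.
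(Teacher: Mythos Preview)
Your proof is correct. For (1) you give the standard co-induction argument that the paper simply cites from \cite[Section~4]{kechrisweak}. For (2) you take a genuinely different route: the paper's two-line proof observes that $\Gamma$ (hence $\Lambda$, by commensurability of property~(T)) lacks~(T), that $\Lambda$ has MD by part~(1), and then invokes Tucker-Drob's theorem \cite[Corollary~4.7]{TD} that MD and EMD coincide for groups without~(T). You instead establish the structural identity $\operatorname{Res}^\Gamma_\Lambda(\Gamma\acts\hat\Gamma)\cong(\Lambda\acts\hat\Lambda)^{\sqcup n}$ and use it to show directly that $\Lambda\acts\hat\Lambda$ weakly contains every ergodic $\Lambda$-action, finishing with Kerr--Pichot to get density of ergodic actions. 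Your approach is more self-contained, avoiding the nontrivial Tucker-Drob result, at the cost of being longer; the paper's is a clean reduction to the literature. One minor imprecision worth tightening: in your ``restrict to a summand and compare masses'' step, the cleanest justification is to disintegrate the factor map $\phi:Y^{\sqcup n}\to d$ over $d$ and note that $z\mapsto\nu_z(Y_j)$ is $\Lambda$-invariant, hence constant equal to $1/n$ by ergodicity of $d$, so the restriction of $\phi$ to \emph{any} summand $Y_j$ (with its measure renormalized) is already a factor map onto $d$.
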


\begin{proof}
	(1) is observed in \cite[Section 4]{kechrisweak}. Finally, (2) follows from (1) together with the following facts: no group with property (T) can have property EMD \cite[Proposition 6]{kechrisweak}; for groups without property (T), EMD and MD are equivalent \cite[Corollary 4.7]{TD}; and, since $\Lambda$ is a finite index subgroup of $\Gamma$, $\Lambda$ has property (T) if and only if $\Gamma$ does.
\end{proof}

The following is the main result of this section:
	
\begin{thm}\label{thm:MD_EMD_ec_profinite}
	Let $\Gamma$ be a countable residually finite group and let $(\hat{\Gamma}, \mu_{\hat{\Gamma}})$ denote the profinite completion of $\Gamma$ equipped with its normalized Haar probability measure. Finally, let $\lambda$ denote Lebesgue measure on $[0,1]$.
	\begin{enumerate}
		\item If $\Gamma$ has property EMD, then the action $\Gamma \acts (\hat{\Gamma}, \mu_{\hat{\Gamma}})$ is existentially closed.
		\item If $\Gamma$ has property MD, then the action $\Gamma \acts (\hat{\Gamma} \times [0, 1], \mu_{\hat{\Gamma}} \times \lambda)$ is existentially closed.
	\end{enumerate}
\end{thm}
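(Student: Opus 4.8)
The plan is to verify, for an arbitrary extension $\phi\colon Y\to \hat\Gamma$ (case (1)) or $\phi\colon Y\to \hat\Gamma\times[0,1]$ (case (2)), the ergodic-theoretic criterion of Proposition~\ref{criterion}, using its last sentence to exploit the profinite structure of $\hat\Gamma$. Fix a decreasing sequence $\Lambda_n\lhd\Gamma$ of finite-index normal subgroups with $\bigcup_n\sigma(\mathcal C_{\bar\Lambda_n})$ dense in $\Malg(\hat\Gamma)$; in case (2) take the algebras $\cal A_n=\sigma(\mathcal C_{\bar\Lambda_n})\vee\sigma(P_n)$ where $P_n$ is a refining sequence of finite partitions of $[0,1]$ into intervals with $\bigcup_n\sigma(P_n)$ dense. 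By the last sentence of Proposition~\ref{criterion} it suffices, given $p,q\in\bb N$, $\alpha\colon Y\to p$, a finite $S\subseteq\Gamma$ and $\epsilon>0$, to produce the required $\tilde\alpha$ only when the partition $\beta$ on the base space is $\cal A_N$-measurable for some $N$. Such a $\beta$ factors through the finite $\Gamma$-set $F_N:=\hat\Gamma/\bar\Lambda_N\cong\Gamma/\Lambda_N$ (in case (2), through $F_N$ joined with the interval partition $P_N$).

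\textbf{Reduction to a finite-index subgroup.} Write $\Lambda=\Lambda_N$, $m=[\Gamma:\Lambda]$, and fix coset representatives $\gamma_1=e,\gamma_2,\dots,\gamma_m$. The base action is the action induced from $\Lambda$ to $\Gamma$ of its fiber over the trivial coset of $F_N$: in case (1) that fiber is $\bar\Lambda$, and since $\Lambda$ has finite index in $\Gamma$ one has $\bar\Lambda=\hat\Lambda$ (every finite-index subgroup of $\Lambda$ is finite-index in $\Gamma$ and hence contains its normal core in $\Gamma$, so the profinite topology of $\Gamma$ restricts to the profinite topology of $\Lambda$ and the two completions agree); in case (2) the fiber is $\hat\Lambda\times[0,1]$. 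Likewise $Y$ is the action induced from the $\Lambda$-action on its fiber $Y_0$ over the trivial coset, and $\phi$ is the induction of a $\Lambda$-factor map $\phi_0\colon Y_0\to\hat\Lambda$ (resp.\ $\phi_0\colon Y_0\to\hat\Lambda\times[0,1]$). Unwinding the induced-action formulas, the pair $(\alpha,\beta)$ becomes an $m$-tuple of partitions $(\alpha_i)_{i\le m}$ of $Y_0$ together with a partition of $Y_0$ arising from $\beta$, and the desired $\tilde\alpha$ becomes an $m$-tuple of partitions of the fiber; a direct computation (expanding each coset block and translating $s^{-1}\gamma_i=\gamma_{k(s,i)}\mu_{s,i}$ with $\mu_{s,i}\in\Lambda$) shows that it is enough to match, for each coset block $i\le m$ and to within $\epsilon$, the joint $\Lambda$-distribution of $(\alpha_i)_{i\le m}$ over the fixed finite set $T=\{\mu_{s,i}:s\in S,\ i\le m\}\subseteq\Lambda$. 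In case (1) the partition arising from $\beta$ is \emph{constant} on each fiber, so this is precisely the statement that $\Lambda\acts Y_0$ is weakly contained in $\Lambda\acts\hat\Lambda$; by Lemma~\ref{lem:MDsubgroup}(2) the group $\Lambda$ has property EMD, so by Kechris's characterization \cite{kechrisweak} the action $\Lambda\acts\hat\Lambda$ is locally universal and the weak containment holds, giving $\tilde\alpha$. In case (2) the partition arising from $\beta$ is, on each fiber, the pullback of the $\Lambda$-invariant partition of $\hat\Lambda\times[0,1]$ into the intervals $I_1,\dots,I_k$ of $P_N$, each of measure $1/k$; since $\phi_0$ is measure-preserving this pulls back to a $\Lambda$-invariant partition of $Y_0$ into pieces $Y_0^{(1)},\dots,Y_0^{(k)}$ of measure $1/k$ with $\phi_0(Y_0^{(n)})\subseteq\hat\Lambda\times I_n$. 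The matching decomposes over $n$, and $\Lambda\acts(\hat\Lambda\times I_n)\cong\Lambda\acts(\hat\Lambda\times[0,1])$ is locally universal for $\Lambda$ because $\Lambda$ has property MD by Lemma~\ref{lem:MDsubgroup}(1); so each block follows from $\Lambda\acts Y_0^{(n)}\preceq\Lambda\acts(\hat\Lambda\times[0,1])$, and reassembling the blocks and then the cosets produces $\tilde\alpha$.

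\textbf{Expected obstacle.} The delicate point is the bookkeeping of the reduction: checking that the induced-action structure of $\hat\Gamma$ (resp.\ $\hat\Gamma\times[0,1]$) over $F_N$ is matched by that of $Y$, that $\bar\Lambda_N=\hat\Lambda_N$, and — most importantly — that pushing $\beta$ down to a finite level converts the residual obligation at the $\Lambda_N$-level into a question of mere \emph{weak containment} rather than of being an e.c.\ extension, since it is exactly the local universality of $\hat\Lambda_N$ (resp.\ $\hat\Lambda_N\times[0,1]$) provided by (E)MD of $\Lambda_N$ that we can feed in. The only substantive difference between the two cases is the trivial $[0,1]$-factor in (2); the observation that it contributes a $\Gamma$-invariant factor, which splits both $Y_0$ and the fiber into pieces of equal measure that may be matched independently, is what keeps the reduction from circling back to the very statement being proved.
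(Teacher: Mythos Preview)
Your proposal is correct and follows essentially the same approach as the paper: reduce via the last clause of Proposition~\ref{criterion} to a $\beta$ measurable with respect to a fixed finite coset partition (times a $[0,1]$-piece in case (2)), pass to the corresponding finite-index subgroup $\Lambda$, and use that $\Lambda$ inherits (E)MD (Lemma~\ref{lem:MDsubgroup}) so that its profinite action is locally universal, reducing the residual obligation on each fiber to a weak-containment statement. The paper writes this out in explicit cocycle coordinates (choosing representatives $r$, cocycle $\rho$, and approximating functions $\gamma_t$ on each coset block) rather than in the language of induced actions, but the content of the argument is the same; your ``direct computation'' step is exactly equations~(\ref{eqn:md_ec1})--(\ref{eqn:md_ec4}) in the paper.
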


\begin{proof}
	In case (1), set $(X, \mu) = (\hat{\Gamma}, \mu_{\hat{\Gamma}})$ and in case (2), set $(X, \mu) = (\hat{\Gamma} \times [0, 1], \mu_{\hat{\Gamma}} \times \lambda)$.

    Let $\Gamma \acts (Y, \nu)$ be a {\pmp} action and $\phi : Y \rightarrow X$ a $\Gamma$-equivariant factor map. Also let $p \in \bN$ and $\alpha : Y \rightarrow p$ be measurable. Let $S \subseteq \Gamma$ be finite with $e \in S$ and let $\epsilon > 0$.

    Let $H_n$ be the intersection of all subgroups of $\Gamma$ having index at most $n$. Write $\bar{H}_n$ for the closure of $H_n$ in $\hat{\Gamma}$. Let $\cal B_{[0,1]}$ be the Borel $\sigma$-algebra on $[0,1]$. In case (1) set $\cal A_n = \{\gamma \bar{H}_n : \gamma \in \Gamma\}$ and in case (2) set $\cal A_n = \{\gamma \bar{H}_n \times B : \gamma \in \Gamma, \ B \in \cal B\}$. Then the $\cal A_n$'s are an increasing sequence of algebras whose union is dense in $X$. So the stronger form of Proposition \ref{criterion} says that its enough to consider partitions of $X$ which are $\cal A_n$-measurable for some $n$.

    Fix $n$, set $H = H_n$ and $\bar{H} = \bar{H}_n$. Pick a choice $r : \Gamma / H \rightarrow \Gamma$ of representatives for the cosets of $H$ in $\Gamma$ with $r(H) = e$, and let $\rho : (\Gamma / H) \times \Gamma \rightarrow H$ be the cocycle $\rho(a H, \gamma) = r(a H) \gamma r(a \gamma H)^{-1}$. Define the finite set $T = \{r(a H) : a \in \Gamma\}$. In case (1) we let $q = 1 = \{0\}$ and let $\beta : X \rightarrow q$ be the constant function, and in case (2) we let $q \in \bN$ and let $\beta : X \rightarrow q$ be a $\{\hat{\Gamma} \times B : B \in \cal B\}$-measurable map satisfying $\mu(\beta^{-1}(j)) > 0$ for every $j \in q$. Note that $\beta$ is $\Gamma$-invariant.

    For $t \in T$ and $j \in q$, set $X_t^j = \beta^{-1}(j) \cap t^{-1} \bar{H} \in \cal A_n$, and notice that these sets partition $X$. Also set $X_t = t^{-1} \bar{H} = \bigcup_{j \in q} X_t^j$. By Proposition \ref{criterion} we will be done if we can find a measurable function $\tilde{\alpha} : X \rightarrow p$ with the property that for every $\pi \in p^S$, $j \in q$, and $t \in T$
	$$\left| \mu(\tilde{\alpha}_S^{-1}(\pi) \cap X_t^j) - \nu(\alpha_S^{-1}(\pi) \cap \phi^{-1}(X_t^j) \right| < \epsilon.$$
	
	Consider the functions $\alpha_{t S}$, $t \in T$. We always have $t \cdot \alpha_S(y) = \alpha_{t S}(t \cdot y)$ since for $s \in S$
	$$(t \cdot \alpha_S(y))(t s) = \alpha_S(y)(s) = \alpha(s^{-1} \cdot y) = \alpha(s^{-1} t^{-1} t \cdot y) = \alpha_{t S}(t \cdot y)(t s).$$
	Therefore for all $t \in T$ and $\pi \in p^S$ we have $t \cdot \alpha_S^{-1}(\pi) = \alpha_{t S}^{-1}(t \cdot \pi)$ . 
    In particular, for every $j \in q$, we have
	\begin{equation} \label{eqn:md_ec1}
		t \cdot \Big( \alpha_S^{-1}(\pi) \cap \phi^{-1}(X_t^j) \Big) = \alpha_{t S}^{-1}(t \cdot \pi) \cap \phi^{-1}(X_{e}^j).
	\end{equation}
	Similarly, since $e \in S$, for $t \in T$, $s \in S$, and $y \in Y$, we have
	\begin{align*}
		\alpha_{t S}(y)(t s) = \alpha(s^{-1} t^{-1} \cdot y) & = \alpha(r(t s H)^{-1} \rho(t H, s)^{-1} \cdot y)\\
		& = \alpha_{r(t s H) S}(\rho(t H, s)^{-1} \cdot y)(r(t s H)).
	\end{align*}
	So for every $t \in T$ and $j \in q$, we have
	\begin{equation} \label{eqn:md_ec2}
		\bigcup_{s \in S} \{y \in \phi^{-1}(X_{e}^j): \alpha_{r(t s H) S}(\rho(t H, s)^{-1} \cdot y)(r(t s H)) \neq \alpha_{t S}(y)(t s)\} = \emptyset.
	\end{equation}
	
	For $j \in q$, let $\mu_j$ denote the normalized restriction of $\mu$ to $X_{e}^j$, and similarly define $\nu_j$ to be the normalized restriction of $\nu$ to $\phi^{-1}(X_{e}^j)$. The profinite completion of $H$ is isomorphic to $\bar{H}$ and its normalized Haar probability measure $\mu_{\bar{H}}$ coincides with the normalized restriction of $\mu_{\hat{\Gamma}}$ to $\bar{H}$. Notice that in case (1) $j \in q$ can only have value $0$ and $H \acts (X_{e}^0, \mu_0)$ is isomorphic to $H \acts (\bar{H}, \mu_{\bar{H}})$, and in case (2) $H \acts (X_{e}^j, \mu_j)$ is isomorphic to $H \acts (\bar{H} \times [0, 1], \mu_{\bar{H}} \times \lambda)$ for every $j \in q$. It follows from the assumptions of cases (1) and (2) and Lemma \ref{lem:MDsubgroup} that the action $H \acts (X_{e}^j, \mu_j)$ weakly contains all $H$-actions for every $j \in q$. Consequently, we can find measurable functions $\gamma_t : X_{e} \rightarrow p^{t S}$ for $t \in T$ satisfying the following two conditions. First, relative to each of the sets $X_{e}^j$, the $\gamma_t$'s will have distribution in measure close to the $\alpha_{t S}$'s, meaning that for all $t \in T$, $j \in q$, and $\pi \in p^S$, we have
	\begin{equation} \label{eqn:md_ec3}
		\left|\mu_j \Big( \gamma_t^{-1}(t \cdot \pi) \cap X_{e}^j \Big) - \nu_j \Big( \alpha_{t S}^{-1}(t \cdot \pi) \cap \phi^{-1}(X_{e}^j) \Big)\right| < |\Gamma : H| \epsilon / 2.
	\end{equation}
	Second, we control how the functions $\gamma_t$ relate to the action of $H$ and demand, in view of (\ref{eqn:md_ec2}), that $\mu_j(D_t^j) < |\Gamma : H| \epsilon / 2$ for all $t \in T$ and $j \in q$, where
	\begin{equation*}
		D_t^j = \bigcup_{s \in S} \{x \in X_{e}^j:  \gamma_{r(t s H)}(\rho(t H, s)^{-1} \cdot x)(r(t s H)) \neq \gamma_t(x)(t s)\}.
	\end{equation*}
	
	Define $\tilde{\alpha} : X \rightarrow p$ by setting $\tilde{\alpha}(x) = \gamma_t(t \cdot x)(t)$ when $t \in T$ and $x \in X_t$. Notice that when $x \in X_t^j \setminus t^{-1} \cdot D_t^j$ we have $t \cdot \tilde{\alpha}_S(x) = \gamma_t(t \cdot x)$, since for any $s \in S$ we have $s^{-1} \cdot x \in X_{r(t s H)}$ and
	\begin{align*}
		(t \cdot \tilde{\alpha}_S(x))(t s) = \tilde{\alpha}_S(x)(s) = \tilde{\alpha}(s^{-1} \cdot x) & = \gamma_{r(t s H)}(r(t s H) s^{-1} \cdot x)(r(t s H))\\
		& = \gamma_{r(t s H)}(\rho(t H, s)^{-1} t \cdot x)(r(t s H))
	\end{align*}
	and the final term above is equal to $\gamma_t(t \cdot x)(t s)$ since $t \cdot x \not\in D_t^j$. It follows that
	\begin{equation} \label{eqn:md_ec4}
		\Big( t \cdot ( \tilde{\alpha}_S^{-1}(\pi) \cap X_t^j ) \Big) \symd \Big( \gamma_t^{-1}(t \cdot \pi) \cap X_{e}^j \Big) \subseteq D_t^j.
	\end{equation}
	
	For $\pi \in p^S$ and $j \in q$, equation (\ref{eqn:md_ec1}) implies that
	\begin{equation*}
		\nu(\alpha_S^{-1}(\pi) \cap \phi^{-1}(X_t^j)) = \nu(\alpha_{t S}^{-1}(t \cdot \pi) \cap \phi^{-1}(X_{e}^j))
	\end{equation*}
	and equation (\ref{eqn:md_ec4}) implies
	\begin{equation*}
		\left| \mu(\tilde{\alpha}_S^{-1}(\pi) \cap X_t^j) - \mu(\gamma_t^{-1}(t \cdot \pi) \cap X_{e}^j) \right| \leq \mu(D_t^j) = |\Gamma : H|^{-1} \mu_j(D_t^j) < \epsilon.
	\end{equation*}
	Since $\mu_j$ and $\nu_j$ are the normalized restrictions of $\mu$ and $\nu$ to $X_{e}^j$ and $\phi^{-1}(X_{e}^j)$, respectively, and since $\mu(X_{e}^j) = \nu(\phi^{-1}(X_{e}^j))$, it follows from (\ref{eqn:md_ec3}) and the above two equations that
	$$\left| \nu(\alpha_S^{-1}(\pi) \cap \phi^{-1}(X_t^j)) - \mu(\tilde{\alpha}_S^{-1}(\pi) \cap X_t^j) \right| < \epsilon.$$
	We conclude that the action $\Gamma \acts (X, \mu)$ is existentially closed.
\end{proof}

Since the finitely generated free group $\bb F$ has property EMD (see \cite[Theorem 1]{kechrisweak} and, using different terminology, Bowen \cite{Bowen03}), the previous theorem generalizes \cite[Theorems 6.7 and 6.18]{BIH}.

The following is a nice application of Theorem \ref{thm:MD_EMD_ec_profinite}:

\begin{prop}\label{MDcriterion}
Suppose that $\Gamma$ has a coamenable normal subgroup $\Lambda$ such that $\Gamma/\Lambda$ is residually finite.  Further suppose that $\Lambda$ can be written as the increasing union of a sequence $(\Lambda_n)_{n\in \bb N}$ of subgroups such that:
\begin{itemize}
    \item Each $\Lambda_n$ has property MD;
    \item Each finite index subgroup of each $\Lambda_n$ is closed in the profinite topology on $\Gamma$.
\end{itemize}
Then $\Gamma$ has property MD.
\end{prop}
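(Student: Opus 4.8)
The plan is to appeal to Kechris's characterization of property MD recalled above: it is enough to show that $\Gamma$ is residually finite and that the profinite completion action $\Gamma \acts (\hat\Gamma \times [0,1],\, \mu_{\hat\Gamma} \times \lambda)$ is locally universal, i.e.\ weakly contains every {\pmp} action of $\Gamma$. That $\Gamma$ is residually finite is immediate: a nontrivial element of $\Gamma$ lying outside $\Lambda$ is detected by a finite quotient of $\Gamma/\Lambda$, while an element $\gamma \in \Lambda \setminus \{e\}$ lies in some $\Lambda_n$ and, as $\Lambda_n$ has property MD (hence is residually finite), is separated from $e$ by a finite-index subgroup $K \le \Lambda_n$; since $K$ is closed in the profinite topology on $\Gamma$ and $\gamma \notin K$, some finite-index subgroup of $\Gamma$ omits $\gamma$.

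\emph{Reduction via co-amenability.} Because $\Lambda$ is co-amenable in $\Gamma$, the action-theoretic analogue of the classical fact that $\pi \preceq \operatorname{Ind}_\Lambda^\Gamma \operatorname{Res}_\Lambda^\Gamma \pi$ for unitary representations of a co-amenable subgroup yields $a \preceq \operatorname{CInd}_\Lambda^\Gamma(a|_\Lambda)$ for every {\pmp} action $\Gamma \acts^a (X,\mu)$. By transitivity of weak containment it therefore suffices to prove that $\operatorname{CInd}_\Lambda^\Gamma(c)$ is weakly contained in $\Gamma \acts \hat\Gamma \times [0,1]$ for an arbitrary {\pmp} action $\Lambda \acts^c (Z,\rho)$.

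\emph{The local input from the profinite-topology hypothesis.} Fix $n$ and write $H = \Lambda_n$. A decreasing chain of finite-index subgroups of $H$ that all contain a fixed finite-index subgroup must stabilize; together with the hypothesis that every finite-index subgroup of $H$ is closed in the profinite topology on $\Gamma$, this shows that every finite-index subgroup of $H$ has the form $N \cap H$ for some finite-index subgroup $N \le \Gamma$. Consequently the closure $\overline H$ of $H$ inside $\hat\Gamma$ is isomorphic to the full profinite completion $\hat H$; the $H$-orbit closures in $\hat\Gamma$ are exactly the left cosets of $\overline H$, and on each of them $H$ acts as its own profinite completion action. Picking a measurable section of $\hat\Gamma \to \overline H \backslash \hat\Gamma$ gives an $H$-equivariant identification of $(\Gamma \acts \hat\Gamma)|_H$ with $H$ acting on $\hat H \times (\overline H \backslash \hat\Gamma)$ — equipped with the product of Haar measure and the pushforward of $\mu_{\hat\Gamma}$ — by its profinite completion action on the first coordinate and trivially on the second. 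Tensoring with the trivial action on $[0,1]$ in the target, $(\Gamma \acts \hat\Gamma \times [0,1])|_H \cong \hat H \times [0,1]_{\mathrm{triv}}$, which, since $H$ has property MD, is the locally universal action of $H$ and so weakly contains every {\pmp} action of $H$. Moreover the $\Gamma$-equivariant factor map $\hat\Gamma \to \widehat{\Gamma/\Lambda}$ is constant on left $\overline H$-cosets (as $H \subseteq \Lambda$), hence factors through the $H$-trivial factor $\overline H \backslash \hat\Gamma$; thus the ``$\hat H$-direction'' of $\hat\Gamma$ and its ``$\widehat{\Gamma/\Lambda}$-direction'' are independent.

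\emph{Assembly, and the main obstacle.} To test $\operatorname{CInd}_\Lambda^\Gamma(c) \preceq \Gamma \acts \hat\Gamma \times [0,1]$, fix finitely many sets, a finite $F \subseteq \Gamma$, and $\epsilon > 0$. Approximating, we may assume the sets depend on finitely many $\Lambda \backslash \Gamma$-coordinates and, on each, a finite partition of $(Z,\rho)$. Amenability of $\Gamma/\Lambda$ supplies a sufficiently $F$-invariant finite set of coset representatives, which expresses the $F$-action on $\operatorname{CInd}_\Lambda^\Gamma(c)$ over the relevant finite window of coordinates as a combination of coordinate shifts and finitely many ``cocycle'' elements of $\Lambda$; choose $n$ so that all these cocycle elements lie in $H = \Lambda_n$. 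One then realizes $c|_H$ inside $\hat\Gamma \times [0,1]$ using the $\hat H$-direction together with the spare $[0,1]$ factor (possible because $(\hat\Gamma \times [0,1])|_H$ is locally universal for $H$), realizes the finite-window coordinate-shift structure — a piece of a generalized Bernoulli shift of $\Gamma/\Lambda$ — inside the $\widehat{\Gamma/\Lambda}$-direction (possible because such a shift of the residually finite amenable group $\Gamma/\Lambda$ is weakly contained in its profinite completion action, by a random-subset argument in a finite quotient), and recombines these two independent realizations into a single partition of $\hat\Gamma \times [0,1]$ matching the $\Gamma$-statistics of $\operatorname{CInd}_\Lambda^\Gamma(c)$ on $F$ to within $\epsilon$. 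Since $\Gamma \acts \hat\Gamma \times [0,1]$ is in turn weakly contained in the genuinely profinite action $\Gamma \acts \hat\Gamma \times \{0,1\}^{\mathbb{N}}$ (trivial on the second factor), this shows every {\pmp} action of $\Gamma$ is weakly contained in a profinite action, so $\Gamma$ has property MD. The main difficulty lies precisely in the recombination: one must run the realization of the $\Lambda_n$-action and that of the coordinate-shift structure \emph{simultaneously} inside one copy of $\hat\Gamma \times [0,1]$, which forces one to bookkeep the non-normal closed subgroup $\overline{\Lambda_n} \le \hat\Gamma$, the two coset decompositions of $\hat\Gamma$ it determines, and the way the cocycle of $\operatorname{CInd}_\Lambda^\Gamma(c)$ sits against the profinite data; the remaining steps are routine weak-containment manipulations.
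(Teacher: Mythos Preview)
Your approach differs substantially from the paper's, and the hard ``assembly'' step you flag at the end is left as a sketch rather than a proof.

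The paper's argument is much shorter because it leans on the e.c.\ machinery developed earlier. It runs as follows: from the profinite-topology hypothesis, the restricted action $\Lambda_n \acts \hat\Gamma$ is isomorphic to $\Lambda_n \acts \widehat{\Lambda_n} \times (\text{trivial factor})$, hence is an existentially closed $\Lambda_n$-action by Theorem~\ref{thm:MD_EMD_ec_profinite}. Since every existential $L_\Lambda$-formula is an $L_{\Lambda_n}$-formula for large $n$, it follows that $\Lambda \acts \hat\Gamma$ is an e.c.\ $\Lambda$-action. Because $\Gamma/\Lambda$ is residually finite, the factor $\Gamma \acts \hat\Gamma/\bar\Lambda$ is a free $\Gamma/\Lambda$-action, so Theorem~\ref{thm:lift} (the coamenable e.c.\ lifting theorem) applies and gives that $\Gamma \acts \hat\Gamma$ is e.c., hence locally universal; MD follows. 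No co-induction, no explicit F{\o}lner bookkeeping, no recombination argument.

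Your route---reducing via $a \preceq \operatorname{CInd}_\Lambda^\Gamma(a|_\Lambda)$ and then approximating $\operatorname{CInd}_\Lambda^\Gamma(c)$ inside $\hat\Gamma \times [0,1]$ by separately handling the $\widehat{\Lambda_n}$-direction and the $\widehat{\Gamma/\Lambda}$-direction---is in the spirit of the Lubotzky--Shalom argument for property FD, and the paper's Remark following Theorem~\ref{thm:Limit_MD} explicitly notes that such an ``ad hoc'' proof avoiding e.c.\ actions can be given along the lines of \cite{BTD2}. So your strategy is not wrong. However, two points need work. First, the assertion $a \preceq \operatorname{CInd}_\Lambda^\Gamma(a|_\Lambda)$ for coamenable $\Lambda$ is not proved in the paper and deserves either a precise reference or an argument; it is the ergodic analogue of Eymard's characterization of coamenability, but you should not invoke it as if it were standard in this context. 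Second, and more seriously, your ``recombination'' paragraph is where all the content lives, and you only describe what has to be done rather than doing it: the interaction between the non-normal $\overline{\Lambda_n}$-coset structure, the cocycle of the co-induction, and the F{\o}lner tiling is genuinely delicate, and your sketch does not establish that the two realizations can be made simultaneously in a single copy of $\hat\Gamma \times [0,1]$ with the required accuracy. The paper's e.c.\ framework absorbs exactly this difficulty into Theorem~\ref{thm:lift}, whose proof already contains the careful coset-and-cocycle bookkeeping you allude to.
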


\begin{proof}
Since each finite index subgroup of each $\Lambda _n$ is closed in the profinite topology of $\Gamma$, the action of $\Lambda _n$ on $\hat{\Gamma}$ is isomorphic to the product of the action of $\Lambda _n$ on $\widehat{\Lambda}_n$ with an identity action of $\Lambda _n$, so this action of $\Lambda _n$ is existentially closed by Theorem \ref{thm:MD_EMD_ec_profinite}. Thus, the action of $\Lambda$ on $\hat{\Gamma}$ is existentially closed as well. The action of $\Gamma$ on $\hat{\Gamma}$ factors onto $\hat{\Gamma}/\bar{\Lambda}$, where $\bar{\Lambda}$ denotes the closure of $\Lambda$ in $\hat{\Gamma}$. Since $\Gamma /\Lambda$ is residually finite, the action of $\Gamma /\Lambda$ on $\hat{\Gamma}/\bar{\Lambda}$ is free. The action of $\Gamma$ on $\hat{\Gamma}$ therefore satisfies the hypotheses of Theorem \ref{thm:lift}, hence it is existentially closed. In particular, $\Gamma$ has property MD.
\end{proof}

A consequence of the previous proposition is the following, expanding the collection of examples of groups known to have property MD:

\begin{thm}\label{thm:Limit_MD}
Limit groups have property MD.
\end{thm}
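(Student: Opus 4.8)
The plan is to reduce to iterated extensions of centralizers and then induct, using Proposition~\ref{MDcriterion} to pass from one level of the construction to the next.

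First I would cut the problem down. By the Kharlampovich--Myasnikov and Sela embedding theorem, every limit group embeds into an \emph{iterated extension of centralizers} (ICE) group: a group $\Gamma$ built from a finitely generated free group $\Gamma_0$ by finitely many steps $\Gamma_{i+1}=\Gamma_i *_{C_i}(C_i\times\bb Z^{n_i})$ with $C_i=C_{\Gamma_i}(u_i)$ the centralizer of an element, and each such $\Gamma$ is again a limit group. By Lemma~\ref{lem:MDsubgroup}(1) it suffices to show that every ICE group has MD. Since limit groups are commutative-transitive, every $C_i$ is abelian and central in $C_i\times\bb Z^{n_i}$; and since $C_i\times\bb Z^{n_i}$ can be realized by $n_i$ successive rank-one extensions of the (growing) centralizer of $u_i$, I may assume $n_i=1$ for all $i$ and induct on the total number $h$ of steps. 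The base case $h=0$ is Kechris's theorem that finitely generated free groups have MD (indeed EMD).

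For the inductive step, write $\Gamma=\Gamma_{h+1}=\Gamma_h *_C(C\times\langle t\rangle)$ with $C=C_{\Gamma_h}(u)$, and assume $\Gamma_h$ has MD. Killing $\Gamma_h$ gives a surjection $\Gamma\to\bb Z$ (well defined since $C\subseteq\Gamma_h$), so $\Lambda:=\langle\langle\Gamma_h\rangle\rangle$ is a coamenable normal subgroup with residually finite quotient $\bb Z$. A Bass--Serre normal-form computation, using that $t$ centralizes $C$, identifies $\Lambda$ with the amalgam over $C$ of the conjugates $t^m\Gamma_h t^{-m}$, $m\in\bb Z$; setting $\Lambda_k:=\langle t^m\Gamma_h t^{-m}: |m|\le k\rangle\cong \Gamma_h *_C\cdots*_C\Gamma_h$ exhibits $\Lambda$ as an increasing union of finitely generated subgroups. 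To invoke Proposition~\ref{MDcriterion} I must verify that (a) each $\Lambda_k$ has MD, and (b) every finite-index subgroup of each $\Lambda_k$ is closed in the profinite topology of $\Gamma$. Point (b) follows from the fact that $\Gamma$, being a limit group, is LERF by Wilton's theorem (``Hall's theorem for limit groups''): as $\Lambda_k$ is finitely generated, $\Gamma$ induces the full profinite topology on it, so its finite-index subgroups are closed in $\Gamma$.

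\emph{The hard part will be (a).} Each $\Lambda_k$ is a finitely generated subgroup of the limit group $\Gamma$, hence again a limit group, but it need not have smaller ICE-height: already at $h=0$ the groups $\Lambda_k$ include closed surface groups, which have ICE-height $1$. So the induction cannot be run on the literal ICE-height; the remedy is to run it on a finer well-founded complexity of limit groups (in the sense of Sela/Kharlampovich--Myasnikov), arranged so that the amalgams $\Lambda_k$ of copies of $\Gamma_h$ along the maximal abelian subgroup $C$ are strictly simpler than $\Gamma$, and to handle the free-product building blocks of the hierarchy separately using closure of MD under free products. The geometric facts that should drive this are that passing to $\Lambda$ strictly shrinks the abelian subgroups (the maximal abelian subgroups of $\Lambda$, hence of $\Lambda_k$, are only conjugates of $C$, so the extra $\bb Z$ is lost), and that iterating Proposition~\ref{MDcriterion} peels off one $\bb Z$ at a time until the kernel becomes locally free, at which point the base case applies; but pinning down the correct complexity function and verifying that it strictly drops at every application of Proposition~\ref{MDcriterion} (with the separability bookkeeping throughout provided by Wilton's LERF theorem) is the genuine obstacle. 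All of the model-theoretic content is by this point encapsulated in Proposition~\ref{MDcriterion} and Theorem~\ref{thm:MD_EMD_ec_profinite}.
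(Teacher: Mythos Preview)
Your plan correctly identifies Proposition~\ref{MDcriterion} and Wilton's LERF theorem as the key tools, but you have made the problem much harder than it needs to be, and you yourself recognize that the induction does not close: the subgroups $\Lambda_k$ you produce at height $h+1$ are limit groups that need not have strictly smaller complexity, and your suggestion of replacing ICE-height by a finer Sela/KM hierarchy invariant is a genuine project, not a step one can wave through.

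The paper avoids this entirely. Instead of peeling off one $\bb Z$ at a time, it invokes a theorem of Kochloukova: every limit group $\Gamma$ has a \emph{free} normal subgroup $\Lambda$ with $\Gamma/\Lambda$ torsion-free nilpotent. One then writes $\Lambda$ as an increasing union of finitely generated free groups $\Lambda_n$; each $\Lambda_n$ has MD (Kechris), and Wilton's LERF gives the separability hypothesis. This is a \emph{single} application of Proposition~\ref{MDcriterion}---no induction, no complexity bookkeeping. The point is that Kochloukova does all of the structural descent for you in one stroke: the coamenable normal subgroup is already (locally) free, so the ``base case'' of your scheme is reached immediately.

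So the gap in your proposal is not a mistake but a missing input: you did not have Kochloukova's result available, and without it (or a substitute that produces a free normal coamenable subgroup) your inductive scheme does not terminate as written.
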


\begin{proof}
Let $\Gamma$ be a limit group. A result of Kochloukova\cite[Corollary B]{Kochloukova} is that $\Gamma$ has a free normal subgroup $\Lambda$ such that $\Gamma/\Lambda$ is torsion-free nilpotent (and, in particuar, residually finite).  Write $\Lambda$ as a union of an increasing sequence of finitely generated free subgroups $\Lambda_n$, whence each $\Lambda_n$ has property MD.  By a result of Wilton \cite{wilton}, limit groups are subgroup separable, meaning that each finitely generated subgroup of $\Gamma$ is closed in the profinite topology of $\Gamma$.  In particular, each finite index subgroup of each $\Lambda_n$ is closed in the profinite topology of $\Gamma$.  Hence $\Gamma$ has property MD by Proposition \ref{MDcriterion}.
\end{proof}

\begin{remark}
Theorem \ref{thm:Limit_MD} implies that limit groups have property FD, the representation theoretic analogue of MD introduced by Lubotzky and Shalom in \cite{Lubotzky-Shalom} (note that FD was introduced prior to MD). Property MD implies property FD by \cite{kechrisweak}, although the converse is open. Property FD for limit groups could also be deduced from \cite{Kochloukova} and \cite{wilton} by appealing to \cite[Corollary 2.5]{Lubotzky-Shalom}. While our Proposition \ref{MDcriterion} is an analogue of \cite[Corollary 2.5]{Lubotzky-Shalom}, its proof is conceptually a bit different, since it makes critical use of existentially closed actions. One may also give a somewhat ad hoc proof of Proposition \ref{MDcriterion}, avoiding the use of existentially closed actions, that more closely parallels the proof of \cite[Corollary 2.5]{Lubotzky-Shalom}, by using an approach similar to \cite{BTD2}.
\end{remark}

\subsection{Weakly mixing e.c. actions} \label{sub:weakmixing}

Recall that the {\pmp} action $\Gamma\acts^a X$ is \textbf{weakly mixing} if the product action $\Gamma\acts^{a\times a} X\times X$ is ergodic.  It follows from \cite[Theorems 4.3 and 6.6]{BIH} and \cite{KP} that there is a weakly mixing e.c.\ action of the free group.  In this subsection, we generalize this result to the case of any group without property (T) and in fact show that the ``model-theoretically generic'' e.c.\ action (in a sense we make precise below) is weakly mixing.  First, we need the following result, which is somewhat implicit in Bergelson's \cite{bergelson}: 

\begin{prop}
The action $\Gamma\acts X$ is weakly mixing if and only if:  for any measurable sets $A_1,\ldots,A_n,B_1,\ldots,B_n\subseteq X$ and any $\epsilon>0$, we have $$\bigcap_{i=1}^n\{\gamma\in \Gamma \ : \ |\mu(A_i\cap \gamma B_i)-\mu(A_i)\mu(B_i)|<\epsilon\}\not=\emptyset.$$
\end{prop}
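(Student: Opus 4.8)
The plan is to rephrase the displayed condition in terms of the Koopman representation and then to invoke the standard representation-theoretic description of weak mixing. Write $H:=L^2_0(X,\mu)$ for the orthogonal complement of the constants in $L^2(X,\mu)$ and let $\kappa$ be the Koopman representation of $\Gamma$ on $H$, so $\kappa(\gamma)f=f\circ\gamma^{-1}$. For measurable $A\subseteq X$ put $\hat A:=\mathbf 1_A-\mu(A)\mathbf 1\in H$. Since $\mu$ is $\Gamma$-invariant, a one-line computation gives
\[\langle\kappa(\gamma)\hat B,\hat A\rangle=\mu(A\cap\gamma B)-\mu(A)\mu(B)\qquad(A,B\subseteq X,\ \gamma\in\Gamma).\]
The finite linear combinations of the vectors $\hat A$ are exactly the mean-zero simple functions, hence dense in $H$, and $|\langle\kappa(\gamma)g,f\rangle|\le\|f\|\,\|g\|$ uniformly in $\gamma$; also, applying the displayed condition to an arbitrary finite list of \emph{pairs} of sets controls all the ``mixed'' correlations $\mu(A\cap\gamma B)-\mu(A)\mu(B)$ coming from a finite collection of sets. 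A routine approximation argument then shows that the condition in the proposition is equivalent to the assertion
\[(\star)\qquad\text{for all }f_1,\dots,f_n,g_1,\dots,g_n\in H\text{ and }\epsilon>0\text{ there is }\gamma\in\Gamma\text{ with }|\langle\kappa(\gamma)g_i,f_i\rangle|<\epsilon\text{ for all }i.\]
So it remains to prove that $a$ is weakly mixing if and only if $(\star)$ holds.

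I would first record the classical equivalence: $a$ is weakly mixing if and only if $\kappa$ has no nonzero finite-dimensional invariant subspace. (Briefly: $a\times a$ is ergodic iff the only invariant vectors of $\kappa\otimes\kappa$ on $L^2(X\times X)=\mathbb C\mathbf 1\oplus H\oplus H\oplus(H\otimes H)$ are the constants, equivalently iff $\kappa\otimes\kappa$ has no nonzero invariant vector in $H\otimes H$; since the Koopman representation is self-conjugate, $H\otimes H\cong H\otimes\bar H\cong\operatorname{HS}(H)$ equivariantly, so this says $\kappa$ has no nonzero Hilbert--Schmidt self-intertwiner, which is equivalent to $\kappa$ having no nonzero finite-dimensional invariant subspace because the absolute value of such an intertwiner is a nonzero positive compact intertwiner whose nonzero eigenspaces are finite-dimensional and invariant.) Granting this, $(\star)\Rightarrow$ weak mixing is the easy direction: if $W\subseteq H$ were a nonzero finite-dimensional invariant subspace with orthonormal basis $e_1,\dots,e_d$, the matrix $\big(\langle\kappa(\gamma)e_j,e_i\rangle\big)_{i,j}$ would represent the unitary $\kappa(\gamma)|_W$, whence $\sum_{i,j}|\langle\kappa(\gamma)e_j,e_i\rangle|^2=d$ for every $\gamma$; applying $(\star)$ to the $d^2$ pairs $(e_i,e_j)$ with $\epsilon=\tfrac1{2\sqrt d}$ would force this sum to be less than $d/4$, a contradiction.

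For the converse, assume $\kappa$ has no nonzero finite-dimensional invariant subspace; the same then holds for the diagonal representation $\kappa^{\oplus n}$ on $H^{n}$, since the $\Gamma$-equivariant coordinate projections $H^{n}\to H$ carry any finite-dimensional invariant subspace of $H^{n}$ onto finite-dimensional invariant subspaces of $H$. Now I use the Jacobs--de Leeuw--Glicksberg decomposition of a unitary representation $\rho$ into a compact part (the closed span of its finite-dimensional subrepresentations) and a weakly mixing part (the set of vectors $v$ with $0$ in the weak closure of the orbit $\{\rho(\gamma)v\}$). As $\kappa^{\oplus n}$ has trivial compact part, every vector of $H^{n}$ lies in its weakly mixing part; applied to $(g_1,\dots,g_n)$ this gives that $0$ lies in the weak closure of $\{\kappa^{\oplus n}(\gamma)(g_1,\dots,g_n):\gamma\in\Gamma\}$. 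Intersecting the basic weak neighbourhoods of $0$ determined by the $n$ functionals $(w_1,\dots,w_n)\mapsto\langle w_i,f_i\rangle$ then yields, for each $\epsilon>0$, a single $\gamma$ with $|\langle\kappa(\gamma)g_i,f_i\rangle|<\epsilon$ for all $i$; this is precisely $(\star)$.

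The only step that is not bookkeeping is the Jacobs--de Leeuw--Glicksberg splitting, equivalently the statement that a unitary representation with no nonzero finite-dimensional subrepresentation has $0$ in the weak closure of every orbit; this is the part that is ``implicit in Bergelson's \cite{bergelson}'', and I would cite it rather than reprove it (one proof: take a minimal idempotent in the weak-operator closure of $\kappa(\Gamma)$, a compact semitopological semigroup of contractions). For amenable $\Gamma$ one can avoid it altogether, since the mean ergodic theorem along a F{\o}lner sequence makes each set $\{\gamma:|\mu(A_i\cap\gamma B_i)-\mu(A_i)\mu(B_i)|<\epsilon\}$ of density one and density-one sets are closed under finite intersection; the representation-theoretic route has the advantage of handling all countable groups at once. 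Everything else --- the set/vector dictionary, the passage from matched to mixed pairs and from simple functions to all of $H$, and the Hilbert--Schmidt computation --- is routine.
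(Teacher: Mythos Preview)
Your proof is correct, but it is organized differently from the paper's. The paper treats the two implications asymmetrically: for weak mixing $\Rightarrow$ nonempty intersection it quotes \cite[Theorem 4.7]{bergelson}, which says each set $\{\gamma:|\mu(A_i\cap\gamma B_i)-\mu(A_i)\mu(B_i)|<\epsilon\}$ is a central$^*$ subset of $\Gamma$, and then uses that central$^*$ sets have the finite intersection property; for the converse it quotes \cite[Exercise 21]{bergelson}, which characterizes weak mixing by the diagonal condition $\bigcap_i\{\gamma:|\langle U_\gamma f_i,f_i\rangle|<\epsilon\}\neq\emptyset$ for $f_i\in L^2_0$, and then does exactly the simple-function approximation you describe to reduce to sets.

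You instead pass symmetrically through the off-diagonal condition $(\star)$ and prove $(\star)\Leftrightarrow$ weak mixing by hand: one direction via the Frobenius-norm obstruction from a finite-dimensional subrepresentation, the other via Jacobs--de Leeuw--Glicksberg applied to $\kappa^{\oplus n}$. This avoids the central$^*$ machinery entirely and makes the representation-theoretic content explicit; the price is that you invoke JdLG as a black box (which, as you note, is itself what lies behind Bergelson's results). The paper's route has the side benefit of actually proving the stronger fact that each individual set in the intersection is central$^*$, not merely that the intersection is nonempty; your route gives exactly the stated equivalence and no more, but with a cleaner logical structure.
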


\begin{proof}
First suppose that the action is weakly mixing.  By \cite[Theorem 4.7]{bergelson}, each set appearing in the above intersection is a central* subset of $\Gamma$.  Since the family of central* subsets of $\Gamma$ has the finite intersection property, we see that the above intersection is nonempty.

Now suppose that the above condition holds.  In order to show that the action is weakly mixing, by \cite[Exercise 21]{bergelson}, it suffices to show:  for any $f_1,\ldots,f_n\in L^2_0(X)$ and any $\epsilon>0$, we have
$$\bigcap_{i=1}^n\{\gamma\in \Gamma \ : |\langle U_\gamma f_i,f_i\rangle|<\epsilon\}\not=\emptyset.$$ Here, $U_\gamma$ is the Koopman representation associated to the action and $L^2_0(X)$ is the orthogonal complement of the subspace of $L^2(X)$ consisting of vectors of integral $0$.  For each $i=1,\ldots,n$, take simple functions $h_1,\ldots,h_n\in L^2(X)$ such that $\|f_i-h_i\|_2<\delta$ for some sufficiently small $\delta<\sqrt{\frac{\epsilon}{3}}$ so that $$\left|\langle U_\gamma f_i,f_i\rangle|-|\langle U_\gamma h_i,h_i\rangle\right|<\frac{\epsilon}{3}$$ for all $i=1,\ldots,n$ and all $\gamma \in \Gamma$.  Write $h_i=\sum_{j} c_{ij}1_{A_{ij}}$.  Then
$$\langle U_\gamma h_i,h_i\rangle=\sum_{j,k}c_{ij}\overline{c_{ik}}\mu(\gamma A_{ij}\cap A_{ik}).$$ Fix $\eta>0$ so that $\eta\sum_{j,k}|c_{ij}c_{ik}|<\frac{\epsilon}{3}$ for all $i=1,\ldots,n$.  By assumption, there is $\gamma\in \Gamma$ such that $|\mu(\gamma A_{ij}\cap A_{ik})-\mu(A_{ij})\mu(A_{ik})|<\eta$ for each $i,j,k$.  It follows that $\left||\langle U_\gamma h_i,h_i\rangle|-|\sum_{jk}c_{ij}\overline{c_{ik}}\mu(A_{ij})\mu(A_{ik})|\right|\leq\sum_{jk}|c_{ij}\overline{c_{ik}}|\eta<\frac{\epsilon}{3}$. But $$\left|\sum_{jk}c_{ij}\overline{c_{ik}}\mu(A_{ij})\mu(A_{ik})\right|=\left|\int h_i\right|^2=\left|\int (f_i-h_i)\right|^2\leq \|f_i-h_i\|_2^2<\delta^2<\frac{\epsilon}{3}.$$  Consequently, for this $\gamma \in \Gamma$ and all $i=1,\ldots,n$, we have $|\langle U_\gamma f_i,f_i\rangle|<\epsilon$, as desired.
\end{proof}

The previous proposition shows that weak mixing can be written in an infinitary first-order way of a particularly simple form.  Recall that a \textbf{$\forall\bigvee\exists$-sentence} is one of the form $\sup_x\bigvee_{n\in \bb N}\varphi_n(\vec x)$, where each $\varphi(x)$ is an existential formula.  We call a class $\mathcal{C}$ of models of $T_\Gamma$ \textbf{$\forall\bigvee\exists$-axiomatizable} if there is a collection $\sigma_n$ of $\forall\bigvee\exists$-sentences such that, for all $\mathcal{M}\models T_\Gamma$, we have $\mathcal{M}\in \mathcal{C}$ if and only if $\mathcal{M}\models\sigma_n$ for each $n\in \bb N$.

\begin{cor}\label{weakAVE}
The class of weakly mixing models of $T_\Gamma$ is $\forall\bigvee\exists$-axiomatizable.
\end{cor}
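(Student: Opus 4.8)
The plan is to read the axioms off directly from the preceding Proposition. For each $n \in \bb N$ I would define the $L_\Gamma$-sentence
$$\sigma_n := \sup_{x_1,\dots,x_n,y_1,\dots,y_n}\ \bigvee_{\gamma \in \Gamma}\ \max_{1 \le i \le n}\, \bigl| \mu(x_i \cap u_\gamma y_i) - \mu(x_i)\mu(y_i) \bigr|,$$
where, as is standard in continuous logic, $\bigvee$ is interpreted as an infimum. The claim is then that $\{\sigma_n : n \in \bb N\}$ is a $\forall\bigvee\exists$-axiomatization of the class of weakly mixing models of $T_\Gamma$.

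The first step is to check that each $\sigma_n$ has the required syntactic shape. For a fixed $\gamma \in \Gamma$, the expression $\max_{1 \le i \le n} |\mu(x_i \cap u_\gamma y_i) - \mu(x_i)\mu(y_i)|$ is a quantifier-free $L_\Gamma$-formula, built from the Boolean operation symbols, the unary function symbols $u_\gamma$, the measure predicate, and the continuous connectives $(r,s) \mapsto |r - s|$, $(r,s) \mapsto rs$, and finite $\max$; in particular it is an existential formula (an infimum over the empty tuple of variables). Since $\Gamma$ is countable, $\bigvee_{\gamma \in \Gamma}$ is a countable disjunction, and the remaining $2n$ variables are all bound by a single $\sup$. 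Hence $\sigma_n$ is a $\forall\bigvee\exists$-sentence in the sense defined above.

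The second step is the semantic translation. Given a {\pmp} action $\Gamma \acts^a (X, \mu)$ with associated model $\cal M_a$, unwinding the interpretation shows that $\sigma_n^{\cal M_a} = 0$ if and only if for every tuple $A_1,\dots,A_n,B_1,\dots,B_n$ from $\Malg(X)$ and every $\epsilon > 0$ there exists $\gamma \in \Gamma$ with $\bigl|\mu(A_i \cap \gamma^a B_i) - \mu(A_i)\mu(B_i)\bigr| < \epsilon$ for all $i \le n$; equivalently, such that $\gamma$ lies in $\bigcap_{i=1}^{n} \{\gamma \in \Gamma : |\mu(A_i \cap \gamma B_i) - \mu(A_i)\mu(B_i)| < \epsilon\}$. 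Thus $\cal M_a \models \sigma_n$ for every $n$ exactly when the criterion of the previous Proposition is satisfied for all finite families of measurable sets, i.e. exactly when $a$ is weakly mixing. Since, under the duality between models of $T_\Gamma$ and {\pmp} actions, every model of $T_\Gamma$ is of the form $\cal M_a$, this yields the corollary.

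I do not anticipate a genuine obstacle: the mathematical content has been isolated in the Proposition, and what remains is bookkeeping about the logical fragment. The one point worth stating explicitly is that a quantifier-free formula counts as a (degenerate) existential formula, so that each disjunct of $\sigma_n$ genuinely sits inside a $\forall\bigvee\exists$-sentence; everything else is routine.
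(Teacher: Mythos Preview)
Your proposal is correct and is essentially identical to the paper's own proof: the paper defines exactly the same sentences $\sigma_n$ and observes that $\sigma_n^{\cal M_a}=0$ for all $n$ is equivalent to the criterion of the preceding Proposition. You are simply a bit more explicit about the syntactic check and the semantic unwinding, neither of which the paper spells out.
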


\begin{proof}
For each $n$, let $\sigma_n$ be the sentence
$$\sup_{x_1,\ldots,x_n,y_1,\ldots,y_n}\bigvee_{\gamma \in \Gamma}\max_{i=1,\ldots,n}|\mu(x_i\cap \gamma y_i)-\mu(x_i)\mu(y_i)|.$$
Then an action $a$ is weakly mixing if and only if $\sigma_n^{\cal M_a}=0$ for all $n\in \bb N$.
\end{proof}

The following corollary uses the notion of an \textbf{enforceable property}, which is a model-theoretic notion of genericity; a precise definition can be found in the first author's article \cite{enforceable}.

\begin{cor}
Suppose that $\Gamma$ is a group without property (T).  Then being a weakly mixing action is an enforceable property of actions of $\Gamma$.
\end{cor}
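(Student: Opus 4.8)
The plan is to deduce the corollary from Corollary \ref{weakAVE}, the Kerr--Pichot genericity theorem \cite{KP}, and the basic theory of enforceable properties developed in \cite{enforceable}. The two essential inputs are that weak mixing is $\forall\bigvee\exists$-axiomatizable (hence ``$G_\delta$'') and that, for groups without property (T), the weakly mixing actions are topologically generic in the space of all actions.

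First I would set up the building (forcing) game for $T_\Gamma$: the two players alternately construct an increasing chain of conditions, each describing a finite piece of a free $\pmp$ action of $\Gamma$, and the enforceable model is the standard action obtained by completing the union of the chain. Since the enforceable model is e.c., it is free and atomless (as in Lemma \ref{ecfree}), so it may be realized on a fixed standard atomless space $(X,\mu)$, and each condition may be read off as a basic open neighbourhood in $A(\Gamma,X,\mu)$ of the partial action it specifies, with passage to an extension corresponding to shrinking the open set. Modulo this dictionary the building game becomes a Banach--Mazur game on $A(\Gamma,X,\mu)$, and for a conjugacy-invariant $G_\delta$ set $\mathcal C\subseteq A(\Gamma,X,\mu)$ the statement that the enforceable model lies in $\mathcal C$ is an enforceable property precisely when $\mathcal C$ is comeager.

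Next I would check that the weakly mixing actions form such a class. By Corollary \ref{weakAVE} they are $\forall\bigvee\exists$-axiomatizable by the sentences $\sigma_n$, and unwinding the definition of $\sigma_n$ together with the uniform continuity of the measure predicate shows that $\{a\in A(\Gamma,X,\mu):\sigma_n^{\mathcal M_a}=0\}$ is $G_\delta$ for each $n$; intersecting over $n$, the set of weakly mixing actions is $G_\delta$ in $A(\Gamma,X,\mu)$. Finally I would invoke Kerr--Pichot \cite{KP}: since $\Gamma$ does not have property (T), the weakly mixing actions are dense in $A(\Gamma,X,\mu)$, hence form a dense $G_\delta$ and in particular a comeager set; by the previous paragraph, being weakly mixing is therefore enforceable.

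The step I expect to be the main obstacle is making the dictionary between the $T_\Gamma$-building game and the Banach--Mazur game on $A(\Gamma,X,\mu)$ precise enough to genuinely transport comeagerness to enforceability --- in particular, verifying that the density input of \cite{KP}, which concerns actions on a fixed space, can be applied at the level of individual conditions so that the weakly mixing witnesses persist into the completed generic action. Should that translation prove cumbersome, I would instead argue purely inside the game: using that enforceable properties are closed under countable conjunction, it suffices to enforce each property ``$\sigma_n\le 1/k$'' separately, and one enforces ``$\sigma_n\le 1/k$'' by a bookkeeping strategy in which, each time the opponent commits a tuple of elements, one uses the abundance of weakly mixing behaviour supplied by \cite{KP} to extend the current condition so that some $\gamma\in\Gamma$ witnesses $\max_i|\mu(x_i\cap\gamma y_i)-\mu(x_i)\mu(y_i)|\le 1/k$ for the relevant sub-tuples, uniform continuity then propagating the estimate to the completion.
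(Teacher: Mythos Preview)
Your approach is correct in spirit, and in particular your fallback argument (enforce each ``$\sigma_n\le 1/k$'' separately using closure of enforceable properties under countable conjunction, extending conditions via Kerr--Pichot) is exactly the right idea. However, the paper's proof is much shorter and avoids the Banach--Mazur translation you worry about: it simply observes that Kerr--Pichot yields a \emph{locally universal} weakly mixing action of $\Gamma$, and then invokes \cite[Proposition~2.6]{enforceable}, which says directly that any $\forall\bigvee\exists$-axiomatizable property possessed by some locally universal structure is enforceable. That abstract proposition packages precisely your fallback bookkeeping argument once and for all, so there is no need to set up a dictionary with $A(\Gamma,X,\mu)$ or to worry about realizing the generic model on a fixed space mid-game.

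The practical difference: your route requires either carefully identifying conditions in the $T_\Gamma$-building game with basic open sets in $A(\Gamma,X,\mu)$ (which, as you note, is where the difficulty lies --- the compiled model is only standard a posteriori), or reproving by hand what \cite[Proposition~2.6]{enforceable} already gives. The paper's route trades the topological genericity statement from \cite{KP} for the slightly stronger consequence that a locally universal weakly mixing action exists, and then everything is a one-line citation. Both arguments use the same two ingredients (Corollary~\ref{weakAVE} and \cite{KP}); the paper just leans on the existing abstract machinery rather than rebuilding it.
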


\begin{proof}
By a result of Kerr and Pichot \cite{KP}, since $\Gamma$ does not have property (T), there is a locally universal weakly mixing action of $\Gamma$.  Using this fact, Corollary \ref{weakAVE}, and \cite[Proposition 2.6]{enforceable}, the result follows.
\end{proof}

Since being e.c.\ is also an enforceable property (\cite[Proposition 2.10]{enforceable}) and since the conjunction of two enforceable properties is also enforceable, we get:

\begin{cor}
Suppose that $\Gamma$ is a group without property (T).  Then there is an e.c.\ weakly mixing action of $\Gamma$.
\end{cor}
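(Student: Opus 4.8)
The plan is to invoke the machinery of \emph{enforceable properties} and simply combine the two ingredients that have already been put in place. Recall the setup: inside the axiomatizable class $\mathcal{K}_\Gamma$ of {\pmp} actions of $\Gamma$ one plays the enforcement game, in which two players alternately commit finite amounts of information and in the limit cooperatively build a separable model of $T_\Gamma$, i.e.\ a {\pmp} action of $\Gamma$. A property $P$ of such actions is \textbf{enforceable} if one of the players has a strategy guaranteeing that the action produced at the end of the game has property $P$. The abstract fact we need, established in the first author's \cite{enforceable}, is that the conjunction of countably many enforceable properties is again enforceable — one interleaves the relevant strategies, assigning to each property a cofinal block of rounds in which its strategy is followed. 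Consequently there is a single ``enforceable'' action that simultaneously satisfies \emph{every} enforceable property.

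With this in hand the argument is immediate. By \cite[Proposition 2.10]{enforceable}, being existentially closed is an enforceable property in any axiomatizable class possessing e.c.\ members, so in particular it is an enforceable property of {\pmp} actions of $\Gamma$. By the previous corollary, under the standing hypothesis that $\Gamma$ does not have property (T), being weakly mixing is also an enforceable property of {\pmp} actions of $\Gamma$; recall that this was deduced from the $\forall\bigvee\exists$-axiomatizability of weak mixing (Corollary \ref{weakAVE}), the Kerr--Pichot construction of a locally universal weakly mixing action, and \cite[Proposition 2.6]{enforceable}. Taking the conjunction of these two enforceable properties, we obtain an action $\Gamma\acts^a X$ that is at once e.c.\ and weakly mixing, which is what the corollary asserts. (In fact this shows slightly more: the ``generic'' e.c.\ action of $\Gamma$ — the enforceable action — is weakly mixing.)

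There is no real obstacle here, as the substantive work was carried out in developing the enforceable-property framework and in proving the preceding corollary. The only point meriting a line of care is that the two enforcement statements must be read as taking place in the \emph{same} ambient class, namely $\mathcal{K}_\Gamma$, so that the general ``conjunction of enforceable properties is enforceable'' principle applies without modification; this is automatic, since both inputs are already phrased for exactly this class.
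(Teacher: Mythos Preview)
Your proof is correct and follows essentially the same approach as the paper: both argue that being e.c.\ is enforceable (citing \cite[Proposition 2.10]{enforceable}), that being weakly mixing is enforceable by the preceding corollary, and that the conjunction of enforceable properties is enforceable, yielding the desired action.
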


Note that if a group has property (T), then no locally universal action of it can be weakly mixing (or even ergodic), whence the previous corollary is optimal.

\section{Generalities on model companions of meaure-preserving actions}

In this section, we establish a number of interesting general results on the existence of the model companion for $T_\Gamma$.  A consequence of these results is that $T_\Gamma$ exists whenever $\Gamma$ is a universally free group, a fact we generalize later in Section \ref{approximatelytreeable}.  We conclude this section with an open mapping criterion for the existence of the model companion for $T_\Gamma$.

\subsection{Definition of model companions}\label{modcompsubsection}

\begin{defn}
We say that \textbf{the model companion of $T_\Gamma$ exists} if there is a set $T$ of $L_\Gamma$-sentences such that, for all $\cal M_a\models T_\Gamma$, we have $\cal M_a\models T$ if and only if $a$ is an e.c.\ action.  In this case, there is a unique such theory $T$ (up to logical equivalence), which we denote by $T_\Gamma^*$.
\end{defn}

\begin{remark}
The model theorist will recognize that this is not the official definition of the model companion but is rather an equivalent reformulation (which holds in our context since the theory $T_\Gamma$ is $\forall\exists$-axiomatizable).
\end{remark}

There is a useful test for when the model companion exists:  

\begin{fact}
$T_\Gamma^*$ exists if and only if:  whenever $(\cal M_{a_i})_{i\in I}$ is a family of e.c.\ actions of $\Gamma$ and $\u$ is an ultrafilter on $I$, we have that $\prod_\u \cal M_{a_i}$ is also an e.c.\ action of $\Gamma$.
\end{fact}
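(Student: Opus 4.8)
This is the standard model-companion test for inductive theories, so the plan is to run the classical argument in the continuous setting, using only that $T_\Gamma$ is $\forall\exists$-axiomatizable and that e.c.\ actions exist. The forward implication is immediate: if $T_\Gamma^*$ exists then, among models of $T_\Gamma$, being an e.c.\ action is the same as satisfying $T_\Gamma^*$, so if each $\cal M_{a_i}$ is e.c.\ then {\L}o\'s's theorem for continuous logic gives $\prod_\u\cal M_{a_i}\models T_\Gamma\cup T_\Gamma^*$; since $\prod_\u\cal M_{a_i}$ is the structure attached to the ultraproduct action $\Gamma\acts^{\prod_\u a_i}\prod_\u(X_i,\mu_i)$ described in Subsection~\ref{ultrasub}, that action is e.c.

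For the converse, suppose the class $\mathcal E$ of e.c.\ models of $T_\Gamma$ is closed under ultraproducts, and put $T_\Gamma^*:=\Th(\mathcal E)$, the set of all $L_\Gamma$-conditions holding in every member of $\mathcal E$; this is consistent because e.c.\ actions exist, and it contains $T_\Gamma$. It is enough to show $\operatorname{Mod}(T_\Gamma^*)=\mathcal E$, which is exactly the assertion that $T_\Gamma^*$ is the model companion. One inclusion is by definition; for the other, fix $\cal M\models T_\Gamma^*$. A routine compactness argument — the continuous-logic form of the fact that a model of $\Th(\mathcal K)$ embeds elementarily into an ultraproduct of members of $\mathcal K$ (cf.\ \cite{bbhu}) — produces an elementary embedding of $\cal M$ into some ultraproduct $\prod_\u\cal M_i$ with each $\cal M_i\in\mathcal E$. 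By hypothesis $\prod_\u\cal M_i\in\mathcal E$, so $\cal M$ is an elementary substructure of an e.c.\ model, and everything now reduces to the claim, which I expect to be the main obstacle, that an elementary substructure of an e.c.\ model of $T_\Gamma$ is again e.c.; this is exactly what turns ``$\mathcal E$ is closed under ultraproducts'' into ``$\mathcal E$ is an elementary class.''

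To prove that claim, let $\cal N$ be an elementary substructure of an e.c.\ model $\cal N'$, let $\cal N\subseteq\cal M_0\models T_\Gamma$, and aim to show $\bigl(\inf_{\bar y}\varphi(\bar a,\bar y)\bigr)^{\cal M_0}\geq\bigl(\inf_{\bar y}\varphi(\bar a,\bar y)\bigr)^{\cal N}$ for every quantifier-free $\varphi(\bar x,\bar y)$ and every tuple $\bar a$ from $\cal N$; the reverse inequality is automatic from $\cal N\subseteq\cal M_0$, and together they say that $\cal N$ is e.c.\ in $\cal M_0$. The key move is to amalgamate $\cal M_0$ and $\cal N'$ over $\cal N$ into a model $\cal M'\models T_\Gamma$ equipped with an inclusion $\cal N'\subseteq\cal M'$ and an embedding $\cal M_0\hookrightarrow\cal M'$ fixing $\cal N$ pointwise. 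Such an $\cal M'$ exists: the conditions $\operatorname{Diag}(\cal M_0)\cup\operatorname{Diag}(\cal N')\cup(T_\Gamma)_\forall$, with the two copies of $\cal N$ identified, are finitely satisfiable inside $\cal M_0$, because the quantifier-free facts contributed by $\cal N'$ over $\bar a$ amount to existential assertions which descend to $\cal N$ (as $\cal N$ is an elementary substructure of $\cal N'$) and reappear in $\cal M_0$ (as $\cal N\subseteq\cal M_0$ and existential formulas are preserved under extension), and any model of $(T_\Gamma)_\forall$ embeds into a model of $T_\Gamma$ since $T_\Gamma$ is $\forall\exists$. Given such an $\cal M'$, the value of $\inf_{\bar y}\varphi(\bar a,\bar y)$ does not increase along $\cal M_0\hookrightarrow\cal M'$, does not decrease from $\cal N'$ to $\cal M'$ since $\cal N'$ is e.c.\ and $\cal N'\subseteq\cal M'\models T_\Gamma$, and is unchanged from $\cal N$ to $\cal N'$ since $\cal N$ is an elementary substructure of $\cal N'$; stringing these together yields the desired inequality.

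Apart from this claim, the ingredients — {\L}o\'s's theorem, the compactness description of $\operatorname{Mod}(\Th(\mathcal E))$, and the fact that models of $(T_\Gamma)_\forall$ extend to models of $T_\Gamma$ — are routine continuous model theory, subject only to the customary care that infima over structures need not be attained.
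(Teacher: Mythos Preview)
Your proof is correct and follows essentially the same approach as the paper: both rely on the abstract characterization of elementary classes (closure under isomorphism, ultraproducts, and ultraroots, cf.\ \cite[Proposition~5.14]{bbhu}) together with the general fact that the class of e.c.\ models of an inductive theory is closed under elementary substructures. The paper simply cites this characterization, while you spell out the amalgamation argument for the elementary-substructure step explicitly.
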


The proof of the previous fact hings on an abstract characterization of when a class of structures in some language is the set of models of some theory, namely when the class is closed under isomorphism, ultraproduct, and ultraroot (see, for example, \cite[Proposition 5.14]{bbhu}).  In general, the class of e.c.\ structures is closed under elementary substructures, whence the nontrivial closure condition in the previous fact is that of being closed under ultraproducts.

As mentioned in the introduction, when $\Gamma$ is amenable, $T_\Gamma^*$ exists and $T_\Gamma^*=T_{\Gamma,free}$.  By a recent result of Berenstein, Ibarlucia, and Henson \cite{BIH}, for any finitely generated free group $\bb F$, we have that $T_{\bb F}^*$ exists.  We will generalize this fact in a number of ways throughout this paper.

\subsection{Model-theoretic shenanigans}

In this subsection, we establish some purely model-theoretic results; these results will be applied in the next subsection to the case of actions.

\begin{defn}
Suppose that $L_1\subseteq L_2$ are languages, $T_2$ is an $L_2$-theory, and $T_1:=T_2|L_1$, that is, the set of sentences in $T_2$ that are actually $L_1$-sentences.  We say that the pair $(T_1,T_2)$ has the:
\begin{itemize}
    \item \textbf{expansion property} if, given any $\cal M\models T_1$, there is $F(\cal M)\models T_2$ such that $\cal M\subseteq F(\cal M)|L_1$;
    \item \textbf{relative expansion property} if, given any $\cal N\models T_2$ and $\cal M\models T_1$ with $\cal N|L_1\subseteq \cal M$, then there is $G(\cal M,\cal N)\models T_2$ with $\cal M\subseteq G(\cal M,\cal N)|L_1$ and $\cal N\subseteq G(\cal M,\cal N)$.
\end{itemize}
\end{defn}

\begin{remark}
In the notation of the previous definition, if $L_2$ is countable, then a simple compactness argument implies that, to show that the pair $(T_1,T_2)$ has the (relative) expansion property, it suffices to consider only countable models.
\end{remark}

\begin{remark}
In the next section, we will suppose that $\Lambda$ is a subgroup of $\Gamma$ and work in the setting of the previous definition with $L_1:=L_\Lambda$, $L_2:=L_\Gamma$, and $T_2:=T_\Gamma$, so that $T_1=T_\Lambda$.  We will then show that $(T_\Lambda,T_\Gamma)$ has both the expansion property and the relative expansion property.
\end{remark}

\begin{lem}\label{mcdown}
Suppose that the pair $(T_1,T_2)$ has both the expansion property and the relative expansion property.  Further suppose that both $T_1$ and $T_2$ have the amalgamation property and that $T_2$ is $\forall\exists$-axiomatizable.  Then if $T_2$ has a model companion, then so does $T_1$.
\end{lem}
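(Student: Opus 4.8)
The plan is to verify the standard ultraproduct criterion for $T_1$: I will show that the e.c.\ models of $T_1$ are cofinal (every model of $T_1$ embeds in one) and that the class of e.c.\ models of $T_1$ is closed under ultraproducts. Since e.c.\ models are always closed under elementary substructures, and hence under ultraroots, the latter closure property makes this class elementary, and one checks in the usual way that its axiomatization is the model companion $T_1^*$.

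The key first ingredient is a \emph{downward transfer}: if $\cal N\models T_2$ is e.c., then $\cal N|L_1$ is an e.c.\ model of $T_1$. To see this, let $\cal N|L_1\subseteq\cal M'$ with $\cal M'\models T_1$; the relative expansion property produces $\cal N'\models T_2$ with $\cal M'\subseteq\cal N'|L_1$ and $\cal N\subseteq\cal N'$. Since $\cal N$ is e.c.\ in $\cal N'$, for every quantifier-free $L_1$-formula $\varphi(x,y)$ and tuple $\bar a$ from $\cal N$ we get $(\inf_y\varphi(\bar a,y))^{\cal N|L_1}=(\inf_y\varphi(\bar a,y))^{\cal N'|L_1}$, and the chain $\cal N|L_1\subseteq\cal M'\subseteq\cal N'|L_1$ squeezes $(\inf_y\varphi(\bar a,y))^{\cal M'}$ to the same value, so $\cal N|L_1$ is e.c.\ in $\cal M'$. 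Cofinality now follows: given $\cal M\models T_1$, use the expansion property to embed $\cal M$ into $F(\cal M)|L_1$ with $F(\cal M)\models T_2$, enlarge $F(\cal M)$ (legitimate since $T_2$ is $\forall\exists$-axiomatizable, so its e.c.\ models are cofinal) to an e.c.\ $\cal N\models T_2$, and note $\cal M\subseteq\cal N|L_1$ with $\cal N|L_1$ e.c.\ in $T_1$ by downward transfer.

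For closure under ultraproducts, let $(\cal M_i)_{i\in I}$ be e.c.\ models of $T_1$, let $\u$ be an ultrafilter on $I$, and put $\cal M=\prod_\u\cal M_i$. As above, choose e.c.\ $\cal N_i\models T_2$ with $\cal M_i\subseteq\cal N_i|L_1$; since $\cal N_i|L_1\models T_1$ and $\cal M_i$ is e.c., $\cal M_i$ is e.c.\ in $\cal N_i|L_1$. Set $\cal N=\prod_\u\cal N_i$. Applying {\L}o\'s's theorem to the existential formulas $\inf_y\varphi$, the property ``$\cal M_i$ is e.c.\ in $\cal N_i|L_1$'' passes to the ultraproduct, so $\cal M$ is e.c.\ in $\cal N|L_1$; moreover, because $T_2$ has a model companion, the ultraproduct $\cal N$ of e.c.\ models of $T_2$ is itself e.c.\ in $T_2$, and hence $\cal N|L_1$ is e.c.\ in $T_1$ by downward transfer. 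It remains to promote ``$\cal M$ is e.c.\ in $\cal N|L_1$'' to ``$\cal M$ is e.c.\ in $T_1$'': given any $\cal M'\models T_1$ with $\cal M\subseteq\cal M'$, use the amalgamation property of $T_1$ to obtain $\cal Q\models T_1$ with $\cal M'\subseteq\cal Q$ and $\cal N|L_1\subseteq\cal Q$ over $\cal M$; since $\cal N|L_1$ is e.c.\ in $T_1$ it is e.c.\ in $\cal Q$, so for any quantifier-free $L_1$-formula $\varphi(x,y)$ and tuple $\bar a$ from $\cal M$,
\[ (\inf_y\varphi(\bar a,y))^{\cal M'}\ \ge\ (\inf_y\varphi(\bar a,y))^{\cal Q}\ =\ (\inf_y\varphi(\bar a,y))^{\cal N|L_1}\ =\ (\inf_y\varphi(\bar a,y))^{\cal M}, \]
the first equality because $\cal N|L_1$ is e.c.\ in $\cal Q$ and the second because $\cal M$ is e.c.\ in $\cal N|L_1$. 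Combined with the automatic reverse inequality $(\inf_y\varphi(\bar a,y))^{\cal M'}\le(\inf_y\varphi(\bar a,y))^{\cal M}$, this forces equality, so $\cal M$ is e.c.\ in $\cal M'$, and therefore $\cal M$ is e.c.

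To finish, the class of e.c.\ models of $T_1$ is closed under ultraproducts and ultraroots, hence elementary, say equal to $\mathrm{Mod}(T_1^*)$; since $T_1^*\vdash T_1$ and every model of $T_1^*$ is an e.c.\ model of $T_1$ (so e.c.\ inside every model of $T_1$, in particular every model of $T_1^*$), Robinson's test shows $T_1^*$ is model-complete, and together with cofinality this exhibits $T_1^*$ as the model companion of $T_1$. I expect the ultraproduct step to be the main obstacle: the real difficulty is crossing between the two languages, namely passing from an arbitrary $T_1$-extension $\cal M\subseteq\cal M'$ to a situation where the e.c.-ness of $\cal N$ in $T_2$ (the sole place the hypothesis on $T_2$ enters) can be exploited. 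This is exactly the combined role of the expansion/relative expansion properties (to move extensions between $L_1$ and $L_2$, in particular to certify via downward transfer that $\cal N|L_1$ is e.c.\ in $T_1$) and the amalgamation hypotheses (to merge $\cal M'$ with the ``good'' extension $\cal N|L_1$); beyond that, the argument is just careful bookkeeping of which $\inf_y\varphi$-inequalities are automatic from inclusions and which require existential closedness.
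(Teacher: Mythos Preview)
Your argument is correct, and it takes a somewhat different route than the paper. The paper proves ultraproduct closure directly: starting from $\cal M\subseteq\cal M'$, it amalgamates $\cal M'$ with $\prod_\u F(\cal M_i)|L_1$ over $\cal M$ (AP of $T_1$), applies relative expansion to obtain a $T_2$-model $\cal R$ containing $\prod_\u F(\cal M_i)$, separately chooses e.c.\ $\cal P_i\supseteq F(\cal M_i)$ and sets $\cal P=\prod_\u\cal P_i$ (e.c.\ since $T_2^*$ exists), then amalgamates $\cal R$ with $\cal P$ over $\prod_\u F(\cal M_i)$ (AP of $T_2$) and chases the existential statement down to the $\cal M_i$.

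Your approach instead first isolates the downward transfer ``$\cal N$ e.c.\ in $T_2\Rightarrow\cal N|L_1$ e.c.\ in $T_1$'' (this is exactly the paper's Lemma~\ref{ecrestrict}, which the paper states \emph{after} Lemma~\ref{mcdown}), and then uses it to short-circuit the argument: once $\cal N|L_1$ is known to be e.c.\ in $T_1$, a single application of AP for $T_1$ suffices to handle an arbitrary extension $\cal M'$. Two small dividends of your route: (i) the amalgamation property of $T_2$ is never invoked, so your proof shows the lemma holds without that hypothesis; (ii) the \L o\'s step making ``$\cal M_i$ e.c.\ in $\cal N_i|L_1$'' pass to the ultraproduct is cleanly separated from the rest. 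The paper's route, on the other hand, avoids forward-referencing Lemma~\ref{ecrestrict} and keeps the argument self-contained at the cost of using both amalgamation hypotheses.
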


\begin{remark}
For the sake of simplicity, we carry out the proof of this lemma as well as the two results that follow in the setting of classical (discrete) logic.  The case of continuous logic is no more difficult, just slightly more annoying to write down.
\end{remark}

\begin{proof}[Proof of Lemma \ref{mcdown}]
Fix a family $(\cal M_i)_{i\in I}$ of e.c.\ models of $T_1$ and an ultrafilter $\u$ on $I$.  Set $\cal M:=\prod_\u \cal M_i$.  We wish to show that $\cal M$ is also an e.c.\ model of $T_1$.  Fix $\cal M'\models T_1$ with $\cal M\subseteq \cal M'$, a quantifier-free $L_1$-formula $\varphi(x,y)$, and elements $a_i\in \cal M_i$ such that $\cal M'\models \exists x\varphi(x,a)$, where $a=(a_i)_\u$.  We wish to show that $\cal M\models \exists x\varphi(x,a)$.

Since $T_1$ has the amalgamation property and $\cal M\subseteq \cal M'$ and $\cal M\subseteq \prod_\u F(\cal M_i)|L_1$, we have $\cal Q\models T_1$ and embeddings $i:\cal M'\hookrightarrow \cal Q$ and $j:\prod_\u F(\cal M_i)|L\hookrightarrow Q$ such that $i|\cal M=j|\cal M$.  Without loss of generality, we may assume that $j$ is an inclusion mapping and so $\prod_\u F(\cal M_i)|L\subseteq \cal Q$.  We then have $\cal R:=G(\cal Q,\prod_\u F(\cal M_i))\models T_2$ such that $\cal Q\subseteq G(\cal Q,\cal R)|L_1$ and $\prod_\u F(\cal M_i)\subseteq G(\cal Q,\cal R)$.  For each $i\in I$, let $\cal P_i\models T_2$ be e.c.\ with $F(\cal M_i)\subseteq \cal P_i$ (which is possible since $T_2$ is $\forall\exists$-axiomatizable), so $\prod_\u F(\cal M_i)\subseteq \cal P:=\prod_\u \cal P_i$.  By assumption, $\cal P$ is also e.c.\  Since $T_2$ has the amalgamation property, we can find $\cal S\models T_2$ and embeddings $b:G(\cal Q,\cal R)\hookrightarrow \cal S$ and $c:\cal P\hookrightarrow \cal S$ so that $b|\prod_\u F(\cal M_i)=c|\prod_\u F(\cal M_i)$.

We are now ready to conclude:  Since $\cal M'\models \exists x \varphi(x,a)$, we have that $\cal Q\models \exists x \varphi(x,a)$ and so $G(\cal Q,\cal R)\models \exists x \varphi(x,a)$ and so $\cal S\models \exists x \varphi(b(a))$.  Since $\cal P$ is e.c.\ and since $b(a)=c(a)$, we have that $\cal P\models \exists x \varphi(x,a)$.  Consequently, for $\u$-almost all $i\in I$, we have $\cal P_i\models \exists x \varphi(x,a_i)$.  Since $\cal M_i\subseteq \cal P_i|L_1$, we have that $\cal M_i\models \exists x \varphi(x,a_i)$ for these $i\in I$, and thus $\cal M\models \exists x \varphi(x,a)$, as desired.
\end{proof}

\begin{lem}\label{ecrestrict}
Suppose that $(T_1,T_2)$ has the relative expansion property.  Then for any e.c.\ model $\cal M$ of $T_2$, we have that $\cal M|L_1$ is an e.c.\ model of $T_1$.
\end{lem}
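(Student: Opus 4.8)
The plan is to carry out a short diagram chase, much as in the proof of Lemma~\ref{mcdown} but using only a single application of the relative expansion property and no amalgamation. As in the remark preceding Lemma~\ref{mcdown}, I will phrase everything in classical (discrete) logic; the continuous case is identical, with $\inf$ playing the role of $\exists$.

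First observe that $\cal M|L_1 \models T_1$, since $T_1 = T_2|L_1$ by definition. To verify that $\cal M|L_1$ is e.c., I would fix an arbitrary $\cal N \models T_1$ with $\cal M|L_1 \subseteq \cal N$, a quantifier-free $L_1$-formula $\varphi(x,y)$, and a tuple $a$ from $\cal M$ with $\cal N \models \exists x\, \varphi(x,a)$; the goal is to show $\cal M \models \exists x\, \varphi(x,a)$.

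Now apply the relative expansion property with $\cal M$ in the role of the $T_2$-model and $\cal N$ in the role of the $T_1$-model: the inclusion $\cal M|L_1 \subseteq \cal N$ is exactly the required hypothesis, so one obtains $\cal P := G(\cal N, \cal M) \models T_2$ with $\cal N \subseteq \cal P|L_1$ and $\cal M \subseteq \cal P$. Since $a \in \cal M \subseteq \cal N \subseteq \cal P|L_1$ and existential formulas are preserved from substructures to extensions, $\cal N \models \exists x\, \varphi(x,a)$ yields $\cal P|L_1 \models \exists x\, \varphi(x,a)$, hence $\cal P \models \exists x\, \varphi(x,a)$ (as $\varphi$ is an $L_1$-formula and the parameters lie in $\cal P|L_1$). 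Finally, $\cal M \subseteq \cal P$ are both models of $T_2$, $\cal M$ is e.c., and $\varphi$ is a quantifier-free $L_1$-formula, hence a quantifier-free $L_2$-formula, with parameters from $\cal M$; therefore $\cal M \models \exists x\, \varphi(x,a)$. Taking reducts, $\cal M|L_1 \models \exists x\, \varphi(x,a)$, so $\cal M|L_1$ is e.c.\ in $\cal N$, and as $\cal N$ was arbitrary, $\cal M|L_1$ is e.c.

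There is essentially no serious obstacle here; this is a routine diagram chase. The only points requiring care are the bookkeeping of the two theories — the $T_2$-model appearing in the \emph{hypothesis} of the relative expansion property is the given e.c.\ model $\cal M$, while the $T_1$-model is the ambient extension $\cal N$ — and the elementary fact that quantifier-free (hence existential) formulas are preserved upward along substructure inclusions, which is invoked both to push $\exists x\, \varphi$ from $\cal N$ up into $\cal P$ and implicitly when moving between $\cal P$ and its reduct $\cal P|L_1$.
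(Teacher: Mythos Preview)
Your proof is correct and follows exactly the same approach as the paper's: apply the relative expansion property to obtain $G(\cal N,\cal M)\models T_2$ containing both $\cal N$ and $\cal M$, push the existential statement up from $\cal N$ to $G(\cal N,\cal M)$, and then pull it back down to $\cal M$ using that $\cal M$ is e.c.\ for $T_2$. The paper's version is simply terser, omitting the bookkeeping about reducts that you spell out.
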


\begin{proof}
Take $\cal N\models T_1$ with $\cal M|L_1\subseteq \cal N$ and an existential $L_1$-sentence $\sigma$ with parameters from $\cal M$ such that $\cal N\models \sigma$.  We then have $G(\cal N,\cal M)\models \sigma$ and $\cal M\subseteq G(\cal N,\cal M)$; since $\cal M$ is e.c.\, then $\cal M\models \sigma$, as desired.
\end{proof}

\begin{cor}\label{mclocal}
Suppose that $T$ is an $L$-theory.  Further suppose that there is an increasing sequence $(L_n)_{n\in \bb N}$ of sublanguages of $L$ with $L=\bigcup_{n\in \bb N}L_n$ such that $(T_n,T)$ has the relative expansion property for all $n$, where $T_n:=T|L_n$.  Further suppose that $T_n$ has a model companion for each $n$.  Then $T$ also has a model companion.
\end{cor}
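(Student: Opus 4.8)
The plan is to proceed exactly as one would to verify the ultraproduct test for the existence of a model companion recorded in Subsection~\ref{modcompsubsection}: it suffices to check that an ultraproduct of e.c.\ models of $T$ is again an e.c.\ model of $T$. (As in all of our applications $T$ is $\forall\exists$-axiomatizable, so this really does characterize the existence of $T^*$; and the class of e.c.\ models of $T$ is automatically closed under isomorphism and, being closed under elementary substructures, under ultraroots, so only ultraproduct-closure is in question.) Thus I would fix a family $(\cal M_i)_{i\in I}$ of e.c.\ models of $T$, an ultrafilter $\u$ on $I$, and set $\cal M:=\prod_\u \cal M_i$; by {\L}o\'s's theorem $\cal M\models T$, and the goal is to show that $\cal M$ is e.c.

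The first step is to pass to reducts. Fix $n\in\bb N$. Since $(T_n,T)$ has the relative expansion property, Lemma~\ref{ecrestrict} applies with $T_1=T_n$ and $T_2=T$ and yields that $\cal M_i|L_n$ is an e.c.\ model of $T_n$ for every $i\in I$. As reducts commute with ultraproducts, $\cal M|L_n=\prod_\u(\cal M_i|L_n)$; and since $T_n$ has a model companion by hypothesis, its class of e.c.\ models is elementary and hence closed under ultraproducts. Therefore $\cal M|L_n$ is an e.c.\ model of $T_n$, and this holds for every $n$.

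The second step is a routine ``finitary support'' argument. Suppose $\cal M\subseteq \cal M'$ with $\cal M'\models T$, and let $\sigma$ be an existential $L$-sentence with parameters $\bar a$ from $\cal M$ such that $\cal M'\models\sigma(\bar a)$; we must show $\cal M\models\sigma(\bar a)$. Only finitely many non-logical symbols occur in $\sigma$, so $\sigma$ is an existential $L_n$-sentence for some $n$. Then $\cal M|L_n\subseteq \cal M'|L_n$, we have $\cal M'|L_n\models T_n$ (since $T_n\subseteq T$ consists of $L_n$-sentences), and $\cal M'|L_n\models\sigma(\bar a)$; since $\cal M|L_n$ is an e.c.\ model of $T_n$ by the first step, it follows that $\cal M|L_n\models\sigma(\bar a)$, and therefore $\cal M\models\sigma(\bar a)$. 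Hence $\cal M$ is e.c., and by the test in Subsection~\ref{modcompsubsection} the theory $T$ admits a model companion.

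I do not expect a serious obstacle here: the genuine content is packaged in Lemma~\ref{ecrestrict} and in the abstract ultraproduct criterion, both already in hand, and the corollary amounts to threading these together one language $L_n$ at a time. The only points that merit a moment's care are the bookkeeping ones flagged above --- that ultraproduct-closure is the sole nontrivial closure condition, that reducts commute with ultraproducts, and that a first-order (existential) $L$-sentence, being finitary, is an $L_n$-sentence for some $n$ --- none of which presents any real difficulty.
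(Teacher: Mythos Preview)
Your proposal is correct and matches the paper's own proof essentially line for line: both apply Lemma~\ref{ecrestrict} to get that each $\cal M_i|L_n$ is e.c.\ for $T_n$, invoke the model companion of $T_n$ to conclude that $\cal M|L_n=\prod_\u(\cal M_i|L_n)$ is e.c., and then use the finitary support of $\sigma$ to descend to some $L_n$. The only difference is that you make the routine bookkeeping (reducts commuting with ultraproducts, ultraroot-closure being automatic) more explicit than the paper does.
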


\begin{proof}
Fix a family $(\cal M_i)_{i\in I}$ of e.c.\ models of $T$ and an ultrafilter $\u$ on $I$.  Set $\cal M:=\prod_\u \cal M_i$.  Suppose that there is  $\cal M'\models T$ with $\cal M\subseteq \cal M'$ and an existential $L$-sentence with parameters from $\cal M$ such that $\cal M'\models \sigma$.  We want to show $\cal M\models \sigma$.  Take $n\in \bb N$ such that $\sigma$ is an $L_n$-sentence.  By Lemma \ref{ecrestrict}, $\cal M_i|L_n$ is an e.c.\ model of $T_n$ for each $i\in I$; since $T_n$ has a model companion, we have that $\prod_\u (\cal M_i|L_n)$ is an e.c.\ model of $T_n$.  Since $\prod_\u (\cal M_i|L_n)\subseteq \cal M'|L_n$, we have that $\cal M|L_n=\prod_\u (\cal M_i|L_n)\models \sigma$, as desired.
\end{proof}

\subsection{Preservation properties for the existence of \texorpdfstring{$T_\Gamma^*$}{the model companion}}

The next two lemmas shows that, for any subgroup $\Lambda$ of $\Gamma$, the pair $(T_\Lambda,T_\Gamma)$ has both the expansion property and the relative expansion property.

\begin{lem}\label{expgroups}
Let $\Gamma$ be a group with subgroup $\Lambda$.  Then the pair $(T_\Lambda,T_\Gamma)$ has the expansion property. 
\end{lem}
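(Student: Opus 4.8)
The plan is to use the \textbf{co-induced action}. Translating the expansion property through the duality between substructures of models of $T_\Gamma$ and factor maps recalled in Subsection \ref{pmpstructures}, what must be shown is the following: given any {\pmp} action $\Lambda \acts^b (Y,\nu)$, there is a {\pmp} action $\Gamma \acts^a (X,\mu)$ together with a $\Lambda$-equivariant factor map $\pi : X \to Y$ witnessing that $b$ is a factor of the restricted action $a|\Lambda$. Indeed, such a $\pi$ produces an embedding of $\cal M_b$ onto a substructure of $\cal M_a|L_\Lambda = \cal M_{a|\Lambda}$, and after transporting structure along this embedding we may take $F(\cal M_b)$ to be the resulting copy of $\cal M_a$.

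First I would set up the co-induced action $a = \operatorname{CInd}_\Lambda^\Gamma(b)$ concretely. Let
\[X = \{x \in Y^\Gamma : x(\gamma\lambda) = (\lambda^{-1})^b \cdot x(\gamma) \text{ for all } \gamma \in \Gamma,\ \lambda \in \Lambda\},\]
with $\Gamma$ acting by $(\gamma^a \cdot x)(\delta) = x(\gamma^{-1}\delta)$; a one-line check shows this preserves $X$ and defines a genuine action of $\Gamma$. Fix a transversal $S$ for the left cosets $\Gamma/\Lambda$ with $e \in S$. Since each $x \in X$ is determined by its restriction to $S$, and this restriction may be prescribed arbitrarily, the map $x \mapsto (x(s))_{s \in S}$ identifies $X$ with $Y^S$; equip $X$ with the corresponding product measure $\mu = \nu^{\otimes S}$.

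Next I would verify the two required properties. For measure-preservation: writing, for $\gamma \in \Gamma$ and $s \in S$, $\gamma^{-1}s = \sigma_\gamma(s)\lambda_\gamma(s)$ with $\sigma_\gamma(s) \in S$ and $\lambda_\gamma(s) \in \Lambda$ uniquely determined, one computes $(\gamma^a \cdot x)(s) = (\lambda_\gamma(s)^{-1})^b \cdot x(\sigma_\gamma(s))$, so in the coordinates $Y^S$ the transformation $\gamma^a$ is the composition of the coordinate permutation $s \mapsto \sigma_\gamma(s)$ (a bijection of $S$, being the $\Gamma$-action of $\gamma^{-1}$ on $\Gamma/\Lambda$) with coordinatewise $\nu$-preserving maps; hence $\gamma^a$ preserves $\mu$. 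For the factor map, let $\pi(x) = x(e)$. Then $\pi_*\mu = \nu$ since $\mu$ is a product measure and $e \in S$, and for $\lambda \in \Lambda$ the defining relation of $X$ (with $\gamma = e$) gives $\pi(\lambda^a \cdot x) = x(\lambda^{-1}) = \lambda^b \cdot x(e) = \lambda^b \cdot \pi(x)$, so $\pi$ is a $\Lambda$-equivariant factor map from $a|\Lambda$ onto $b$. This completes the construction, hence the proof.

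There is no serious obstacle here; the only points requiring care are the transversal bookkeeping $\gamma^{-1}s = \sigma_\gamma(s)\lambda_\gamma(s)$ used to see that $\gamma^a$ preserves the product measure, and keeping the left-coset conventions consistent so that the projection to the $e$-coordinate is genuinely $\Lambda$-equivariant rather than equivariant only for a conjugate copy of $\Lambda$.
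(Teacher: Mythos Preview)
Your proof is correct and is exactly the approach the paper takes: the paper's proof is the single sentence ``This is an immediate consequence of the existence of the coinduction procedure,'' and you have simply written out the details of that coinduction construction. The bookkeeping (transversal, product measure, $\Lambda$-equivariance of evaluation at $e$) is all handled correctly.
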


\begin{proof}
This is an immediate consequence of the existence of the \emph{coinduction} procedure (see, for example, \cite[Section 10(G)]{kechrisglobal}).
\end{proof}

The following is a special case of Epstein's construction of coinducing an action from a subequivalence relation, see \cite[Section 3]{IKT}.

\begin{lem}\label{relexpgroups}
Let $\Gamma$ be a group with subgroup $\Lambda$.  Then the pair $(T_\Lambda,T_\Gamma)$ has the relative expansion property.  More precisely, let $\Gamma \acts (X, \mu)$ and $\Lambda \acts (Y, \nu)$ be {\pmp} actions on standard probability spaces and $\phi : Y \rightarrow X$ a $\Lambda$-equivariant factor map. Then there exists a {\pmp} action $\Gamma \acts (\bar{Y}, \bar{\nu})$ and a $\Lambda$-equivariant map $\pi : \bar{Y} \rightarrow Y$ such that $\phi \circ \pi$ is $\Gamma$-equivariant.
\end{lem}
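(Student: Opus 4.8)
The plan is to build $\bar Y$ as a \emph{relative co-induction} of $Y$ along $\phi$: a mild variant of co-inducing a $\Lambda$-action up to $\Gamma$, adapted to the situation in which we already have a fixed $\Gamma$-space $X$ underneath and we want the co-induced object to live over it. This is exactly a special case of Epstein's co-induction from a subequivalence relation (applied to the orbit relation of $\Lambda\acts X$ inside that of $\Gamma\acts X$, cf.\ \cite[Section 3]{IKT}), but it is cleanest to give the subgroup-level construction directly. By the remark following the definition of the relative expansion property it suffices to work with standard probability spaces, which is what the ``more precisely'' clause records, so we take $X,Y$ standard and $\phi:Y\to X$ a $\Lambda$-equivariant factor map.

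\textbf{The space and the maps.} Fix a transversal $T\ni e$ for the left cosets $\Gamma/\Lambda$. Let $\bar Y$ be the set of pairs $(x,c)$ where $x\in X$ and $c:\Gamma\to Y$ satisfies $\phi(c(\gamma))=(\gamma^{-1})^a\cdot x$ for all $\gamma\in\Gamma$ together with the cocycle-type identity $c(\gamma\lambda)=(\lambda^{-1})^b\cdot c(\gamma)$ for all $\gamma\in\Gamma$, $\lambda\in\Lambda$. Such a $c$ is determined by its restriction to $T$, so $\bar Y$ is naturally a Borel subset of $X\times Y^T$, hence standard Borel (even when $[\Gamma:\Lambda]=\infty$, since $Y^T$ is a countable product of standard Borel spaces). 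Define a $\Gamma$-action by $\delta\cdot(x,c)=(\delta^a\cdot x,c')$ with $c'(\gamma)=c(\delta^{-1}\gamma)$; one checks this preserves both defining constraints and is an action. Put $\pi(x,c)=c(e)$. Then $\pi$ is $\Lambda$-equivariant, since the cocycle identity with $\gamma=e$ gives $c(\lambda^{-1})=\lambda^b\cdot c(e)$; and $\phi\circ\pi$ is the projection $(x,c)\mapsto x$, which is $\Gamma$-equivariant by construction. It remains only to supply an invariant measure under which $\pi$ is measure-preserving.

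\textbf{The measure and its invariance.} Disintegrate $\nu=\int_X\nu_x\,d\mu(x)$ along $\phi$; $\Lambda$-equivariance of $\phi$ yields $(\lambda^b)_*\nu_x=\nu_{\lambda^a x}$ for every $\lambda\in\Lambda$ and $\mu$-a.e.\ $x$ (arrange this to hold on a conull $\Lambda$-invariant set). On $X\times Y^T$ set $\bar\nu=\int_X\big(\bigotimes_{t\in T}\nu_{(t^{-1})^a x}\big)\,d\mu(x)$, which is supported on $\bar Y$; the $t=e$ coordinate gives $\pi_*\bar\nu=\nu$, so $\pi$ is a factor map. For $\Gamma$-invariance — the one computation with content — fix $\delta\in\Gamma$; in $T$-coordinates the action sends $(x,(c(t))_{t\in T})$ to base point $\delta^a x$ and $t$-coordinate $c(\delta^{-1}t)$. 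Writing $\delta^{-1}t=s(\delta,t)\,\lambda(\delta,t)$ with $s(\delta,t)\in T$ and $\lambda(\delta,t)\in\Lambda$ — so $t\mapsto s(\delta,t)$ is the permutation of $T$ induced by left translation by $\delta^{-1}$ on $\Gamma/\Lambda$ — the cocycle identity rewrites this $t$-coordinate as $\lambda(\delta,t)^{-1}\cdot c(s(\delta,t))$. Since under $\bar\nu$ the coordinates $c(s)$ $(s\in T)$ are independent with $c(s)\sim\nu_{(s^{-1})^a x}$, the pushed-forward $t$-coordinate is distributed as $(\lambda(\delta,t)^{-1})_*\nu_{(s(\delta,t)^{-1})^a x}=\nu_{(\lambda(\delta,t)^{-1}s(\delta,t)^{-1})^a x}=\nu_{(t^{-1}\delta)^a x}=\nu_{(t^{-1})^a(\delta^a x)}$, using $(s(\delta,t)\lambda(\delta,t))^{-1}=(\delta^{-1}t)^{-1}=t^{-1}\delta$; independence survives because $t\mapsto s(\delta,t)$ is a bijection, and the base point is distributed as $\delta^a_*\mu=\mu$ since $a$ is p.m.p. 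Hence $\delta_*\bar\nu=\bar\nu$. Translating $(\Gamma\acts(\bar Y,\bar\nu),\pi)$ back to measure algebras then witnesses the relative expansion property for $(T_\Lambda,T_\Gamma)$.

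\textbf{Main obstacle.} There is no real conceptual difficulty here — this is the Epstein/co-induction machine — so the only things requiring care are the bookkeeping in the last paragraph (the interplay of the transversal $T$, the $\Lambda$-valued cocycle $\lambda(\delta,t)$, and the $\Lambda$-equivariance of the disintegration), making sure the measure $\bar\nu$ is well defined and is a probability measure, and checking that all the defining relations of $\bar Y$ and the equivariance identities hold on the nose on a suitable conull set rather than merely a.e.
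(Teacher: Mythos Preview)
Your proof is correct and is essentially the same relative coinduction construction the paper gives: both disintegrate $\nu$ over $\phi$, form the product of the fiber measures $\nu_{r(a\Lambda)^{-1}x}$ over the cosets $\Gamma/\Lambda$, and define the $\Gamma$-action via the standard induction cocycle. The only cosmetic difference is that you carry the base point $x$ explicitly as part of the space (so $\bar Y\subseteq X\times Y^T$) and encode the coset-indexed family as $\Lambda$-equivariant functions $c:\Gamma\to Y$, whereas the paper works directly on $Y^{\Gamma/\Lambda}$ and recovers $x$ as $\phi(\bar y(\Lambda))$; the invariance computation and the equivariance checks line up verbatim with the paper's.
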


\begin{proof}
	Fix a choice of representatives $r : \Gamma / \Lambda \rightarrow \Gamma$ for the cosets of $\Lambda$ in $\Gamma$ with $r(\Lambda) = e$, and define the cocycle $\rho : \Gamma \times (\Gamma / \Lambda) \rightarrow \Lambda$ by $\rho(\gamma, a \Lambda) = r(\gamma a \Lambda)^{-1} \gamma r(a \Lambda)$. Define an action of $\Gamma$ on $\bar{Y} := Y^{\Gamma/ \Lambda}$ by
	\[(\gamma \cdot \bar{y})(a \Lambda) = \rho(\gamma^{-1}, a \Lambda)^{-1} \bar{y}(\gamma^{-1} a \Lambda).\]
	
	Let $\nu = \int \nu_x \ d \mu(x)$ be the \textbf{disintegration} of $\nu$ with respect to $\phi$ (that is, $x \mapsto \nu_x$ is the unique, up to agreement almost-everywhere, measurable function satisfying $\nu_x(\phi^{-1}(x)) = 1$ for $\mu$-almost-every $x$ and $\nu = \int \nu_x \ d \mu(x)$; see \cite[Theorem A.7]{glasner}). Define
	\[\bar{\nu} = \int_X \prod_{a \Lambda \in \Gamma / \Lambda} \nu_{r(a \Lambda)^{-1} \cdot x} \ d \mu(x),\]
	so a $\bar{\nu}$-random point $\bar{y}$ of $Y^{\Gamma / \Lambda}$ is obtained by choosing a $\mu$-random point $x \in X$ and independently for each $a \Lambda \in \Gamma / \Lambda$ letting $\bar{y}(a \Lambda)$ be a $\nu_{r(a \Lambda)^{-1} \cdot x}$-random point in $Y$. Clearly if $\pi : Y^{\Gamma / \Lambda} \rightarrow Y$ is the evaluation map $\pi(\bar{y}) = \bar{y}(\Lambda)$, then $\pi$ is measure-preserving, and it is also $\Lambda$-equivariant since
	\[\pi(\lambda \cdot \bar{y}) = (\lambda \cdot \bar{y})(\Lambda) = \rho(\lambda^{-1}, \Lambda)^{-1} \bar{y}(\Lambda) = \lambda \bar{y}(\Lambda) = \lambda \pi(\bar{y}).\]
	Also, for $\bar{\nu}$-almost-every $\bar{y} \in Y^{\Gamma / \Lambda}$ there is $x \in X$ satisfying $\phi(\bar{y}(a \Lambda)) = r(a \Lambda)^{-1} \cdot x$ for every $a \Lambda \in \Gamma / \Lambda$, and in this case
	\begin{align*}
	\phi \circ \pi(\gamma \cdot \bar{y}) & = \phi((\gamma \cdot \bar{y})(\Lambda))\\
	& = \phi(\rho(\gamma^{-1}, \Lambda)^{-1} \bar{y}(\gamma^{-1} \Lambda))\\
	& = \rho(\gamma^{-1}, \Lambda)^{-1} \phi(\bar{y}(\gamma^{-1} \Lambda))\\
	& = \rho(\gamma^{-1}, \Lambda)^{-1} r(\gamma^{-1} \Lambda)^{-1} \cdot x\\
	& = r(\Lambda)^{-1} \gamma \cdot x\\
	& = \gamma \cdot x\\
	& = \gamma \cdot \phi \circ \pi(\bar{y}).
	\end{align*}
	Thus $\phi \circ \pi$ is $\Gamma$-equivariant on a $\bar{\nu}$-conull subset of $Y^{\Gamma / \Lambda}$.
	
	To finish the proof, it only remains to check that the measure $\bar{\nu}$ is $\Gamma$-invariant. First notice that uniqueness of measure disintegration and the $\Lambda$-invariance of $\mu$ and $\nu$ imply that $\nu_x(\lambda \cdot E) = \nu_{\lambda^{-1} \cdot x}(E)$ for every $\lambda \in \Lambda$, every measurable set $E \subseteq Y$, and $\mu$-almost-every $x \in X$. Now consider any collection of measurable sets $E_{a \Lambda} \subseteq Y$ for $a \Lambda \in \Gamma / \Lambda$, set $E = \prod_{a \Lambda \in \Gamma / \Lambda} E_{a \Lambda} \subseteq Y^{\Gamma / \Lambda}$, and let $\gamma \in \Gamma$. Notice that
	\[\gamma \cdot E = \prod_{a \Lambda \in \Gamma / \Lambda} \rho(\gamma^{-1}, a \Lambda)^{-1} E_{\gamma^{-1} a \Lambda}.\]
	We then have
	\begin{align*}
	\bar{\nu}(\gamma \cdot E) & = \int_X \prod_{a \Lambda \in \Gamma / \Lambda} \nu_{r(a \Lambda)^{-1} \cdot x}(\rho(\gamma^{-1}, a \Lambda)^{-1} E_{\gamma^{-1} a \Lambda}) \ d \mu(x)\\
	& = \int_X \prod_{a \Lambda \in \Gamma / \Lambda} \nu_{\rho(\gamma^{-1}, a \Lambda) r(a \Lambda)^{-1} \cdot x}(E_{\gamma^{-1} a \Lambda}) \ d \mu(x)\\
	& = \int_X \prod_{a \Lambda \in \Gamma / \Lambda} \nu_{r(\gamma^{-1} a \Lambda)^{-1} \gamma^{-1} \cdot x}(E_{\gamma^{-1} a \Lambda}) \ d \mu(x)\\
	& = \int_X \prod_{a \Lambda \in \Gamma / \Lambda} \nu_{r(\gamma^{-1} a \Lambda)^{-1} \cdot x}(E_{\gamma^{-1} a \Lambda}) \ d \mu(x)\\
	& = \int_X \prod_{a \Lambda \in \Gamma / \Lambda} \nu_{r(a \Lambda)^{-1} \cdot x}(E_{a \Lambda}) \ d \mu(x)\\
	& = \bar{\nu}(E),
	\end{align*}
	where for the third-to-last equality we use the $\Gamma$-invariance of $\mu$, and for the second-to-last equality we use the fact that left-multiplication by $\gamma^{-1}$ permutes the set $\Gamma / \Lambda$. Since every measurable set can be approximated in $\bar{\nu}$-measure by sets that are finite disjoint unions of sets of the above form, it follows that $\bar{\nu}$ is indeed $\Gamma$-invariant.
\end{proof}

Since both $T_\Gamma$ and $T_\Lambda$ have the amalgamation property (as witnessed by the existence of relatively independent joinings-see Lemma \ref{lem:amalgam} below) and are $\forall\exists$-axiomatizable, Lemmas \ref{mcdown}, \ref{expgroups}, and \ref{relexpgroups} allow us to conclude:

\begin{cor} \label{cor:mc_sub}
The property of $T_\Gamma$ having a model companion is inherited by subgroups:  if $\Lambda$ is a subgroup of $\Gamma$ and $T_\Gamma^*$ exists, then so does $T_\Lambda^*$.
\end{cor}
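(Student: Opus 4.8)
The plan is to quote the abstract transfer principle in Lemma~\ref{mcdown}, instantiated with $L_1 := L_\Lambda$, $L_2 := L_\Gamma$, and $T_2 := T_\Gamma$. The first thing I would pin down is that, with these choices, the theory ``$T_1 := T_2|L_1$'' appearing in that lemma really is $T_\Lambda$: this is immediate from the description of the axioms of $T_\Gamma$ in Subsection~\ref{pmpstructures}, since the $L_\Lambda$-sentences among those axioms are precisely the measure-algebra axioms together with the axioms asserting that each $u_\lambda$ (for $\lambda\in\Lambda$) is an automorphism and that $u_{\lambda_1}\circ u_{\lambda_2}=u_{\lambda_1\lambda_2}$ for $\lambda_1,\lambda_2\in\Lambda$, that is, exactly $T_\Lambda$. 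Dually, under the duality between models of $T_\Gamma$ and {\pmp} actions of $\Gamma$, the operation of passing to the $L_\Lambda$-reduct corresponds to restricting the $\Gamma$-action to $\Lambda$.

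Next I would check that the four hypotheses of Lemma~\ref{mcdown} are met in this instance. The expansion property of $(T_\Lambda,T_\Gamma)$ is exactly Lemma~\ref{expgroups} (via coinduction), and its relative expansion property is Lemma~\ref{relexpgroups} (via Epstein's coinduction-over-a-subequivalence-relation construction). Both $T_\Lambda$ and $T_\Gamma$ have the amalgamation property because relatively independent joinings exist, which is recorded in Lemma~\ref{lem:amalgam}. Finally, $T_\Gamma$, and likewise $T_\Lambda$, is $\forall\exists$-axiomatizable --- indeed all of its axioms are universal. Lemma~\ref{mcdown} then applies: since $T_\Gamma^*$ exists, that is, $T_\Gamma$ has a model companion, the lemma yields that $T_\Lambda = T_1$ has one too; and because $T_\Lambda$ is $\forall\exists$-axiomatizable, this is precisely the statement (in the sense of the Definition in Subsection~\ref{modcompsubsection}) that $T_\Lambda^*$ exists.

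I do not expect a real obstacle at this level: all of the substance has been front-loaded into Lemmas~\ref{mcdown}, \ref{expgroups}, \ref{relexpgroups}, and~\ref{lem:amalgam}. If one wants to point to the technically heaviest ingredient, it is the verification of the relative expansion property in Lemma~\ref{relexpgroups} --- in particular the check that the coinduced measure $\bar\nu$ on $Y^{\Gamma/\Lambda}$ is $\Gamma$-invariant --- but that work is already in hand. The only point demanding genuine (if routine) care in assembling the present corollary is the bookkeeping noted above: confirming that $T_\Gamma|L_\Lambda$ is literally $T_\Lambda$ and that the model-reduct operation matches restriction of actions, so that Lemma~\ref{mcdown} can be invoked verbatim.
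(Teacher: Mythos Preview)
Your proposal is correct and follows essentially the same approach as the paper: both invoke Lemma~\ref{mcdown} with $L_1=L_\Lambda$, $L_2=L_\Gamma$, $T_2=T_\Gamma$, and verify its hypotheses via Lemmas~\ref{expgroups}, \ref{relexpgroups}, and~\ref{lem:amalgam} together with the fact that $T_\Gamma$ and $T_\Lambda$ are $\forall\exists$-axiomatizable. Your added bookkeeping about why $T_\Gamma|L_\Lambda=T_\Lambda$ is a welcome clarification but not a departure from the paper's argument.
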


Lemmas \ref{ecrestrict} and \ref{relexpgroups} imply:

\begin{cor}\label{restrictedec}
If $\Gamma \acts (X, \mu)$ is an e.c.\ action and $\Lambda \leq \Gamma$ is a subgroup, then the restricted action $\Lambda \acts (X, \mu)$ is e.c.\ as well.
\end{cor}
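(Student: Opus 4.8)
The plan is to simply combine Lemmas \ref{ecrestrict} and \ref{relexpgroups}, taking $L_1 := L_\Lambda$, $L_2 := L_\Gamma$, $T_2 := T_\Gamma$, and $T_1 := T_\Lambda$ (which indeed equals $T_\Gamma | L_\Lambda$, since the $L_\Lambda$-consequences of the measure-algebra axioms together with the axioms for the $\Gamma$-action are precisely the axioms $T_\Lambda$). First I would record the bookkeeping point that, because $\Lambda \leq \Gamma$, the language $L_\Lambda$ is literally a sublanguage of $L_\Gamma$: it retains the Boolean operation symbols, the measure predicate, and the symbols $u_\lambda$ for $\lambda \in \Lambda$, discarding only the $u_\gamma$ with $\gamma \notin \Lambda$. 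Consequently, for any {\pmp} action $\Gamma \acts^a (X,\mu)$, the reduct $\cal M_a | L_\Lambda$ is exactly the $L_\Lambda$-structure $\cal M_{a|\Lambda}$ associated to the restricted action $\Lambda \acts^{a|\Lambda} (X,\mu)$; this is immediate from the construction of these metric structures in Subsection \ref{pmpstructures}.

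Next I would invoke Lemma \ref{relexpgroups}, which tells us that the pair $(T_\Lambda, T_\Gamma)$ has the relative expansion property, so the hypothesis of Lemma \ref{ecrestrict} is met. Given an e.c.\ action $\Gamma \acts^a (X,\mu)$, the structure $\cal M_a$ is an e.c.\ model of $T_\Gamma$ by definition, so Lemma \ref{ecrestrict} yields that $\cal M_a | L_\Lambda$ is an e.c.\ model of $T_\Lambda$. Combining this with the identification of the reduct from the previous paragraph, $\cal M_{a|\Lambda}$ is an e.c.\ model of $T_\Lambda$, which is exactly the assertion that the restricted action $\Lambda \acts (X,\mu)$ is e.c.

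Since both input lemmas are already established in the excerpt, I do not anticipate any genuine obstacle; the only point requiring a word of care is the identification $\cal M_a | L_\Lambda = \cal M_{a|\Lambda}$, and even that is routine once one recalls the definitions. If one preferred a self-contained argument bypassing the reduct formalism, one could argue directly via the ``logic-free'' characterization in Fact \ref{soft}: given a $\Lambda$-action $\Lambda \acts (Z,\zeta)$ extending $\Lambda \acts (X,\mu)$, first apply Lemma \ref{relexpgroups} to coinduce a $\Gamma$-action on some $\bar Z$ together with a $\Lambda$-equivariant map $\bar Z \to Z$ whose composite to $X$ is $\Gamma$-equivariant, then use that $\Gamma \acts (X,\mu)$ is e.c.\ to produce a factor map $X_\u \to \bar Z$ compatible with the diagonal map $X_\u \to X$, and finally restrict everything to $\Lambda$ and compose $X_\u \to \bar Z \to Z$ to obtain the required ultrapower factorization. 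This is, however, just Lemma \ref{ecrestrict} unwound, so the two-line combination above is the cleaner route.
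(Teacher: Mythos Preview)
Your proposal is correct and is exactly the paper's approach: the paper simply states that Lemmas \ref{ecrestrict} and \ref{relexpgroups} imply the corollary. Your additional bookkeeping (verifying $T_\Gamma|L_\Lambda = T_\Lambda$ and $\cal M_a|L_\Lambda = \cal M_{a|\Lambda}$) and the alternative unwinding via Fact \ref{soft} are reasonable elaborations but not required.
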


Corollary \ref{mclocal} and Lemma \ref{relexpgroups} imply:

\begin{cor}\label{mclocgroup}
The property of $T_\Gamma$ having a model companion is a local property of groups:  if $\Gamma$ is a group such that $T_\Lambda^*$ exists for every finitely generated subgroup $\Lambda$ of $\Gamma$, then $T_\Gamma^*$ also exists.
\end{cor}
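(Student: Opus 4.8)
The plan is to derive this directly from Corollary \ref{mclocal} together with Lemma \ref{relexpgroups}, once we exhibit $\Gamma$ as a suitable increasing union of finitely generated subgroups. Since $\Gamma$ is countable, enumerate $\Gamma = \{\gamma_1, \gamma_2, \ldots\}$ and set $\Lambda_n := \langle \gamma_1, \ldots, \gamma_n \rangle$, so that $(\Lambda_n)_{n \in \bb N}$ is an increasing sequence of finitely generated subgroups of $\Gamma$ with $\bigcup_n \Lambda_n = \Gamma$. Correspondingly, put $L_n := L_{\Lambda_n}$; since $L_{\Lambda_n}$ is obtained from the Boolean operation symbols and the measure predicate by adjoining the unary function symbols $u_\gamma$ for $\gamma \in \Lambda_n$, the $L_n$ form an increasing chain of sublanguages of $L_\Gamma$ with $\bigcup_n L_n = L_\Gamma$.

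Next I would observe that $T_\Gamma | L_n = T_{\Lambda_n}$. On the one hand, every axiom of $T_{\Lambda_n}$ (the measure algebra axioms, the axioms asserting each $u_\gamma$ for $\gamma \in \Lambda_n$ is an automorphism, and the composition axioms $u_{\gamma_1} \circ u_{\gamma_2} = u_{\gamma_1 \gamma_2}$ for $\gamma_1, \gamma_2 \in \Lambda_n$) is already among the axioms of $T_\Gamma$, so $T_{\Lambda_n} \subseteq T_\Gamma | L_n$. On the other hand, by the expansion property for the pair $(T_{\Lambda_n}, T_\Gamma)$ (Lemma \ref{expgroups}), every model of $T_{\Lambda_n}$ is the $L_n$-reduct of a model of $T_\Gamma$, so any $L_n$-sentence that holds in all models of $T_\Gamma$ holds in all models of $T_{\Lambda_n}$; hence $T_\Gamma | L_n \subseteq T_{\Lambda_n}$ as well. (This is the same bookkeeping observation already used in the remark preceding Lemma \ref{mcdown}.)

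With these identifications in place, the hypotheses of Corollary \ref{mclocal} are met for $T := T_\Gamma$ and the chain $(L_n)_n$: the pair $(T_{\Lambda_n}, T_\Gamma)$ has the relative expansion property by Lemma \ref{relexpgroups} applied to the subgroup $\Lambda_n \leq \Gamma$, and each $T_n = T_{\Lambda_n}$ has a model companion by assumption, since $\Lambda_n$ is finitely generated. Corollary \ref{mclocal} then yields that $T_\Gamma$ has a model companion, i.e., $T_\Gamma^*$ exists. I do not expect any genuine obstacle here: the argument is purely an application of the machinery already developed, and the only point requiring even minimal care — the reduct identity $T_\Gamma | L_n = T_{\Lambda_n}$ — is immediate from Lemma \ref{expgroups}.
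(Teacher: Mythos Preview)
Your proposal is correct and follows exactly the approach the paper takes: the paper simply states that this corollary is implied by Corollary \ref{mclocal} and Lemma \ref{relexpgroups}, and you have spelled out the details of that application (choosing an increasing chain of finitely generated subgroups and verifying the reduct identity $T_\Gamma | L_n = T_{\Lambda_n}$).
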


The following corollary is an immediate consequence of Corollary \ref{mclocal}; it was stated in the introduction of \cite{BIH}:

\begin{cor}\label{freeinfgen}
If $\bb F_\infty$ is the free group on a countably infinite set of generators, then $T_{\bb F_{\infty}}^*$ exists.
\end{cor}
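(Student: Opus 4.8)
The plan is to apply Corollary \ref{mclocal} directly, using as the increasing sequence of sublanguages the languages associated to the finite-rank free subgroups exhausting $\bb F_\infty$, and feeding in Fact \ref{BHItheorem} as the base case.

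Concretely, I would write $\bb F_\infty = \langle x_n : n \in \bb N \rangle$ and, for each $n$, set $\Lambda_n = \langle x_1, \dots, x_n \rangle \leq \bb F_\infty$, so that $\Lambda_n \cong \bb F_n$ is a finitely generated free group and $\bb F_\infty = \bigcup_n \Lambda_n$. Let $L_n := L_{\Lambda_n}$ be the corresponding sublanguage of $L := L_{\bb F_\infty}$; since each $\Lambda_n$ is a subgroup of $\Lambda_{n+1}$ and of $\bb F_\infty$, the $L_n$ form an increasing sequence of sublanguages with $L = \bigcup_n L_n$. Inspecting the axioms of $T_{\bb F_\infty}$ (the measure-algebra axioms, the axioms asserting that each $u_\gamma$ is an automorphism, and the relations $u_{\gamma_1} \circ u_{\gamma_2} = u_{\gamma_1 \gamma_2}$), one sees that those which happen to be $L_n$-sentences form precisely a set of axioms for $T_{\Lambda_n}$; hence $T_n := T_{\bb F_\infty} | L_n$ is, up to logical equivalence, $T_{\Lambda_n}$. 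By Lemma \ref{relexpgroups}, applied with the subgroup $\Lambda_n \leq \bb F_\infty$, the pair $(T_n, T_{\bb F_\infty})$ has the relative expansion property for every $n$. Moreover, $T_n^* = T_{\Lambda_n}^* = T_{\bb F_n}^*$ exists: for $n \geq 2$ this is Fact \ref{BHItheorem}, while for $n \leq 1$ the group $\bb F_n$ is amenable and the model companion is the theory of free actions, as recalled in the introduction. Corollary \ref{mclocal} then applies and yields that $T := T_{\bb F_\infty}$ has a model companion, i.e., $T_{\bb F_\infty}^*$ exists. (Alternatively, this follows at once from Corollary \ref{mclocgroup}, since by the Nielsen--Schreier theorem every finitely generated subgroup of $\bb F_\infty$ is a finitely generated free group, and hence its theory of actions admits a model companion by the previous line.)

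There is no genuine obstacle in this argument: the entire weight is carried by the preservation result Corollary \ref{mclocal} (equivalently, Corollary \ref{mclocgroup}) together with the known case of finitely generated free groups. The only points requiring the slightest attention are the identification of $T_{\bb F_\infty}|L_n$ with $T_{\Lambda_n}$, which is immediate from the explicit axiomatization of $T_\Gamma$, and the treatment of the degenerate ranks $n \leq 1$, which is subsumed by the amenable case.
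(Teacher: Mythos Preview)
Your proof is correct and follows exactly the approach the paper intends: the paper records this corollary as ``an immediate consequence of Corollary \ref{mclocal}'' without further argument, and you have supplied precisely the details needed to make that application go through (the identification $T_{\bb F_\infty}|L_{\Lambda_n} = T_{\Lambda_n}$, the relative expansion property from Lemma \ref{relexpgroups}, and Fact \ref{BHItheorem} for the finite-rank input). Your alternative via Corollary \ref{mclocgroup} and Nielsen--Schreier is equally valid and amounts to the same thing.
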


Note that it is unclear if having a model companion is closed under quotients.  Indeed, by the previous corollary, if it is closed under quotients, then $T_\Gamma^*$ exists for all countable groups $\Gamma$.

\subsection{Coamenable subgroups}

It is currently unknown if the class of groups for which $T_\Gamma^*$ exists is closed under extensions, that is to say: if $\Gamma$ is a countable group with normal subgroup $\Lambda$ for which both $T_\Lambda^*$ and $T_{\Gamma/\Lambda}^*$ exists, must $T_\Gamma^*$ necessarily exist.  In this subsection, we show that this is the case if we assume that $\Lambda$ is a co-amenable normal subgroup of $\Gamma$, that is to say $\Gamma/\Lambda$ is amenable.

Below, when $\Lambda$ is a normal subgroup of $\Gamma$, we say that $\Gamma\acts (X,\mu)$ weakly contains a free {\pmp} action of $\Gamma/\Lambda$ to mean that this action weakly contains some action $\Gamma\acts (Y,\nu)$ such that every point in $Y$ has stabilizer $\Lambda$. In this case we can always choose $(Y, \nu)$ to be a standard Borel probability space since weak containment is transitive and every free {\pmp} action of $\Gamma / \Lambda$ weakly contains a Bernoulli shift action of $\Gamma / \Lambda$ \cite{AW13}.

The following is a consequence of the Ornstein-Weiss quasitiling lemma:

\begin{lem}\label{OWlemma}
Suppose that $\Lambda$ is a normal co-amenable subgroup of $\Gamma$ and that $\Gamma\acts^a (X,\mu)$ is a {\pmp} action that weakly contains a free {\pmp} action of $\Gamma/\Lambda$.  Then for every finite $S\subseteq \Gamma$ with $e\in S$ and every $\epsilon>0$, there is a measurable map $c:X\to \Gamma/\Lambda$ having finite image such that $\mu(X')>1-\epsilon$, where 
$$X'=\{x\in X \ : \ c(s^{-1}\cdot x)=c(x)s \text{ for all }s\in S\}.$$
\end{lem}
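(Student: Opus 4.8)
The plan is to recognize the sought map $c$ as an approximate Rokhlin tiling for the amenable quotient group $Q := \Gamma/\Lambda$, to build such a tiling inside a genuinely free p.m.p.\ action of $Q$ using the Ornstein--Weiss quasitiling theorem, and then to transport it onto $X$ via weak containment. Fix notation: let $\pi : \Gamma \to Q$ be the quotient map, let $S_Q := \pi(S)$ be the (finite, $e$-containing) image of $S$, and for $q \in Q$, $s \in \Gamma$ write $qs$ for the product $q\cdot(s\Lambda)$ in $Q$ (so $qs = q\,\pi(s)$). By the discussion preceding the lemma, $a$ weakly contains a p.m.p.\ action $\Gamma \acts^{b} (Y,\nu)$ on a standard probability space all of whose point stabilizers equal $\Lambda$; equivalently, $b$ is obtained by precomposing a \emph{free} p.m.p.\ action $Q \acts (Y,\nu)$ with $\pi$.

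When $Q$ is infinite this free $Q$-action is automatically aperiodic and non-atomic, so the Ornstein--Weiss quasitiling theorem (the Rokhlin lemma for free actions of amenable groups) applies: for any $\delta > 0$ it yields finitely many finite subsets $T_1,\dots,T_m \subseteq Q$ that are $(\delta, S_Q^{-1})$-invariant (so that $S_Q^{-1}t \subseteq T_i$ for all but a $\delta$-fraction of $t \in T_i$) together with measurable ``bases'' $C_1,\dots,C_m \subseteq Y$ such that the sets $\{t \cdot C_i : t \in T_i,\ 1 \le i \le m\}$ are pairwise disjoint and cover all but $\delta$ of $Y$. (If $Q$ is finite one instead takes $m = 1$, $T_1 = Q$, $\delta = 0$, and $C_1$ a fundamental domain; the rest of the argument is identical and in fact produces a genuine equivariant map on $Y$.)

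Next I would write down the combinatorial map. Put $F := \bigcup_i \{t^{-1} : t \in T_i\} \subseteq Q$ and define $c_0 : Y \to F$ by $c_0(t \cdot y) := t^{-1}$ whenever $y \in C_i$ and $t \in T_i$, and $c_0 :\equiv q_0$ on the remaining (measure $< \delta$) part of $Y$ for a fixed $q_0 \in F$. The key point is that if $z = t\cdot y$ with $y \in C_i$ and $t$ lies in the ``interior'' $\{t \in T_i : S_Q^{-1}t \subseteq T_i\}$, then for every $s \in S$ we have $\pi(s)^{-1}\cdot z = (\pi(s)^{-1}t)\cdot y$ with $\pi(s)^{-1}t \in T_i$, so that $c_0(s^{-1}\cdot z) = (\pi(s)^{-1}t)^{-1} = t^{-1}\pi(s) = c_0(z)s$; moreover $c_0(z)s \in F$ and $z \in c_0^{-1}(c_0(z))$. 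Since a $\delta$-fraction of each tile lies outside its interior and the tiled region has measure $> 1-\delta$, the set of such ``good'' $z$ has measure $> 1 - 2\delta$. Hence the finite partition $\{P_q^{Y} := c_0^{-1}(q)\}_{q\in F}$ of $Y$ satisfies
\[\sum_{q \in F} \nu\Big(P_q^{Y} \cap \bigcap_{s\in S} s^{b}\cdot P_{qs}^{Y}\Big) \;=\; \nu\big(\{z \in Y : c_0(s^{-1}\cdot z) = c_0(z)s \text{ for all } s\in S\}\big) \;>\; 1 - 2\delta,\]
where the terms with $qs \notin F$ contribute $0$ since then $P_{qs}^{Y} = \varnothing$.

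Finally, choose $\delta < \epsilon/4$ and transport this to $X$. The left-hand side above is a fixed finite sum of measures of finite Boolean combinations of the translates $\{s^{b}\cdot P_q^{Y} : q \in F,\ s \in S\}$ (using $e \in S$), so weak containment $b \preceq a$ — together with the standard consequence that it permits matching, to within any prescribed error, the joint distribution of finitely many measurable sets under finitely many group elements — produces measurable sets $\{P_q\}_{q\in F}$ in $X$ whose $a$-translate statistics agree with those of $\{P_q^{Y}\}$ to within a tiny error $\eta$. After disjointifying $\{P_q\}$ into an honest finite partition of $X$ (absorbing the resulting small $d_\mu$-perturbation into $\eta$) and taking $\eta$ sufficiently small relative to $|F|$, $|S|$, and $\epsilon$, the map $c : X \to F$ with $c^{-1}(q) = P_q$ satisfies $\mu(X') > 1 - \epsilon$. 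The main obstacle is not conceptual but bookkeeping: (i) tracking left versus right translations carefully enough that the identity $c_0(s^{-1}\cdot z) = c_0(z)s$ genuinely holds on the tile interiors, and (ii) ensuring in the transfer step that the sets returned by weak containment can be cleaned up to a partition with finite image without spending more than $\epsilon$ of error.
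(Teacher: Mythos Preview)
Your argument is correct and follows the same overall architecture as the paper's proof: build the desired map on the model action $\Gamma\acts^b(Y,\nu)$ coming from a free $Q$-action, verify the identity $c_0(s^{-1}\cdot z)=c_0(z)s$ on a large set there, and then push everything to $X$ by weak containment.

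The difference lies in how the map on $Y$ is constructed. You invoke the Ornstein--Weiss Rokhlin lemma for free actions of amenable groups directly, producing tiles $T_i$ with bases $C_i$, and define $c_0$ by the ``tile coordinate'' $t^{-1}$; the identity then holds on tile interiors. The paper instead uses the Ornstein--Weiss/Connes--Feldman--Weiss theorem that the orbit equivalence relation of $\Gamma\acts^b Y$ is $\nu$-hyperfinite: one picks a finite subequivalence relation $F$ whose classes absorb the $S$-orbits of most points, chooses a Borel selector $D$ meeting each $F$-class once, and sets $d(y)=\gamma\Lambda$ for the (unique up to $\Lambda$) $\gamma$ with $\gamma y\in D$ inside the $F$-class. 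On $Y'=\{y:(y,s^{-1}y)\in F\ \forall s\in S\}$ the same identity $d(s^{-1}y)=d(y)s$ follows immediately from uniqueness of the selector.

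Your route is the one announced by the paper (``a consequence of the Ornstein--Weiss quasitiling lemma'') and is slightly more hands-on; the paper's route via hyperfiniteness avoids tracking tile interiors and invariance constants at the cost of invoking a marginally heavier structural result. Both are standard, and the weak-containment transfer step is identical in spirit.
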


\begin{proof}
	Let $(Y, \nu)$ be a standard Borel probability space and $\Gamma \acts^b (Y, \nu)$ an action that is weakly contained in the action $\Gamma \acts (X, \mu)$ and has the property that every $y \in Y$ has stabilizer $\Lambda$. Since $\Gamma / \Lambda$ is amenable, the orbit equivalence relation
	$$E_\Gamma^Y = \{(y, \gamma \cdot y) : y \in Y, \ \gamma \in \Gamma\}$$
	must be $\nu$-hyperfinite \cite{OW80}, meaning there is a sequence of Borel equivalence relations $E_n \subseteq E_\Gamma^Y$ such that each class of each $E_n$ is finite, $E_n \subseteq E_{n+1}$ for all $n$, and $\bigcup_n E_n$ coincides with $E_\Gamma^Y$ on a $\Gamma$-invariant conull set. Set $F = E_n$ for a sufficiently large value of $n$ so that the set
	$$Y' = \{y \in Y : (y, s^{-1} \cdot y) \in F \text{ for all }s\in S\}$$
	has measure greater than $1-\epsilon$. Since $F$ is a Borel equivalence relation whose classes are all finite, there exists a Borel set $D \subseteq Y$ that contains precisely one point from every $F$-class \cite[Thm 12.16]{K95}. Let $d : Y \rightarrow \Gamma / \Lambda$ be the function that sends $y \in Y$ to the $\Lambda$-coset $\gamma \Lambda$ for any (equivalently every) $\gamma \in \Gamma$ satisfying $\gamma y \in D$ and $(y, \gamma y) \in F$. We observe that $d$ is measurable since for every $\gamma \in \Gamma$ the set of $y \in Y$ satisfying $y \ F \ \gamma \cdot y$ is equal to the Borel set $(\mathrm{id} \times b(\gamma))^{-1}(F)$. It is immediate from these definitions that the set of $y \in Y$ satisfying $d(s^{-1} \cdot y) = d(y) s$ for all $s\in S$ is precisely $Y'$ and thus has measure larger than $1-\epsilon$. Finally, since the map $d$ is described by the countable measurable partition $\{d^{-1}(\gamma \Lambda) : \gamma \Lambda \in \Gamma / \Lambda\}$ and the translated maps $d(s^{-1} \cdot)$, $s \in S$, are described by the $S$-translates of that partition, the fact that $\Gamma \acts (X, \mu)$ weakly contains $\Gamma \acts (Y, \nu)$ immediately implies that a function $c : X \rightarrow \Gamma / \Lambda$ with the desired property exists.
\end{proof}

\begin{thm} \label{thm:lift}
Let $\Gamma$ be a countable group and $\Lambda$ a normal co-amenable subgroup of $\Gamma$. If $\Gamma \acts (X, \mu)$ is a {\pmp} action that weakly contains a free {\pmp} action of $\Gamma / \Lambda$ and if the restricted action $\Lambda \acts (X, \mu)$ is existentially closed, then the action $\Gamma \acts (X, \mu)$ is existentially closed.
\end{thm}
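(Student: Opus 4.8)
The plan is to verify the ergodic-theoretic criterion of Proposition \ref{criterion} for an arbitrary extension $\phi : Y \to X$ of the given action $\Gamma \acts (X,\mu)$, by adapting the argument in the proof of Theorem \ref{thm:MD_EMD_ec_profinite} and replacing the exact coset decomposition coming from a finite-index subgroup with the approximate, finitely supported coset decomposition furnished by Ornstein--Weiss (Lemma \ref{OWlemma}). So fix $p,q \in \bb N$, measurable $\alpha : Y \to p$ and $\beta : X \to q$, a finite $S \subseteq \Gamma$, and $\epsilon > 0$; after enlarging $S$ we may assume $e \in S$ and $S = S^{-1}$, and we must produce $\tilde\alpha : X \to p$ with $|\nu(\alpha_S^{-1}(\pi)\cap\phi^{-1}(\beta^{-1}(j))) - \mu(\tilde\alpha_S^{-1}(\pi)\cap\beta^{-1}(j))| < \epsilon$ for all $\pi \in p^S$ and $j \in q$.

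First I would fix a small $\delta > 0$ and a finite $S^+ \supseteq S$ (to be enlarged below) and apply Lemma \ref{OWlemma} --- legitimately, since $\Gamma \acts (X,\mu)$ weakly contains a free action of $\Gamma/\Lambda$ --- to obtain a measurable $c : X \to \Gamma/\Lambda$ with finite image $F$ and $\mu(X') > 1-\delta$, where $X' = \{x : c(s^{-1}x) = c(x)s \text{ for all }s\in S^+\}$. Normality of $\Lambda$ makes $\Gamma/\Lambda$ a group; pick representatives $r : F \to \Gamma$ with $r(\Lambda) = e$, and form the cochain $\rho(g\Lambda,s) = r(g\Lambda)\,s\,r(gs\Lambda)^{-1}$, which takes values in $\Lambda$ because $r(g\Lambda)s\Lambda = gs\Lambda$. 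Enlarge $S^+$ so that it contains every $r(g\Lambda)^{\pm1}$ and every $\rho(g\Lambda,s)^{\pm1}$ with $g\Lambda\in F$, $s\in S$. On $X'$ the level sets $X_g := c^{-1}(g\Lambda)$, $g\Lambda\in F$, form a finite partition of $X$ such that, for $s\in S$, $s^{-1}$ carries $X_g$ into $X_{gs}$ (in particular $gs\Lambda\in F$, since $c(s^{-1}x)$ lies in the image of $c$), and $r(g\Lambda)$ carries $X_g$ into the base piece $X_\Lambda$ (so that $\Lambda$ approximately preserves each $X_g$).

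Next I would transport the pull-back problem to the base piece, which is where the hypothesis that the restricted action $\Lambda \acts (X,\mu)$ is existentially closed enters: the restriction of $\phi$ to $\Lambda$-actions is then an e.c.\ factor map. Apply the $\Lambda$-version of Proposition \ref{criterion} to it, with $P := \prod_{g\Lambda\in F}p^{r(g\Lambda)S}$ and the composite coloring $\alpha' : Y \to P$, $\alpha'(y) = \big(\alpha_{r(g\Lambda)S}(y)\big)_{g\Lambda\in F}$; with a finite base coloring $\beta' : X \to Q$ recording the pair $(\beta,c)$ (here $\beta$ is $\Gamma$-invariant, hence $\Lambda$-invariant, and $c$ is $\Lambda$-invariant off $X\setminus X'$); with the finite set $S_\Lambda := \{\rho(g\Lambda,s)^{\pm1} : g\Lambda\in F,\ s\in S\}\cup\{e\}\subseteq\Lambda$; and with a sufficiently small $\epsilon' > 0$. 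This produces $\tilde{\alpha'} : X \to P$ matching the $(\alpha',\beta',S_\Lambda)$-statistics to within $\epsilon'$; writing $\tilde{\alpha'}(x) = \big(\gamma_{g\Lambda}(x)\big)_{g\Lambda\in F}$ with $\gamma_{g\Lambda} : X \to p^{r(g\Lambda)S}$, I would then set $\tilde\alpha(x) = \gamma_{c(x)}\big(r(c(x))\cdot x\big)\big(r(c(x))\big)$ for $x\in X'$ (on $X'$ one has $r(c(x))\cdot x\in X_\Lambda$) and define $\tilde\alpha$ arbitrarily on $X\setminus X'$.

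Finally I would check that this works, following the computation after equation (\ref{eqn:md_ec2}) in the proof of Theorem \ref{thm:MD_EMD_ec_profinite} essentially verbatim. For $x\in X_g\cap X'$ with $s^{-1}x\in X'$ one computes $r(gs\Lambda)\cdot s^{-1}\cdot x = \rho(g\Lambda,s)^{-1}\cdot(r(g\Lambda)\cdot x)$, so that $r(g\Lambda)\cdot\tilde\alpha_S(x) = \gamma_{g\Lambda}(r(g\Lambda)\cdot x)$ outside a defect set $D_g$ whose measure is $O(\epsilon')$ (the exact identity $\alpha_{r(gs\Lambda)S}(\rho(g\Lambda,s)^{-1}y)(r(gs\Lambda)) = \alpha_{r(g\Lambda)S}(y)(r(g\Lambda)s)$ for $\alpha$ forces $\gamma_{g\Lambda}$ to satisfy it approximately). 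Hence, translating by $r(g\Lambda)$ and using $\Gamma$-invariance of $\beta$ together with the identity $t\cdot\alpha_S^{-1}(\pi)=\alpha_{tS}^{-1}(t\cdot\pi)$, the joint law of $(\tilde\alpha_S,\beta)$ on $X_g$ matches that of $(\gamma_{g\Lambda},\beta')$ on $X_\Lambda$, which was arranged to match that of $(\alpha_{r(g\Lambda)S},\beta')$ on $\phi^{-1}(X_\Lambda)$, which untranslates to $(\alpha_S,\beta)$ on $\phi^{-1}(X_g)$. Summing over the finitely many pieces $g\Lambda\in F$ and adding the $O(|S|\,\delta)$ from $X\setminus X'$ and its $S$-translates, the total discrepancy is $<\epsilon$ once $\delta$ and $\epsilon'$ were chosen small enough. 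I expect the one genuinely delicate point to be exactly this error bookkeeping: choosing $S^+$ large enough (all the $r(g\Lambda)^{\pm1}$ and all the $\rho(g\Lambda,s)^{\pm1}$) \emph{before} fixing $\delta$, then $\delta$ (controlling $\mu(X\setminus X')$ and its $S$-translates) before fixing $\epsilon'$, and tracking how the Ornstein--Weiss defect and the $S_\Lambda$-equivariance defect of $\tilde{\alpha'}$ propagate through the operators $(\cdot)_S$ and $(\cdot)_{S_\Lambda}$.
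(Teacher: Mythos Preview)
Your overall architecture matches the paper's proof: apply Lemma~\ref{OWlemma} to get a finite coloring $c$ with pieces $X_w$, translate each piece by a coset representative, use that $\Lambda \acts (X,\mu)$ is e.c.\ to pull back the translated data $\alpha_{wS}$, and reassemble. But two concrete steps in your write-up do not go through as stated.

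First, and most importantly, your assertion that ``$\beta$ is $\Gamma$-invariant'' is false: $\beta:X\to q$ is an arbitrary measurable map in the criterion of Proposition~\ref{criterion}. (You may be carrying this over from the proof of Theorem~\ref{thm:MD_EMD_ec_profinite}, where $\beta$ \emph{was} chosen $\Gamma$-invariant thanks to the special structure of the profinite-completion action; no such structure is available here.) This matters in the final comparison: after translating $X_g$ by $r(g\Lambda)$, the relevant base coloring at the translated point $y=r(g\Lambda)\cdot x$ is $\beta(r(g\Lambda)^{-1}y)$, not $\beta(y)$. The paper handles this by feeding the \emph{translated} sets $w\cdot(\beta^{-1}(j)\cap X_w)$ into the $\Lambda$-e.c.\ step (equation~\eqref{eqn:lift3}); you would need to put the finite family $(\beta\circ r(g\Lambda)^{-1})_{g\Lambda\in F}$ together with indicators of the translated pieces into your base coloring $\beta'$, not just $(\beta,c)$.

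Second, your plan to enlarge $S^+$ to include all $r(g\Lambda)^{\pm1}$ and $\rho(g\Lambda,s)^{\pm1}$ is genuinely circular, as you suspect: the image $F$ (hence the representatives $r(g\Lambda)$) is produced by Lemma~\ref{OWlemma} \emph{applied with the parameters $S^+$ and $\delta$}, so you cannot choose $S^+$ to contain objects that only exist after $S^+$ is fixed. The good news is that this enlargement is unnecessary. The paper applies Lemma~\ref{OWlemma} with only the original $S$, obtains $W=c(X)$ and representatives, and then works directly on the (possibly overlapping, not contained in $X_\Lambda$) translated pieces $w\cdot X_w$; the identity $r(ws\Lambda)s^{-1}=\rho(w\Lambda,s)^{-1}w$ is a pure group identity needing no control on $X'$. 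Drop the enlargement of $S^+$, and correct $\beta'$ as above, and your outline becomes the paper's proof.
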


\begin{proof}
We use the criterion for being e.c.\ established in Proposition \ref{criterion} (and use the notation established right before the statement of that proposition).  Let $\Gamma \acts (Y, \nu)$ be a {\pmp} action and let $\phi : Y \rightarrow X$ be a $\Gamma$-equivariant factor map. Fix $p, q \in \bN$, $\alpha : Y \rightarrow p$ and $\beta : X \rightarrow q$ measurable maps, $S \subseteq \Gamma$ be finite with $e \in S$, and $\epsilon > 0$. Fix a choice $r : \Gamma / \Lambda \rightarrow \Gamma$ of representatives for the cosets of $\Lambda$ in $\Gamma$ and let $\rho : (\Gamma / \Lambda) \times \Gamma \rightarrow \Lambda$ be the cocycle $\rho(a \Lambda, \gamma) = r(a \Lambda) \gamma r(a \gamma \Lambda)^{-1}$.

By applying Lemma \ref{OWlemma} and composing with the function $r$, we obtain a measurable map $c : X \rightarrow r(\Gamma / \Lambda)$ having finite image and satisfying $\mu(X') > 1 - \frac{\epsilon}{2}$, where
$$X' = \{x \in X : \ c(s^{-1} \cdot x) = r(c(x) s \Lambda) \text{ for all }s\in S\}.$$
Define the finite set $W = c(X) \subseteq r(\Gamma / \Lambda) \subseteq \Gamma$ and for $w \in W$ set $X_w = c^{-1}(w)$. Also set $W' = \{w \in W : X_w \cap X' \neq \varnothing\}$ and notice that whenever $w \in W'$ and $s \in S$, we have $r(w s \Lambda) \in W$ since $r(w s \Lambda) = c(s^{-1} \cdot x)$ whenever $x \in X_w \cap X'$.

Now consider the functions $\alpha_{w S}$ for $w \in W$. We observe that we always have $w \cdot \alpha_S(y) = \alpha_{w S}(w \cdot y)$ since for $s \in S$, we have
$$(w \cdot \alpha_S(y))(w s) = \alpha_S(y)(s) = \alpha(s^{-1} \cdot y) = \alpha(s^{-1} w^{-1} w \cdot y) = \alpha_{w S}(w \cdot y)(w s).$$
Therefore for all $w \in W$ and $\pi \in p^S$, we have $w \cdot \alpha_S^{-1}(\pi) = \alpha_{w S}^{-1}(w \cdot \pi)$.  In particular, we have
\begin{equation} \label{eqn:lift1}
w \cdot \Big( \alpha_S^{-1}(\pi) \cap \phi^{-1}(X_w) \Big) = \alpha_{w S}^{-1}(w \cdot \pi) \cap w \cdot \phi^{-1}(X_w).
\end{equation}
Similarly, since $e \in S$, for $w \in W'$, $s \in S$, and $y \in Y$ we have
\begin{align*}
\alpha_{w S}(y)(w s) = \alpha(s^{-1} w^{-1} \cdot y) & = \alpha(r(w s \Lambda)^{-1} \rho(w \Lambda, s)^{-1} \cdot y)\\
 & = \alpha_{r(w s \Lambda) S}(\rho(w \Lambda, s)^{-1} \cdot y)(r(w s \Lambda)).
\end{align*}
So for every $w \in W'$, we have
\begin{equation} \label{eqn:lift2}
\bigcup_{s \in S} \left\{y \in Y: \alpha_{r(w s \Lambda) S}(\rho(w \Lambda, s)^{-1} \cdot y)(r(w s \Lambda)) \neq \alpha_{w S}(y)(w s)\right\} = \emptyset.
\end{equation}

Since $\Lambda \acts (X, \mu)$ is existentially closed, we can find measurable functions $\gamma_w : w \cdot X_w \rightarrow p^{w S}$ for $w \in W$ satisfying the following two conditions. First, relative to each of the sets $w \cdot (\beta^{-1}(j) \cap X_w)$, the $\gamma_w$'s will have identical distribution in measure to the functions $\alpha_{w S}$, meaning that for all $w \in W$, $j \in q$, and $\pi \in p^S$, we have
\begin{equation} \label{eqn:lift3}
\mu \Big( \gamma_w^{-1}(w \cdot \pi) \cap w \cdot \beta^{-1}(j) \Big) = \nu \Big( \alpha_{w S}^{-1}(w \cdot \pi) \cap \phi^{-1}(w \cdot \beta^{-1}(j) \cap w \cdot X_w) \Big).
\end{equation}
(We point out that taking an intersection with $w \cdot X_w$ on the left would be redundant since the domain of $(\gamma_w)$ is $w \cdot X_w$). Second, we control how the functions $\gamma_w$ relate to the action of $\Lambda$ and demand, in view of (\ref{eqn:lift2}), that $\mu(D_w) < \frac{\epsilon}{2|W|}$ for all $w \in W'$, where
\begin{equation*}
D_w = \bigcup_{s \in S} \left\{x \in w \cdot X_w:  \gamma_{r(w s \Lambda)}(\rho(w \Lambda, s)^{-1} \cdot x)(r(w s \Lambda)) \neq \gamma_w(x)(w s)\right\}.
\end{equation*}

Define $\tilde{\alpha} : X \rightarrow p$ by setting $\tilde{\alpha}(x) = \gamma_w(w \cdot x)(w)$ when $w \in W$ and $x \in X_w$. Notice that when $x \in (X_w \setminus w^{-1} \cdot D_w) \cap X'$, we have $w \cdot \tilde{\alpha}_S(x) = \gamma_w(w \cdot x)$, since for any $s \in S$, we have $s^{-1} \cdot x \in X_{r(w s \Lambda)}$ since $x \in X'$ and therefore
\begin{align*}
(w \cdot \tilde{\alpha}_S(x))(w s) = \tilde{\alpha}_S(x)(s) = \tilde{\alpha}(s^{-1} \cdot x) & = \gamma_{r(w s \Lambda)}(r(w s \Lambda) s^{-1} \cdot x)(r(w s \Lambda))\\
 & = \gamma_{r(w s \Lambda)}(\rho(w \Lambda, s)^{-1} w \cdot x)(r(w s \Lambda))
\end{align*}
and the final term above is equal to $\gamma_w(w \cdot x)(w s)$ since $w \cdot x \not\in D_w$. As we additionally have that $X_w \cap X' = \emptyset$ when $w \in W \setminus W'$, we conclude that for all $w \in W$, we have
\begin{equation} \label{eqn:lift4}
\Big( w \cdot ( \tilde{\alpha}_S^{-1}(\pi) \cap X_w ) \Big) \symd \Big( \gamma_w^{-1}(w \cdot \pi) \cap w \cdot X_w \Big) \subseteq D_w \cup w \cdot (X_w \setminus X').
\end{equation}

For $\pi \in p^S$ and $j \in q$, equation (\ref{eqn:lift1}) implies that
\begin{equation} \label{eqn:lift5}
\nu(\alpha_S^{-1}(\pi) \cap \phi^{-1}(\beta^{-1}(j))) = \sum_{w \in W} \nu(\alpha_{w S}^{-1}(w \cdot \pi) \cap w \cdot \phi^{-1}(\beta^{-1}(j) \cap X_w))
\end{equation}
and equation (\ref{eqn:lift4}) implies
\begin{equation} \label{eqn:lift6}
\left| \mu(\tilde{\alpha}_S^{-1}(\pi) \cap \beta^{-1}(j)) - \sum_{w \in W} \mu(\gamma_w^{-1}(w \cdot \pi) \cap w \cdot (\beta^{-1}(j) \cap X_w)) \right| < \epsilon.
\end{equation}
Since the sums over $w \in W$ in (\ref{eqn:lift5}) and (\ref{eqn:lift6}) are equal by (\ref{eqn:lift3}), it follows that
$$\left| \nu(\alpha_S^{-1}(\pi) \cap \phi^{-1}(\beta^{-1}(j))) - \mu(\tilde{\alpha}_S^{-1}(\pi) \cap \beta^{-1}(j)) \right| < \epsilon.$$
We conclude that the action $\Gamma \acts (X, \mu)$ is existentially closed.
\end{proof}

\begin{cor} \label{cor:coam_mc}
Suppose that $\Gamma$ is a group containing a normal coamenable subgroup $\Lambda$ for which $T_\Lambda^*$ exists.  Then $T_\Gamma^*$ exists as well.
\end{cor}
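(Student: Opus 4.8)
The plan is to verify the ultraproduct test for the existence of a model companion recorded above: it suffices to show that whenever $(\cal M_{a_i})_{i\in I}$ is a family of e.c.\ actions of $\Gamma$ and $\u$ is an ultrafilter on $I$, the ultraproduct $\cal M_a := \prod_\u \cal M_{a_i}$ is again e.c. Here $\Gamma\acts^a(X,\mu)$ denotes the ultraproduct action $\prod_\u a_i$ on $(X,\mu)=\prod_\u(X_i,\mu_i)$, which need not be standard; this causes no difficulty, since Theorem \ref{thm:lift}, together with Proposition \ref{criterion} and Lemma \ref{OWlemma} on which it relies, is proved with no standardness assumption on the acting space.

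First I would restrict to $\Lambda$. By Corollary \ref{restrictedec}, each restricted action $\Lambda\acts^{a_i}(X_i,\mu_i)$ is e.c., and since forming the $L_\Lambda$-reduct commutes with ultraproducts, the restriction of $a$ to $\Lambda$ is precisely the ultraproduct $\prod_\u\big(\Lambda\acts^{a_i}(X_i,\mu_i)\big)$. Because $T_\Lambda^*$ exists by hypothesis, the ultraproduct test applied to $\Lambda$ now yields that $\Lambda\acts^a(X,\mu)$ is e.c.

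Next I would check the weak-containment hypothesis in Theorem \ref{thm:lift}. Each $a_i$, being e.c., is locally universal by Lemma \ref{lem:locuniv}; since the class of locally universal actions is axiomatizable (and hence closed under ultraproducts), $a$ is locally universal as well. In particular $\Gamma\acts^a(X,\mu)$ weakly contains some {\pmp} action of $\Gamma$ all of whose stabilizers equal $\Lambda$, i.e.\ it weakly contains a free {\pmp} action of $\Gamma/\Lambda$. (Alternatively, one can observe directly that weak containment is a finitary condition and therefore passes from the $a_i$ to $a$.) Since $\Lambda$ is normal and coamenable in $\Gamma$, Theorem \ref{thm:lift} then applies to $\Gamma\acts^a(X,\mu)$ and shows it to be e.c., completing the verification of the test and hence the proof.

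The argument is essentially bookkeeping once Theorem \ref{thm:lift} is available; the only point warranting care, and the mild obstacle I expect, is the interplay between ultraproducts and the e.c.\ property in the non-standard setting — namely, confirming that the $L_\Lambda$-reduct of $\prod_\u\cal M_{a_i}$ really is the ultraproduct of the $L_\Lambda$-reducts, and that Theorem \ref{thm:lift} may legitimately be invoked for the (non-standard) action $a$.
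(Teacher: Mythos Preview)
Your proposal is correct and follows essentially the same approach as the paper: verify the ultraproduct test by restricting to $\Lambda$ (using Corollary \ref{restrictedec} and the existence of $T_\Lambda^*$), observe that the ultraproduct is locally universal (via Lemma \ref{lem:locuniv}), and then apply Theorem \ref{thm:lift}. Your additional remarks about standardness and the commutation of reducts with ultraproducts are accurate and simply make explicit what the paper leaves implicit.
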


\begin{proof}
Let $(\cal M_{a_i})_{i\in I}$ be a family of e.c.\ models of $T_\Gamma$ and fix an ultrafilter $\u$ on $I$; it suffices to show that $\cal M_a:=\prod_\u \cal M_{a_i}$ is also an e.c.\ model of $T_\Gamma$.  For each $i\in I$, let $\cal M_{b_i}:=\cal M_{a_i}|L_{\Lambda}\models T_{\Lambda}$ denote the restricted action and set $\cal M_b:=\prod_\u \cal M_{b_i}=\cal M_a|L_\Lambda$.  By Corollary \ref{restrictedec}, each $\cal M_{b_i}$ is an e.c.\ model of $T_\Lambda$.  Since $T_\Lambda^*$ exists, $\cal M_b$ is also an e.c.\ model of $T_\Lambda$.  Since each $\cal M_{a_i}$ is e.c.\, it is also locally universal, whence so is $\cal M_a$. By Theorem \ref{thm:lift}, $\cal M_a$ is e.c.\ as desired.
\end{proof}

\begin{remark}
    Under the assumptions of the previous corollary, the axioms for $T_\Gamma^*$ are the axioms for $T_\Lambda^*$ together with the axioms for $T_{\Gamma,max}$.
\end{remark}

Recall from the introduction that a group $\Gamma$ is universally free if it embeds into an ultrapower of a free group.

\begin{cor}
Suppose that $\Gamma$ is a universally free group.  Then $T_\Gamma^*$ exists.
\end{cor}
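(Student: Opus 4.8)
The plan is to reduce to the finitely generated case and then invoke the structure theory of limit groups together with Corollary~\ref{cor:coam_mc}. First I would apply Corollary~\ref{mclocgroup}: since having a model companion is a local property of groups, it suffices to prove that $T_\Lambda^*$ exists for every finitely generated subgroup $\Lambda$ of $\Gamma$. Any such $\Lambda$ is itself universally free, since it embeds into the same ultrapower of a free group into which $\Gamma$ embeds (equivalently, universal sentences pass to substructures); being finitely generated, $\Lambda$ is by definition a limit group. This reduces the statement to the case where $\Gamma$ is a limit group.

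So assume now that $\Gamma$ is a limit group. I would then recall the result of Kochloukova \cite[Corollary B]{Kochloukova} already used in the proof of Theorem~\ref{thm:Limit_MD}: $\Gamma$ has a free normal subgroup $\Lambda$ with $\Gamma / \Lambda$ torsion-free nilpotent, hence solvable and in particular amenable. Thus $\Lambda$ is a normal coamenable subgroup of $\Gamma$. It remains to verify that $T_\Lambda^*$ exists. The group $\Lambda$ is free, possibly of infinite rank; but by the Nielsen--Schreier theorem every finitely generated subgroup of $\Lambda$ is free of finite rank, hence has a model companion by Fact~\ref{BHItheorem}, and one more application of Corollary~\ref{mclocgroup} yields that $T_\Lambda^*$ exists (one could equivalently cite Corollary~\ref{freeinfgen}). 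Corollary~\ref{cor:coam_mc} now gives that $T_\Gamma^*$ exists.

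There is no serious obstacle remaining: the heavy lifting has all been carried out in the preceding sections — the Ornstein--Weiss quasitiling argument behind Theorem~\ref{thm:lift} and hence Corollary~\ref{cor:coam_mc}, the coinduction and relative-expansion constructions behind Corollaries~\ref{cor:mc_sub} and~\ref{mclocgroup}, and Kochloukova's structure theorem — so what remains is just to assemble these with two elementary observations: that a finitely generated subgroup of a universally free group is a limit group, and that Nielsen--Schreier together with Fact~\ref{BHItheorem} handles free groups of arbitrary countable rank. The only point to handle with any care is the reduction to the finitely generated case, and that is immediate since being universally free is inherited by subgroups.
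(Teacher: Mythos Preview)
Your proof is correct and follows essentially the same approach as the paper: reduce to the finitely generated case via Corollary~\ref{mclocgroup}, invoke Kochloukova's theorem to obtain a normal coamenable free subgroup $\Lambda$, note that $T_\Lambda^*$ exists (the paper simply cites Corollary~\ref{freeinfgen}), and conclude by Corollary~\ref{cor:coam_mc}. Your additional remarks on why finitely generated subgroups of universally free groups are limit groups, and the Nielsen--Schreier alternative to Corollary~\ref{freeinfgen}, are correct but not needed beyond what the paper already does.
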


\begin{proof}
By Corollary \ref{mclocgroup}, we may assume that $\Gamma$ is finitely generated, that is, that $\Gamma$ is a limit group.  By the aforemtnioned result of Kochloukova \cite{Kochloukova}, $\Gamma$ has a normal coamenable (not necessarily finitely generated) free subgroup $\Lambda$.  By Corollary \ref{freeinfgen}, $T_\Lambda^*$ exists.  Thus, by Corollary \ref{cor:coam_mc}, we have that $T_\Gamma^*$ exists.
\end{proof}

\subsection{An open mapping characterization for the existence of \texorpdfstring{$T_\Gamma^*$}{the model companion}}\label{openmappingsub}

In this subsection, we give an ergodic-theoretic characterization of the existence of $T_\Gamma^*$, which also yields axioms for the model companion when it exists.  First, we need a few preparatory lemmas.

\begin{lem}\label{eccover}
	For every $n \in \bN$, let $\Gamma \acts (X_n, \mu_n)$ be a {\pmp} action. Then there exists a {\pmp} action $\Gamma \acts (X, \mu)$ which is e.c.\ and which factors onto $\Gamma \acts (X_n, \mu_n)$ for every $n$.
\end{lem}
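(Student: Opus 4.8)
The plan is to construct $\Gamma \acts (X,\mu)$ in two moves: first form the product of all the given actions, then enlarge that product to an e.c.\ action using the general existence result recorded earlier in the excerpt.

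First I would form the product action $\Gamma \acts \big(\prod_{n \in \bN} X_n,\ \mu\big)$, where $\mu = \prod_{n \in \bN}\mu_n$ is the product measure and $\Gamma$ acts diagonally via $\gamma \cdot (x_n)_n = (\gamma \cdot x_n)_n$. This is again a {\pmp} action, and it is on a standard probability space whenever each $(X_n,\mu_n)$ is, since a countable product of standard probability spaces is standard. For each $m \in \bN$, the coordinate projection $\pi_m : \prod_n X_n \to X_m$ is $\Gamma$-equivariant and measure-preserving, hence is a factor map from the product action onto $\Gamma \acts (X_m, \mu_m)$. Next, I would apply the fact that e.c.\ actions exist over any given action: applied to the product action, this produces an e.c.\ action $\Gamma \acts (X,\mu)$ together with an inclusion of models $\cal M_{\prod_n X_n} \subseteq \cal M_X$ (with $X$ standard if every $X_n$ is). By the duality between substructures of models of $T_\Gamma$ and factor maps described in Subsection \ref{pmpstructures}, this inclusion corresponds to a $\Gamma$-equivariant factor map $\phi : X \to \prod_n X_n$. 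Composing, $\pi_n \circ \phi : X \to X_n$ is then a factor map from $\Gamma \acts (X,\mu)$ onto $\Gamma \acts (X_n,\mu_n)$ for every $n$, which is exactly what is required.

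There is no real obstacle here; the argument is just an assembly of facts already available. The only two points meriting a word of care are that a countable product of standard probability spaces remains standard (so the optional standardness of $X$ goes through) and the translation between the model-theoretic inclusion $\cal M_{\prod_n X_n} \subseteq \cal M_X$ and the existence of the factor map $\phi$, both of which are routine.
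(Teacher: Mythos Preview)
Your proof is correct and follows essentially the same approach as the paper: take an action factoring onto every $X_n$ (you use the product, the paper leaves this unspecified) and then pass to an e.c.\ extension. The only difference is that you supply more detail, including the explicit product construction and the translation between model-theoretic inclusions and factor maps.
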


\begin{proof}
	Let $\Gamma\acts (Y,\nu)$ be any pmp action factoring onto each $\Gamma\acts (X_n,\mu_n)$ and then let $\Gamma\acts (X,\mu)$ be an e.c.\ action factoring onto $\Gamma \acts (Y,\nu)$.
\end{proof}

\begin{lem} \label{lem:amalgam}
	Let $\Gamma \acts (X, \mu)$ and $\Gamma \acts (Y, \nu)$ be {\pmp} actions that both factor onto the {\pmp} action $\Gamma \acts (Z, \eta)$ via the factor maps $\phi$ and $\psi$, respectively. If $(Z, \eta)$ is a standard probability space then there is a $\Gamma$-invariant measure $\lambda$ on $X \times Y$ having marginals $\mu$ on $X$ and $\nu$ on $Y$, respectively, and satisfying $\phi(x) = \psi(y)$ for $\lambda$-almost-every $(x, y) \in X \times Y$.
\end{lem}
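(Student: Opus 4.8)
The plan is to build the desired measure $\lambda$ by the standard relatively independent joining construction over the common factor $(Z,\eta)$. First I would disintegrate both $\mu$ and $\nu$ over $\eta$ using the factor maps $\phi$ and $\psi$: since $(Z,\eta)$ is standard, we may write $\mu = \int_Z \mu_z \, d\eta(z)$ and $\nu = \int_Z \nu_z \, d\eta(z)$, where $z \mapsto \mu_z$ and $z \mapsto \nu_z$ are the (essentially unique) measurable families of probability measures concentrated on $\phi^{-1}(z)$ and $\psi^{-1}(z)$ respectively (see \cite[Theorem A.7]{glasner}). Then I would define
\[
\lambda = \int_Z \mu_z \times \nu_z \, d\eta(z),
\]
a Borel probability measure on $X \times Y$. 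It is immediate from the disintegration identities that the marginals of $\lambda$ are $\mu$ and $\nu$, and that for $\lambda$-almost-every $(x,y)$ we have $\phi(x) = z = \psi(y)$, since $\mu_z \times \nu_z$ is concentrated on $\phi^{-1}(z) \times \psi^{-1}(z)$.

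The one substantive point is $\Gamma$-invariance of $\lambda$. For this I would first record the transformation rule for the disintegrations under the group action: uniqueness of the disintegration together with the $\Gamma$-invariance of $\mu$ and $\nu$ and the $\Gamma$-equivariance of $\phi$ and $\psi$ forces $\mu_{\gamma \cdot z} = \gamma_* \mu_z$ and $\nu_{\gamma \cdot z} = \gamma_* \nu_z$ for every $\gamma \in \Gamma$ and $\eta$-almost-every $z$ (this is the analogue of the computation $\nu_x(\lambda \cdot E) = \nu_{\lambda^{-1}\cdot x}(E)$ used in the proof of Lemma \ref{relexpgroups}). Given this, for a measurable rectangle $E \times F \subseteq X \times Y$ and $\gamma \in \Gamma$, one computes
\[
\lambda(\gamma \cdot (E \times F)) = \int_Z \mu_z(\gamma \cdot E)\,\nu_z(\gamma \cdot F)\, d\eta(z) = \int_Z \mu_{\gamma^{-1}\cdot z}(E)\,\nu_{\gamma^{-1}\cdot z}(F)\, d\eta(z),
\]
and then the $\Gamma$-invariance of $\eta$ (it is a factor of $\mu$) lets one change variables $z \mapsto \gamma \cdot z$ to recover $\int_Z \mu_z(E)\,\nu_z(F)\,d\eta(z) = \lambda(E\times F)$. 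Since finite disjoint unions of measurable rectangles are dense in the measure algebra of $X \times Y$, this extends to all measurable sets, giving $\Gamma$-invariance of $\lambda$.

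I expect the main (minor) obstacle to be the careful bookkeeping around almost-everywhere statements: the disintegration families and the transformation rules $\mu_{\gamma\cdot z} = \gamma_*\mu_z$ hold only off $\eta$-null sets depending on $\gamma$, but since $\Gamma$ is countable one can intersect over all $\gamma \in \Gamma$ to obtain a single conull set on which everything is simultaneously valid, after which the integral manipulations above are legitimate. This is entirely routine, so I would state it briefly and refer to the parallel argument already carried out in the proof of Lemma \ref{relexpgroups}.
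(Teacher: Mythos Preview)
Your construction is the right one---the relatively independent joining over $Z$---and the invariance computation is correct in spirit. But there is a genuine gap in the disintegration step: the lemma does \emph{not} assume that $(X,\mu)$ or $(Y,\nu)$ is standard, only that $(Z,\eta)$ is, and the result is in fact applied in the paper (for instance in the proof of Theorem~\ref{prop:openmap}) with $X$ an ultraproduct space, which is not standard and whose $\sigma$-algebra is not countably generated. The disintegration theorem you cite (\cite[Theorem~A.7]{glasner}) requires the source space to be standard, so writing $\mu = \int_Z \mu_z \, d\eta(z)$ with $\mu_z$ a genuine probability measure on $X$ concentrated on $\phi^{-1}(z)$ is not justified as stated. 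This is not merely an a.e.\ bookkeeping issue: without countable generation there is no reason the functions $z\mapsto \mu_z(A)$ can be assembled into a countably additive measure $\mu_z$ simultaneously for all measurable $A\subseteq X$.

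The paper's proof circumvents this as follows. It first defines $\lambda$ only on rectangles via Radon--Nikodym derivatives, setting $\lambda(A\times B)=\int f_A\, g_B\, d\eta$ where $f_A$ realizes $C\mapsto \mu(A\cap\phi^{-1}(C))$ and similarly for $g_B$; these derivatives exist with no standardness assumption on $X$ or $Y$. To check that $\lambda$ is a premeasure (countably additive on the algebra of finite unions of rectangles), it reduces to a \emph{countably generated} sub-$\sigma$-algebra $\Sigma_X\supseteq\phi^{-1}(\mathcal B_Z)$ containing the sets in question, where disintegration is available and shows $f_A(z)=\mu_z(A)$; this verifies the premeasure condition, and Carath\'eodory then yields the measure $\lambda$. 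Invariance and the fiber-product concentration follow by the arguments you sketched, again checked on rectangles. Your proof can be repaired along exactly these lines.
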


\begin{proof}
	For each set $A \in \mathcal{B}_X$ denote by $f_A$ the Radon--Nikodym derivative of the measure $C \in \mathcal{B}_Z \mapsto \mu(A \cap \phi^{-1}(C))$ with respect to $\eta$, and similarly for $B \in \mathcal{B}_Y$ define $g_B$ to be the Radon--Nikodym derivative of $C \mapsto \nu(B \cap \psi^{-1}(C))$ with respect to $\eta$.
	
	Define a function $\lambda$ on the set of all measurable rectangles $A \times B$ ($A \in \mathcal{B}_X$, $B \in \mathcal{B}_Y$) by
	$$\lambda(A \times B) = \int f_A \cdot g_B \ d \eta.$$
	Suppose that $A \times B$ is the disjoint union of $A_n \times B_n$, $n \in \bN$. Since $(Z, \eta)$ is standard, we can pick countably generated $\sigma$-algebras $\Sigma_X \subseteq \mathcal{B}_X$ and $\Sigma_Y \subseteq \mathcal{B}_Y$ containing $\phi^{-1}(\mathcal{B}_Z)$ and $\psi^{-1}(\mathcal{B}_Z)$, respectively, with $A_n \in \Sigma_X$ and $B_n \in \Sigma_Y$ for all $n$. Disintegrate $\mu \restriction \Sigma_X$ with respect to $\phi$ and $\nu \restriction \Sigma_Y$ with respect to $\psi$ to obtain almost-everywhere unique measurable maps $z \mapsto \mu_z$ and $z \mapsto \nu_z$, where $\mu_z$ and $\nu_z$ are probability measures on $\Sigma_X$ and $\Sigma_Y$ respectively, satisfying $\mu_z(\phi^{-1}(z)) = 1 = \nu_z(\psi^{-1}(z))$ for $\eta$-almost every $z \in Z$, $\mu \restriction \Sigma_X = \int_Z \mu_z \ d \eta$ and $\nu \restriction \Sigma_Y = \int_Z \nu_z \ d \eta$. Then for $A' \in \Sigma_X$ we have
	$$\int \mu_z(A') \cdot 1_C \ d \eta = \mu(A' \cap \phi^{-1}(C)) = \int f_{A'} \cdot 1_C \ d \eta$$
	for all $C \in \mathcal{B}_Z$ and therefore $f_{A'}(z) = \mu_z(A')$ for a.e. $z$. Similarly $g_{B'}(z) = \nu_z(B')$ for every $B' \in \Sigma_Y$ and a.e. $z$. It follows that the function $\lambda$ and the measure $\int \mu_z \times \nu_z \ d \eta$ coincide when restricted to measurable rectangles in $\Sigma_X \times \Sigma_Y$ and therefore $\sum_{n \in \bN} \lambda(A_n \times B_n) = \lambda(A \times B)$.
	
	The previous paragraph shows that $\lambda$ is a probability premeasure on the algebra of finite unions of measurable rectangles, so by Caratheodory's theorem $\lambda$ has a unique extension to a probability measure on $\mathcal{B}_X \times \mathcal{B}_Y$. Since $\lambda$ is $\Gamma$-invariant it follows from the uniqueness of the extension that it must be $\Gamma$-invariant as well. Moreover, it is immediately seen that this extension, which we denote by $\lambda$ as well, has marginals $\mu$ and $\nu$ on $X$ and $Y$ respectively. 
	
	Lastly, since $Z$ is standard we have that $Z \times Z \setminus \{(z,z) : z \in Z\}$ is a countable union of measurable rectangles. Since every measurable rectangle $C \times D \subseteq Z \times Z$ that is disjoint with the diagonal satisfies
	$$\lambda((\phi \times \psi)^{-1}(C \times D)) = \int_Z 1_C \cdot 1_D \ d \eta = \eta(C \cap D) = 0,$$
	we conclude that $\phi(x) = \psi(y)$ for $\lambda$-almost-every $(x,y)$.	
\end{proof}

\begin{lem}
	Let $(\Gamma \acts (X_i, \mu_i))_{i \in I}$ be a collection of {\pmp} actions, let $\u$ be an ultrafilter on $I$, let $\Gamma \acts \prod_\u (X_i, \mu_i)$ be the ultraproduct action, and let $q \in \bN$.
	\begin{enumerate}
		\item Given any measurable map $\alpha : \prod_\u X_i \rightarrow q$, there exist measurable maps $\alpha^i : X_i \rightarrow q$ such that $\alpha([x_i]_\u) = \lim_{i \rightarrow \u} \alpha^i(x_i)$ for almost-every $[x_i]_\u \in \prod_\u X_i$.
		\item If $\alpha_i : X_i \rightarrow q$ is measurable for each $i\in I$ and $\alpha : \prod_\u X_i \rightarrow q$ is defined by $\alpha([x_i]_\u) = \lim_{i \rightarrow \u} \alpha^i(x_i)$, then
		\begin{enumerate}
			\item $\alpha$ is measurable,
			\item $\alpha_\Gamma([x_i]_\u) = \lim_{i \rightarrow \u} \alpha^i_\Gamma([x_i]_\u)$ for every $[x_i]_\u \in \prod_\u X_i$, and
			\item $(\alpha_\Gamma)_*(\prod_\u \mu_i) = \lim_{i \rightarrow \u} (\alpha^i_\Gamma)_*(\mu_i)$.
		\end{enumerate} 
	\end{enumerate}
\end{lem}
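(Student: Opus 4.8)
The plan is to reduce everything to the structural facts about the ultraproduct space recalled in Subsection~\ref{ultrasub}: every element of the measure algebra of $\prod_\u(X_i,\mu_i)$ is represented by a set $[A_i]_\u=\prod_\u A_i$; the operation $[\,\cdot\,]_\u$ commutes with complement and finite unions; and $\mu_0([A_i]_\u)=\lim_{i\to\u}\mu_i(A_i)$. Throughout I identify a measurable map into $q=\{0,\dots,q-1\}$ with the measurable partition of its domain into its $q$ fibers, and I use that ultralimits exist (and are unique) in the compact metrizable spaces $q$, $q^\Gamma$, and $\Prob(q^\Gamma)$.

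For part (1), I would start from the partition $\{\alpha^{-1}(j)\}_{j<q}$ of $\prod_\u X_i$ and, using the representation fact, choose for each $j$ a measurable set $A^i_j\subseteq X_i$ with $[A^i_j]_\u=\alpha^{-1}(j)$ in the measure algebra. These need not partition $X_i$, so I would disjointify: set $\tilde A^i_j:=A^i_j\setminus\bigcup_{k<j}A^i_k$ for $j<q-1$ and $\tilde A^i_{q-1}:=X_i\setminus\bigcup_{j<q-1}\tilde A^i_j$, so that $\{\tilde A^i_j\}_{j<q}$ genuinely partitions $X_i$. Since $[\,\cdot\,]_\u$ commutes with finite Boolean operations and the $\alpha^{-1}(j)$ are pairwise disjoint with union $\prod_\u X_i$, one computes $[\tilde A^i_j]_\u=\alpha^{-1}(j)$ for every $j$. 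Letting $\alpha^i:X_i\to q$ be the map with fibers $\{\tilde A^i_j\}_{j<q}$, this identity unwinds to $\alpha([x_i]_\u)=\lim_{i\to\u}\alpha^i(x_i)$ for a.e.\ $[x_i]_\u$, because $\lim_{i\to\u}\alpha^i(x_i)=j$ exactly when $\{i:x_i\in(\alpha^i)^{-1}(j)\}\in\u$, i.e.\ when $[x_i]_\u\in[(\alpha^i)^{-1}(j)]_\u$.

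For part (2) the engine is the single identity $\alpha^{-1}(j)=\prod_\u(\alpha^i)^{-1}(j)$, valid since $\alpha([x_i]_\u)=j\Leftrightarrow\{i:\alpha^i(x_i)=j\}\in\u$; this simultaneously shows $\alpha$ is well defined and, as each $(\alpha^i)^{-1}(j)$ is measurable, that $\alpha^{-1}(j)\in\cal B_0$, giving (a). For (b) I would argue coordinatewise: for each $\gamma\in\Gamma$, using that the ultraproduct action is the diagonal one, $\alpha_\Gamma([x_i]_\u)(\gamma)=\alpha(\gamma^{-1}\cdot[x_i]_\u)=\alpha([\gamma^{-1}\cdot x_i]_\u)=\lim_{i\to\u}\alpha^i(\gamma^{-1}\cdot x_i)=\lim_{i\to\u}\alpha^i_\Gamma(x_i)(\gamma)$; since a basic open set of $q^\Gamma$ constrains only finitely many coordinates, coordinatewise agreement along $\u$ upgrades to $\lim_{i\to\u}\alpha^i_\Gamma(x_i)=\alpha_\Gamma([x_i]_\u)$. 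For (c) it suffices to check that the two Borel probability measures on $q^\Gamma$ agree on cylinders $C_{F,\pi}=\{y\in q^\Gamma:y|_F=\pi\}$ (finite $F\subseteq\Gamma$, $\pi\in q^F$), since these form a $\pi$-system generating the Borel $\sigma$-algebra; unwinding definitions with the identity above gives $\alpha_\Gamma^{-1}(C_{F,\pi})=\prod_\u(\alpha^i_\Gamma)^{-1}(C_{F,\pi})$, so its $\prod_\u\mu_i$-mass equals $\lim_{i\to\u}\mu_i\big((\alpha^i_\Gamma)^{-1}(C_{F,\pi})\big)=\lim_{i\to\u}(\alpha^i_\Gamma)_*(\mu_i)(C_{F,\pi})$, and since $1_{C_{F,\pi}}$ is continuous on $q^\Gamma$ the weak$^*$ ultralimit makes this last expression equal to $\big(\lim_{i\to\u}(\alpha^i_\Gamma)_*(\mu_i)\big)(C_{F,\pi})$.

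I expect part (1) to be the one place that is not a pure unwinding of definitions: producing a common partition of the $X_i$ that represents $\alpha$, and carrying out the disjointification while keeping track of the Boolean-operation compatibility of $[\,\cdot\,]_\u$ (so that $[\tilde A^i_j]_\u=\alpha^{-1}(j)$ for every $j$, including the last index, which has a different definition), is the small but genuine step. Everything in part (2) then follows formally from the diagonal description of the ultraproduct action together with the product and weak$^*$ topologies.
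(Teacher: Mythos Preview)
Your proposal is correct and follows essentially the same approach as the paper: the paper also represents the fibers $\alpha^{-1}(k)$ by sets $[A^i_k]_\u$, disjointifies via a ``least index'' rule with the final class absorbing the remainder, and for part (2) verifies the identity $\alpha^{-1}(k)=[(\alpha^i)^{-1}(k)]_\u$, the coordinatewise computation of $\alpha_\Gamma$, and the agreement of the pushforward measures on clopen (equivalently, cylinder) sets. The only cosmetic difference is that the paper phrases (c) in terms of arbitrary clopen sets rather than basic cylinders, which amounts to the same thing.
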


\begin{proof}
	(1). Since $\alpha$ is measurable, for each $k \in q-1$ there is a collection of measurable sets $A_k^i \subseteq X_i$ such that $\alpha^{-1}(k) = [A_k^i]_\u$ up to a $\prod_\u \mu_i$-null set. Define $\alpha^i : X_i \rightarrow q$ by
	$$\alpha^i(x) = \begin{cases}
		k & \text{if } k \in q-1 \text{ is least with } x \in A_k^i\\
		q-1 & \text{if } x \not\in A_0^i \cup \cdots \cup A_{q-2}^i.
	\end{cases}$$
	Then $\alpha^i$ is measurable and it is easy to check by induction on $k \in q$ that for almost-every $[x_i]_\u \in \alpha^{-1}(k)$ we have $\alpha([x_i]_\u) = \lim_{i \rightarrow \u} \alpha^i(x_i)$.
	
	(2). It is immediate that $\alpha$ is measurable since $\alpha^{-1}(k) = [(\alpha^i)^{-1}(k)]_\u$. Also, for every $\gamma \in \Gamma$ we have
	$$\alpha_\Gamma([x_i]_\u)(\gamma) = \alpha(\gamma^{-1} \cdot [x_i]_\u) = \alpha([\gamma^{-1} x_i]_\u) = \lim_{i \rightarrow \u} \alpha^i(\gamma^{-1} x_i) = \lim_{i \rightarrow \u} \alpha^i_\Gamma(x_i)(\gamma).$$
	This establishes (a) and (b). Next consider any clopen set $C \subseteq q^\Gamma$. For every $[x_i]_\u \in \prod_\u X_i$ we have that $\alpha_\Gamma([x_i]_\u) = \lim_{i \rightarrow \u} \alpha^i_\Gamma(x_i)$ belongs to $C$ if and only if (since $C$ is both open and closed) $\{i \in I : \alpha^i_\Gamma(x_i) \in C\} \in \u$, or equivalently $[x_i]_\u \in [(\alpha^i_\Gamma)^{-1}(C)]_\u$. Therefore $(\alpha_\Gamma)^{-1}(C) = [(\alpha^i_\Gamma)^{-1}(C)]_\u$ and
	$$(\alpha_\Gamma)_*(\prod_\u \mu_i)(C) = \lim_\u (\alpha_i^\Gamma)_*(\mu_i)(C).$$
	Since $\lim_{i \rightarrow \u} (\alpha_\Gamma^i)_*(\mu_i)$ is a probability measure on $q^\Gamma$ and all Borel probability measures on $q^\Gamma$ are uniquely determined by their values on clopen sets, (c) follows.
\end{proof}

We now prove a lemma providing an ergodic-theoretic characterization of e.c.\ factor maps.  To state it, given any $q\in \bb N$, we let $\operatorname{Prob}_\Gamma(q^\Gamma)$ denote the set of probability measures on $q^\Gamma$ preserved by the natural action of $\Gamma$ on $q^\Gamma$.  Given another integer $p$, we let $\operatorname{Prob}_\Gamma(q^\Gamma\times p^\Gamma)$ have the analogous meaning.  We view each of these spaces as equipped with their weak$^*$-topologies.  We also let $\pi:q^\Gamma\times p^\Gamma\to q^\Gamma$ denote the canonical projection map, which induces a push-forward map $\pi_*:\Prob_\Gamma(q^\Gamma \times p^\Gamma)\to \Prob_\Gamma(q^\Gamma)$. 

\begin{lem}\label{eclemma}
	The action $\Gamma \curvearrowright^a (X,\mu)$ is e.c.\ if and only if:  for any $p,q\in \bb N$, any weak$^*$-open subset $U$ of $\operatorname{Prob}_\Gamma(q^\Gamma\times p^\Gamma)$, and any $\beta:X\to q$ for which $(\beta_\Gamma)_*(\mu)\in \pi_*(U)$, there is $\gamma:X\to p$ such that $((\beta\times \gamma)_\Gamma)_*(\mu)\in U$.
\end{lem}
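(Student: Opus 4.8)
The plan is to deduce both directions from the ergodic-theoretic criterion of Proposition~\ref{criterion}, translating its finite-partition statistics into assertions about pushforward measures. Throughout I would use the canonical $\Gamma$-equivariant identification $(q\times p)^\Gamma\cong q^\Gamma\times p^\Gamma$, under which $(\beta\times\gamma)_\Gamma=\beta_\Gamma\times\gamma_\Gamma$; I write $z=(z_q,z_p)$ for a point of $q^\Gamma\times p^\Gamma$. Recall that a basic weak$^*$-neighborhood of a point $\theta\in\operatorname{Prob}_\Gamma(q^\Gamma\times p^\Gamma)$ is cut out by finitely many inequalities $|\rho(C_w)-\theta(C_w)|<\delta$, where $C_w=\{z:z|_S=w\}$ ranges over the cylinders on some finite window $S\subseteq\Gamma$ (which we may take to contain $e$), and that for a measurable $\beta:X\to q$, a measurable $\gamma:X\to p$, and such an $S$ one has $\big((\beta_\Gamma\times\gamma_\Gamma)_*\mu\big)(\{z:z_q|_S=j,\ z_p|_S=\pi\})=\mu(\beta_S^{-1}(j)\cap\gamma_S^{-1}(\pi))$; on the $Y$-side the same bookkeeping applies via the equivariance identity $(\beta\circ\phi)_\Gamma=\beta_\Gamma\circ\phi$ for a factor map $\phi:Y\to X$.

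For the ($\Leftarrow$) direction I would assume the measure condition and verify the criterion of Proposition~\ref{criterion} for an arbitrary factor map $\phi:Y\to X$. Given $p,q$, measurable $\alpha:Y\to p$ and $\beta:X\to q$, a finite $S\ni e$, and $\epsilon>0$, put $\theta:=\big((\beta\circ\phi)_\Gamma\times\alpha_\Gamma\big)_*\nu\in\operatorname{Prob}_\Gamma(q^\Gamma\times p^\Gamma)$; then $\pi_*\theta=(\beta_\Gamma\circ\phi)_*\nu=(\beta_\Gamma)_*\mu$, so $(\beta_\Gamma)_*\mu\in\pi_*(U)$ for the basic neighborhood $U\ni\theta$ given by $|\rho(C_w)-\theta(C_w)|<\epsilon\,q^{-|S|}$, $w\in(q\times p)^S$. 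The hypothesis then provides $\gamma:X\to p$ with $\big((\beta_\Gamma\times\gamma_\Gamma)_*\mu\big)\in U$, and taking $\tilde\alpha:=\gamma$ one expresses each of the sets $\{z:z_p|_S=\pi,\ z_q(e)=j\}$ as a disjoint union of cylinders $C_w$ and sums the defining inequalities of $U$ to obtain $|\nu(\alpha_S^{-1}(\pi)\cap\phi^{-1}(\beta^{-1}(j)))-\mu(\tilde\alpha_S^{-1}(\pi)\cap\beta^{-1}(j))|<\epsilon$ for all $\pi\in p^S$, $j\in q$.

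The ($\Rightarrow$) direction is the substantive one. Assuming $a$ is e.c., fix $p,q$, a weak$^*$-open $U\subseteq\operatorname{Prob}_\Gamma(q^\Gamma\times p^\Gamma)$, and $\beta:X\to q$ with $(\beta_\Gamma)_*\mu\in\pi_*(U)$; choose $\theta\in U$ with $\pi_*\theta=(\beta_\Gamma)_*\mu$. The point is to realize $\theta$ as the relevant statistics of a factor of $X$: the spaces $(X,\mu)$ and $(q^\Gamma\times p^\Gamma,\theta)$ both factor onto the standard $\Gamma$-space $(q^\Gamma,(\beta_\Gamma)_*\mu)$ — via $\beta_\Gamma$ and via $\pi$ — so Lemma~\ref{lem:amalgam} produces a $\Gamma$-invariant joining $\lambda$ on $Y:=X\times q^\Gamma\times p^\Gamma$ with marginals $\mu$ and $\theta$ and satisfying $\beta_\Gamma(x)=s$ for $\lambda$-a.e.\ $(x,s,t)$. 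Let $\phi:Y\to X$ be the projection (a factor map since $\lambda$ is $\Gamma$-invariant with marginal $\mu$) and set $\alpha(x,s,t):=t(e)$; unwinding the shift conventions gives $\alpha_\Gamma(x,s,t)=t$ and $(\beta\circ\phi)_\Gamma(x,s,t)=\beta_\Gamma(x)=s$ a.e., hence $\big((\beta\circ\phi)_\Gamma\times\alpha_\Gamma\big)_*\lambda=\theta$. Since $a$ is e.c., $\phi$ is an e.c.\ factor map, so Proposition~\ref{criterion}, applied with $q$ replaced by $q^S$ and $\beta$ replaced by $\beta_S:X\to q^S$ for $S$ a large enough window (and with small enough $\epsilon$) that the $S$-cylinder neighborhood of $\theta$ lies in $U$, produces $\gamma:X\to p$ with $\lambda(\alpha_S^{-1}(\pi)\cap\phi^{-1}(\beta_S^{-1}(j)))$ close to $\mu(\gamma_S^{-1}(\pi)\cap\beta_S^{-1}(j))$ for all $(j,\pi)\in q^S\times p^S$. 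The left side equals $\theta(C_{(j,\pi)})$ and the right side equals $\big((\beta_\Gamma\times\gamma_\Gamma)_*\mu\big)(C_{(j,\pi)})$, so $\big((\beta\times\gamma)_\Gamma\big)_*\mu=\big((\beta_\Gamma\times\gamma_\Gamma)_*\mu\big)\in U$, as needed.

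I expect the main obstacle to be this last construction — realizing the abstract invariant measure $\theta\in U$ concretely as (the label-statistics of) a genuine factor of $X$ to which Proposition~\ref{criterion} can be applied, which is exactly what the relatively independent joining of Lemma~\ref{lem:amalgam} over the symbolic factor $q^\Gamma$ accomplishes — together with the fiddly bookkeeping of keeping the left/right shift conventions for $\alpha_\Gamma$ and $\beta_\Gamma$ straight and matching cylinder probabilities with the $\alpha_S$, $\beta_S$ of Proposition~\ref{criterion}.
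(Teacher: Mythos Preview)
Your proposal is correct and follows essentially the same approach as the paper: both directions are reduced to Proposition~\ref{criterion}, and for the forward direction you invoke Lemma~\ref{lem:amalgam} to realize the chosen $\theta\in U$ as the label statistics of a genuine extension $Y=X\times q^\Gamma\times p^\Gamma\to X$, then read off the required $\gamma$ from the e.c.\ criterion. Your version is slightly more explicit in one respect---you spell out that Proposition~\ref{criterion} should be applied with $\beta$ replaced by $\beta_S:X\to q^S$ so as to control all window-$S$ cylinder probabilities---whereas the paper simply says to choose $S$ and $\epsilon$ ``suitable for ensuring membership in the open set $U$''; the content is the same.
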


\begin{proof}
	First suppose that $\Gamma \curvearrowright^a (X,\mu)$ is e.c.\  Take a weak$^*$-open subset $U$ of $\operatorname{Prob}_\Gamma(q^\Gamma\times p^\Gamma)$ and a function $\beta:X\to q$ for which $(\beta_\Gamma)_*(\mu)\in \pi_*(U)$.  By assumption, there is $\lambda\in U$ such that $\pi_*(\lambda)=(\beta_\Gamma)_*(\mu)$. By Lemma \ref{lem:amalgam}, there is a $\Gamma$-invariant probability measure $\omega$ on $X\times q^\Gamma\times p^\Gamma$ that has marginal $\mu$ on $X$ and $\lambda$ on $q^\Gamma\times p^\Gamma$ and satisfies $\beta_\Gamma(x) = y$ for $\omega$-almost-every $(x,y,z) \in X \times q^\Gamma \times p^\Gamma$. By assumption, the factor map given by the projection $\phi:X\times q^\Gamma\times p^\Gamma\to X$ is e.c.\ and we have that $(((\beta \circ \phi) \times \alpha)_\Gamma)_* \omega = \lambda \in U$, where $\alpha : X \times q^\Gamma \times p^\Gamma \rightarrow p$ is the map $\alpha(x,y,z) = z(e)$. Therefore the desired $\gamma$ is obtained by applying Proposition \ref{criterion} with this $\alpha$ together with a finite set $S \subseteq \Gamma$ and an $\epsilon > 0$ that are suitable for ensuring membership in the open set $U$ containing $\lambda$.
    
	We now prove the converse.  Towards this end, fix a factor map $\phi:Y\to X$; we wish to show that this map is e.c.\ using the criterion of Proposition \ref{criterion}.  We thus take measurable maps $\beta:X\to q$ and $\gamma:Y\to p$, finite $S\subseteq \Gamma$, and $\epsilon>0$.  Note then that $((\beta\circ \phi)_\Gamma\times \gamma_\Gamma)_*(\nu)\in \operatorname{Prob}_\Gamma(q^\Gamma\times p^\Gamma)$ and that $S$ and $\epsilon$ determine an open neighborhood $U$ of this measure.  Moreover, 
	$$(\beta_\Gamma)_*(\mu)=(\pi_\Gamma)_*((\beta\circ \phi)_\Gamma\times \gamma_\Gamma)_*(\nu)\in \pi_*(U).$$  By hypothesis, there is $\tilde{\gamma}:X\to p$ such that $((\beta\times \tilde{\gamma})_\Gamma)_*(\mu)\in U$, which verifies the criterion of Proposition \ref{criterion}.
\end{proof}

The following theorem is the main result of this subsection and offers an ergodic-theoretic characterization of the existence of $T_\Gamma^*$.

\begin{thm} \label{prop:openmap}
	$T_\Gamma^*$ exists if and only if for every $p,q \in \bN$, the push-forward map $\pi_*:\Prob_\Gamma(q^\Gamma \times p^\Gamma)\to \Prob_\Gamma(q^\Gamma)$ is an open map with respect to the weak$^*$ topologies.
\end{thm}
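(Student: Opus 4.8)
The plan is to combine the ultraproduct test for the existence of $T_\Gamma^*$ (that $T_\Gamma^*$ exists if and only if the class of e.c.\ actions of $\Gamma$ is closed under ultraproducts) with the ergodic-theoretic characterization of e.c.\ actions from Lemma~\ref{eclemma}. Two preliminary remarks: since $\Gamma$ is countable, $q^\Gamma$, $p^\Gamma$, and $q^\Gamma\times p^\Gamma\cong(q\times p)^\Gamma$ are compact metrizable, so $\Prob_\Gamma(q^\Gamma)$ and $\Prob_\Gamma(q^\Gamma\times p^\Gamma)$ are compact metrizable in the weak$^*$ topology and $\pi_*$ is continuous; and for any measurable $\beta:X\to q$, $\gamma:X\to p$ one has $\pi\circ(\beta\times\gamma)_\Gamma=\beta_\Gamma$, so $\pi_*$ sends $((\beta\times\gamma)_\Gamma)_*\mu$ to $(\beta_\Gamma)_*\mu$.

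\emph{Assume each $\pi_*$ is open.} Let $(\Gamma\acts^{a_i}(X_i,\mu_i))_{i\in I}$ be e.c.\ actions, let $\u$ be an ultrafilter on $I$, and let $\Gamma\acts^a(X,\mu)=\prod_\u(X_i,\mu_i)$ be the ultraproduct action; I will verify the criterion of Lemma~\ref{eclemma} for $a$. Fix $p,q\in\bN$, a weak$^*$-open $U\subseteq\Prob_\Gamma(q^\Gamma\times p^\Gamma)$, and a measurable $\beta:X\to q$ with $(\beta_\Gamma)_*\mu\in\pi_*(U)$. Pick $\lambda\in U$ with $\pi_*(\lambda)=(\beta_\Gamma)_*\mu$ and, using metrizability, an open $V$ with $\lambda\in V$ and $\overline{V}\subseteq U$; then $\pi_*(V)$ is an open neighborhood of $(\beta_\Gamma)_*\mu$. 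By the lemma immediately preceding Lemma~\ref{eclemma}, write $\beta=\lim_{i\to\u}\beta^i$ for measurable $\beta^i:X_i\to q$; that lemma also gives $(\beta_\Gamma)_*\mu=\lim_{i\to\u}((\beta^i)_\Gamma)_*\mu_i$, so $((\beta^i)_\Gamma)_*\mu_i\in\pi_*(V)$ for $\u$-almost every $i$. For such $i$, applying Lemma~\ref{eclemma} to the e.c.\ action $a_i$ with the open set $V$ produces $\gamma^i:X_i\to p$ with $((\beta^i\times\gamma^i)_\Gamma)_*\mu_i\in V$ (set $\gamma^i$ arbitrarily for the remaining $i$). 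Let $\gamma=\lim_{i\to\u}\gamma^i:X\to p$. Since $q\times p$ is finite, $\beta\times\gamma=\lim_{i\to\u}(\beta^i\times\gamma^i)$, so $((\beta\times\gamma)_\Gamma)_*\mu=\lim_{i\to\u}((\beta^i\times\gamma^i)_\Gamma)_*\mu_i\in\overline{V}\subseteq U$, which is exactly what Lemma~\ref{eclemma} requires. Hence $a$ is e.c., and by the ultraproduct test $T_\Gamma^*$ exists.

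\emph{Conversely, assume $T_\Gamma^*$ exists.} I argue the contrapositive: suppose that for some $p,q\in\bN$ the map $\pi_*$ is not open, so there is an open $U\subseteq\Prob_\Gamma(q^\Gamma\times p^\Gamma)$ and a point $\lambda_0\in U$ with $\pi_*(\lambda_0)$ not in the interior of $\pi_*(U)$; by metrizability choose $\nu_n\to\pi_*(\lambda_0)$ in $\Prob_\Gamma(q^\Gamma)$ with $\nu_n\notin\pi_*(U)$ for every $n$. For each $n$, Lemma~\ref{eccover} gives an e.c.\ action $\Gamma\acts^{a_n}(X_n,\mu_n)$ and a factor map $\phi_n:X_n\to q^\Gamma$ onto the shift action $\Gamma\acts(q^\Gamma,\nu_n)$; letting $\beta_n:X_n\to q$ be evaluation at the identity composed with $\phi_n$, one checks $(\beta_n)_\Gamma=\phi_n$ and hence $((\beta_n)_\Gamma)_*\mu_n=\nu_n$. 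Fix a nonprincipal ultrafilter $\u$ on $\bN$ and let $\Gamma\acts^a(X,\mu)=\prod_\u(X_n,\mu_n)$; this is e.c.\ because $T_\Gamma^*$ exists. Put $\beta=\lim_{n\to\u}\beta_n:X\to q$, so that $(\beta_\Gamma)_*\mu=\lim_{n\to\u}\nu_n=\pi_*(\lambda_0)\in\pi_*(U)$. By Lemma~\ref{eclemma} there is $\gamma:X\to p$ with $((\beta\times\gamma)_\Gamma)_*\mu\in U$; writing $\gamma=\lim_{n\to\u}\gamma_n$ for measurable $\gamma_n:X_n\to p$ gives $((\beta\times\gamma)_\Gamma)_*\mu=\lim_{n\to\u}((\beta_n\times\gamma_n)_\Gamma)_*\mu_n$, and since $U$ is open this measure lies in $U$ for $\u$-almost every $n$. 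Applying $\pi_*$ yields $\nu_n\in\pi_*(U)$ for those $n$, contradicting the choice of the $\nu_n$.

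\emph{Where the work is.} Almost all of the substance is already packaged into Lemma~\ref{eclemma}, Lemma~\ref{eccover}, and the lemma relating ultralimits of finite-valued maps to weak$^*$-limits of the associated $\Gamma$-invariant measures; the argument above is largely topological bookkeeping. The points requiring care are: shrinking $U$ to an open set with closure inside it and passing to a convergent sequence $\nu_n$ — both of which rest on the compactness and metrizability of the spaces of invariant measures involved — and checking that $\lim_{i\to\u}$ commutes with forming $\beta\times\gamma$ and with the map $\alpha\mapsto(\alpha_\Gamma)_*\mu$. If a subtle error is lurking it is in one of those commutation claims, so those are the steps I would verify most carefully.
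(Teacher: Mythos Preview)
Your proof is correct. For the ($\Rightarrow$) direction, your argument is essentially the paper's: both pass to an ultraproduct of e.c.\ actions (you use a family $(a_n)$ each factoring onto one $(q^\Gamma,\nu_n)$; the paper uses the ultrapower of a single e.c.\ action factoring onto all the $(q^\Gamma,\nu_n)$ at once via Lemma~\ref{eccover}), invoke the characterization of Lemma~\ref{eclemma} on the resulting e.c.\ action, and then decompose the witness $\gamma$ coordinatewise to get $\nu_n\in\pi_*(U)$ for $\u$-many $n$.

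For the ($\Leftarrow$) direction you take a genuinely different route. The paper argues directly that the open-mapping hypothesis makes the e.c.\ criterion of Lemma~\ref{eclemma} first-order: one restricts to basic open $U\subseteq\Prob_\Gamma(q^\Gamma\times p^\Gamma)$, and since $\pi_*(U)$ is then open, the hypothesis ``$(\beta_\Gamma)_*\mu\in\pi_*(U)$'' can be replaced by membership in a basic open subset of $\pi_*(U)$, yielding a countable scheme of sentences. You instead verify the ultraproduct test: given e.c.\ $a_i$, shrink $U$ to $V$ with $\overline V\subseteq U$, use openness of $\pi_*(V)$ to push the problem down to the $a_i$, apply Lemma~\ref{eclemma} there, and pull the witnesses $\gamma^i$ back up. Your argument is semantically tidy and avoids discussing axiomatizability at all; the paper's has the payoff of producing explicit axioms for $T_\Gamma^*$, which is precisely the content of the Remark following the theorem and is used later (e.g.\ in Section~\ref{approximatelytreeable}).
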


\begin{proof}
	($\Rightarrow$) Fix $q, p \in \bN$ and let $\pi : q^\Gamma \times p^\Gamma \rightarrow q^\Gamma$ be the projection map. Consider any measure $\lambda \in \Prob_\Gamma(q^\Gamma \times p^\Gamma)$ and any weak$^*$ neighborhood $W \subseteq \Prob_\Gamma(q^\Gamma \times p^\Gamma)$ of $\lambda$. Set $\nu = \pi_*(\lambda)$ and consider any sequence $\nu_n \in \Prob_\Gamma(q^\Gamma)$ that converges weak$^*$ to $\nu$ and a nonprincipal ultrafilter $\u$ on $\bb N$. It will suffice to show that
	$$\{n \in \bN: \nu_n \in \pi_*(W)\} \in \u.$$

	By Lemma \ref{eccover}, we may take an e.c.\ action $\Gamma \acts (X, \mu)$ having the property that it factors onto $\Gamma \acts (q^\Gamma, \nu_n)$ for every $n \in \bN$. For each $n\in \bb N$, pick a measurable map $\alpha^n : X \rightarrow q$ satisfying $(\alpha^n_\Gamma)_*(\mu) = \nu_n$. Consider the ultrapower action $\Gamma \acts (X, \mu)_\u$ and define $\alpha : X_\u \rightarrow q$ by setting $\alpha([x_n]_\u) = \lim_\u \alpha^n(x_n)$. Then $\alpha$ is measurable and $(\alpha_\Gamma)_*(\mu_\u) = \nu$.
	
	Apply Lemma \ref{lem:amalgam} to get a $\Gamma$-invariant probability measure $\omega$ on $X_\u \times q^\Gamma \times p^\Gamma$ that has marginal $\mu_\u$ on $X_\u$, marginal $\lambda$ on $q^\Gamma \times p^\Gamma$, and satisfies $\alpha_\Gamma(x) = \pi(y,z)$ for $\omega$-a.e. $(x,y,z) \in X_\u \times q^\Gamma \times p^\Gamma$. Define $\bar{\alpha}$ and $\bar{\beta}$ on $X \times q^\Gamma \times p^\Gamma$ by setting $\bar{\alpha}(x,y,z) = \alpha(x) \in q$ and $\bar{\beta}(x,y,z) = z(e) \in p$. By our choice of $\omega$, we have that $\bar{\alpha}_\Gamma(x,y,z) = \alpha_\Gamma(x)$ is equal to $y$ almost-everywhere. Therefore $(\bar{\alpha}_\Gamma \times \bar{\beta}_\Gamma)(x,y,z) = (y,z)$ almost-everywhere and hence $(\bar{\alpha}_\Gamma \times \bar{\beta}_\Gamma)_*(\omega) = \lambda \in W$.
	
	Since $T_\Gamma^*$ exists, the action $\Gamma \acts (X, \mu)_\u$ is e.c.\ Consequently, based on the last sentence of the previous paragraph, there must exist a measurable map $\beta : X_\u \rightarrow p$ satisfying $(\alpha_\Gamma \times \beta_\Gamma)_*(\mu_\u) \in W$. Let $\beta^n : X \rightarrow p$ be a sequence of measurable maps satisfying $\beta([x_n]_\u) = \lim_\u \beta^n(x_n)$. Then we have $(\alpha \times \beta)([x_n]_\u) = \lim_\u (\alpha_n(x_n), \beta_n(x_n))$ and therefore
	$$\lim_{n \rightarrow \u} (\alpha^n_\Gamma \times \beta^n_\Gamma)_*(\mu) = (\alpha_\Gamma \times \beta_\Gamma)_*(\mu_\u) \in W.$$
	Applying $\pi_*$ to both sides we obtain $\{n \in \bN: \nu_n \in \pi_*(W)\} \in \u$ as claimed.
	
	($\Leftarrow$)  Assuming the open mapping condition, we see that the characterization of being e.c.\ in the previous lemma is first-order.  Indeed, first note that it suffices to assume that $U$ is a basic open subset of $\operatorname{Prob}_\Gamma(q^\Gamma\times p^\Gamma)$.  Second, by assumption, $\pi_*(U)$ is an open subset of $\operatorname{Prob}_\Gamma(q^\Gamma)$, and it then suffices to consider basic open subsets of $\operatorname{Prob}_\Gamma(q^\Gamma)$ contained in $\pi_*(U)$.
\end{proof}

\begin{remark}
	As explained in the proof of the previous theorem, the characterization for being e.c.\ given in Lemma \ref{eclemma} does indeed yield an axiomatization of $T_\Gamma^*$ when it exists.
\end{remark}

\section{A concrete axiomatization}\label{concrete}

In this section, we show that, for strongly treeable groups, the model companion $T_\Gamma^*$ exists and has a concrete set of axioms that are ergodic-theoretic in nature. We additionally obtain the same result for treeable groups but with axioms that, while still ergodic-theoretic in nature, are slightly less concrete.

Our axioms for the model companion will rely on two properties we introduce: the definable cocycle property and the extension-MD property. These properties are discussed in the first two subsections below, and in the third subsection we will describe the concrete axiomatization. Lastly, in the final three subsections of this section we will verify that these two properties hold for suitable actions of (strongly) treeable groups.

\subsection{Model companions and the definable cocycle property}\label{subs:definablecocycle}

Building upon our observations about cocycles for e.c.\ actions in Subsection \ref{subs:ec_cocycles} (specifically Lemma \ref{eccocycle}), we first note that the mere existence of the model companion immediately yields a remarkable feature of cochains mapping to finite groups that are close to satisfying the conditions for being a cocycle. If such cochains are said to be ``almost-cocycles,'' then the next result says that if the model companion exists, then every almost-cocycle is near an actual cocycle in some extension (equivalently, it is near a coboundary in some extension).

\begin{prop} \label{prop:mc_cocycle}
Suppose that $T_\Gamma^*$ exists.  Then for every $\epsilon>0$, finite set $S \subseteq \Gamma$, and finite group $K$, there is $\delta>0$ such that, for all actions $\Gamma\acts^a(X,\mu)$ and measurable maps $\sigma:\Gamma\times X\to K$, if $\Cocy_K^{\cal M_a}(B_\sigma)<\delta$, then there is an action $\Gamma\acts (Y,\nu)$, a factor map $\phi:Y\to X$, and a measurable map $\alpha:Y\to K$ such that $$\nu(\{y\in Y \ : \ \alpha(s y) = \sigma(s, \phi(y)) \alpha(y) \text{ for every $s \in S$}\})>1-\epsilon.$$
\end{prop}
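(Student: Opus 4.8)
The plan is a compactness argument resting on Lemma~\ref{eccocycle} together with the hypothesis that $T_\Gamma^*$ exists (equivalently, that the e.c.\ actions of $\Gamma$ are closed under ultraproducts). Suppose the assertion fails for some $\epsilon>0$, finite $S\subseteq\Gamma$, and finite group $K$. Then for each $n\in\bb N$ there is an action $\Gamma\acts^{a_n}(X_n,\mu_n)$ and a cochain $\sigma_n:\Gamma\times X_n\to K$ with $\Cocy_K^{\cal M_{a_n}}(B_{\sigma_n})<1/n$, yet no extension $(\phi:Y\to X_n,\ \alpha:Y\to K)$ satisfies $\nu(\{y:\alpha(sy)=\sigma_n(s,\phi(y))\alpha(y)\text{ for all }s\in S\})>1-\epsilon$.

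The first step is to reduce to e.c.\ actions. For each $n$, choose an e.c.\ action $\Gamma\acts^{a_n^*}(X_n^*,\mu_n^*)$ together with a factor map $\phi_n^*:X_n^*\to X_n$, and set $\sigma_n^*:=\sigma_n\circ(\phi_n^*\times\id)$. Pulling a cochain back along a measure-preserving equivariant map does not change the value of the $\Cocy_K$-formula (its constituents $\Phi_1,\Phi_2$ only involve measures of Boolean combinations and the automorphisms $u_\gamma$), so $\Cocy_K^{\cal M_{a_n^*}}(B_{\sigma_n^*})<1/n$. Moreover any measurable $\alpha:X_n^*\to K$ with $\mu_n^*(\{x:\alpha(sx)=\sigma_n^*(s,x)\alpha(x)\text{ for all }s\in S\})>1-\epsilon$ would make $(\phi_n^*,\alpha)$ an extension of $a_n$ of the forbidden kind; hence no such $\alpha$ exists for $a_n^*$.

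Now fix a nonprincipal ultrafilter $\u$ on $\bb N$ and form the ultraproduct action $\Gamma\acts^{a^*}(X^*,\mu^*)=\prod_\u(X_n^*,\mu_n^*)$. This is where the hypothesis enters: since each $a_n^*$ is e.c.\ and $T_\Gamma^*$ exists, $a^*$ is e.c.\ Let $\sigma^*=[\sigma_n^*]_\u$ be the ultraproduct cochain, so $B_{\sigma^*}=[B_{\sigma_n^*}]_\u$; by the continuous {\L}o\'{s} theorem, $\Cocy_K^{\cal M_{a^*}}(B_{\sigma^*})=\lim_\u\Cocy_K^{\cal M_{a_n^*}}(B_{\sigma_n^*})=0$, so $\sigma^*$ is a genuine cocycle of $a^*$. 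By the proof of Lemma~\ref{eccocycle} (which, for an e.c.\ action, produces from a cocycle $\sigma^*$ a map $\alpha$ with $\alpha(\gamma x)=\sigma^*(\gamma,x)\alpha(x)$ off a small set), there is a measurable $\alpha:X^*\to K$ with $\mu^*(\{x:\alpha(sx)=\sigma^*(s,x)\alpha(x)\text{ for all }s\in S\})>1-\epsilon$. Writing $a_k=[\alpha^{-1}(k)]_{\mu^*}$ for the partition $(\alpha^{-1}(k))_{k\in K}\in\operatorname{Part}_K^{\cal M_{a^*}}$, this inequality says precisely that
$$\psi(B_{\sigma^*},(a_k)_k):=\mu^*\Big(\bigwedge_{s\in S}\bigvee_{k,k'\in K}\big(a_k\wedge\pi_{s,k'}(B_{\sigma^*})\wedge u_s^{-1}(a_{k'k})\big)\Big)>1-\epsilon,$$
where $\psi(B,(b_k)_k)$ is a quantifier-free $L_\Gamma$-formula in $B\in S_K$ and a $K$-tuple $(b_k)_k$.

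To finish, one descends. Represent $a_k$ as $[a_{k,n}]_\u$, set $\vec A_n=(a_{k,n})_{k\in K}$, and replace each $\vec A_n$ by a genuine partition of $\cal M_{a_n^*}$ at a cost tending to $0$ along $\u$ (this does not change the class $(a_k)_k$ in the ultraproduct). By {\L}o\'{s} applied to $\psi$, the set $\{n:\psi(B_{\sigma_n^*},\vec A_n)>1-\epsilon\}$ belongs to $\u$, hence is nonempty, and for such an $n$ the partition $\vec A_n$ corresponds to a measurable $\alpha_n:X_n^*\to K$ with $\mu_n^*(\{x:\alpha_n(sx)=\sigma_n^*(s,x)\alpha_n(x)\text{ for all }s\in S\})>1-\epsilon$, contradicting the second paragraph. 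The only point needing care — and the reason the argument is not immediate — is that an ultraproduct of arbitrary actions need not be e.c., so Lemma~\ref{eccocycle} cannot be applied to $\prod_\u a_n$ directly; passing first to e.c.\ extensions $a_n^*$, and checking that the forbidden configuration persists, is exactly what makes the ultraproduct e.c.\ and lets the hypothesis that $T_\Gamma^*$ exists do its work. The remaining ingredients — invariance of $\Cocy_K$ under factor pullback, and the partition-rounding in the ultraproduct — are routine.
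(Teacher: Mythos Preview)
Your argument is correct and follows essentially the same route as the paper's proof: assume failure, collect counterexamples $(a_n,\sigma_n)$, pass to e.c.\ extensions, take the ultraproduct (which is e.c.\ by the hypothesis that $T_\Gamma^*$ exists), observe that the ultraproduct cochain is a genuine cocycle, apply Lemma~\ref{eccocycle}, and descend via \L o\'s to obtain a contradiction. The paper's write-up is terser but structurally identical; one cosmetic slip in your version is that $\sigma_n^*$ should be $\sigma_n\circ(\id_\Gamma\times\phi_n^*)$ rather than $\sigma_n\circ(\phi_n^*\times\id)$.
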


\begin{proof}
Suppose, towards a contradiction, that the above condition does not hold for some $\epsilon$, $S$ and $K$.  For each $n\in \bb N$, take a {\pmp} action $\Gamma \acts^{a_n} (X_n, \mu_n)$ and a map $\sigma_n : \Gamma \times X_n \rightarrow K$ such that $\Cocy^{\cal M_{a_n}}_K(B_{\sigma_n})<\frac{1}{n}$ and yet, for every extension $Y$ of $X_n$ and $\alpha : Y \rightarrow K$, the set of $y$ satisfying $\alpha(s y) = \sigma_n(s, \phi(y)) \alpha(y)$ for all $s\in S$ has measure at most $1-\epsilon$. For each $n\in \bb N$, take an e.c.\ action $\Gamma \acts^{b_n} (Z_n, \eta_n)$ that factors onto $\Gamma \acts^{a_n} (X_n, \mu_n)$, say via the map $\phi_n$. Let $\u$ be a nonprincipal ultrafilter on $\bb N$ and write $\Gamma\acts^b(Z, \eta)$ for the ultraproduct of the actions $\Gamma\acts^{b_n}(Z_n,\eta_n)$.  Define $\sigma : \Gamma \times Z \rightarrow K$ by setting $\sigma_\gamma^{-1}(k):=[\phi_n^{-1} \circ (\sigma_n)_\gamma^{-1}(k)]_\u$ for each $k\in K$.  Then $\Cocy^{\cal M_b}_K(B_\sigma) = 0$ and therefore $\sigma$ is a cocycle for the ultraproduct action $\Gamma\acts^b Z$.  Since $T_\Gamma^*$ exists, $\Gamma \acts^b (Z, \eta)$ is e.c.\, whence Lemma \ref{eccocycle} implies that there is a measurable map $\alpha : Z \rightarrow K$ such that the measure of the set of $z$ with $\alpha(s z) = \sigma(s, z) \alpha(z)$ for all $s \in S$ has measure strictly greater than $1-\epsilon$. Pick a sequence of maps $\alpha_n : Z_n \rightarrow K$ such that $\alpha$ is the ultralimit of the $\alpha_n$'s, that is, that $\alpha^{-1}(k)=[\alpha_n^{-1}(k)]_\u$ for all $k\in K$.  It follows that there is $n\in \bb N$ such that the set of $z\in Z_n$ satisfying $\alpha_n(s z) = \sigma_n(s, \phi_n(z)) \alpha_n(z)$ for all $s \in S$ has measure greater than $1-\epsilon$, contradicting the choice of the action $\Gamma\acts^{b_n} (X_n,\mu_n)$.
\end{proof}

The above proposition provides the impetus for a new definition, something we call the definable cocycle property. This property will be used to ensure that statements about cocycles mapping into finite groups are actually first-order, contributing part of our concrete axiomatization of the model companion. We first note some equivalences.

\begin{lem} \label{lem:defcocycle}
    Let $\Gamma$ be a countable group, $K$ a finite group, and fix an $L_\Gamma$-theory $T$ extending $T_\Gamma$ (such as $T_\Gamma$ itself, $T_{\Gamma,free}$, $T_{\Gamma,max}$, or $T_{\Gamma}^*$). Let $\rho_1$ and $\rho_2$ be metrics on $C^1(\Gamma, K^\Gamma)$ and $C^2(\Gamma, K^\Gamma)$ respectively that are compatible with their product topologies. Then the following are equivalent:
    \begin{enumerate}
        \item \label{item:defcocycle1} For any $\epsilon>0$, there is a $\delta>0$ so that, for any $\cal M_a\models T$ and any $B\in \cal M_a^{\Gamma\times K}$, if $\Cocy_K^{\cal M_a}(B)<\delta$, then there is a cocycle $\sigma$ of $a$ such that $d(B,B_\sigma)\leq\epsilon$.
        \item \label{item:defcocycle2} For any $\epsilon > 0$, there is $\delta > 0$ so that, for any action $\Gamma \acts^a (X, \mu)$ with $\cal M_a \models T$ and any equivariant measurable map $c : X \rightarrow C^1(\Gamma, K^\Gamma)$, if
        \[\int \rho_2(\partial c(x), e_{C^2(\Gamma, K^\Gamma)}) \ d \mu < \delta,\]
        then there is a measurable equivariant map $z : X \rightarrow Z^1(\Gamma, K^\Gamma)$ such that
        \[\int \rho_1(c(x),z(x)) \ d \mu < \epsilon.\]
        \item \label{item:defcocycle3} For any family $(\cal M_{a_i})_{i\in I}$ of models of $T$ and any ultrafilter $\u$ on $I$, setting $\cal M:=\prod_\u \cal M_{a_i}$, we have $Z(\Cocy_K^{\cal M})=\prod_\u Z(\Cocy_K^{\cal M_{a_i}})$.
        \item \label{item:defcocycle4} For any $T$-formula $\Phi(x,y)$, with $x$ ranging over sort $\cal M_a^{\Gamma\times K}$, the $T$-functors $\sup_{B\in Z(\Cocy_K)} \Phi(B,y)$ and $\inf_{B\in Z(\Cocy_K)} \Phi(B,y)$ are $T$-formulae again.
    \end{enumerate}
\end{lem}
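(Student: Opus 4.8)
The plan is to deduce the four equivalences from the standard theory of definable sets in continuous logic together with the cochain--cocycle dictionary of Lemma~\ref{lem:CocycleCorr}. Conditions (\ref{item:defcocycle1}), (\ref{item:defcocycle3}), and (\ref{item:defcocycle4}) are precisely the specializations, to the zeroset $Z(\Cocy_K)$ of the $T_\Gamma$-formula $\Cocy_K$, of the familiar equivalent characterizations of the statement that $Z(\Cocy_K)$ is a $T$-definable set (see \cite[Section~9]{bbhu} and the discussion of definability in \cite{spgap}), while (\ref{item:defcocycle2}) is the transcription of (\ref{item:defcocycle1}) into the group-cohomology framework discussed above. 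Thus the proof has two ingredients: verifying the definable-set equivalences for this particular zeroset, and carrying out the transcription.

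First I would treat (\ref{item:defcocycle1}) $\Leftrightarrow$ (\ref{item:defcocycle3}). Because $\Cocy_K$ is a $T_\Gamma$-formula, its interpretation commutes with ultraproducts, so writing $\cal M = \prod_\u \cal M_{a_i}$ the inclusion $\prod_\u Z(\Cocy_K^{\cal M_{a_i}}) \subseteq Z(\Cocy_K^{\cal M})$ is automatic, and the content of (\ref{item:defcocycle3}) is the reverse inclusion. For the forward direction: if $[B_i]_\u \in Z(\Cocy_K^{\cal M})$ then $\lim_\u \Cocy_K^{\cal M_{a_i}}(B_i) = 0$, so for each $n$ the set of $i$ with $\Cocy_K^{\cal M_{a_i}}(B_i) < \delta(1/n)$ lies in $\u$; applying (\ref{item:defcocycle1}) on those coordinates and diagonalizing over $n$ yields cocycles $\sigma_i$ of $a_i$ with $[B_{\sigma_i}]_\u = [B_i]_\u$. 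For the converse: if (\ref{item:defcocycle1}) fails, there is $\epsilon > 0$ and, for each $i \in \bb N$, an action $a_i$ with a tuple $B_i$ such that $\Cocy_K^{\cal M_{a_i}}(B_i) < 1/i$ but $d(B_i, B_\sigma) > \epsilon$ for every cocycle $\sigma$ of $a_i$; then $[B_i]_\u \in Z(\Cocy_K^{\cal M})$, whereas every element of $\prod_\u Z(\Cocy_K^{\cal M_{a_i}})$ is of the form $[B_{\sigma_i}]_\u$ for cocycles $\sigma_i$ of $a_i$ and hence lies at distance $\lim_\u d(B_i, B_{\sigma_i}) \ge \epsilon$ from $[B_i]_\u$, contradicting (\ref{item:defcocycle3}). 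The equivalence (\ref{item:defcocycle1}) $\Leftrightarrow$ (\ref{item:defcocycle4}) is the general fact that a zeroset $Z(\psi)$ is a $T$-definable set --- equivalently, $\operatorname{dist}(\cdot, Z(\psi))$ is uniformly over models of $T$ a definable predicate, which is exactly (\ref{item:defcocycle1}) --- if and only if $\sup$'s and $\inf$'s of formulas over it remain formulas; I would cite this and record only the implication (\ref{item:defcocycle1}) $\Rightarrow$ (\ref{item:defcocycle4}) used later, for which $\inf_{B \in Z(\Cocy_K)} \Phi(B,y)$ is the uniform limit as $N \to \infty$ of the $T$-formulas $\inf_{B}\bigl(\Phi(B,y) + N \cdot \Cocy_K(B)\bigr)$, and symmetrically for the supremum.

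It remains to prove (\ref{item:defcocycle1}) $\Leftrightarrow$ (\ref{item:defcocycle2}) via Lemma~\ref{lem:CocycleCorr}. Since $C^1(\Gamma, K^\Gamma) \cong K^{\Gamma \times \Gamma}$ and $C^2(\Gamma, K^\Gamma) \cong K^{\Gamma \times \Gamma \times \Gamma}$ are compact metrizable, any two compatible metrics on them are uniformly equivalent, so the truth of (\ref{item:defcocycle2}) is independent of the choice of $\rho_1, \rho_2$, and I would take convenient weighted Hamming metrics. Under the correspondence of Lemma~\ref{lem:CocycleCorr} a measurable equivariant map $c : X \to C^1(\Gamma, K^\Gamma)$ is exactly a cochain $\theta : \Gamma \times X \to K$, hence a partition-valued tuple $B_\theta \in \cal M_a^{\Gamma \times K}$, with equivariant maps into $Z^1(\Gamma, K^\Gamma)$ corresponding to the cocycles; moreover $\int \rho_2(\partial c(x), e)\, d\mu$ and the second summand $\Phi_2(B_\theta)$ of $\Cocy_K(B_\theta)$ are each, up to a fixed modulus, a weighted sum over triples of the measures of the sets where the cocycle identity for $\theta$ fails, and $\int \rho_1(c(x), c'(x))\, d\mu$ is, up to a fixed modulus, comparable to $d(B_\theta, B_{\theta'})$. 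Hence (\ref{item:defcocycle2}) is exactly (\ref{item:defcocycle1}) restricted to partition-valued tuples; and for a general $B$ with $\Cocy_K^{\cal M_a}(B) < \delta$, the first summand $\Phi_1(B)$ controls both the distance from $B$ to the nearest partition-valued tuple $B^\flat$ and the resulting change in $\Cocy_K$, so one first replaces $B$ by $B^\flat$ and then invokes (\ref{item:defcocycle2}), recovering the full strength of (\ref{item:defcocycle1}).

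All of this is routine; the point requiring genuine care, rather than a real obstacle, is the last one: the uniform comparability of the defect functionals $\Phi_1, \Phi_2$ and the sort-metric $d$ with their cohomological counterparts $\int \rho_2(\partial c, e)\, d\mu$ and $\int \rho_1(c, c')\, d\mu$. This is a bookkeeping exercise relying on the compactness of $K^{\Gamma \times \Gamma}$, the explicit formulas defining $\Cocy_K$ and the coboundary map $\partial$, Markov's inequality, and the fact that the bijections $e, f$ and the fixed metric on the sort $\cal M_a^{\Gamma \times K}$ are chosen consistently with one another.
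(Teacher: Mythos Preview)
Your proposal is correct and follows essentially the same approach as the paper: the equivalence of (\ref{item:defcocycle1}), (\ref{item:defcocycle3}), (\ref{item:defcocycle4}) is an instance of the general definable-set machinery (the paper simply cites \cite[Theorem~2.13]{spgap}, whereas you unpack the (\ref{item:defcocycle1})$\Leftrightarrow$(\ref{item:defcocycle3}) argument explicitly), and (\ref{item:defcocycle1})$\Leftrightarrow$(\ref{item:defcocycle2}) is the cochain--cocycle dictionary, which the paper records as ``immediate from the definitions'' while you (more honestly) flag the uniform comparability of the two defect functionals as the one step requiring care.
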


\begin{proof}
    The equivalence of (\ref{item:defcocycle1}) and (\ref{item:defcocycle2}) is immediate from the definitions. The equivalence of (\ref{item:defcocycle1}), (\ref{item:defcocycle3}) and (\ref{item:defcocycle4}) is a special case of \cite[Theorem 2.13]{spgap}. Note that in \cite[Theorem 2.13]{spgap}, there was no cardinality restriction on the index set (recall Convention \ref{ultraconvention} above); however, for separable theories (such as the $T$ considered here), it is an artifact of the proof that one only needs to assume the preservation under countable ultraproducts.
\end{proof}

Note that condition (\ref{item:defcocycle3}) of the previous lemma is equivalent to the following:  given any family $(\Gamma\acts^{a_i} (X_i,\mu_i))_{i\in I}$ of models of $T$ and any ultrafilter $\u$ on $I$, setting $\Gamma\acts^a (X,\mu)$ to be the ultraproduct action, if $\sigma:\Gamma\times X\to K$ is a cocycle, then there are cocycles $\sigma^i:\Gamma\times X_i\to K$ such that $\sigma_\gamma^{-1}(k)=[(\sigma_\gamma^i)^{-1}(k)]_\u$ for all $\gamma\in \Gamma$ and $k\in K$.  In other words, if we let $\sigma^\u:\Gamma\times X\to K$ denote the corresponding \textbf{ultraproduct cocycle}, that is, the cocycle of the ultraproduct action given by the formula $(\sigma_\u)_\gamma^{-1}(k):=[(\sigma_\gamma^i)^{-1}(k)]_\u$ for all $\gamma\in \Gamma$ and $k\in K$, then condition (2) states that every cocycle of the product action is an ultraproduct cocyle.

When the equivalent conditions of Lemma \ref{lem:defcocycle} hold, we say that $\Cocy_K$ is a \textbf{$T$-definable set}. If $\Cocy_K$ is a $T$-definable set for all finite groups $K$, then we say that $T$ has the \textbf{definable cocycle property}.

\begin{cor}
Suppose that $T_\Gamma^*$ exists.  Then $T_\Gamma^*$ has the definable cocycle property.
\end{cor}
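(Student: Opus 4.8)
The plan is to verify condition~(\ref{item:defcocycle1}) of Lemma~\ref{lem:defcocycle} for the theory $T=T_\Gamma^*$ and every finite group $K$; by that lemma this says precisely that each $\Cocy_K$ is a $T_\Gamma^*$-definable set, i.e.\ that $T_\Gamma^*$ has the definable cocycle property. So, fixing $\epsilon>0$ and a finite group $K$, I must produce $\delta>0$ so that whenever $\cal M_a\models T_\Gamma^*$ and $B\in\cal M_a^{\Gamma\times K}$ satisfies $\Cocy_K^{\cal M_a}(B)<\delta$, there is a cocycle $\sigma'$ of $a$ with $d(B,B_{\sigma'})\le\epsilon$. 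The engine is Proposition~\ref{prop:mc_cocycle}, which (since $T_\Gamma^*$ exists) turns a sufficiently good almost-cocycle on an \emph{arbitrary} action into an honest cocycle on an \emph{extension}; the extra ingredient needed here is that every model of $T_\Gamma^*$ is an e.c.\ action, so that honest cocycle can be brought back down to the original action. Concretely, recalling that the metric $d$ on the sort $\cal M_a^{\Gamma\times K}$ is a weighted sum of the $d_\mu$-distances between the coordinates $\pi_{\gamma,k}(\cdot)$, I would first pick a finite $S\subseteq\Gamma$ with $e\in S$ whose complementary tail of weights is $<\epsilon/2$, then pick $\epsilon'>0$ small (depending on $\epsilon$, $S$, $K$) so that an error of size a fixed multiple of $\epsilon'$ on the coordinates indexed by $S$ still contributes $<\epsilon/2$, and finally let $\delta>0$ be what Proposition~\ref{prop:mc_cocycle} provides for this $\epsilon'$, $S$, $K$.

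Now take $\cal M_a\models T_\Gamma^*$, so $\Gamma\acts^a(X,\mu)$ is e.c., and $B$ with $\Cocy_K^{\cal M_a}(B)<\delta$; write $\sigma=\sigma_B$ for the associated cochain. Proposition~\ref{prop:mc_cocycle} yields a {\pmp} action $\Gamma\acts(Y,\nu)$, a factor map $\phi:Y\to X$, and a measurable $\alpha:Y\to K$ with $\nu(\{y:\alpha(sy)=\sigma(s,\phi(y))\alpha(y)\text{ for all }s\in S\})>1-\epsilon'$. Under the duality between factor maps and substructures of models of $T_\Gamma$, and writing $b$ for the action on $Y$ so that $\cal M_a\subseteq\cal M_b$, this asserts that the existential $L_\Gamma$-condition ``there is a $K$-partition $(A_k)_{k\in K}$ of the measure algebra such that, setting $\tilde\alpha=k$ on $A_k$, the (quantifier-free definable, in terms of the $A_k$ and $B$) set where $\tilde\alpha(sx)=\sigma_B(s,x)\tilde\alpha(x)$ for all $s\in S$ has measure $>1-\epsilon'$'' holds in $\cal M_b$. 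Since $a$ is e.c., this condition transfers down to $\cal M_a$: up to the routine rounding of an approximate partition to an exact one (which replaces $\epsilon'$ by a constant multiple $C_{S,K}\epsilon'$), there is a measurable $\tilde\alpha:X\to K$ with $\tilde\alpha(sx)=\sigma_B(s,x)\tilde\alpha(x)$ for $\mu$-almost every $x$ in a set of measure $>1-C_{S,K}\epsilon'$ and every $s\in S$.

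Setting $\sigma'(\gamma,x)=\tilde\alpha(\gamma x)\tilde\alpha(x)^{-1}$ produces a coboundary — hence a cocycle — of $a$ that agrees with $\sigma_B$ on the set $S$ off a set of measure at most $C_{S,K}\epsilon'$, so $d_\mu(\pi_{s,k}(B),\pi_{s,k}(B_{\sigma'}))\le C_{S,K}\epsilon'$ for all $s\in S$, $k\in K$. Splitting $d(B,B_{\sigma'})$ into the $S$-part (bounded by a constant multiple of $C_{S,K}\epsilon'<\epsilon/2$, after absorbing $C_{S,K}$ into the earlier choice of $\epsilon'$) and the tail over $\gamma\notin S$ (bounded by the tail of weights $<\epsilon/2$) gives $d(B,B_{\sigma'})\le\epsilon$, which establishes condition~(\ref{item:defcocycle1}) for every finite $K$ and hence the definable cocycle property for $T_\Gamma^*$. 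The one genuinely delicate point is the descent step: one must recognize that ``there is an $\epsilon'$-approximate coboundary over this action'' is an honest existential $L_\Gamma$-statement in the parameter $B$ — so that e.c.\ of $a$ transfers it from $\cal M_b$ to $\cal M_a$ — and one must keep the quantifiers in order, choosing $S$ before $\epsilon'$ and only then extracting $\delta$ from Proposition~\ref{prop:mc_cocycle}; the partition rounding and the coordinatewise metric estimate are routine.
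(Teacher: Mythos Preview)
Your argument is correct and is precisely the approach the paper intends: invoke Proposition~\ref{prop:mc_cocycle} to get an approximate coboundary in some extension, then use that every model of $T_\Gamma^*$ is e.c.\ to pull the existential witness back to $X$, yielding a coboundary (hence cocycle) close to $B$. The paper states this in one line (``immediately from Proposition~\ref{prop:mc_cocycle} and the definition of e.c.\ action''); you have simply unpacked the bookkeeping with the metric weights and the partition-rounding, all of which is routine.
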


\begin{proof}
This follows immediately from Proposition \ref{prop:mc_cocycle} and the definition of e.c.\ action.
\end{proof}

We will soon establish that $T_{\Gamma, free}$ (resp. $T_{\Gamma, max}$) has the definable cocycle property when $\Gamma$ is strongly treeable (resp. $\Gamma$ is treeable). For now we make the following simple observation:

\begin{lem}\label{freedefcocycle}
If $\bb F$ is any free group, then $T_{\bb F}$ has the definable cocycle property.
\end{lem}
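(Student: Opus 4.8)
The plan is to verify condition (\ref{item:defcocycle1}) of Lemma \ref{lem:defcocycle} directly for $T = T_{\bb F}$, using the fact that free groups have no nontrivial relations, so that a cochain on a free group can be freely modified on generators to produce a cocycle. Since $T_{\bb F}$ is local by Corollary \ref{mclocgroup}-type reasoning — or more precisely since the definable cocycle property for a given finite $K$ only involves finitely much of the group at a time via the metrics $\rho_1, \rho_2$ — it suffices to treat $\bb F = \bb F_r$ finitely generated, with free generating set $\{x_1, \ldots, x_r\}$; the general case follows since any finitely generated subgroup of a free group is free and the relevant estimates localize.

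The key observation is the following. Fix a finite group $K$ and a {\pmp} action $\bb F \acts^a (X,\mu)$. A cocycle $\sigma : \bb F \times X \to K$ is completely and freely determined by its values $\sigma(x_1, \cdot), \ldots, \sigma(x_r, \cdot)$ on the generators: given arbitrary measurable maps $g_i : X \to K$, there is a \emph{unique} cocycle $\sigma$ with $\sigma(x_i, x) = g_i(x)$ for all $i$ and a.e.\ $x$ (build $\sigma$ on reduced words by iterating the cocycle identity; there are no relations to check). Now suppose $\tau : \bb F \times X \to K$ is a cochain with $\Cocy_K^{\cal M_a}(B_\tau) < \delta$. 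By the definition of $\Cocy_K$ — in particular the term $\Phi_2$ controlling the cocycle identity on the pairs $(x_i, w)$ and $(w, x_i)$ for short words $w$ — a small value of $\Cocy_K$ forces $\tau(\gamma, \cdot)$ to agree, off a set of small measure, with the cocycle $\sigma$ defined by $\sigma(x_i, \cdot) := \tau(x_i, \cdot)$, for every $\gamma$ in a prescribed finite window. Concretely: reverse-engineer $\delta$ from $\epsilon$ and from how much of the metrics $\rho_1, \rho_2$ (equivalently, which finitely many words $\gamma$) is needed to get within $\epsilon$; choose the finite window $S \subseteq \bb F$ accordingly; then a sufficiently small $\Cocy_K$-value (this is where the weights $2^{-f(\gamma,\delta,k)}$ enter — small total means each relevant summand is small) guarantees $\mu(\{x : \tau(\gamma, x) \neq \sigma(\gamma, x)\}) < $ (something small) for all $\gamma \in S$, by an induction on word length in $S$ using the cocycle-identity summands. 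Since $d(B_\tau, B_\sigma)$ depends only on these finitely many coordinates, this yields $d(B_\tau, B_\sigma) \le \epsilon$, and $\sigma$ is an honest cocycle of $a$ by construction.

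The main obstacle — really the only delicate point — is the bookkeeping in the induction: from a bound on $\Cocy_K^{\cal M_a}(B_\tau)$ one must extract a uniform (independent of the action $a$) bound on $\mu(\{\tau(\gamma,\cdot) \neq \sigma(\gamma,\cdot)\})$ for all $\gamma$ in the finite window $S$, and the error accumulates multiplicatively with word length (each application of the near-cocycle identity at a generator introduces a controlled error, and one must also absorb the error in the partition term $\Phi_1$ since a priori $\tau(\gamma,\cdot)$ need not even be a function valued in $K$ — but $\Phi_1$ small forces $(B_\tau(\gamma,k))_{k}$ to be $\epsilon$-close to a genuine partition, so we may harmlessly replace it by one). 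Choosing $\delta$ small enough relative to $\epsilon$, $r$, and the diameter of the window $S$ makes the accumulated error less than $\epsilon$; this is routine but must be done with some care. Finally, for infinitely generated $\bb F$ one notes that $\rho_1, \rho_2$ being compatible with the product topology means only finitely many generators are ever relevant for a given $\epsilon$, reducing to the finitely generated case already handled.
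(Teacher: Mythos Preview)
Your proposal is correct and rests on the same core observation as the paper: a cocycle on a free group is freely and uniquely determined by its values on a free generating set, so one can simply \emph{define} the desired cocycle from the cochain's values on generators. The difference is only in which of the equivalent conditions of Lemma~\ref{lem:defcocycle} is verified. You attack condition~(\ref{item:defcocycle1}) head-on, which forces you into the $\epsilon$--$\delta$ bookkeeping and the induction on word length that you correctly flag as the only delicate point. The paper instead verifies condition~(\ref{item:defcocycle3}), the ultraproduct criterion: given a cocycle $\sigma$ on an ultraproduct action $\prod_\u a_i$, write $\pi_{s,k}(B_\sigma)=[X_{i,s,k}]_\u$ on generators $s\in S$, define $\sigma_i$ on $X_i$ from these partitions, and extend freely to a cocycle; then $[\sigma_i]_\u=\sigma$ automatically since agreement on generators propagates to all of $\bb F$ by the cocycle identity. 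This route bypasses the quantitative error accumulation entirely---the ultralimit absorbs it---and handles the infinitely generated case with no extra work. Your approach has the modest virtue of being self-contained (no appeal to the equivalence in Lemma~\ref{lem:defcocycle}), but the paper's is cleaner.
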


\begin{proof}
We verify condition (\ref{item:defcocycle3}) of Lemma \ref{lem:defcocycle}. Suppose that $\bb F \acts^a X$ is the ultraproduct of the actions $\bb F\acts^{a_i}X_i$ and suppose that $\sigma:\bb F\times X\to K$ is a cocycle for $a$.  If $S$ is a free generating set for $\bb F$, then for each $s \in S$ and $k\in K$, we can write $\pi_{s,k}(B_\sigma)=[X_{i,s,k}]_\u$, where, for each $i\in I$ and $s \in S$, $(X_{i,s,k})_{k \in K}$ is a measurable partition of $X_i$.  If we define $\sigma_{i,0}:S\times X_i\to K$ by setting $\sigma_{i,0}(s,x):=k$ when $x\in X_{i,s,k}$, then $\sigma_{i,0}$ extends uniquely to a cocycle $\sigma_i:\bb F\times X_i\to K$ in such a way that $[\sigma_i]_\u=\sigma$. 
\end{proof}

\subsection{Finite-to-one extensions and the extension-MD property}\label{subs:weakMD}

In the last subsection, we observed that when $T$ has the definable cocycle property, the $K$-valued cocycles form a definable set. We also know that cocycles to finite groups correspond to finite-to-one extensions (via the skew product construction and Rokhlin's skew product theorem). This leads us to consider the nature of finite-to-one extensions and their relationship with e.c.\ actions. We start with the following theorem, which characterizes those actions that are e.c.\ for finite-to-one extensions.

\begin{thm}\label{concretetheorem}
An action $\Gamma\acts(X,\mu)$ is e.c.\ for finite-to-one extensions if and only if: for every integer $k$, every cocycle $\sigma : \Gamma \times X \rightarrow \operatorname{Sym}(k)$, every finite set $F\subseteq \Gamma$, every $\epsilon > 0$, every integer $q$, and every map $\beta : X \to q$, there exists a map $\alpha : X \to k$ such that:
\begin{enumerate}
    \item $\alpha_*\mu$ is the normalized counting measure on $k$.
    \item $\alpha$ is $\epsilon$-independent with $\beta$, that is, for all $i< k$ and all $j<q$, we have
    $$\left|\mu(\alpha^{-1}(i)\cap \beta^{-1}(j))-\mu(\alpha^{-1}(i))\mu(\beta^{-1}(j))\right|\leq \epsilon.$$
    \item $\mu(\{x\in X \ : \alpha(f x) = \sigma(f,x)(\alpha(x)) \text{ for every }f \in F\})\geq 1-\epsilon$.
\end{enumerate}
\end{thm}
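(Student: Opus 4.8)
The plan is to verify both implications against the combinatorial criterion for e.c.\ factor maps in Proposition~\ref{criterion}, after first recording the (standard) fact that, via Rokhlin's skew-product theorem applied on ergodic components together with the cocycle/skew-product dictionary, ``$\Gamma\acts^a(X,\mu)$ is e.c.\ for finite-to-one extensions'' is equivalent to ``$\cal M_a$ is e.c.\ in $\cal M_b$ for every skew product $\Gamma\acts^b X\times_\sigma[k]$ on $(X\times[k],\ \mu\times c_k)$ associated to a cocycle $\sigma:\Gamma\times X\to\operatorname{Sym}(k)$'', where $c_k$ is the normalized counting measure and $[k]=\{0,\dots,k-1\}$. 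It then suffices to characterize the latter property.

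($\Rightarrow$) Fix $k$, $\sigma$, a finite $F\subseteq\Gamma$, $\epsilon>0$, $q$, and $\beta:X\to q$, and set $\phi:Y=X\times_\sigma[k]\to X$. In $Y$ the coordinate map $\alpha_0(x,u)=u$ is exactly $(\mu\times c_k)$-uniform, exactly independent from $\beta\circ\phi$ (the measure is a product), and satisfies $\alpha_0(\gamma y)=\sigma(\gamma,\phi(y))(\alpha_0(y))$ for all $\gamma,y$. Writing $X_{f,\tau}=\{x:\sigma(f,x)=\tau\}\in\cal M_a$, the assertion ``there is a partition $(A_i)_{i<k}$ of the measure algebra with $|\mu(A_i)-1/k|<\eta$, with $|\mu(A_i\cap\beta^{-1}(j))-\mu(A_i)\mu(\beta^{-1}(j))|<\eta$, and with $\mu\big(\bigcup_{\tau,i}(X_{f,\tau}\cap A_i\cap f^{-1}A_{\tau(i)})\big)>1-\eta$ for each $f\in F$'' is an existential condition over $\cal M_a$ in the parameters $(X_{f,\tau})_{f\in F,\tau}$ and $(\beta^{-1}(j))_{j<q}$. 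Lifting these parameters through $\phi$, the condition holds with $\eta=0$ in $\cal M_b$, witnessed by the level sets of $\alpha_0$; hence by e.c.\ the corresponding infimum vanishes in $\cal M_a$ too, so it is realized there for every $\eta>0$. The resulting $\alpha:X\to k$ satisfies (2) and (3) with $\epsilon$ replaced by $O(|F|\eta)$ and (1) up to error $\eta$. Finally, testing e.c.\ against the trivial finite extensions $X\times[n]$ shows $\cal M_a$ has $n$-part partitions arbitrarily close to uniform for all $n$, so $(X,\mu)$ is atomless; one then perturbs $\alpha$ on a set of measure $O(k\eta)$ to make clause~(1) hold exactly, worsening (2) and (3) by $O(k|F|\eta)$, and shrinks $\eta$.

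($\Leftarrow$) Assume (1)--(3) and verify Proposition~\ref{criterion} for $\phi:Y=X\times_\sigma[k]\to X$. Given $\alpha:Y\to p$, $\beta:X\to q$, a finite $S\ni e$, and $\epsilon>0$, write $\alpha(x,u)=g(x,u)$ and $\bar g(x)=(g(x,0),\dots,g(x,k-1))$. The key identity is $\alpha_S(x,u)(f)=\bar g(f^{-1}x)\big(\sigma(f^{-1},x)(u)\big)$, so for each $u<k$ the set $\{x:\alpha_S(x,u)=\pi,\ \beta(x)=j\}$ lies in the finite $\sigma$-algebra generated by the map $\beta'':X\to Q$ with $\beta''(x)=\big(\beta(x),(\bar g(f^{-1}x))_{f\in S},(\sigma(f^{-1},x))_{f\in S}\big)$. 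Apply the hypothesis with this $\sigma$, with $F:=S$, with $\beta''$ as the auxiliary map, and with a small $\eta$, obtaining a uniform $\alpha_0:X\to k$ that is $\eta$-independent from $\beta''$ and satisfies $\mu(G)\ge 1-\eta$ for $G=\{x:\alpha_0(fx)=\sigma(f,x)(\alpha_0(x))\ \forall f\in S\}$. Put $\tilde\alpha(x)=\bar g(x)(\alpha_0(x))$. Using the cocycle identity, on $X':=\bigcap_{f\in S}fG$ (so $\mu(X')\ge 1-|S|\eta$) one gets $\alpha_0(f^{-1}x)=\sigma(f^{-1},x)(\alpha_0(x))$ and hence $\tilde\alpha_S(x)=\alpha_S(x,\alpha_0(x))$. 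Splitting on the value $u=\alpha_0(x)$, and using $\mu(\alpha_0^{-1}(u))=1/k$ together with the $\eta$-independence of $\alpha_0$ from the $\beta''$-measurable events above, one gets
$$\mu\big(\tilde\alpha_S^{-1}(\pi)\cap\beta^{-1}(j)\big)=\tfrac1k\sum_{u<k}\mu\big(\{x:\alpha_S(x,u)=\pi,\ \beta(x)=j\}\big)+O\big((|S|+k|Q|)\eta\big),$$
whose main term equals $\nu(\alpha_S^{-1}(\pi)\cap\phi^{-1}\beta^{-1}(j))$ exactly. Taking $\eta$ small yields the criterion.

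The delicate point is the choice of $\beta''$ in ($\Leftarrow$): it must absorb not only $\beta$ but also the $S$-translates of the fibre-labelling $\bar g$ \emph{and} of the cocycle $\sigma$, so that property~(2)'s near-independence applies to precisely the events controlling $\alpha_S$; tracing $\tilde\alpha_S$ back to $\alpha_S$ through the cocycle identity on $X'$ is where the bookkeeping is heaviest. The only other subtleties are that exactness of clause~(1) is not handed to us by e.c.\ directly -- so one needs the observation that e.c.\ for finite-to-one extensions already forces $(X,\mu)$ to be atomless -- and the preliminary reduction of arbitrary finite-to-one extensions to $\operatorname{Sym}(k)$-skew products via Rokhlin's theorem, which, though routine, should be stated with care.
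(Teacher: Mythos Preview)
Your proposal is correct and follows essentially the same route as the paper's proof: both directions go through the skew-product $X\times_\sigma[k]$ and the criterion of Proposition~\ref{criterion}, defining the candidate $\tilde\alpha(x)=\alpha(x,\alpha_0(x))$ in the backward direction just as the paper does with $\hat\gamma(x)=\gamma(x,\alpha(x))$. Your treatment is slightly more explicit than the paper's in two places---you spell out the refined $\beta''$ used when invoking the hypothesis (the paper leaves this implicit when it asserts inequality~(\ref{eqn:finite-to-one2})), and you note that atomlessness follows from e.c.\ for the trivial extensions $X\times[n]$ before perturbing to exact uniformity---but these are clarifications rather than a different argument.
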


\begin{proof}
First assume that $\Gamma\acts (X,\mu)$ is e.c.\ for finite-to-one extensions and fix $k$, $\sigma$, $F$, $\epsilon$, $q$, and $\beta$ as above. Let $u_k$ denote the normalized counting measure on $k$ and let $(Y, \nu) = (X \times_\sigma k, \mu \times u_k)$ be the skew-product extension of $\Gamma\acts (X, \mu)$ with respect to $\sigma$.
Let $\phi : Y \rightarrow X$ and $\hat{\alpha} : Y \rightarrow k$ be the coordinate projection maps. Notice that $\hat{\alpha}_* \nu = u_k$ and that $\hat{\alpha}$ is independent with $\beta \circ \phi$. Also notice that $\hat{\alpha}(f \cdot y) = \sigma(f, \phi(y)) (\hat{\alpha}(y))$ for every $f \in F$ and $y \in Y$, or equivalently, writing $S_{f, \tau} = \{x \in X : \sigma(f, x) = \tau\}$,
\[Y = \bigcup_{i \in k} \bigcap_{f \in F} \bigcup_{\tau \in \operatorname{Sym}(k)} \hat{\alpha}^{-1}(i) \cap \phi^{-1}(S_{f,\tau}) \cap f^{-1} \cdot \hat{\alpha}^{-1}(\tau(i)).\]
Since the action of $\Gamma\acts (X, \mu)$ is e.c.\ for finite-to-one extensions, there exists a map $\alpha : X \rightarrow k$ such that $\alpha_*\mu$ is as close to $u_k$ as desired, $\alpha$ is $(\epsilon/2)$-independent with $\beta$, and the set
\[\bigcup_{i \in k} \bigcap_{f \in F} \bigcup_{\tau \in \operatorname{Sym}(k)} \alpha^{-1}(i) \cap S_{f,\tau} \cap f^{-1} \cdot \alpha^{-1}(\tau(i)),\]
that is, the set of $x \in X$ satisfying $\alpha(fx) = \sigma(f,x)(\alpha(x))$ for all $f\in F$, has measure greater than $1-(\epsilon/2)$. If we choose $\alpha$ so that $\alpha_* \mu$ is sufficiently close to $u_k$, then we can perturb $\alpha$ so as to satisfy items (1)-(3) above.

Now assume that $\Gamma\acts (X,\mu)$ satisfies items (1)-(3) for any choice of $k$, $\sigma$, $F$, $\epsilon$, $q$, and $\beta$ as in the statement of the proposition.  We show that $\Gamma\acts (X,\mu)$ is e.c.\ for finite-to-one extensions.  By the Rohklin skew-product theorem and the ergodic decomposition, it suffices to show that $\Gamma\acts (X,\mu)$ is e.c.\ for skew-product extensions associated with finite groups.  Thus, fix a cocycle $\sigma:\Gamma\times X\to \operatorname{Sym}(k)$, let $u_k$ be normalized counting measure on $k$, and consider the associated skew-product extension $(Y, \nu) = (X\times_\sigma k, \mu \times u_k)$.  We wish to show that the projection map $\phi : Y \rightarrow X$ is e.c.\ Let $p, q \in \bN$, let $\gamma : Y \rightarrow p$ and $\beta : X \rightarrow q$ be measurable maps, let $\epsilon > 0$, and let $F \subseteq \Gamma$ be finite. Our goal is to find a measurable map $\hat{\gamma} : X \rightarrow p$ satisfying
\begin{equation} \label{eqn:finite-to-one0}
 |\mu(\hat{\gamma}_F^{-1}(\pi) \cap \beta^{-1}(j)) - \nu(\gamma_F^{-1}(\pi) \cap \phi^{-1}(\beta^{-1}(j))| < \epsilon
\end{equation}

for all $\pi \in p^F$ and all $j \in q$.

For $\pi \in p^F$ and $i \in k$, let $D_{\pi,i}$ be the set of $x \in X$ satisfying $\gamma_F(x,i) = \pi$. Then
\begin{equation} \label{eqn:finite-to-one1}
\nu(\gamma_F^{-1}(\pi) \cap \phi^{-1}(\beta^{-1}(j))) = \sum_{i \in k} \frac{1}{k} \mu(D_{\pi,i} \cap \beta^{-1}(j)).
\end{equation}
By our assumption on the action $\Gamma \acts (X, \mu)$, we can pick a measurable map $\alpha : X \rightarrow k$ satisfying $\mu(A) > 1 - \frac{\epsilon}{2}$, where
\[A = \{x \in X : \forall f \in F \ \alpha(f^{-1} x) = \sigma(f^{-1}, x)(\alpha(x)),\]
and satisfying
\begin{equation} \label{eqn:finite-to-one2}
\left| \sum_{i \in k} \mu(\alpha^{-1}(i) \cap D_{\pi,i} \cap \beta^{-1}(j)) - \sum_{i \in k} \frac{1}{k} \mu(D_{\pi, i} \cap \beta^{-1}(j) \right| < \frac{\epsilon}{2}
\end{equation}
for every $\pi \in p^F$ and $j \in q$. Define $\hat{\gamma} : X \rightarrow p$ by $\hat{\gamma}(x) = \gamma(x, \alpha(x))$. Then for $x \in A$, we have
\begin{align*}
    \hat{\gamma}_F(x)(f) & = \hat{\gamma}(f^{-1} x)\\
    & = \gamma(f^{-1} x, \alpha(f^{-1} x))\\
    & = \gamma(f^{-1} x, \sigma(f^{-1},x)(\alpha(x)))\\
    & = \gamma(f^{-1} \cdot (x, \alpha(x)))\\
    & = \gamma_F(x,\alpha(x))(f).
\end{align*}
As a result, for $x \in A$ we have $\hat{\gamma}_F(x) = \pi$ if and only if there is $i \in k$ with $x \in \alpha^{-1}(i) \cap D_{\pi, i}$. Therefore
\begin{equation} \label{eqn:finite-to-one3}
\left|\mu(\hat{\gamma}_F^{-1}(\pi) \cap \beta^{-1}(j)) - \sum_{i \in k} \mu(\alpha^{-1}(i) \cap D_{\pi,i} \cap \beta^{-1}(j)) \right| < \mu(X \setminus A) < \frac{\epsilon}{2}.
\end{equation}
Combining equations \eqref{eqn:finite-to-one1}, \eqref{eqn:finite-to-one2}, and \eqref{eqn:finite-to-one3} shows that \eqref{eqn:finite-to-one0} holds.
\end{proof}

Let $\Gamma \acts^a (X, \mu)$ be a {\pmp} action, let $\lambda$ be Lebesgue measure, and let $\Gamma \acts^{\id} [0,1]$ be the trivial action fixing every point. We call $\Gamma \acts^{a \times \id} (X \times [0,1], \mu \times \lambda)$ the trivial extension of $\Gamma \acts^a (X, \mu)$ having atomless fibers.

\begin{cor} \label{cor:simple}
	Let $\Gamma \acts^a (X, \mu)$ be a {\pmp} action, and suppose that $\Gamma \acts^{a \times \id} (X \times [0,1], \mu \times \lambda)$ is an e.c.\ extension of $\Gamma \acts^a (X, \mu)$. Then $\Gamma \acts^a (X, \mu)$ is e.c.\ for finite-to-one extensions if and only if $B^1(a, \operatorname{Sym}(k))$ is dense in $Z^1(a, \operatorname{Sym}(k))$ for every $k \in \bb N$.
\end{cor}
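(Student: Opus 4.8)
The plan is to derive both directions from the ergodic-theoretic characterization of being e.c.\ for finite-to-one extensions in Theorem~\ref{concretetheorem}, converting condition (3) there into density of coboundaries and back. Note first that the hypothesis on the trivial atomless extension $\Gamma\acts^{a\times\id}(X\times[0,1],\mu\times\lambda)$ is only needed for the reverse implication; the forward implication uses nothing about that extension and is essentially a repackaging of Lemma~\ref{eccocycle}.

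For ($\Rightarrow$), assume conditions (1)--(3) of Theorem~\ref{concretetheorem} hold, and fix $k\in\bb N$, a cocycle $\sigma:\Gamma\times X\to\operatorname{Sym}(k)$, a finite $F\subseteq\Gamma$, and $\epsilon>0$. Let $\iota:\operatorname{Sym}(k)\to\operatorname{Sym}(\operatorname{Sym}(k))$ be the left regular representation and put $\bar\sigma=\iota\circ\sigma$, which is again a cocycle since homomorphisms preserve the cocycle identity. Applying condition (3) of Theorem~\ref{concretetheorem} to $\bar\sigma$ (with $q=1$ and $\beta$ constant) produces a measurable $s:X\to\operatorname{Sym}(k)$ with $\mu(\{x:s(fx)=\bar\sigma(f,x)(s(x))=\sigma(f,x)s(x)\text{ for all }f\in F\})>1-\epsilon$, i.e.\ $\sigma(f,x)=s(fx)s(x)^{-1}$ off a set of measure $<\epsilon$. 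Thus the coboundary associated to $s$ (after the cosmetic inverse adjustment needed to match the sign convention of the preliminaries) lies in the basic neighborhood of $\sigma$ determined by $F$ and $\epsilon$; as $F,\epsilon$ were arbitrary, $B^1(a,\operatorname{Sym}(k))$ is dense in $Z^1(a,\operatorname{Sym}(k))$.

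For ($\Leftarrow$), assume $X\times[0,1]$ is an e.c.\ extension of $X$ and $B^1(a,\operatorname{Sym}(k))$ is dense in $Z^1(a,\operatorname{Sym}(k))$ for every $k$, and verify (1)--(3) of Theorem~\ref{concretetheorem}. Given $k,\sigma,F,\epsilon,q,\beta$, use density to pick $s:X\to\operatorname{Sym}(k)$ so that the good set $G=\{x:\sigma(f,x)=s(fx)^{-1}s(x)\text{ for all }f\in F\}$ has $\mu(G)\ge 1-\epsilon/3$, and on $X\times[0,1]$ set $\hat\alpha(x,t)=s(x)^{-1}(\lfloor kt\rfloor)$. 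Elementary computations then give: $\hat\alpha_*(\mu\times\lambda)$ is exactly the normalized counting measure on $k$ and, conditioning on the $X$-coordinate, $\hat\alpha$ is exactly independent of $\beta\circ\mathrm{pr}_X$; and for $x\in G$ one has $\hat\alpha(f\cdot(x,t))=\sigma(f,x)(\hat\alpha(x,t))$ for every $t$ and $f\in F$, so the approximate $\sigma$-equivariance event has measure $\ge 1-\epsilon/3$. The key observation is that (1), (2), (3) are all constraints on the joint distribution of $(\hat\alpha_S,\beta')$, where $S$ is a finite set containing $\{e\}\cup F^{-1}$ and $\beta'=\bigl(\beta,(\sigma(f,\cdot))_{f\in F}\bigr):X\to q'$ (observe $\sigma$ regarded as a cocycle for $a\times\id$ is just $(\gamma,(x,t))\mapsto\sigma(\gamma,x)$, so $(\sigma(f,\cdot))_{f\in F}\circ\mathrm{pr}_X$ is literally a coordinate of $\beta'\circ\mathrm{pr}_X$). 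Applying Proposition~\ref{criterion} to the e.c.\ factor map $\mathrm{pr}_X:X\times[0,1]\to X$ with this $\hat\alpha$, reference partition $\beta'$, set $S$, and a small $\epsilon'$, we obtain $\tilde\alpha:X\to k$ whose joint law with $\beta'$ is $\epsilon'$-close to that of $(\hat\alpha_S,\beta'\circ\mathrm{pr}_X)$; hence $\tilde\alpha$ is $\epsilon'$-equidistributed, $\epsilon'$-independent of $\beta$, and $\sigma$-equivariant off a set of measure $\le \epsilon/3+\epsilon'$. A final perturbation of $\tilde\alpha$ on a set of measure $O(\epsilon')$ makes it exactly equidistributed while worsening the other two estimates by only $O(|F|k\epsilon')$; choosing $\epsilon'$ small enough from the start delivers (1)--(3) with the prescribed $\epsilon$.

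The main obstacle is the bookkeeping in the reverse direction: one must write the three requirements of Theorem~\ref{concretetheorem} as membership of the distribution of $(\tilde\alpha_S,\beta')$ in a suitable weak$^*$-open region (for (2) and (3)) or a closed-then-perturbed region (for (1)), so that the $\epsilon'$-approximation furnished by Proposition~\ref{criterion} transports them from $X\times[0,1]$ down to $X$, and one must keep straight the pull-back of $\sigma$ along $\mathrm{pr}_X$. Everything else is routine: the computations verifying the properties of $\hat\alpha$ are immediate (this is exactly where the extra $[0,1]$-randomness, fixed by $\Gamma$, supplies equidistribution and independence), and the clean-up perturbation is the same argument already used in the proof of Theorem~\ref{concretetheorem}.
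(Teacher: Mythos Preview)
Your proposal is correct and takes essentially the same approach as the paper: use density of coboundaries to produce a map $X\to\operatorname{Sym}(k)$, build from it a coloring on a trivial extension that satisfies the three conditions of Theorem~\ref{concretetheorem}, and transfer down via the assumed e.c.\ extension. The paper's write-up is slightly terser---it works on the finite intermediate extension $\tilde X=X\times k$ (also e.c.\ over $X$, being intermediate to $X\times[0,1]$), where the formula $\tilde\alpha(x,i)=\alpha(x)(i)$ already gives (1) and (2) exactly and so avoids your explicit Proposition~\ref{criterion} bookkeeping and final perturbation, and it omits the forward direction as immediate from the argument of Lemma~\ref{eccocycle}---but your $\hat\alpha(x,t)=s(x)^{-1}(\lfloor kt\rfloor)$ is just $\tilde\alpha$ precomposed with $[0,1]\to k$, so the difference is cosmetic.
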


\begin{proof}
	We will apply Theorem \ref{concretetheorem}. So let $k, q \in \bb N$, let $\sigma \in Z^1(a, \operatorname{Sym}(k))$, let $F \subseteq \Gamma$ be finite, let $\epsilon > 0$, and let $\beta : X \rightarrow q$ be measurable.
	
	Let $u_k$ be the normalized counting measure on $k = \{0, \ldots, k-1\}$. Define $(\tilde X, \tilde \mu) = (X \times k, \mu \times u_k)$ and define $\tilde \beta : \tilde X \rightarrow q$ and $\tilde \sigma : \Gamma \times \tilde X \rightarrow \operatorname{Sym}(k)$ by
	\[\tilde \beta(x,i) = \beta(x) \quad \text{and} \quad \tilde \sigma(\gamma, (x,i)) = \sigma(\gamma, x).\]
	We know that $\Gamma \acts^{a \times \id} (\tilde X, \tilde \mu)$ is an e.c.\ extension of $\Gamma \acts^a (X, \mu)$ since this extension is intermediary to the trivial extension having atomless fibers. Therefore it suffices to find a measurable map $\tilde \alpha : \tilde X \rightarrow k$ satisfying conditions (1), (2), and (3) of Theorem
	\ref{concretetheorem} with $X$, $\mu$, $\sigma$, $\alpha$, $\beta$ replaced by $\tilde X$, $\tilde \mu$, $\tilde \sigma$, $\tilde \alpha$, $\tilde \beta$.
	
	Since we are assuming $B^1(a, \operatorname{Sym}(k))$ is dense in $Z^1(a, \operatorname{Sym}(k))$, there is a measurable map $\alpha : X \rightarrow \operatorname{Sym}(k)$ such that the set
	\[Y = \{x \in X : \sigma(f, x) = \alpha(f^a \cdot x) \alpha(x)^{-1} \text{ for all } f \in F\}\]
	satisfies $\mu(Y) > 1 - \epsilon$. Now define $\tilde \alpha : \tilde X \rightarrow k$ by
	\[\tilde \alpha(x,i) = \alpha(x)(i).\]
	Then we have
	\[\tilde \alpha_* \tilde \mu = \int \alpha(x)_* u_k \ d \mu = \int u_k \ d \mu = u_k,\]
	so (1) is satisfied, and
	\begin{align*}
		\tilde \mu(\tilde \alpha^{-1}(i) \cap \tilde \beta^{-1}(j) ) & = \tilde \mu(\{(x, \alpha(x)^{-1}(i)) : x \in \beta^{-1}(j)\})\\
		& = \frac{1}{k} \cdot \mu(\beta^{-1}(j))\\
		& = \tilde \mu( \tilde \alpha^{-1}(i)) \tilde \mu (\tilde \beta^{-1}(j)),
	\end{align*}
	so (2) is satisfied. Finally, $\tilde \mu(Y \times k) = \mu(Y) > 1 - \epsilon$ and for every $(x,i) \in Y \times k$ and $f \in F$ we have
	\[\tilde \alpha(f^{a \times \id} (x,i)) = \alpha(f^a \cdot x)(i) = \sigma(f, x) \circ \alpha(x)(i) = \tilde \sigma(f, (x,i)) (\tilde \alpha(x,i)).\]
	Thus (3) is satisfied.
\end{proof}

The previous results motivate the consideration of actions having the property that all of their extensions can be approximated by their finite-to-one extensions. Drawing a parallel with property MD for groups, we make the following definition.

\begin{defn} \label{def:weakMD}
We say that a {\pmp} action $\Gamma \acts^a (X, \mu)$ on a standard probability space is \textbf{extension-MD} if for any (equivalently, every) non-atomic standard probability space $(Y, \nu)$, the set of finite-to-one $a$-extensions are dense in $F_a(\Gamma, Y,X)$ (as defined in Subsection \ref{subsectionspace}). If every free {\pmp} action of $\Gamma$ on a standard probability space is extension-MD, then we say that $\Gamma$ has the \textbf{extension-MD property}.
\end{defn}

Similar to Proposition \ref{criterion}, we will rely on the following characterization of this property.

\begin{prop} \label{weakMDcriterion}
    The action $\Gamma \acts^a (X, \mu)$ is extension-MD if and only if $(X, \mu)$ is a standard probability space and, whenever $\Gamma \acts^b (Y, \nu)$ is a {\pmp} action extending $\Gamma \acts^a (X, \mu)$, say via $\phi : Y \rightarrow X$, $p, q \in \bb N$, $\alpha : Y \to p$, and $\beta : X \to q$ are measurable, $F \subseteq \Gamma$ is finite, and $\epsilon > 0$, there is a {\pmp} action $\Gamma \acts^{b'} (Y', \nu')$ extending $\Gamma \acts^a (X, \mu)$, say via $\phi' : Y' \rightarrow X$, and a measurable map $\alpha' : Y' \to p$ such that $\phi'$ is almost-everywhere finite-to-one and
    \begin{equation} \label{eqn:weakMDcriterion}
        \Big| \nu \Big(\alpha_F^{-1}(\pi) \cap \phi^{-1}(\beta^{-1}(j)) \Big) - \nu' \Big((\alpha')_F^{-1}(\pi) \cap (\phi')^{-1}(\beta^{-1}(j)) \Big) \Big| < \epsilon
    \end{equation}
    for all $\pi \in p^F$ and $j \in q$.
\end{prop}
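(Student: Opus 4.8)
The plan is to argue exactly as in the proof of Proposition~\ref{criterion}: the density condition defining extension-MD and the combinatorial condition of the statement are two encodings of the same phenomenon, the dictionary being that a finite family of measurable sets is faithfully recorded by the partition into the atoms of the Boolean algebra it generates, re-indexed by $\{0,\dots,p-1\}$ or $\{0,\dots,q-1\}$, and that the weak topology on $F_a(\Gamma,Y,X)$ is precisely the topology of closeness of these finitely many atoms under finitely many group elements. I would first dispose of the parenthetical ``equivalently, every'' in Definition~\ref{def:weakMD}: any two non-atomic standard probability spaces are isomorphic, so an isomorphism $(Y,\nu)\cong(Y_0,\nu_0)$ induces a homeomorphism $F_a(\Gamma,Y,X)\cong F_a(\Gamma,Y_0,X)$ carrying finite-to-one extensions to finite-to-one extensions (and in any case this drops out once the equivalence with the $(Y,\nu)$-free criterion is proved). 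Throughout I use two elementary facts: an extension of a non-atomic space is non-atomic, and a finite-to-one extension of a non-atomic standard space is again non-atomic standard, hence isomorphic to $(Y,\nu)$; in the cases of interest $(X,\mu)$ is non-atomic, so these apply.

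For the forward direction I would take $\Gamma\acts^b(Y,\nu)$, $\phi\colon Y\to X$, $\alpha\colon Y\to p$, $\beta\colon X\to q$, finite $F\subseteq\Gamma$, and $\epsilon>0$ as in the criterion, and (replacing $F$ by $F\cup\{e\}$ if necessary, which only strengthens the conclusion) assume $e\in F$. Since $(Y,\nu)$ is non-atomic standard, density at the point $(\phi,b)\in F_a(\Gamma,Y,X)$ applied with the finite family $\{\beta^{-1}(j):j<q\}$ of subsets of $X$, the finite family $\{\alpha^{-1}(i):i<p\}$ of subsets of $Y$, the set $F$, and a tolerance $\delta<\epsilon/(|F|+1)$, yields a finite-to-one $a$-extension $(\psi,c)\in F_a(\Gamma,Y,X)$ with $\nu\bigl(\psi^{-1}(\beta^{-1}(j))\symd\phi^{-1}(\beta^{-1}(j))\bigr)<\delta$ and $\nu\bigl(\gamma^c\alpha^{-1}(i)\symd\gamma^b\alpha^{-1}(i)\bigr)<\delta$ for all relevant $i,j,\gamma$. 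Taking $(Y',\nu')=(Y,\nu)$, $\phi'=\psi$, $b'=c$, $\alpha'=\alpha$ and estimating $(\alpha'_F)^{-1}(\pi)\cap(\phi')^{-1}(\beta^{-1}(j))$ against $\alpha_F^{-1}(\pi)\cap\phi^{-1}(\beta^{-1}(j))$ by a union bound over the defining intersections then gives \eqref{eqn:weakMDcriterion}.

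For the converse I would fix a non-atomic standard $(Y,\nu)$, a point $(\phi,b)\in F_a(\Gamma,Y,X)$, and a basic neighborhood $U^{\mathcal A,\mathcal B,F,\epsilon}_a(\phi,b)$, again with $e\in F$ after enlarging $F$. Let $\beta\colon X\to q$ enumerate the atoms of the algebra generated by $\mathcal A$ and $\alpha\colon Y\to p$ enumerate the atoms of the algebra generated by $\mathcal B$, so $\mathcal A$ lies in the algebra generated by $\beta$ and $\mathcal B$ in the algebra generated by $\alpha$. Applying the criterion to $b,\phi,\alpha,\beta,F$ and a small $\delta$ produces a finite-to-one $a$-extension $(\phi',b')$ on $(Y',\nu')$ together with $\alpha'\colon Y'\to p$ satisfying the $\delta$-matching of \eqref{eqn:weakMDcriterion}. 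Since $(Y',\nu')$ is then non-atomic standard I transport everything onto $(Y,\nu)$. Now $(\phi,b)$ with its partition $\alpha$ and the transported $(\phi',b')$ with its partition $\alpha'$ both live on $(Y,\nu)$, and their refinements into the atoms $\alpha_F^{-1}(\pi)\cap\phi^{-1}(\beta^{-1}(j))$, respectively $(\alpha')_F^{-1}(\pi)\cap(\phi')^{-1}(\beta^{-1}(j))$, have the same masses up to $\delta$. By homogeneity of the measure algebra of $(Y,\nu)$ I post-compose the transporting isomorphism with an element of $\Aut(Y,\nu)$ carrying the first refined partition onto a partition agreeing piece by piece, up to total error $O(\delta)$, with the second. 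Because $e\in F$, aligning these refined partitions forces $\alpha$ to agree with the pulled-back $\alpha'$, $\phi$ to agree with $\phi'$ modulo $\mathcal A$, and the $F$-translates of the $\alpha$-pieces to agree with those of the $\alpha'$-pieces; translating back, the resulting finite-to-one $a$-extension lands in $U^{\mathcal A,\mathcal B,F,\epsilon}_a(\phi,b)$ once $\delta$ was chosen small relative to $\epsilon$, $|F|$, $p$, $q$.

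I expect the only genuinely non-routine step to be this last realization: one must check that a finite-to-one extension merely matching the prescribed finite distribution can be moved, by a measure-space isomorphism, into a prescribed weak-topology neighborhood of $(\phi,b)$, which is where homogeneity of the measure algebra of $(Y,\nu)$ and the normalization $e\in F$ are used. The remaining bookkeeping---choosing $\delta$, unwinding $\alpha_F$, and the union bounds---is a direct transcription of the argument for Proposition~\ref{criterion}, so I would, as there, leave those details to the reader.
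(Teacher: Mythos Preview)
Your approach is essentially the same as the paper's: in both directions you pass between the basic neighborhoods $U^{\mathcal A,\mathcal B,F,\epsilon}_a(\phi,b)$ and the combinatorial condition \eqref{eqn:weakMDcriterion} by encoding finite families of sets as atom-labeling maps, and in the converse you realize the finite-to-one extension inside $(Y,\nu)$ by a measure-algebra isomorphism matching the two refined partitions piece by piece. The paper makes exactly the same moves (keeping the same $\alpha$ in the forward direction, and in the converse observing that the sets in \eqref{eqn:weakMDcriterion} partition $Y$ and $Y'$ and then choosing $S:(Y',\nu')\to(Y,\nu)$ aligning them up to error $\epsilon$).

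There is one small inaccuracy. You assert that ``in the cases of interest $(X,\mu)$ is non-atomic'' and use this to conclude that the finite-to-one extension $(Y',\nu')$ is non-atomic standard, hence isomorphic to $(Y,\nu)$. Nothing in Definition~\ref{def:weakMD} forces $(X,\mu)$ to be non-atomic, so this step is unjustified. The paper handles it directly: one can always replace $(Y',\nu')$ by $(Y'\times[0,1],\nu'\times\lambda)$ with the trivial action on the second factor (lifting $\alpha'$ and $\phi'$ in the obvious way, which leaves \eqref{eqn:weakMDcriterion} unchanged and keeps $\phi'$ finite-to-one after composing with the projection), and then pass to a standard factor on which $\phi'$ and $\alpha'$ remain measurable. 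With that patch your argument goes through; similarly, in the forward direction, the reduction to $(Y,\nu)$ standard non-atomic (which you assume without comment) is obtained by the same product-with-$[0,1]$ and countably-generated-factor tricks.
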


\begin{proof}
    If $\Gamma \acts^a (X, \mu)$ is extension-MD, then $(X, \mu)$ is a standard probability space and it is easy to see that it is then enough to check the above condition in the cases where $(Y, \nu)$ is a standard non-atomic probability space. In this case, every $(\phi', b')$ in some open neighborhood of $(\phi, b)$ will satisfy (\ref{eqn:weakMDcriterion}) using the same map $\alpha : Y \to p$ (in this case, take care to note that $\alpha_F$ depends on the action being considered and can be written $\alpha_{b(F)}$ and $\alpha_{b'(F)}$ for clarity). The assumption that the action $a$ is extension-MD implies that this open set contains a finite-to-one extension of $a$. Conversely, suppose the above condition holds. Consider a standard non-atomic probability space $(Y, \nu)$, an $a$-extension $(\phi, b) \in F_a(\Gamma, Y, X)$, and an open neighborhood $U^{\cal A, \cal B, F, \epsilon}_a(\phi, b)$. Choose $\alpha : Y \to p$ and $\beta : X \to q$ so that $\cal A = \{\alpha^{-1}(i) : i \in p\}$ and $\cal B = \{\beta^{-1}(j) : j \in q\}$. Let $\Gamma \acts^{b'} (Y', \nu')$, $\phi' : Y' \rightarrow X$, and $\alpha' : Y' \to p$ satisfy (\ref{eqn:weakMDcriterion}) for all $\pi \in p^F$ and $j \in q$. We can assume that $(Y', \nu')$ is non-atomic, and by passing to a factor of $Y'$ for which $\phi'$ and $\alpha'$ remain measurable, we may further assume that $(Y', \nu')$ is a standard probability space. Notice that the sets appearing in (\ref{eqn:weakMDcriterion}) partition $Y$ and $Y'$ as $\pi \in p^F$ and $j \in q$ vary. Since (\ref{eqn:weakMDcriterion}) implies that the measures of these pieces are within $\epsilon$ of one another, there is an isomorphism of probability spaces $S : (Y', \nu') \rightarrow (Y, \nu)$ that matches the respective pieces of the partitions, for each $\pi \in p^F$ and $j \in q$, up to an error (symmetric difference) of measure $\epsilon$. The resulting pair $(S \cdot \phi', S \cdot b')$ will belong to $U^{\cal A, \cal B, F, \epsilon}_a(\phi, b)$, and since $\phi'$ is finite-to-one and $S$ is an isomorphism, $S \cdot \phi'$ will be finite-to-one as well.
\end{proof}

The above proposition allows us to extend the definition of extension-MD to actions on non-standard probability spaces: we say that a {\pmp} action $\Gamma \acts^a (X, \mu)$ is \textbf{extension-MD} if it satisfies the condition of the above proposition. It is easily seen then that if $\Gamma$ has the extension-MD property as defined in Definition \ref{def:weakMD} then all free {\pmp} actions of $\Gamma$ (on both standard and non-standard spaces) are extension-MD.

A trivial consequence of the above proposition and Proposition \ref{criterion} is that every e.c.\ action is automatically extension-MD. The following is also clear.

\begin{lem}\label{weakMDlemma}
An extension-MD action is e.c.\ if and only if it is e.c.\ for finite-to-one extensions. 
\end{lem}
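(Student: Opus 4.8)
The plan is to prove the nontrivial implication by chaining together the two ergodic-theoretic reformulations already at our disposal: Proposition \ref{criterion}, which reformulates ``$a$ is e.c.'' (and, restricted to finite-to-one factor maps, ``$a$ is e.c.\ for finite-to-one extensions''), and Proposition \ref{weakMDcriterion}, which reformulates ``$a$ is extension-MD.'' One implication is immediate and needs no hypothesis at all: if $\Gamma\acts^a(X,\mu)$ is e.c., then it is e.c.\ in $\cal M_b$ for \emph{every} extension $b$, in particular for every finite-to-one extension, so it is e.c.\ for finite-to-one extensions.

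For the converse, I would assume $\Gamma\acts^a(X,\mu)$ is extension-MD and e.c.\ for finite-to-one extensions and then verify the criterion of Proposition \ref{criterion} directly. So fix an arbitrary extension $\phi:Y\to X$ of $a$, integers $p,q$, measurable maps $\alpha:Y\to p$ and $\beta:X\to q$, a finite set $S\subseteq\Gamma$, and $\epsilon>0$; the goal is to produce $\tilde\alpha:X\to p$ whose distribution relative to $\beta$ approximates that of $\alpha_S$ to within $\epsilon$. The strategy is to route this through a finite-to-one extension. First apply Proposition \ref{weakMDcriterion} with error $\epsilon/2$ to the data $(\phi,b,\alpha,\beta,S)$ to obtain an almost-everywhere finite-to-one extension $\phi':Y'\to X$ of $a$ together with $\alpha':Y'\to p$ such that the quantities $\nu'((\alpha')_S^{-1}(\pi)\cap(\phi')^{-1}(\beta^{-1}(j)))$ are within $\epsilon/2$ of $\nu(\alpha_S^{-1}(\pi)\cap\phi^{-1}(\beta^{-1}(j)))$ for all $\pi\in p^S$, $j\in q$ (and, if desired, one may take $Y'$ standard as in the proof of that proposition, though Proposition \ref{criterion} does not require it). Then, since $a$ is e.c.\ for finite-to-one extensions, apply the criterion of Proposition \ref{criterion} to the finite-to-one extension $\phi'$ with the same $p,q,\beta,S$ and error $\epsilon/2$ to obtain $\tilde\alpha:X\to p$ with $\mu(\tilde\alpha_S^{-1}(\pi)\cap\beta^{-1}(j))$ within $\epsilon/2$ of $\nu'((\alpha')_S^{-1}(\pi)\cap(\phi')^{-1}(\beta^{-1}(j)))$. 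A single triangle inequality then yields the required $\epsilon$-approximation, and Proposition \ref{criterion} gives that $a$ is e.c.

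I do not expect any genuine difficulty here; the whole content is the observation that extension-MD lets one replace an arbitrary extension by a finite-to-one one at arbitrarily small cost, after which the finite-to-one e.c.\ hypothesis finishes the job. The only thing demanding care is the bookkeeping: one must feed exactly the same $S$, $p$, $q$, and $\beta$ into both applications, and verify that the two $\epsilon/2$ errors are measured against the identical families $\{\beta^{-1}(j):j\in q\}$ and the identical exponent sets $p^S$, so that the errors genuinely add. Once this matching is in place, the proof is complete.
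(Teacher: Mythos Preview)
Your proposal is correct and is precisely the argument the paper has in mind; indeed, the paper does not write out a proof at all, simply stating that the lemma ``is also clear'' immediately after Proposition~\ref{weakMDcriterion}. Your two-step approximation via Proposition~\ref{weakMDcriterion} and Proposition~\ref{criterion}, combined by the triangle inequality, is exactly the intended (and only natural) route.
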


Before ending this subsection, we make an additional observation about extension-MD actions and show that free groups have the extension-MD property.

\begin{lem} \label{lem:exMD}
    Let $\Gamma \acts^a (X, \mu)$ be a {\pmp} action. Let $G = \Aut([0,1], \lambda)$, where $\lambda$ is Lebesgue measure, equipped with the weak topology, and let $\rho_1$ be a metric on $C^1(\Gamma, G^\Gamma)$ compatible with its product topology. Write $Z^1(a, G)_{fin}$ for the set of all measurable cocycles on $a$ that map to a finite subgroup of $G$. Then the following are equivalent:
    \begin{enumerate}
        \item \label{item:exMD2} $Z^1(a, G)_{fin}$ is dense in $Z^1(a, G)$;
        \item \label{item:exMD3} for every measurable cocycle $\sigma : \Gamma \times X \rightarrow G$, every finite set $F \subseteq \Gamma$, every $\epsilon > 0$, and every finite tuple $(C_i)_{i \in n}$ of Borel subsets of $[0,1]$, there is a finite subgroup $H \leq G$ and a measurable cocycle $\sigma' : \Gamma \times X \rightarrow H$ such that, for all $\gamma \in F$ and all $i \in n$:
            \[ \int \lambda \big(\sigma(\gamma,x)^{-1}(C_i) \symd \sigma'(\gamma, x)^{-1}(C_i) \big) \ d \mu < \epsilon;\]
        \item \label{item:exMD4} for every measurable equivariant map $z : X \rightarrow Z^1(\Gamma, G^\Gamma)$ and every $\epsilon > 0$, there is a finite subgroup $H \leq G$ and a measurable equivariant map $z' : X \rightarrow Z^1(\Gamma, H^\Gamma)$ satisfying $\int \rho_1(z(x),z'(x)) \ d \mu < \epsilon$.
    \end{enumerate}
    Moreover, the above properties imply that the action $\Gamma \acts^a (X, \mu)$ is extension-MD.
\end{lem}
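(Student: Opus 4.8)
The plan is to dispose of the equivalence of (\ref{item:exMD2})--(\ref{item:exMD4}) by routine unwinding of definitions, and then to prove the ``moreover'' clause by a discretization argument built on top of Rohklin's skew-product theorem. For the equivalences, I would fix a countable family $\{C_i\}_{i \in \bN}$ of Borel subsets of $[0,1]$ that is dense for the weak topology, so that $d_G(S,T) := \sum_i 2^{-i}\lambda(S^{-1}(C_i)\symd T^{-1}(C_i))$ is a compatible complete metric on $G = \Aut([0,1],\lambda)$ (here one uses that $G$ is a topological group, so inversion is continuous). Convergence $\sigma_n \to \sigma$ in $Z^1(a,G)$ means $\int d_G(\sigma_n(\gamma,x),\sigma(\gamma,x))\,d\mu \to 0$ for every $\gamma$, which unwinds to precisely the condition in (\ref{item:exMD3}) restricted to the family $\{C_i\}$; since every Borel $C$ is approximated in $\lambda$-measure by members of $\{C_i\}$ and measure-preserving maps carry symmetric differences isometrically, one gets the same for arbitrary finite tuples of Borel sets, giving (\ref{item:exMD2}) $\Leftrightarrow$ (\ref{item:exMD3}). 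For (\ref{item:exMD2}) $\Leftrightarrow$ (\ref{item:exMD4}), I would feed in the correspondence of Lemma \ref{lem:CocycleCorr}: a cochain $\theta : \Gamma \times X \to G$ takes values in a finite subgroup $H \leq G$ exactly when the associated equivariant map $c : X \to C^1(\Gamma, G^\Gamma)$ takes values in $C^1(\Gamma, H^\Gamma)$ (immediate from $c(x)(\beta)(\alpha) = \theta(\beta^{-1},(\alpha^{-1})^a\cdot x)$), and since $a$ is measure-preserving and $\rho_1$ induces the product topology on $C^1(\Gamma, G^\Gamma) = G^{\Gamma\times\Gamma}$, the $L^1(\mu)$-metric $\int\rho_1(c(x),c'(x))\,d\mu$ induces on equivariant maps exactly the topology corresponding to coordinatewise-in-measure convergence of the $\sigma(\gamma,\cdot)$; thus density of $Z^1(a,G)_{fin}$ in $Z^1(a,G)$ is literally (\ref{item:exMD4}).

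For the ``moreover'' clause, assume (\ref{item:exMD2})--(\ref{item:exMD4}) and verify the criterion of Proposition \ref{weakMDcriterion}. So let $\Gamma \acts^b (Y,\nu)$ extend $\Gamma \acts^a (X,\mu)$ via $\phi$, let $p,q \in \bN$, let $\alpha : Y \to p$ and $\beta : X \to q$ be measurable, let $F \subseteq \Gamma$ be finite (enlarge it to be symmetric), and let $\epsilon > 0$. Passing from $(Y,\nu,b,\alpha)$ to $(Y \times [0,1], \nu \times \lambda, b \times \id, \alpha \circ \pi_Y)$ leaves every quantity $\nu(\alpha_F^{-1}(\pi) \cap \phi^{-1}(\beta^{-1}(j)))$ unchanged, so we may assume $\phi$ has non-atomic fibers (and, passing to a standard reduct of $X$ if needed, that $X$ is standard). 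Then, by Rohklin's skew-product theorem in its relative, non-ergodic form for extensions with non-atomic fibers (see, for example, \cite{glasner}), we identify $b$ with a skew-product $X \times_\sigma ([0,1],\lambda)$ for some $\sigma \in Z^1(a,G)$, with $\phi$ the coordinate projection and $\alpha : X \times [0,1] \to p$ measurable. Fix a small $\delta > 0$; by a standard approximation in the product measure algebra, choose a finite Borel partition $\mathcal{Q} = \{Q_1,\dots,Q_r\}$ of $[0,1]$ and a map $\hat\alpha : X \times [0,1] \to p$ with $\hat\alpha(x,\cdot)$ constant on each $Q_l$ and $(\mu\times\lambda)(\alpha \neq \hat\alpha) < \delta$. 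Apply (\ref{item:exMD3}) with this $F$, with the finite tuple consisting of all unions of pieces of $\mathcal{Q}$, and with a parameter $\epsilon'$ to be chosen small in terms of $\epsilon$, $|F|$ and $r$, obtaining a finite subgroup $H \leq G$ and a cocycle $\sigma' : \Gamma \times X \to H$ with $\int\lambda(\sigma(\gamma,x)^{-1}(C)\symd\sigma'(\gamma,x)^{-1}(C))\,d\mu < \epsilon'$ for every $\gamma \in F$ and every union $C$ of $\mathcal{Q}$-pieces.

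Now I would build the finite-to-one extension. Put $\mathcal{Q}' = \bigvee_{h \in H} h^{-1}\mathcal{Q}$; a direct check shows $H$ permutes the pieces of $\mathcal{Q}'$ setwise, so $\sigma'$ induces a cocycle $\bar\sigma' : \Gamma \times X \to \operatorname{Sym}(\mathcal{Q}')$, and $Y' := X \times \mathcal{Q}'$ carries the {\pmp} action $\gamma\cdot(x,l) = (\gamma x, \bar\sigma'(\gamma,x)(l))$ for the product of $\mu$ with the probability measure $\bar\lambda(\{l\}) = \lambda(Q'_l)$ (measure-preservation holds since each $\sigma'(\gamma,x) \in H$ preserves $\lambda$, hence its induced permutation of $\mathcal{Q}'$ preserves $\bar\lambda$). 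The coordinate projection $\phi' : Y' \to X$ is finite-to-one, the map $(x,u) \mapsto (x, \text{the } \mathcal{Q}'\text{-piece containing } u)$ is an equivariant factor map $X \times_{\sigma'}[0,1] \to Y'$, and $\hat\alpha$ (being $\mathcal{Q}'$-measurable in the $[0,1]$-variable) descends through it to a map $\alpha' : Y' \to p$. Hence $(\mu\times\bar\lambda)((\alpha')_F^{-1}(\pi) \cap (\phi')^{-1}(\beta^{-1}(j)))$ equals the corresponding quantity for $\hat\alpha$ computed inside $X \times_{\sigma'}[0,1]$. Replacing $\hat\alpha$ by $\alpha$ costs at most $|F|\delta$ (the $F$-translates of $\{\alpha \neq \hat\alpha\}$ have total measure $< |F|\delta$); replacing the $\sigma'$-action by the $\sigma$-action costs at most $\sum_{\gamma\in F}\sum_{C} \int_X \lambda(\sigma(\gamma,x)^{-1}(C)\symd\sigma'(\gamma,x)^{-1}(C))\,d\mu < |F|\,2^r\,\epsilon'$, where $C$ ranges over the (finitely many, $x$-dependent) unions of $\mathcal{Q}$-pieces arising as level sets of the $\hat\alpha(\gamma^{-1}x,\cdot)$; and passing from $X \times_\sigma[0,1]$ back to $Y$ is an isomorphism over $X$. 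So the two quantities differ by at most $|F|\delta + |F|\,2^r\,\epsilon'$, and choosing first $\delta$ and then (once $\mathcal{Q}$, hence $r$, is fixed) $\epsilon'$ small enough yields (\ref{eqn:weakMDcriterion}); thus $\Gamma \acts^a (X,\mu)$ is extension-MD.

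I expect the main obstacle to be exactly the passage in the last paragraph from the $[0,1]$-fiber skew-product $X \times_\sigma [0,1]$ to a \emph{genuinely} finite-to-one extension: the device is to use an $H$-invariant refinement $\mathcal{Q}'$ of the partition that discretizes $\alpha$, which converts $\sigma'$ into a finite-permutation cocycle and lets $\alpha$ factor through the finite extension, at the cost of book-keeping three separate approximation errors — discretizing $\alpha$, the $\sigma \to \sigma'$ replacement (which has to be bounded uniformly over the $x$-dependent unions of $\mathcal{Q}$-pieces that occur), and the factor-map identifications — and checking that each stays below $\epsilon$. A secondary point needing care is the invocation of the relative, non-ergodic form of Rohklin's skew-product theorem to represent $b$ as a skew-product over $X$.
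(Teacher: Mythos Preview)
Your proposal is correct and follows essentially the same route as the paper. The paper dismisses the equivalence of (\ref{item:exMD2})--(\ref{item:exMD4}) in one line (``immediate from the definitions''), while you spell it out via a concrete metric on $G$ and Lemma~\ref{lem:CocycleCorr}; for the ``moreover'' clause, both arguments reduce to a skew product $X\times_\sigma[0,1]$ via Rokhlin, discretize $\alpha$ in the fiber by a finite partition/algebra (your $\mathcal Q$, the paper's $\cal C$), approximate $\sigma$ by a cocycle $\sigma'$ into a finite subgroup $H$ using hypothesis (\ref{item:exMD3}), pass to the $H$-invariant refinement (your $\mathcal Q'=\bigvee_{h\in H}h^{-1}\mathcal Q$, the paper's $\cal C'$), and descend to the resulting finite-to-one extension --- with the same three-term error bookkeeping you describe.
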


\begin{proof}
    The equivalence of (\ref{item:exMD2}), (\ref{item:exMD3}), and (\ref{item:exMD4}) is immediate from the definitions. So it will be enough to show that (\ref{item:exMD3}) implies that $\Gamma \acts^a (X, \mu)$ is extension-MD. We will do this by checking the condition in Proposition \ref{weakMDcriterion}.

    Let $\Gamma \acts^b (Y, \nu)$ be a {\pmp} extension of $\Gamma \acts^a (X, \mu)$, say via the map $\phi : (Y, \nu) \rightarrow (X, \mu)$. By disintegrating $\nu$ with respect to $\phi$ we obtain a measurable map $x \in X \mapsto \nu_x \in \Prob(Y)$ satisfying $\nu_x(\phi^{-1}(x)) = 1$ for $\mu$-almost-every $x$ and $\int \nu_x \ d \mu = \nu$. Since, in verifying the criterion in Proposition \ref{weakMDcriterion}, we could let $\Gamma$ act trivially on $([0, 1], \lambda)$, where $\lambda$ is the Lebesgue measure, and lift any $\alpha : Y \to p$ to the direct product action $\Gamma \acts (Y \times [0,1], \nu \times \lambda)$, we see that without loss of generality we may assume that $\nu_x$ is non-atomic for $\mu$-almost-every $x \in X$. Then by the Rokhlin skew-product theorem we can assume that $(Y, \nu) = (X \times [0,1], \mu \times \lambda)$, that $\phi$ is the projection map to $X$, and that there is a measurable cocycle $\sigma : \Gamma \times X \rightarrow \Aut([0,1], \lambda)$ so that the action $b$ is the skew-product action given by the formula
	\[\gamma^b \cdot (x, r) = (\gamma^a \cdot x, \sigma(\gamma, x)(r)).\]

    Now consider a pair of measurable maps $\alpha : X \times [0,1] \rightarrow p$ and $\beta : X \rightarrow q$, a finite $F \subseteq \Gamma$ and an $\epsilon > 0$. Since $\Malg([0,1], \lambda)$ is complete, we can find a finite algebra $\cal C$ of Borel subsets of $[0,1]$ and a measurable function $\tilde \alpha : X \times [0,1] \rightarrow p$ satisfying $\{r \in [0,1] : \tilde \alpha(x,r) = i\} \in \cal C$ for every $x \in X$ and $(\mu \times \lambda)(\{(x,r) : \alpha(x,r) \neq \tilde \alpha(x,r)\}) < \epsilon / (2|F|)$. Notice that for every $\pi \in p^F$ and $j \in q$
    \begin{equation} \label{eqn:exMD1}
        |(\mu \times \lambda)(\alpha_F^{-1}(\pi) \cap (\beta \circ \phi)^{-1}(j)) - (\mu \times \lambda)(\tilde \alpha_F^{-1}(\pi) \cap (\beta \circ \phi)^{-1}(j)) < \frac{\epsilon}{2}
    \end{equation}
    Define $A_i^x = \{r \in [0,1] : \tilde \alpha(x,r) = i\} \in \cal C$ for each $x \in X$ and $i \in p$.

    Let $\sigma' : \Gamma \times X \rightarrow G$ be a measurable cocycle that takes values in a finite subgroup of $G$ and satisfies
    \[\sum_{\gamma \in F^{-1}} \sum_{C \in \cal C} \int \lambda \big(\sigma(\gamma,x)^{-1}(C) \symd \sigma'(\gamma, x)^{-1}(C) \big) \ d \mu < \frac{\epsilon}{2}.\]
    Let $b'$ be the skew-product action of $\Gamma$ on $(X \times [0,1], \mu \times \lambda)$ given by $\sigma'$, that is, $\gamma^{b'} \cdot (x, r) = (\gamma^a \cdot x, \sigma'(\gamma, x)(r))$, and regard $b'$ as an extension of $a$ via the projection map $\phi$. We will write $\tilde \alpha_{b(F)}$ and $\tilde \alpha_{b'(F)}$ in place of $\tilde \alpha_F$ in order to clarify the action being considered. Then for every $\pi \in p^F$ and $j \in q$ we have
    \begin{align*}
        & \big|(\mu \times \lambda)\big(\tilde \alpha_{b(F)}^{-1}(\pi) \cap (\beta \circ \phi)^{-1}(j)\big) - (\mu \times \lambda)\big(\tilde \alpha_{b'(F)}^{-1}(\pi) \cap (\beta \circ \phi)^{-1}(j)\big)\big|\\
        & \leq (\mu \times \lambda)\big((\tilde \alpha_{b(F)}^{-1}(\pi) \symd \tilde \alpha_{b'(F)}^{-1}(\pi)) \cap (\beta \circ \phi)^{-1}(j)\big)\\
        & = \int_{\beta^{-1}(j)} \lambda \left(\Bigg(\bigcap_{\gamma \in F^{-1}} \sigma(\gamma,x)^{-1}(A_{\pi(\gamma^{-1})}^{\gamma^a \cdot x}) \Bigg) \symd \Bigg(\bigcap_{\gamma \in F^{-1}} \sigma'(\gamma, x)^{-1}(A_{\pi(\gamma^{-1})}^{\gamma^a \cdot x}) \Bigg) \right) \ d \mu\\
        & \leq \sum_{\gamma \in F^{-1}} \int_X \lambda \Big( \sigma(\gamma,x)^{-1}(A_{\pi(\gamma^{-1})}^{\gamma^a \cdot x}) \symd \sigma'(\gamma,x)^{-1}(A_{\pi(\gamma^{-1})}^{\gamma^a \cdot x} \Big) \ d \mu < \frac{\epsilon}{2}.
    \end{align*}
    Consequently, for every $\pi \in p^F$ and $j \in q$ we have
        \[|(\mu \times \lambda)(\alpha_{b(F)}^{-1}(\pi) \cap (\beta \circ \phi)^{-1}(j)) - (\mu \times \lambda)(\tilde \alpha_{b'(F)}^{-1}(\pi) \cap (\beta \circ \phi)^{-1}(j)) < \epsilon.\]
    Finally, say $\sigma'$ takes values in the finite subgroup $H \leq G$, and let $\cal C'$ be the finite $H$-invariant algebra generated by $\cal C$. Let $(U, \rho)$ be a finite probability space and $\psi : ([0,1], \lambda) \rightarrow (U, \rho)$ a measure-preserving map such that the $\psi$-preimage of the powerset of $U$ is $\cal C'$. Since $H$ descends to the group of measure-preserving automorphisms of $(U, \rho)$, the skew-product action $b'$ descends to an intermediary extension $\Gamma \acts^c (X \times U, \mu \times \rho)$ of $a$, and the map $\tilde \alpha$ descends as well. Since $U$ is finite, this is a finite-to-one extension of $a$ that, together with the descended map $\tilde \alpha$, approximates the extension $b$ and the map $\alpha$ relative to the parameters $F$ and $\epsilon$, in accordance with Proposition \ref{weakMDcriterion}.
\end{proof}

We remark that we do not know if the converse of the previous lemma holds. If $\Gamma \acts^a (X, \mu)$ is extension-MD, then one can show that, for every measurable cocycle $\sigma : \Gamma \times X \rightarrow \Aut([0,1], \lambda)$, every finite $F \subseteq \Gamma$, every $\epsilon > 0$, every finite tuple $(C_i)_{i \in n}$ of Borel subsets of $[0,1]$, and every finite tuple $(A_k)_{k \in m}$ of Borel subsets of $X$, there is a finite subgroup $H \leq \Aut([0,1], \lambda)$, and a measurable cocycle $\sigma' : \Gamma \times X \rightarrow H$ such that for all $\gamma \in F$, $i,j \in n$ and $k \in m$
\[\left| \int_{A_k} \lambda(C_j \cap \sigma(\gamma, x)^{-1}(C_i)) \ d \mu - \int_{A_k} \lambda(C_j \cap \sigma'(\gamma, x)^{-1}(C_i)) \ d \mu \right| < \epsilon.\]
Compare this with (\ref{item:exMD3}) above.

\begin{lem}\label{freeweakMD}
    If $\bb F$ is a free group, then $\bb F$ has the extension-MD property.
\end{lem}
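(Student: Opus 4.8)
The plan is to verify the criterion of Lemma \ref{lem:exMD}, specifically condition (\ref{item:exMD2}): for a free {\pmp} action $\bb F \acts^a (X,\mu)$ and $G = \Aut([0,1],\lambda)$, the set $Z^1(a,G)_{fin}$ of cocycles valued in a finite subgroup of $G$ is dense in $Z^1(a,G)$. Since this is a property of \emph{all} free {\pmp} actions of $\bb F$, establishing it gives the extension-MD property by the ``moreover'' clause of that lemma. The key point exploited throughout is that a cocycle on a free group is determined by completely arbitrary choices of measurable functions on the generators: if $S$ is a free generating set for $\bb F$, then for any measurable maps $\tau_s : X \to G$, $s \in S$, there is a unique cocycle $\sigma \in Z^1(a,G)$ with $\sigma(s,\cdot) = \tau_s$ for all $s \in S$, and conversely every cocycle arises this way.

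First I would fix a cocycle $\sigma \in Z^1(a,G)$, a finite set $F \subseteq \bb F$, an $\epsilon > 0$, and a finite tuple $(C_i)_{i \in n}$ of Borel subsets of $[0,1]$; by enlarging $F$ to a symmetric set containing $e$ and closed under taking subwords of its elements (with respect to $S$), and using the cocycle identity, it suffices to control the behavior of the approximating cocycle $\sigma'$ on the generators $s \in S \cap (\text{subwords of } F)$, since agreement on generators (up to small error in the relevant seminorms) propagates to agreement on the finitely many words in $F$. So the problem reduces to: given finitely many measurable maps $\tau_s : X \to G$ ($s$ ranging over a finite subset $S_0 \subseteq S$), find finitely many measurable maps $\tau_s' : X \to H$ for some common finite subgroup $H \leq G$ with $\int \lambda(\tau_s(x)^{-1}(C_i) \symd \tau_s'(x)^{-1}(C_i))\, d\mu < \epsilon'$ for all $s \in S_0$ and $i \in n$, where $\epsilon'$ is chosen small enough (depending on $|F|$, $n$) to absorb the propagation. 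Then the $\tau_s'$ determine a cocycle $\sigma' \in Z^1(a,G)$ valued in the finite group $H$ (acting on $[0,1]$ through the finite quotient it induces on a finite partition), and a short computation with the cocycle identity over the words of $F$ gives the desired estimate.

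The remaining task is the generator-by-generator approximation, and this is where the only real work lies, though it is elementary. Given a single measurable map $\tau : X \to \Aut([0,1],\lambda)$ and finitely many Borel sets $C_1,\dots,C_n$, I would approximate as follows: partition $[0,1]$ into a fine finite partition $\mathcal{P}$ (into dyadic intervals, say) so that each $C_i$ is well-approximated by a $\mathcal{P}$-measurable set; then for each $x$, the map $\tau(x)$ can be approximated in the weak topology by a measure-preserving bijection of $[0,1]$ that permutes the atoms of a common refinement $\mathcal{Q}$ of $\mathcal{P}$ and $\tau(x)^{-1}\mathcal{P}$ up to small error --- i.e. by an element of the finite group $\operatorname{Sym}(\mathcal{Q})$ of permutations of the $\mathcal{Q}$-atoms (viewed inside $G$); measurable selection (the fact that $G$ is Polish and the set of such finite-group-valued approximations is analytic, or simply a direct construction via the standard dyadic approximation which is canonical and hence measurable in $x$) yields a measurable map $\tau' : X \to \operatorname{Sym}(\mathcal{Q}) \leq G$ with the required integral estimate. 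Taking $H$ to be a single finite group containing the finitely many $\operatorname{Sym}(\mathcal{Q}_s)$ arising from the finitely many generators --- e.g. the symmetric group on a common refinement of all the partitions involved --- completes the argument.

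The main obstacle I anticipate is not conceptual but bookkeeping: making the approximation $\tau \mapsto \tau'$ genuinely \emph{measurable} in the parameter $x$ while landing in a \emph{fixed} finite subgroup of $G$, and then checking that the errors on generators propagate controllably to the finitely many words in $F$ via the cocycle identity (which involves products of up to $|F|$-many terms, so $\epsilon'$ must be taken proportional to $\epsilon / |F|$, with the $\lambda$-error in each factor controlled uniformly). The measurability is handled cleanly by using the canonical dyadic approximation of a measure-preserving automorphism, which depends measurably on the automorphism in the weak topology; the propagation is a routine telescoping estimate using that each factor is measure-preserving. Freeness of $\bb F$ enters only through the statement that arbitrary measurable assignments on generators extend (uniquely) to cocycles, which is exactly what lets us perform the approximation one generator at a time without any consistency constraints.
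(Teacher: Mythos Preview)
Your proposal is correct and follows essentially the same route as the paper: reduce to approximating on a free generating set $S$ and then approximate each $\tau_s : X \to \Aut([0,1],\lambda)$ by a map into a common finite (dyadic-permutation) subgroup. The paper streamlines the argument in two ways. First, rather than verifying condition (\ref{item:exMD3}) and tracking propagation of errors from generators to words in $F$ via telescoping, it observes once and for all that the restriction-to-$S$ map from $Z^1(a,G)$ to the space of measurable maps $S \times X \to G$ is a \emph{homeomorphism} (for the convergence-in-measure topologies), so density on generators immediately gives density in $Z^1(a,G)$ --- this is condition (\ref{item:exMD2}) directly and eliminates all the $\epsilon/|F|$ bookkeeping you anticipate. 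Second, rather than building a finite subgroup $H$ depending on $\tau_s$ and the $C_i$'s, the paper fixes in advance the increasing sequence $H_n \leq G$ of dyadic-interval permutation groups (with $\bigcup_n H_n$ dense in $G$) and notes that $H_n$-valued measurable maps are therefore dense among all measurable maps $S \times X \to G$; this makes the measurable-selection concern disappear. Your approach works, but the paper's phrasing shows the argument is essentially two sentences long once the homeomorphism is noted.
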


\begin{proof}
    Suppose that $\bb F$ is freely generated by the set $S$, and let $\bb F \acts^a (X, \mu)$ be a free {\pmp} action. Every measurable cocycle $\sigma : \bb F \times X \rightarrow \Aut([0,1], \lambda)$ is uniquely determined by its restriction to $S \times X$, and conversely every measurable map from $S \times X$ to $\Aut([0,1], \lambda)$ uniquely determines a cocycle. Moreover, if we topologize the collection of measurable functions from $S \times X$ to $\Aut([0,1], \lambda)$ so that $f_n \rightarrow f$ if and only if $f_n(s, \cdot)$ converges to $f(s, \cdot)$ in measure for every $s \in S$, then this correspondence is a homeomorphism. Since $\Aut([0,1], \lambda)$ admits an increasing sequence of finite subgroups whose union is dense (that is, consider the subgroup $H_n$ of automorphisms that fix $1$ and permute the intervals $[\frac{i}{2^n},\frac{i+1}{2^n})$ via order-preserving isometries), it is immediate that the measurable maps from $S \times X$ to finite subgroups of $\Aut([0,1], \lambda)$ are dense in the space of all measurable maps from $S \times X$ to $\Aut([0,1], \lambda)$. Therefore condition (\ref{item:exMD2}) of Lemma \ref{lem:exMD} is satisfied.
\end{proof}

\subsection{The axiomatization}

In this section, we combine the ideas from the previous subsections to show that, for a certain class of groups, the model companion $T_\Gamma^*$ exists and has a concrete set of axioms that are ergodic-theoretic in nature. 

\begin{thm}\label{mainaxioms1}
Suppose that $\Gamma$ has the extension-MD property and that $T_{\Gamma,free}$ has the definable cocycle property.  Then $T_\Gamma^*$ exists.
\end{thm}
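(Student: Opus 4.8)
The plan is to verify the criterion recalled in Subsection \ref{modcompsubsection}: $T_\Gamma^*$ exists as soon as the class of e.c.\ actions of $\Gamma$ is closed under ultraproducts. So I would fix a family $(\cal M_{a_i})_{i\in I}$ of e.c.\ actions $\Gamma\acts^{a_i}(X_i,\mu_i)$ and an ultrafilter $\u$ on $I$, put $\Gamma\acts^a(X,\mu)=\prod_\u\Gamma\acts^{a_i}(X_i,\mu_i)$, and show $\cal M_a$ is e.c. Each $a_i$ is free by Lemma \ref{ecfree}, and since $T_{\Gamma,free}$ is axiomatizable the ultraproduct $a$ is free too; hence, $\Gamma$ having the extension-MD property, the action $a$ is extension-MD (this property, although stated for free actions on standard spaces, passes automatically to all free actions). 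By Lemma \ref{weakMDlemma} it then suffices to prove that $a$ is e.c.\ for finite-to-one extensions, and for that I would use the criterion of Theorem \ref{concretetheorem}: given $k,q\in\bb N$, a cocycle $\sigma:\Gamma\times X\to\operatorname{Sym}(k)$, a finite $F\subseteq\Gamma$, a map $\beta:X\to q$, and $\epsilon>0$, produce a map $\alpha:X\to k$ satisfying conditions (1)--(3) of that theorem.

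The heart of the argument is to disassemble $\sigma$ over the index set $I$. Since $\cal M_a=\prod_\u\cal M_{a_i}$ and each $a_i$ is free, the definable cocycle property for $T_{\Gamma,free}$, in the guise of condition (\ref{item:defcocycle3}) of Lemma \ref{lem:defcocycle}, gives $Z(\Cocy_{\operatorname{Sym}(k)}^{\cal M_a})=\prod_\u Z(\Cocy_{\operatorname{Sym}(k)}^{\cal M_{a_i}})$; thus I may write $\sigma=\sigma^\u$ for genuine cocycles $\sigma^i\in Z^1(a_i,\operatorname{Sym}(k))$, and similarly pick $\beta^i:X_i\to q$ with $\beta=[\beta^i]_\u$. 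Each $a_i$ is e.c., hence e.c.\ for finite-to-one extensions, so Theorem \ref{concretetheorem} applied to the standard action $a_i$ with the data $k,\sigma^i,F,\epsilon,q,\beta^i$ yields $\alpha^i:X_i\to k$ satisfying (1)--(3). I would then set $\alpha=[\alpha^i]_\u:X\to k$ and check, using that $k$ and $\operatorname{Sym}(k)$ are finite and the standard description of measure algebras of ultraproduct actions, that $\alpha^{-1}(j)=[(\alpha^i)^{-1}(j)]_\u$, that $\{x\in X:\alpha(f^a x)=\sigma(f,x)(\alpha(x))\text{ for all }f\in F\}$ agrees modulo null sets with the ultraproduct of the analogous sets for the $a_i$, and likewise for the finitely many sets witnessing $\epsilon$-independence with $\beta$; taking $\u$-limits of inequalities (1)--(3) for the $\alpha^i$ then delivers (1)--(3) for $\alpha$. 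By Theorem \ref{concretetheorem} the action $a$ is e.c.\ for finite-to-one extensions, hence e.c.\ by the previous paragraph, and since the family and ultrafilter were arbitrary, $T_\Gamma^*$ exists.

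The hard part here is conceptual rather than computational: the argument works only because a cocycle on the ultraproduct action can be realized as an ultraproduct of cocycles on the factors, which is exactly what the definable cocycle property for $T_{\Gamma,free}$ provides — without it there is no mechanism to push the finite-to-one information coming from Theorem \ref{concretetheorem} for each $a_i$ back up to $a$. The extension-MD hypothesis does the complementary job of reducing ``e.c.'' to ``e.c.\ for finite-to-one extensions'', that is, to the regime controlled by finite-group cocycles. What remains — the transfer of conditions (1)--(3) across the ultraproduct — is routine ultraproduct bookkeeping, the only mild care being that $a$ is not standard, so Theorem \ref{concretetheorem} must be invoked for it in its general form.
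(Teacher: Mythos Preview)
Your proof is correct and rests on the same ingredients as the paper's --- the definable cocycle property, the extension-MD property, Theorem~\ref{concretetheorem}, and Lemma~\ref{weakMDlemma} --- but the presentation is different. The paper does not verify the ultraproduct criterion directly; instead it writes down explicit $L_\Gamma$-sentences $\theta_{k,q,F}$ that encode conditions (1)--(3) of Theorem~\ref{concretetheorem}, with the sup over cocycles made first-order precisely by the definable cocycle property for $T_{\Gamma,free}$, and then observes that (by Theorem~\ref{concretetheorem} and Lemma~\ref{weakMDlemma}) these sentences together with $T_{\Gamma,free}$ axiomatize the e.c.\ actions. The two arguments are really dual: the paper shows the relevant conditions are first-order (hence form axioms), while you show they are preserved under ultraproducts --- and the bridge in both directions is exactly condition~(\ref{item:defcocycle3})/(\ref{item:defcocycle4}) of Lemma~\ref{lem:defcocycle}. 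What the paper's route buys is the concrete list of axioms promised by the section title; what yours buys is a slightly more direct path to mere existence of $T_\Gamma^*$, at the cost of not exhibiting the axioms.
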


\begin{proof}
For each $k,q\geq 1$ and each finite $F\subseteq \Gamma$, let $\theta_{k,q,F}$ be the $T_{\Gamma,free}$-sentence $$\sup_{A\in \operatorname{Part}_q}\sup_{B\in Z(\Cocy_{\operatorname{Sym}(k)})}\inf_{C\in \operatorname{Part}_k}\max(\varphi_1(C),\varphi_2(A,C),\varphi_3(B,C)),$$ where:
\begin{enumerate}
    \item $\varphi_1(C)$ is the formula
    $$\max_{i=1,\ldots,k}\left|\mu(C_i)-\frac{1}{k}\right|,$$
    \item $\varphi_2(A,C)$ is the formula
    $$\max_{i=1,\ldots,q}\max_{j=1,\ldots,k}|\mu(A_i\cap C_j)-\mu(A_i)\mu(C_j)|,$$
    \item $\varphi_3(B,C)$ is the formula
    $$\max_{\gamma\in F}\max_{i=1,\ldots,k}\max_{\rho\in\operatorname{Sym}(k)}d\left(\pi_{\gamma,\rho}(B) \cap C_i,\pi_{\gamma,\rho}(B)\cap \gamma^{-1}C_{\rho(i)}\right)$$
\end{enumerate}
Here, $\operatorname{Part}_k$ denotes the $T_\Gamma$-definable set of $k$-tuples that form a partition of the measure space.  Since $T_{\Gamma,free}$ has the definable cocycle property, the above sentences are indeed (equivalent modulo $T_{\Gamma,free}$ to) $T_{\Gamma,free}$-sentences.  

Set $T_\Gamma^*$ to be $T_{\Gamma,free}$ together with all of the sentences $\theta_{k,q,F}$.  Theorem \ref{concretetheorem}, together with Lemma \ref{weakMDlemma}, shows that $T_{\Gamma}^*$ does indeed axiomatize the class of e.c.\ models of $T_\Gamma$.
\end{proof}

Recalling Lemmas \ref{freedefcocycle} and \ref{freeweakMD}, we see that the concrete axioms in the previous theorem yield an alternative axiomatization for $T_{\bb F}^*$ for free groups $\bb F$.

The exact same proof as that of Theorem \ref{mainaxioms1} yields the following theorem:

\begin{thm}\label{mainaxioms2}
If $T_{\Gamma,max}$ has the definable cocycle property and every action of $\Gamma$ that is maximal with respect to weak containment is extension-MD, then $T_\Gamma^*$ exists.  Moreover, the axioms for $T_{\Gamma}^*$ are given by the axioms for $T_{\Gamma,max}$ together with the sentences $\theta_{k,q,F}$ from the proof of Theorem \ref{mainaxioms1}.
\end{thm}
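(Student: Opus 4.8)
The plan is to mimic the proof of Theorem \ref{mainaxioms1} essentially word for word, replacing $T_{\Gamma,free}$ by $T_{\Gamma,max}$ throughout. Concretely, I would set $T_\Gamma^* := T_{\Gamma,max} \cup \{\theta_{k,q,F} : k,q \geq 1,\ F \subseteq \Gamma \text{ finite}\}$, where the $\theta_{k,q,F}$ are exactly the sentences written down in the proof of Theorem \ref{mainaxioms1}, and then prove that $\cal M_a \models T_\Gamma^*$ if and only if $a$ is e.c.

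First I would check that each $\theta_{k,q,F}$ is a genuine $T_{\Gamma,max}$-sentence. The quantifiers $\sup_{A\in\operatorname{Part}_q}$ and $\inf_{C\in\operatorname{Part}_k}$ cause no trouble, since $\operatorname{Part}_m$ is a $T_\Gamma$-definable set. The quantifier $\sup_{B\in Z(\Cocy_{\operatorname{Sym}(k)})}$ is the delicate point: since $T_{\Gamma,max}$ is assumed to have the definable cocycle property, $\Cocy_{\operatorname{Sym}(k)}$ is a $T_{\Gamma,max}$-definable set, so by condition (\ref{item:defcocycle4}) of Lemma \ref{lem:defcocycle} the expression $\sup_{B\in Z(\Cocy_{\operatorname{Sym}(k)})}(\cdots)$ is (equivalent modulo $T_{\Gamma,max}$ to) a $T_{\Gamma,max}$-formula. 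Hence $T_\Gamma^*$ really is a set of $L_\Gamma$-sentences.

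Next I would prove the equivalence. Suppose $\cal M_a \models T_\Gamma^*$. Since $\cal M_a \models T_{\Gamma,max}$, the action $a$ is maximal with respect to weak containment (by the lemma identifying models of $T_{\Gamma,max}$ with locally universal actions), so by hypothesis $a$ is extension-MD. On the other hand, the vanishing of all the $\theta_{k,q,F}$ unwinds — clause $\varphi_1$ corresponding to condition (1), clause $\varphi_2$ to (2), clause $\varphi_3$ to (3), and $\epsilon>0$ being absorbed into the $\inf_C$ — to exactly the criterion of Theorem \ref{concretetheorem}, so $a$ is e.c.\ for finite-to-one extensions. By Lemma \ref{weakMDlemma}, an extension-MD action that is e.c.\ for finite-to-one extensions is e.c., so $a$ is e.c. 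Conversely, if $a$ is e.c., then $a$ is locally universal by Lemma \ref{lem:locuniv}, so $\cal M_a \models T_{\Gamma,max}$; and $a$ is in particular e.c.\ for finite-to-one extensions, so Theorem \ref{concretetheorem} gives that each $\theta_{k,q,F}$ evaluates to $0$ in $\cal M_a$ (here using the perturbation remark at the end of that theorem's proof to pass between the $\epsilon$-approximate uniformity and independence produced by $\inf_C$ and the exact conditions). Thus $\cal M_a \models T_\Gamma^*$, and the ``moreover'' clause is precisely the description of $T_\Gamma^*$ just produced.

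The main obstacle is, just as in Theorem \ref{mainaxioms1}, the syntactic legitimacy of the sentences $\theta_{k,q,F}$, i.e.\ knowing that one may quantify over the cocycle zeroset $Z(\Cocy_{\operatorname{Sym}(k)})$ inside a first-order formula; once that is granted, everything else is bookkeeping — matching the three clauses of $\theta_{k,q,F}$ to the three conditions of Theorem \ref{concretetheorem}, recalling that the models of $T_{\Gamma,max}$ are the locally universal actions, and invoking Lemma \ref{weakMDlemma}. That legitimacy is exactly what the definable cocycle property for $T_{\Gamma,max}$ supplies through Lemma \ref{lem:defcocycle}, which is why that hypothesis appears in the statement. (One should also keep in mind that models of $T_{\Gamma,max}$ may live on non-standard spaces, so the extended, Proposition \ref{weakMDcriterion}-style notion of extension-MD is the one in play, but this is already the situation in the proof of Theorem \ref{mainaxioms1}.)
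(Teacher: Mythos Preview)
Your proposal is correct and is exactly the paper's approach: the paper simply states that ``the exact same proof as that of Theorem \ref{mainaxioms1} yields'' Theorem \ref{mainaxioms2}, and your write-up is a faithful (indeed more detailed) execution of that substitution of $T_{\Gamma,max}$ for $T_{\Gamma,free}$.
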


In the remaining three subsections, we will show that if $\Gamma$ is strongly treeable, then $\Gamma$ has the extension-MD property and $T_{\Gamma, free}$ has the definable cocycle property. Thus Theorem \ref{mainaxioms1} yields a concrete axiomatization for $T_\Gamma^*$ when $\Gamma$ is strongly treeable. We will additionally show that when $\Gamma$ is only assumed to be treeable, $T_{\Gamma, max}$ has the definable cocycle property and all actions of $\Gamma$ that are maximal with respect to weak containment are extension-MD. As a result, Theorem \ref{mainaxioms2} provides an axiomatization for $T_\Gamma^*$ when $\Gamma$ is treeable.

\subsection{Trees and group cohomology}\label{subs:groupcoh}

We now turn our attention towards the goal of (partially) generalizing Lemmas \ref{freedefcocycle} and \ref{freeweakMD} to (strongly) treeable groups. The proofs of these prior results for a free group $\bb F$ were based on the simple observation that cocycles $\sigma : \bb F \times X \rightarrow G$ are in one-to-one correspondence with measurable maps $S \times X \rightarrow G$, where $S$ is a free generating set for $\bb F$. In fact, the map that restricts cochains $\theta : \bb F \times X \rightarrow G$ to the domain $S \times X$, when combined with the one-to-one correspondence just mentioned, provides a retraction from the space of cochains to the space of cocycles. (Recall that if $Y$ is a topological space and $A \subseteq Y$, then a map $r : Y \rightarrow A$ is called a retraction if it is continuous and restricts to the identity map on $A$.)

The lemma below generalizes this retraction phenomena to other groups by replacing the role of the free generating set $S$ with the set of edges of a tree $T \in \cal T(\Gamma)$.

\begin{lem} \label{lem:retract}
	Let $\Gamma$ be a countable group and $G$ a Polish group. There is a map assigning each tree $T \in \cal T(\Gamma)$ a retraction $r_T : C^1(\Gamma, G^\Gamma) \rightarrow Z^1(\Gamma, G^\Gamma)$ such that:
	\begin{enumerate}
		\item \label{item:retract1} $r_{\gamma^d \cdot T}(\gamma^t \cdot c) = \gamma^t \cdot r_T(c)$ for all $T \in \cal T(\Gamma)$, $c \in C^1(\Gamma, G^\Gamma)$, and $\gamma \in \Gamma$; and
		\item \label{item:retract2} for every $\alpha, \beta \in \Gamma$ there is a clopen partition $\cal U$ of $\cal T(\Gamma)$ so that $r_T(c)(\beta)(\alpha) = r_{T'}(c)(\beta)(\alpha)$ for all $c \in C^1(\Gamma, G^\Gamma)$ when $T$ and $T'$ belong to a common $V \in \cal V$.
	\end{enumerate}
\end{lem}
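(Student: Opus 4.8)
The plan is to construct $r_T$ by a ``path-reading'' procedure that mimics the free-group case: in a tree $T$, every vertex $\gamma \in \Gamma$ is joined to the root $e$ by a unique geodesic path, and a cocycle is determined by its values on edges, so we build a cocycle out of an arbitrary cochain by only remembering the cochain's values along tree edges and reconstructing everything else via the cocycle identity. Concretely, recall from Lemma \ref{lem:CocycleCorr} that a cochain $\theta : \Gamma \times X \to G$ corresponds to an equivariant map $c : X \to C^1(\Gamma, G^\Gamma)$, and unwinding the formulas the ``value of $c$ on an edge'' is an expression of the form $\theta(\delta^{-1}\gamma, (\gamma^{-1})^a\cdot x)$ for an edge $(\gamma,\delta)$ of $T$; so it suffices to define $r_T$ purely combinatorially at the level of $C^1(\Gamma, G^\Gamma)$ and then observe it is equivariant. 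For $c \in C^1(\Gamma, G^\Gamma)$, I would define $r_T(c)$ as the unique cocycle whose ``increment'' across each edge $(\gamma,\delta)$ of $T$ agrees with that of $c$ — more precisely, writing out the path $e = \gamma_0, \gamma_1, \ldots, \gamma_n = \gamma$ from $e$ to $\gamma$ in $T \cup \bar T$, set the value of $r_T(c)$ on $(\gamma, \cdot)$ to be the appropriate ordered product of the edge-increments of $c$ along this path (with an inverse whenever the path traverses an edge against its direction in $T$). One checks directly from this definition that $\partial(r_T(c)) \equiv e_G$, i.e.\ $r_T(c) \in Z^1(\Gamma, G^\Gamma)$, since telescoping products over a path automatically satisfy the cocycle identity; and that if $c$ is already a cocycle then the product over the path collapses to the single value $c$ would assign, so $r_T$ restricts to the identity on $Z^1(\Gamma, G^\Gamma)$. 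Continuity of $r_T$ is clear since each coordinate $r_T(c)(\beta)(\alpha)$ is a finite product of finitely many coordinates of $c$, the number and identity of which depend only on $T$ (and on $\alpha,\beta$), not on $c$.

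Next I would verify the equivariance property (\ref{item:retract1}). The key point is that the diagonal action $d$ on $\cal T(\Gamma)$ and the action $t$ on $C^1(\Gamma, G^\Gamma)$ are compatible with the path structure: the unique $T$-geodesic from $e$ to $\gamma^{-1}\delta$ maps, under the natural identifications, to the unique $\gamma^d\cdot T$-geodesic relevant to computing $r_{\gamma^d\cdot T}$, and the twist by $\gamma^t$ on the cochain side exactly matches. This should be a routine but slightly fiddly bookkeeping computation once the definition of $r_T$ is written out carefully; the cleanest route is probably to phrase $r_T$ abstractly (independent of coordinates) as ``the cocycle extending the restriction of $c$ to the $T$-spanning-tree edges'' and then note that $\gamma^t$ intertwines restriction-to-$T$-edges with restriction-to-$(\gamma^d\cdot T)$-edges.

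Finally, for the locality property (\ref{item:retract2}): fix $\alpha, \beta \in \Gamma$. The value $r_T(c)(\beta)(\alpha)$ depends only on the (finite) list of edges of $T$ lying on the geodesic relevant to the pair $(\alpha,\beta)$, i.e.\ on the geodesic in $T \cup \bar T$ between two specific vertices of $\Gamma$ determined by $\alpha$ and $\beta$. The set of possible such finite geodesics is countable, and for each fixed finite path $P$ the set $\{T \in \cal T(\Gamma) : \text{the relevant geodesic in } T \text{ equals } P\}$ is clopen in $\cal T(\Gamma)$ (it is a condition on finitely many coordinates of the indicator function of $T$, once one checks acyclicity and connectedness force the geodesic to be $P$ given that all edges of $P$ are present — here one uses that a tree has a \emph{unique} path between any two vertices). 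These clopen sets partition $\cal T(\Gamma)$, and on each piece $r_T(c)(\beta)(\alpha)$ is the \emph{same} fixed word in the coordinates of $c$, which is exactly the assertion. I expect the main obstacle to be purely notational: correctly tracking the direction reversals along the geodesic (the $\bar T$ versus $T$ edges, which introduce group inverses) and confirming that these sign choices are consistent with both the cocycle identity and with the equivariance, since the identifications in (\ref{eqn:CocycleCorr}) involve inverses in several places. Getting those conventions aligned is the one genuinely error-prone part; the topological and algebraic content is otherwise straightforward.
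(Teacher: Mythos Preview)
Your proposal is correct and matches the paper's approach almost exactly: the paper also defines $r_T(c)$ by multiplying edge-increments of $c$ along the unique geodesic in $T \cup \bar T$, checks the cocycle identity by path concatenation, verifies the retraction property by telescoping, and obtains (\ref{item:retract2}) from the fact that the formula for $r_T(c)(\beta)(\alpha)$ depends only on the finite (directed) geodesic in $T$. The only cosmetic difference is that the paper takes the geodesic directly from $\alpha$ to $\alpha\beta$ (rather than rooting at $e$ and then quotienting), which makes the equivariance computation in (\ref{item:retract1}) a one-line change of variables; your version routes through $e$, which yields the same formula after cancellation but makes the equivariance check slightly less transparent since $e$ is not preserved by the $d$-action.
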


\begin{proof}
	For $(T, c) \in \cal T(\Gamma) \times C^1(\Gamma, G^\Gamma)$ define a function $g_T^c : T \cup \bar T \rightarrow G$ by setting, for each $(v, u) \in T$,
	\[g_T^c(v, u) = c(u^{-1} v)(u)\]
	and
	\[g_T^c(u,v) = c(u^{-1} v)(u)^{-1}.\]
	Clearly the map $c \mapsto g_T^c(v, u)$ is continuous for fixed $T$. Also note that for any $\gamma \in \Gamma$ we have that $(v, u) \in T$ if and only if $(\gamma v, \gamma u) \in \gamma^d \cdot T$; moreover when this occurs we have
	\[g_{\gamma^d \cdot T}^{\gamma^t \cdot c}(\gamma v, \gamma u) = (\gamma^t \cdot c)(u^{-1} v)(\gamma u) = c(u^{-1} v)(u) = g_T^c(v, u).\]
	Similarly $g_{\gamma^d \cdot T}^{\gamma^t \cdot c}(\gamma v, \gamma u) = g_T^c(v, u)$ when $(v, u) \in \bar T$.
	
	For $(T,c) \in \cal T(\Gamma) \times C^1(\Gamma, G^\Gamma)$ we define $r_T(c) \in Z^1(\Gamma, G^\Gamma)$ as follows. For each $\alpha, \beta \in \Gamma$, let $v_0, \ldots, v_n$ be the sequence of vertices in the geodesic path from $\alpha$ to $\alpha \beta$ in $T \cup \bar T$ (meaning $v_0 = \alpha$, $(v_{i+1}, v_i) \in T \cup \bar T$ for every $i$, and $v_n = \alpha \beta$) and set
	\begin{equation} \label{eqn:treecoh}
		r_T(c)(\beta)(\alpha) = g_T^c(v_n, v_{n-1}) g_T^c(v_{n-1}, v_{n-2}) \cdots g_T^c(v_1, v_0).
	\end{equation}
	Based on the previous paragraph, it is immediate that $c \mapsto r_T(c)$ is continuous for fixed $T$. Additionally, $r_T(c)(\beta)(\alpha)$ only depends on $T$ in so far as $T$ determines the path $v_0, \ldots, v_n$; hence (\ref{item:retract2}) holds. Note that since $T$ is a tree and $g_T^c(u,v) = g_T^c(v,u)^{-1}$ for all $(v,u) \in T \cup \bar T$, in the above formula one can use any path in $T \cup \bar T$ from $\alpha$ to $\alpha \beta$, not necessarily the geodesic path. Consequently, if $u_0 = \gamma, \ldots, u_\ell = \gamma \alpha$ is a path from $\gamma$ to $\gamma \alpha$ in $T \cup \bar T$ and $v_0 = \gamma \alpha, \ldots, v_m = \gamma \alpha \beta$ is a path from $\gamma \alpha$ to $\gamma \alpha \beta$ in $T \cup \bar T$, then $u_0, \ldots, u_\ell, v_1, \ldots, v_m$ is a path in $T \cup \bar T$ from $\gamma$ to $\gamma \alpha \beta$ and it from equation (\ref{eqn:treecoh}) that, for all $\gamma\in \Gamma$:
	\[r_T(c)(\beta)(\gamma \alpha) r_T(c)(\alpha)(\gamma) = r_T(c)(\alpha \beta)(\gamma).\]
	Thus
	\[r_T(c)(\beta)^\alpha r_T(c)(\alpha) = r_T(c)(\alpha \beta)\]
	and $r_T(c) \in Z^1(\Gamma, G^\Gamma)$ as required.
	
	To see that $r_T$ is a retraction, consider any $z \in Z^1(\Gamma, G^\Gamma)$. For all $\alpha, \beta, \gamma \in \Gamma$ we have $\partial z(\alpha, \beta)(\gamma) = e_G$, meaning
	\[z(\beta)(\gamma \alpha) z(\alpha)(\gamma) = z(\alpha \beta)(\gamma).\]
	By making the substitutions $\alpha = u$, $\beta = u^{-1} v$, and $\gamma = e$ we obtain
	\[z(u^{-1} v)(u)  z(u)(e) = z(v)(e).\]
	Thus when $(v,u) \in T$ we have
	\[g_T^z(v,u) = z(u^{-1} v)(u) = z(v)(e) z(u)(e)^{-1}\]
	and
	\[g_T^z(u,v) = g_T^z(v,u)^{-1} = z(u)(e) z(v)(e)^{-1}.\]
	It now immediately follows from the formula (\ref{eqn:treecoh}) that for all $\alpha, \beta \in \Gamma$
	\[r_T(z)(\beta)(\alpha) = z(\alpha \beta)(e) z(\alpha)(e)^{-1} = z(\beta)(\alpha),\]
	and thus $r_T(z) = z$.
	
	Lastly, when $v_0 = \gamma^{-1} \alpha, \ldots, v_n = \gamma^{-1} \alpha \beta$ is a path in $T \cup \bar T$ from $\gamma^{-1} \alpha$ to $\gamma^{-1} \alpha \beta$, we have $\gamma v_0, \ldots, \gamma v_n$ is a path in $\gamma^d \cdot (T \cup \bar T)$ from $\alpha$ to $\alpha \beta$. Therefore
	\begin{align*}
		r_{\gamma^d \cdot T}(\gamma^t \cdot c)(\beta)(\alpha) & = g_{\gamma^d \cdot T}^{\gamma^t \cdot c}(\gamma v_n, \gamma v_{n-1}) \cdots g_{\gamma^d \cdot T}^{\gamma^t \cdot c}(\gamma v_1, \gamma v_0)\\
		& = g_T^c(v_n, v_{n-1}) \cdots g_T^c(v_1, v_0)\\
		& = r_T(c)(\beta)(\gamma^{-1} \alpha)\\
		& = (\gamma^t \cdot r_T(c))(\beta)(\alpha),
	\end{align*}
	and $r_{\gamma^d \cdot T}(\gamma^t \cdot c) = \gamma^t \cdot r_T(c)$ as required by (\ref{item:retract1}).
\end{proof}

Note that if $H$ is a subgroup of $G$ then $C^1(\Gamma, H^\Gamma)$ and $Z^1(\Gamma, H^\Gamma)$ are subsets of $C^1(\Gamma, G^\Gamma)$ and $Z^1(\Gamma, G^\Gamma)$, respectively; moreover if $r_T$ is the map defined in the previous lemma with respect to $G$, then $r_T$ maps the subset $C^1(\Gamma, H^\Gamma)$ to $Z^1(\Gamma, H^\Gamma)$.

When $\Gamma \acts^a (X, \mu)$ is a free {\pmp} treeable action there is a measurable equivariant map $x \in X \mapsto T(x) \in \cal T(\Gamma)$ from $\Gamma \acts^a (X, \mu)$ to $\Gamma \acts^d (\cal T(\Gamma), \tau)$ for some invariant Borel probability measure $\tau = T_* \mu$. In this case the above lemma provides a technique for turning cochains into cocycles (recall Lemma \ref{lem:CocycleCorr}): given any measurable equivariant map $\theta : X \rightarrow C^1(\Gamma, G^\Gamma)$ we obtain a measurable equivariant map $\phi : X \rightarrow Z^1(\Gamma, G^\Gamma)$ via the formula $\phi(x) = r_{T(x)}(\theta(x))$. Although this does closely resemble the technique we used for verifying the definable cocycle and extension-MD properties for free groups, this new variant of the technique has a significant limitation in that the measure $\tau \in \Prob_\Gamma(\cal T(\Gamma))$ may vary as one considers different treeable actions of $\Gamma$. The constraint of factoring onto $\Gamma \acts^d (\cal T(\Gamma), \tau)$ for some particular $\tau$ poses an obstruction that, for the moment, prevents us from proving that $T_{\Gamma,free}$ has the definable cocyle property when $\Gamma$ is strongly treeable.

\subsection{Weak containment of treeings}\label{subs:treeable}

The purpose of this section is to overcome the limitation described at the end of the previous subsection. We will do this by relaxing the requirement of factoring onto $\Gamma \acts^d (\cal T(\Gamma), \tau)$ to merely weakly containing the action $\Gamma \acts^d (\cal T(\Gamma), \tau)$. We will show that in this case the mapping $T \mapsto r_T$ from Lemma \ref{lem:retract} is still usable in an approximate form.

We begin with a technical lemma.

\begin{lem} \label{lem:reltop}
	Let $X$ and $Z$ be compact Hausdorff spaces, let $\pi_X : X \times Z \rightarrow X$ be the projection map, let $\mu$ be a Borel probability measure on $X$, and let $Y \subseteq Z$ be Borel. Set
	\[\Omega = \{\omega \in \Prob(X \times Z) : \omega(X \times Y) = 1, \ (\pi_X)_* \omega = \mu\},\]
	and equip $\Omega$ with the relative topology inherited from the weak$^*$ topology on $\Prob(X \times Z)$. If $f : X \times Y \rightarrow [0,1]$ is a measurable function having the property that for every $\epsilon > 0$ there is a relatively clopen partition $\cal V$ of $Y$ satisfying
	\[\sup_{x \in X} |f(x,y) - f(x,y')| < \epsilon\]
	whenever $y$ and $y'$ belong to a common $V \in \cal V$, then the map $\omega \in \Omega \mapsto \int f \ d \omega$ is continuous.
\end{lem}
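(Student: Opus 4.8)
The plan is to prove continuity of $\omega\mapsto\int f\,d\omega$ at an arbitrary point $\omega_0\in\Omega$, the obstruction being that $f$ is only defined on $X\times Y$ with $Y$ merely Borel, so it need not extend continuously to $X\times Z$ and cannot be tested directly against $C(X\times Z)$. The idea is to approximate $f$, \emph{uniformly over all of} $\Omega$, by a finite sum $\sum_{i\le N}\tilde g_i(x)\mathbf 1_{V_i}(z)$ with each $\tilde g_i\in C(X)$ and each $V_i$ relatively clopen in $Y$, and then to handle such a summand by a semicontinuity sandwich. Two features of the set-up are exploited: (i) every $\omega\in\Omega$ has the \emph{same} marginal $\mu$ on $X$, so a Borel function of $x$ may be replaced by a nearby continuous one in $L^1(\mu)$ with no loss of uniformity; and (ii) a relatively clopen $V\subseteq Y$ is squeezed between its closure $\overline V$ in $Z$ (a closed set) and an open set $W\subseteq Z$, with $\overline V\cap Y=V=W\cap Y$, so on the $\omega$-conull set $X\times Y$ the indicators $\mathbf 1_{X\times\overline V}$ and $\mathbf 1_{X\times W}$ both agree with $\mathbf 1_{X\times V}$.

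Concretely, I would fix $\omega_0\in\Omega$ and $\varepsilon>0$ and take a relatively clopen partition $\mathcal V=\{V_i\}$ of $Y$ (countable, which is automatic in the metrizable setting relevant in the applications) on which the oscillation of $f$ in the $y$-variable is $<\varepsilon$; discard empty pieces and pick $y_i\in V_i$. Choose $N$ with $\sum_{i>N}\omega_0(X\times V_i)<\varepsilon$ and note that the tail $V_{>N}:=\bigcup_{i>N}V_i$ is again relatively clopen. Put $g_i(x):=f(x,y_i)$, which is a Borel map $X\to[0,1]$ (it is $f$ precomposed with the continuous map $x\mapsto(x,y_i)$), and using that $\mu$ is Radon (automatic when $X$ is metrizable), hence $C(X)$ is dense in $L^1(X,\mu)$, choose continuous $\tilde g_i\colon X\to[0,1]$ with $\int_X|g_i-\tilde g_i|\,d\mu<\varepsilon/N$ for $i\le N$. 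For each $i$ let $\overline{V_i}$ be the closure of $V_i$ in $Z$ and fix an open $W_i\subseteq Z$ with $W_i\cap Y=V_i$; relative closedness of $V_i$ gives $\overline{V_i}\cap Y=V_i$ too. Since $\omega(X\times Y)=1$ for $\omega\in\Omega$, for such $\omega$ and any bounded Borel $h$ on $X\times Z$ we get $\int_{X\times V_i}h\,d\omega=\int_{X\times\overline{V_i}}h\,d\omega=\int_{X\times W_i}h\,d\omega$, and likewise with $V_i$ replaced by $V_{>N}$.

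The heart of the matter is the claim that $G_i(\omega):=\int_{X\times\overline{V_i}}\tilde g_i(x)\,d\omega$ is continuous on $\Omega$: as $\tilde g_i\ge 0$ is continuous and $\overline{V_i}$ is closed, $(x,z)\mapsto\tilde g_i(x)\mathbf 1_{\overline{V_i}}(z)$ is bounded and upper semicontinuous on the compact space $X\times Z$, so $G_i$ is upper semicontinuous on $\Prob(X\times Z)$; running the same argument with the open set $W_i$ shows $\omega\mapsto\int_{X\times W_i}\tilde g_i\,d\omega$ is lower semicontinuous on $\Prob(X\times Z)$; and by the identities of the previous paragraph these two functions agree on $\Omega$, so $G_i|_\Omega$ is continuous. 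Granting this, one estimates on $\Omega$, using $0\le f\le 1$, the oscillation bound, the fixed marginal $(\pi_X)_*\omega=\mu$, and $\omega(X\times V_{>N})\le 1$,
\[
\Bigl|\int f\,d\omega-\sum_{i\le N}G_i(\omega)\Bigr|\;\le\;\omega(X\times V_{>N})+\varepsilon\sum_{i\le N}\omega(X\times V_i)+\sum_{i\le N}\int_X|g_i-\tilde g_i|\,d\mu\;<\;\omega(X\times V_{>N})+2\varepsilon .
\]
Since $\omega\mapsto\omega(X\times V_{>N})=\omega(X\times\overline{V_{>N}})$ is upper semicontinuous and takes value $<\varepsilon$ at $\omega_0$, it stays $<\varepsilon$ on a neighbourhood $U_0$ of $\omega_0$ in $\Omega$; shrinking $U_0$ using continuity of the finite sum $\sum_{i\le N}G_i$ then yields $\bigl|\int f\,d\omega-\int f\,d\omega_0\bigr|<7\varepsilon$ near $\omega_0$, and letting $\varepsilon\to 0$ and $\omega_0$ vary completes the proof. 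The step I expect to demand the most care is the semicontinuity sandwich that makes each $G_i$ continuous — specifically, keeping the replacement of $g_i$ by $\tilde g_i$ uniform over $\Omega$ (this is exactly where the common marginal $\mu$ enters) and correctly reconciling $\overline{V_i}$, $V_i$ and $W_i$ off of $Y$; the countable tail $V_{>N}$ is only a minor bookkeeping point.
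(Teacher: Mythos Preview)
Your argument is correct and has the same architecture as the paper's proof: fix $\omega_0$, pass to finitely many pieces of the partition, replace each $f(\cdot,y_i)$ by a continuous $\tilde g_i$ using that all $\omega\in\Omega$ share the $X$-marginal $\mu$, and control the remaining tail. The one real difference is how continuity of the approximating integral is obtained. The paper chooses pairwise disjoint \emph{clopen} subsets $U_i\subseteq Z$ with $U_i\cap Y=V_i$ (and a clopen $W\subseteq Z$ for the tail), so that $g(x,z)=\sum_i g_{V_i}(x)\,1_{U_i}(z)$ is genuinely continuous on $X\times Z$ and $\omega\mapsto\int g\,d\omega$ and $\omega\mapsto\omega(X\times W)$ are weak$^*$-continuous directly. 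You instead sandwich each $V_i$ between its closure $\overline{V_i}$ and an open $W_i$, obtaining continuity of each $G_i$ on $\Omega$ from the matching u.s.c./l.s.c.\ pair. Your route has the advantage of not relying on $Z$ having enough clopen sets: the paper's step ``pick clopen $U_i\subseteq Z$ with $U_i\cap Y=V_i$'' is not justified by the hypotheses of the lemma as stated (e.g.\ $Z=[0,1]$), although it is valid in every application in the paper since $Z$ is always a zero-dimensional space there. Two small remarks: your parenthetical about countability of $\cal V$ is unnecessary, since at most countably many pieces carry positive $\omega_0$-mass and the complement of any finitely many relatively clopen pieces is again relatively clopen; and the density of $C(X)$ in $L^1(\mu)$ that you hedge with a metrizability comment is invoked in exactly the same way in the paper's proof.
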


\begin{proof}
	Fix $\epsilon > 0$ and let $\cal V$ be as described. Also fix a point $\omega_0 \in \Omega$. Choose a finite subcollection $\cal V' \subseteq \cal V$ so that $\omega_0(Y \setminus \bigcup \cal V') < \epsilon$, and choose a clopen set $W \subseteq Z$ with $W \cap Y = Y \setminus \bigcup \cal V'$. For each $V \in \cal V'$ pick a point $y_V \in V$ and choose a continuous function $g_V : X \rightarrow [0,1]$ satisfying
	\[\int |g_V(x) - f(x,y_V)| \ d \mu < \frac{\epsilon}{|\cal V'|}.\]
	Pick a collection $\cal U'$ of pairwise disjoint clopen subsets of $Z$ having nonempty intersection with $Y$ and satisfying $\{U \cap Y : U \in \cal U'\} = \cal V'$, and define a continuous function $g : X \times Z \rightarrow [0,1]$ by
	\[g(x,z) = \sum_{U \in \cal U'} g_{U \cap Y}(x) 1_U(z).\]
	For every $\omega \in \Omega$ we have
	\begin{align*}
		\int |f-g| \ d \omega & \leq \omega(W) + \sum_{V \in \cal V'} \int_{X \times V} |f(x,y)-g_V(x)| \ d \omega\\
		& \leq \omega(W) + \epsilon + \sum_{V \in \cal V'} \int_{X \times V} |f(x,y_V) - g_V(x)| \ d \omega\\
		& \leq \omega(W) + \epsilon + \sum_{V \in \cal V'} \int_{X \times Z} |f(x,y_V) - g_V(x)| \ d \omega\\
		& = \omega(W) + \epsilon + \sum_{V \in \cal V'} \int |f(x,y_V) - g_V(x)| \ d \mu\\
		& < \omega(W) + 2 \epsilon.
	\end{align*}
	The set
	\[\left\{\omega \in \Omega : \omega(W) < \epsilon, \ \left|\int g \ d \omega - \int g \ d \omega_0 \right| < \epsilon \right\}\]
	is a relatively open neighborhood of $\omega_0$, and every $\omega$ in this set satisfies
	\[\left| \int f \ d \omega - \int f \ d \omega_0 \right| < \omega(W) + 2 \epsilon + \omega_0(W) + 2 \epsilon + \epsilon = 7 \epsilon.\qedhere\]
\end{proof}

For the lemma below, fix metrics $\rho_1$ and $\rho_2$ on $C^1(\Gamma, G^\Gamma)$ and $C^2(\Gamma, G^\Gamma)$, respectively, that map to $[0,1]$, are compatible with the product topologies, and satisfy the following uniform condition: for every $\epsilon > 0$ there is a finite set $F \subseteq \Gamma$ such that $\forall c, c' \in C^1(\Gamma, G^\Gamma)$:
\[\big( \ c(\beta)(\alpha) = c'(\beta)(\alpha)  \text{ for all }\alpha, \beta \in F \big)\Rightarrow \rho_1(c,c'), \rho_2(\partial c, \partial c') < \epsilon.\]
When $G$ is compact this uniform condition automatically holds for all metrics that are compatible with the topology. In any case, such metrics $\rho_1$ and $\rho_2$ can be constructed by, for example, fixing an enumeration $(\gamma_n)_{n \in \bN}$ of $\Gamma$, picking a $[0,1]$-valued metric $\rho_0$ on $G$ compatible with its topology, and defining
\[\rho_1(c,c') = \sum_{n,m \in \bN} 2^{-n-m} \rho_0 \big(c(\gamma_n)(\gamma_m), c'(\gamma_n)(\gamma_m) \big)\]
and
\[\rho_2(c,c') = \sum_{n,m,k \in \bN} 2^{-n-m-k} \rho_0 \big(c(\gamma_n, \gamma_m)(\gamma_k), c'(\gamma_n, \gamma_m)(\gamma_k) \big).\] 

\begin{lem} \label{lem:weaklift}
	Let $\tau$ be an invariant Borel probability measure on $\cal T(\Gamma)$, let $X$ be a standard Borel space, and let $\Gamma \acts^a (X, \mu)$ be a {\pmp} action that weakly contains $\Gamma \acts^d (\cal T(\Gamma), \tau)$. Then there is an invariant Borel probability measure $\omega$ on $X \times \cal T(\Gamma)$ that pushes forward to $\mu$ and $\tau$ under the projection maps to $X$ and $\cal T(\Gamma)$, respectively,
	and satisfies the following:
	for every measurable equivariant map $\theta : X \rightarrow C^1(\Gamma, G^\Gamma)$ and $\epsilon > 0$ there is a measurable equivariant map $\theta' : X \rightarrow C^1(\Gamma, G^\Gamma)$ satisfying
	\[\int \rho_1(\theta(x), \theta'(x)) \ d \mu < \int \rho_1(\theta(x), r_T(\theta(x))) \ d \omega + \epsilon\]
	and
	\[\int \rho_2( \partial \theta'(x), e_{C^2(\Gamma, G^\Gamma)}) \ d \mu < \epsilon.\]
\end{lem}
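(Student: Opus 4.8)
The plan is to obtain $\omega$ from an ultrapower of $a$ that factors onto $d$, to build an \emph{exact} cocycle upstairs using the retractions $r_T$ of Lemma~\ref{lem:retract}, and then to descend to $X$ with controlled error.

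First I would construct $\omega$. Since $\Gamma\acts^d(\cal T(\Gamma),\tau)$ is weakly contained in $\Gamma\acts^a(X,\mu)$, the characterization of weak containment recalled in Subsection~\ref{ultrasub} provides a nonprincipal ultrafilter $\u$ on $\bN$ (all structures in sight are separable) together with a $\Gamma$-equivariant factor map $p\colon(X_\u,\mu_\u)\to(\cal T(\Gamma),\tau)$ from the ultrapower action $a_\u$. Letting $q\colon X_\u\to X$ be the diagonal factor map, I set $\omega:=((q,p))_*\mu_\u$, which is a $\Gamma$-invariant Borel probability measure on $X\times\cal T(\Gamma)$ with marginals $q_*\mu_\u=\mu$ and $p_*\mu_\u=\tau$. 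The point of passing to the ultrapower, rather than to a limit of graph joinings of $d$ with conjugates of $a$, is that here $d$ is a \emph{genuine} factor of $a_\u$, so the cocycle produced in the next step is exactly equivariant.

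Next, given an equivariant $\theta\colon X\to C^1(\Gamma,G^\Gamma)$ and $\epsilon>0$, write $\bar\theta:=\theta\circ q\colon X_\u\to C^1(\Gamma,G^\Gamma)$ (equivariant) and define $\Theta'\colon X_\u\to Z^1(\Gamma,G^\Gamma)$ by $\Theta'(\xi):=r_{p(\xi)}(\bar\theta(\xi))$. By Lemma~\ref{lem:retract}(\ref{item:retract1}) the map $\Theta'$ is equivariant and $Z^1$-valued, so $\partial\Theta'\equiv e_{C^2(\Gamma,G^\Gamma)}$; and by the definition of $\omega$ one has $\int_{X_\u}\rho_1(\bar\theta,\Theta')\,d\mu_\u=\int_{X\times\cal T(\Gamma)}\rho_1(\theta(x),r_T(\theta(x)))\,d\omega$. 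Using $q_*\mu_\u=\mu$, the triangle inequality, and the fact that $\partial$ acts coordinatewise (so $\partial(\theta'\circ q)=(\partial\theta')\circ q$), it then suffices to find an equivariant $\theta'\colon X\to C^1(\Gamma,G^\Gamma)$ with $\int_{X_\u}\rho_1(\theta'\circ q,\Theta')\,d\mu_\u<\epsilon$ and $\int_{X_\u}\rho_2(\partial(\theta'\circ q),e_{C^2(\Gamma,G^\Gamma)})\,d\mu_\u<\epsilon$, since the second integral equals $\int_{X_\u}\rho_2(\partial(\theta'\circ q),\partial\Theta')\,d\mu_\u$.

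For the descent I would exploit the uniform condition on $\rho_1,\rho_2$: there is a finite $F_0\subseteq\Gamma$ and, by continuity of $\rho_1$, $\rho_2$ and $\partial$, a $\delta>0$, such that any two elements of $C^1(\Gamma,G^\Gamma)$ agreeing to within $\delta$ (in the fixed metric $\rho_0$ on $G$) at every coordinate in $F_1\times F_1$, where $F_1:=F_0\cup F_0F_0$, are $\epsilon$-close in $\rho_1$ and have $\partial$-images $\epsilon$-close in $\rho_2$. Through the cochain--cocycle correspondence of Lemma~\ref{lem:CocycleCorr}, controlling the $F_1\times F_1$ window of the $C^1$-valued map amounts to controlling the cochain on $F_2\times X$ with $F_2:=F_1^{-1}$. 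Let $\sigma'\colon\Gamma\times X_\u\to G$ be the cochain corresponding to $\Theta'$. For each $\gamma\in F_2$, approximate $\sigma'(\gamma,\cdot)$ by a finitely-valued measurable map $X_\u\to G$; a finitely-valued measurable map on an ultrapower is of ultraproduct form (by the standard argument, cf.\ the proof of Lemma~\ref{ultraclosed} and the lemma preceding Lemma~\ref{eclemma}), so this map is $[\sigma^n_\gamma]_\u$ for measurable $\sigma^n_\gamma\colon X\to G$. Define cochains $\sigma^n\colon\Gamma\times X\to G$ by $\sigma^n(\gamma,x)=\sigma^n_\gamma(x)$ for $\gamma\in F_2$ and $\sigma^n(\gamma,x)=e_G$ otherwise, and let $\theta^n\colon X\to C^1(\Gamma,G^\Gamma)$ be the corresponding equivariant maps. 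Realizing also $p=[p_n]_\u$ for measurable $p_n\colon X\to\cal T(\Gamma)$ (same ultraproduct argument, using that $\cal T(\Gamma)$ is Borel in a Cantor space), Lemma~\ref{lem:retract}(\ref{item:retract2}) gives $\Theta'=\big[x\mapsto r_{p_n(x)}(\theta(x))\big]_\u$. Now $\rho_1(\theta^n\circ q,\Theta')$, $\rho_2(\partial(\theta^n\circ q),e_{C^2(\Gamma,G^\Gamma)})$ and $\rho_1\big(r_{p_n(\cdot)}(\theta(\cdot)),\theta(\cdot)\big)$ are $[0,1]$-valued and of ultraproduct form, so their $\mu_\u$-integrals are the $\u$-limits of the corresponding $\mu$-integrals over $X$; by the window estimate above (taking $\delta$ small relative to $\epsilon$ and $|F_2|$) the first two $\u$-limits are $<\epsilon$, while the third equals $\int_{X\times\cal T(\Gamma)}\rho_1(\theta(x),r_T(\theta(x)))\,d\omega$. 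Hence for $\u$-almost-every $n$ the map $\theta':=\theta^n$ satisfies both displayed inequalities of the lemma, up to fixed multiples of $\epsilon$ that are absorbed by starting from a smaller $\epsilon$.

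\textbf{Expected main obstacle.} The construction of $\omega$ and of $\Theta'$ is short; the bulk of the work, and the delicate part, is the descent: one must simultaneously verify that all the relevant objects ($p$, the finitely-valued approximations of $\sigma'$, and the composites $r_{p_n(\cdot)}(\theta(\cdot))$) are of ultraproduct form, keep track of the passage between $\mu$-integrals on $X$ and $\mu_\u$-integrals on $X_\u$, and check that the uniform condition on $\rho_1,\rho_2$ genuinely localizes everything to the finite window $F_1\times F_1$, so that defining $\sigma^n$ to be $e_G$ outside $F_2$ is harmless.
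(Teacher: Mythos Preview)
Your ultrapower approach is a genuinely different route from the paper's, and the overall strategy is sound. The paper instead constructs equivariant maps $\psi_n:X\to\cal T(\Gamma)^\Gamma$ directly from weak containment (so that $(\psi_n)_*\mu$ weak$^*$-converges to $\ell_*\tau$, where $\ell(T)(\gamma)=(\gamma^{-1})^d\cdot T$), obtains $\omega$ as a subsequential weak$^*$ limit of $(\id\times\psi_n)_*\mu$ after putting a compact topology on $X$, introduces an auxiliary operator $\hat r_y$ defined for \emph{all} $y\in\cal T(\Gamma)^\Gamma$ (not only those of the form $\ell(T)$), and sets $\theta'(x)=\hat r_{\psi_n(x)}(\theta(x))$ for suitable $n$, using a tailor-made continuity result (Lemma~\ref{lem:reltop}) to control the two integrals in the limit. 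The paper's approach stays on standard spaces throughout and sidesteps the problem of approximating the tree-valued factor by maps into $\cal T(\Gamma)$ by working in $\cal T(\Gamma)^\Gamma$ from the outset; your approach trades this for ultraproduct bookkeeping, which is arguably more in keeping with the model-theoretic spirit of the paper.

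Your descent has a gap as written, however. The claim ``$p=[p_n]_\u$ for measurable $p_n:X\to\cal T(\Gamma)$'' does not follow from ``$\cal T(\Gamma)$ is Borel in a Cantor space'': the standard ultraproduct representation of $p$ produces $p_n:X\to\Pow{\Gamma\times\Gamma}$ with no guarantee that $p_n(x)\in\cal T(\Gamma)$, so $r_{p_n(x)}$ is undefined. You can repair this by first approximating $p$ in measure by a \emph{finitely}-valued map landing in finitely many points of $\cal T(\Gamma)$, chosen so that the approximation respects the clopen partition of Lemma~\ref{lem:retract}(\ref{item:retract2}) for the window $F_1\times F_1$; but note that this clopen partition of $\cal T(\Gamma)$ is countably infinite in general (the geodesic from $\alpha$ to $\alpha\beta$ in $T$ can have arbitrary length), so the truncation step must be made explicit. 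Separately, the phrase ``$\rho_1(\theta^n\circ q,\Theta')$ \ldots\ are of ultraproduct form'' conflates a fixed-$n$ function on $X_\u$ with a \emph{sequence} of functions on $X$: what you actually need is $\lim_\u\int_X\rho_1(\theta,\theta^n)\,d\mu=\int_{X_\u}\lim_\u\rho_1(\theta(x_m),\theta^m(x_m))\,d\mu_\u$ together with the identification $\bar\theta(\xi)=\lim_\u\theta(x_m)$ $\mu_\u$-almost everywhere, which holds for Polish targets but is not entirely trivial when $G$ is not compact.
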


\begin{proof}
	We let $\Gamma$ act on $\Pow{\Gamma \times \Gamma}^\Gamma$ by the usual left-shift action
	\[(\gamma^s \cdot y)(\alpha) = y(\gamma^{-1} \alpha) \qquad y \in \Pow{\Gamma \times \Gamma}^\Gamma, \ \alpha, \gamma \in \Gamma\]
	and we define an equivariant lifting map $\ell : \cal T(\Gamma) \rightarrow \cal T(\Gamma)^\Gamma \subseteq \Pow{\Gamma \times \Gamma}^\Gamma$ via the formula
	\[\ell(T)(\gamma) = (\gamma^{-1})^d \cdot T.\]
	
	We will first construct a sequence of measurable equivariant maps $\psi_n : X \rightarrow \cal T(\Gamma)^\Gamma$ such that $(\psi_n)_* \mu$ converges to $\ell_* \tau$ in the weak$^*$ topology on $\Prob(\Pow{\Gamma \times \Gamma}^\Gamma)$. Since $\Pow{\Gamma \times \Gamma}$ is $0$-dimensional, we can pick an increasing sequence $\cal U_n$ of finite clopen partitions of $\Pow{\Gamma \times \Gamma}$ such that $\cal U_{n+1}$ is finer than $\cal U_n$ for every $n$ and such that $\bigcup_n \cal U_n$ is a base for the topology on $\Pow{\Gamma \times \Gamma}$. Also fix an increasing sequence of finite sets $F_n \subseteq \Gamma$ having the property $\bigcup_n F_n = \Gamma$. For each $n$ write $\cal U_n = \{U_n^i : i \in k_n\}$. Define the clopen partition $\cal V_n = \{V_{n,\pi} : \pi \in k_n^{F_n}\}$ of $\Pow{\Gamma \times \Gamma}^\Gamma$ where
	\[V_{n,\pi} = \prod_{\gamma \in F_n} U_n^{\pi(\gamma)}.\]
	Then $(\cal V_n)$ is a sequence of clopen partitions of $\Pow{\Gamma \times \Gamma}^\Gamma$ with $\cal V_{n+1}$ finer than $\cal V_n$ for every $n$ and with $\bigcup_n \cal V_n$ a base for the topology on $\Pow{\Gamma \times \Gamma}^\Gamma$. It easily follows from these properties, as well as the compactness of $\Pow{\Gamma \times \Gamma}^\Gamma$, that the set of all finite linear combinations of characteristic functions of sets in $\bigcup_n \cal V_n$ is uniformly dense in the space of all real-valued continuous functions on $\Pow{\Gamma \times \Gamma}^\Gamma$. So it will be enough for our maps $\psi_n$ to satisfy $(\psi_n)_* \mu(V) \rightarrow \ell_* \tau(V)$ as $n \rightarrow \infty$ for every $V \in \bigcup_m \cal V_m$.
	
	Since $\Gamma \acts^a (X, \mu)$ weakly contains $\Gamma \acts (\cal T(\Gamma), \tau)$, for every $n$ there is a partition $\cal Q_n = \{Q_n^i : i \in k_n\}$ of $X$ such $Q_n^i = \varnothing$ when $U_n^i \cap \cal T(\Gamma) = \varnothing$ and satisfying
	\begin{equation} \label{eq:weaklift}
		\sum_{\pi \in k_n^{F_n}} \left| \mu \left( \bigcap_{\gamma \in F_n} (\gamma^{-1})^a \cdot Q_n^{\pi(\gamma)} \right) - \tau \left( \bigcap_{\gamma \in F_n} (\gamma^{-1})^d \cdot U_n^{\pi(\gamma)} \right) \right| < \frac{1}{2^n}.
	\end{equation}
	For every $n$ and every $i \in k_n$ with $U_n^i \cap \cal T(\Gamma) \neq \varnothing$, pick a point $u_n^i \in U_n^i \cap \cal T(\Gamma)$. Define $\psi_n : X \rightarrow \cal T(\Gamma)^\Gamma$ by setting $\psi_n(x)(\gamma) = u_n^i$ when $(\gamma^{-1})^a \cdot x \in Q_n^i$. Then we have that for every $n$
	\begin{align*}
		& \sum_{\pi \in k_n^{F_n}} \Big| (\psi_n)_* \mu (V_{n,\pi}) - \ell_*\tau (V_{n,\pi}) \Big|\\
		& = \sum_{\pi \in k_n^{F_n}} \left| (\psi_n)_* \mu \left( \prod_{\gamma \in F_n} U_n^{\pi(\gamma)} \right) - \ell_* \tau \left( \prod_{\gamma \in F_n} U_n^{\pi(\gamma)} \right) \right|\\
		& = \sum_{\pi \in k_n^{F_n}} \left| \mu \left( \bigcap_{\gamma \in F_n} (\gamma^{-1})^a \cdot Q_n^{\pi(\gamma)} \right) - \tau \left( \bigcap_{\gamma \in F_n} (\gamma^{-1})^d \cdot U_n^{\pi(\gamma)} \right) \right| < \frac{1}{2^n}.
	\end{align*}
	It follows that $(\psi_n)_* \mu$ weak$^*$ converges to $\ell_* \tau$ as desired.
	
	We will now define $\omega$. Without loss of generality, we can assume $X$ is uncountable. Since $X$ is a standard Borel space, we can pick a compact Hausdorff topology on $X$ that is compatible with its Borel $\sigma$-algebra. Since $\Pow{\Gamma \times \Gamma}^\Gamma$ is also compact, the space of all Borel probability measures on $X \times \Pow{\Gamma \times \Gamma}^\Gamma$ is compact in the weak$^*$ topology. So there is a Borel probability measure $\widehat \omega$ on $X \times \Pow{\Gamma \times \Gamma}^\Gamma$ that is a subsequential limit of the measures $(\id \times \psi_n)_* \mu$. Since each $\psi_n$ is equivariant, $\widehat \omega$ is $\Gamma$-invariant, and since the projection map from $X \times \Pow{\Gamma \times \Gamma}^\Gamma$ to $\Pow{\Gamma \times \Gamma}^\Gamma$ is continuous, the pushforward of $\widehat \omega$ with respect to the projection must be equal to $\ell_* \tau$. Also note that $\widehat \omega$ and every measure $(\id \times \psi_n)_* \mu$ belong to the set $\Omega$ of Borel probability measures on $X \times \Pow{\Gamma \times \Gamma}^\Gamma$ that assign measure $1$ to $X \times \cal T(\Gamma)^\Gamma$ and pushforward to $\mu$ under the projection map to $X$.
    
    We let $\omega$ be the measure obtained as the pushforward of $\widehat \omega$ under the map $(x, y) \mapsto (x, y(e))$, and note that the pushforwards of $\omega$ to $X$ and $\cal T(\Gamma)$ are $\mu$ and $\tau$. Also notice that $\widehat \omega$ is similarly obtained as the pushforward of $\omega$ under the map $(x,T) \mapsto (x, \ell(T))$.
	
	Finally, let $\theta : X \rightarrow C^1(\Gamma, G^\Gamma)$ be any equivariant measurable map. Let $T \mapsto r_T$ be the map from Lemma \ref{lem:retract}. For $y \in \cal T(\Gamma)^\Gamma$ and $c \in C^1(\Gamma, G^\Gamma)$ define $\hat r_y(c) \in C^1(\Gamma, G^\Gamma)$ by the formula
	\[\hat r_y(c)(\beta)(\alpha) = r_{y(\alpha)}((\alpha^{-1})^t \cdot c)(\beta)(e).\]
	We claim that, using $Z = \Pow{\Gamma \times \Gamma}^\Gamma$ and $Y = \cal T(\Gamma)^\Gamma$, both of the functions
	\[(x,y) \mapsto \rho_1(\theta(x), \hat r_y(\theta(x))) \quad \text{and} \quad (x, y) \mapsto \rho_2(\partial \hat r_y(\theta(x)), e_{C^2(\Gamma, G^\Gamma)})\]
	satisfy the assumptions stated in Lemma \ref{lem:reltop}. To see this, let $\epsilon > 0$. Pick a finite set $F \subseteq \Gamma$ such that, for all $c, c' \in C^1(\Gamma, G^\Gamma)$:
	\[ \big( \ c(\beta)(\alpha) = c'(\beta)(\alpha) \text{ for all }\alpha, \beta \in F\big) \Rightarrow \rho_1(c,c'), \rho_2(\partial c, \partial c') < \epsilon.\]
	By Lemma \ref{lem:retract} there is a clopen partition $\cal U$ of $\cal T(\Gamma)$ so that for every $U \in \cal U$ we have $r_T(c)(\beta)(e) = r_{T'}(c)(\beta)(e)$ for all $c \in C^1(\Gamma, G^\Gamma)$, $\beta \in F$, and $T, T' \in U$. Letting $\cal V$ be the clopen partition of $\cal T(\Gamma)^\Gamma$ whereby $y, y' \in \cal T(\Gamma)^\Gamma$ belong to the same piece of $\cal V$ if and only if $y(\alpha)$ and $y'(\alpha)$ both belong to a common piece of $\cal U$ for every $\alpha \in F$, it follows that when $y, y' \in V \in \cal V$ we have that for all $c \in C^1(\Gamma, G^\Gamma)$
	\[\hat r_{y}(c)(\beta)(\alpha) = r_{y(\alpha)}((\alpha^{-1})^t \cdot c)(\beta)(e) = r_{y'(\alpha)}((\alpha^{-1})^t \cdot c)(\beta)(e) = \hat r_{y'}(c)(\beta)(\alpha)\]
	for all $\alpha, \beta \in F$ and hence $\rho_1(\hat r_y(c), \hat r_{y'}(c)), \ \rho_2(\partial \hat r_y(c), \partial \hat r_{y'}(c)) < \epsilon$. Therefore when $y, y' \in V \in \cal V$ we have that
	\[\sup_{x \in X} \big| \rho_1(\theta(x), \hat r_y(\theta(x))) - \rho_1(\theta(x), \hat r_{y'}(\theta(x))) \big| \leq \epsilon\]
	and
	\[\sup_{x \in X} \big| \rho_2(\partial \hat r_y(\theta(x)), e_{C^2(\Gamma, G^\Gamma)}) - \rho_2(\partial \hat r_{y'}(\theta(x)), e_{C^2(\Gamma, G^\Gamma)}) \big| \leq \epsilon.\]
	This verifies our claim. Thus Lemma \ref{lem:reltop} applies to these two functions and the measures $\widehat \omega$ and $(\id \times \psi_n)_* \mu$.
	
	Notice that the map $(y, c) \mapsto \hat r_y(c)$ is jointly equivariant since
	\begin{align*}
		\hat r_{\gamma^s \cdot y}(\gamma^t \cdot c)(\beta)(\alpha) & = r_{(\gamma^s \cdot y)(\alpha)}((\alpha^{-1} \gamma)^t \cdot c)(\beta)(e)\\
		& = r_{y(\gamma^{-1}\alpha)}((\alpha^{-1} \gamma)^t \cdot c)(\beta)(e)\\
		& = \hat r_y(c)(\beta)(\gamma^{-1} \alpha)\\
		& = (\gamma^t \cdot \hat r_y(c))(\beta)(\alpha).
	\end{align*}
	Therefore $x \mapsto \hat r_{\psi_n(x)}(\theta(x))$ is a measurable equivariant map from $X$ to $C^1(\Gamma, G^\Gamma)$ for every $n$. Additionally, for every $T \in \cal T(\Gamma)$ we have
	\begin{align*}
		\hat r_{\ell(T)}(c)(\beta)(\alpha) & = r_{\ell(T)(\alpha)}((\alpha^{-1})^t \cdot c)(\beta)(e)\\
		& = r_{(\alpha^{-1})^d \cdot T}((\alpha^{-1})^t \cdot c)(\beta)(e)\\
		& = ((\alpha^{-1})^t \cdot r_T(c))(\beta)(e)\\
		& = r_T(c)(\beta)(\alpha),
	\end{align*}
	and thus $\hat r_{\ell(T)} = r_T$. Therefore
	\[\int \rho_1(\theta(x), r_T(\theta(x))) \ d \omega = \int \rho_1(\theta(x), \hat r_y(\theta(x))) \ d \widehat \omega\]
	and, since each $r_T$ maps to $Z^1(\Gamma, G^\Gamma)$,
	\[0 = \int \rho_2(\partial r_T(\theta(x)), e_{C^2(\Gamma, G^\Gamma)}) \ d \omega = \int \rho_2(\partial \hat r_y(\theta(x)), e_{C^2(\Gamma, G^\Gamma)}) \ d \widehat \omega.\]
	By applying Lemma \ref{lem:reltop} we conclude that for any $\epsilon > 0$ there is an $n$ with
	\[\int \rho_1(\theta(x), \hat r_y(\theta(x))) \ d (\id \times \psi_n)_* \mu < \int \rho_1(\theta(x), r_T(\theta(x))) \ d \omega + \epsilon\]
	and
	\[\int \rho_2(\partial \hat r_y(\theta(x)), e_{C^2(\Gamma, G^\Gamma)}) \ d (\id \times \psi_n)_* \mu < \epsilon.\]
	Defining $\theta'(x) = \hat r_{\psi_n(x)}(\theta(x))$ for any such value of $n$ completes the proof.
\end{proof}

\subsection{Cocycles on actions of treeable groups} \label{subs:cocytree}

The next proposition is a ``definable cocycle property'' for the class of actions that weakly contain a particular treeable action.

\begin{prop} \label{prop:weaktree}
	Let $\Gamma$ be a treeable group, let $\tau$ be an invariant Borel probability measure on $\cal T(\Gamma)$, let $G$ be a compact metrizable group, and let $\rho_1$ and $\rho_2$ be metrics on $C^1(\Gamma, G^\Gamma)$ and $C^2(\Gamma, G^\Gamma)$, respectively, that are compatible with their product topologies and map to $[0,1]$.
	
	Then for every $\epsilon > 0$ there is $\delta(\epsilon) > 0$ with the following property: whenever $\Gamma \acts^a (X, \mu)$ is a {\pmp} action that weakly contains $\Gamma \acts^d (\cal T(\Gamma), \tau)$ and $\theta: X \rightarrow C^1(\Gamma, G^\Gamma)$ is an equivariant measurable map satisfying
	\[\int \rho_2(\partial \theta(x), e_{C^2(\Gamma, G^\Gamma)}) \ d \mu < \delta(\epsilon),\]
	there is an equivariant measurable map $\phi : X \rightarrow Z^1(\Gamma, G^\Gamma)$ satisfying
	\[\int \rho_1(\theta(x), \phi(x)) \ d \mu < \epsilon.\]
\end{prop}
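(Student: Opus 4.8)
Since $G$ is compact, it carries a compatible bi\-/invariant metric $\rho_0$ with values in $[0,1]$, and since all metrics compatible with the product topology on the compact spaces $C^1(\Gamma,G^\Gamma)$ and $C^2(\Gamma,G^\Gamma)$ are uniformly equivalent, it suffices to prove the statement for the specific metrics $\rho_1,\rho_2$ built from $\rho_0$ exactly as in the display preceding Lemma \ref{lem:weaklift}. Write $\|\partial\theta\|:=\int \rho_2(\partial\theta(x),e_{C^2(\Gamma,G^\Gamma)})\, d\mu$. The space of (classes of) equivariant measurable maps $X\to C^1(\Gamma,G^\Gamma)$, metrized by $d(\theta,\theta')=\int\rho_1(\theta(x),\theta'(x))\,d\mu$, is a complete metric space (as $C^1(\Gamma,G^\Gamma)$ is compact and $\rho_1\le 1$), and the subset of maps landing a.e.\ in $Z^1(\Gamma,G^\Gamma)$ is closed in it; this closedness, combined with continuity of $\partial$ and dominated convergence, shows that if $\theta_k\to\phi$ in $d$ and $\|\partial\theta_k\|\to 0$ then $\phi$ is equivariant and $\phi(x)\in Z^1(\Gamma,G^\Gamma)$ for a.e.\ $x$.

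The heart of the argument is the following \emph{uniform approximate retraction estimate}: there is a function $\Psi:(0,1]\to(0,\infty)$ with $\lim_{t\to 0^+}\Psi(t)=0$, depending only on $\Gamma$, $\tau$, $G$ and the fixed metrics, such that for every {\pmp} action $\Gamma\acts^a(X,\mu)$ weakly containing $\Gamma\acts^d(\cal T(\Gamma),\tau)$, every associated measure $\omega$ on $X\times\cal T(\Gamma)$ as in Lemma \ref{lem:weaklift}, and every equivariant measurable $\theta:X\to C^1(\Gamma,G^\Gamma)$, one has $\int\rho_1(\theta(x),r_T(\theta(x)))\,d\omega<\Psi(\|\partial\theta\|)$. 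Granting this, the proposition follows by iterating Lemma \ref{lem:weaklift}: fix a decreasing sequence $\epsilon_k\downarrow 0$, to be chosen, set $\theta_0=\theta$, and recursively let $\theta_{k+1}$ be the map produced by Lemma \ref{lem:weaklift} applied to $\theta_k$ with parameter $\epsilon_{k+1}$, so that $\int\rho_1(\theta_k,\theta_{k+1})\,d\mu<\int\rho_1(\theta_k(x),r_T(\theta_k(x)))\,d\omega+\epsilon_{k+1}<\Psi(\|\partial\theta_k\|)+\epsilon_{k+1}$ and $\|\partial\theta_{k+1}\|<\epsilon_{k+1}$. Choosing $\epsilon_0=\delta:=\delta(\epsilon)$ small enough that $\Psi(\delta)<\epsilon/4$, then $\epsilon_1$ so small that $\Psi(\epsilon_1)<\epsilon/8$ and $\epsilon_1<\epsilon/8$, and continuing, we get $\sum_{k\ge 0}\big(\Psi(\|\partial\theta_k\|)+\epsilon_{k+1}\big)<\epsilon$; hence $(\theta_k)$ is $d$\-/Cauchy with a limit $\phi$ satisfying $\int\rho_1(\theta,\phi)\,d\mu<\epsilon$, and by the first paragraph $\phi:X\to Z^1(\Gamma,G^\Gamma)$ is equivariant, as desired.

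To prove the estimate, first use equivariance of $\theta$ and the covariance $r_{\gamma^d\cdot T}(\gamma^t\cdot c)=\gamma^t\cdot r_T(c)$ of Lemma \ref{lem:retract}(\ref{item:retract1}), together with $\Gamma$\-/invariance of $\omega$, to re\-/root each coordinate: for every $\alpha,\beta\in\Gamma$,
\[
\int \rho_0\big(\theta(x)(\beta)(\alpha),\, r_T(\theta(x))(\beta)(\alpha)\big)\, d\omega
=\int \rho_0\big(\theta(x)(\beta)(e),\, r_T(\theta(x))(\beta)(e)\big)\, d\omega ,
\]
so that $\int\rho_1(\theta(x),r_T(\theta(x)))\,d\omega=\sum_{\beta\in\Gamma}W(\beta)\,A(\beta)$ for suitable summable weights $W(\beta)$, where $A(\beta)$ denotes the right\-/hand integral above. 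Next, for $N\in\bN$ the set $\{T\in\cal T(\Gamma): d_T(e,\beta)\le N\}$ is clopen and has $\tau$\-/measure tending to $1$ as $N\to\infty$ (since tree distances are a.s.\ finite); crucially this estimate depends only on $\tau$, not on $a$, because $\omega$ pushes forward to $\tau$ on $\cal T(\Gamma)$, so $\omega(\{d_T(e,\beta)>N\})=\tau(\{d_T(e,\beta)>N\})$. On the complementary "good" set one expands $r_T(\theta(x))(\beta)(e)$ along the geodesic $e=v_0,\dots,v_m=\beta$ ($m\le N$) using the formula (\ref{eqn:treecoh}) of Lemma \ref{lem:retract} and the identity $g_T^c(v,u)=c(v)(e)\cdot E(u,v)\cdot c(u)(e)^{-1}$ with $E(u,v)$ a value of $\partial c$; bi\-/invariance of $\rho_0$ and the triangle inequality then bound the integrand by a sum of at most $N+1$ integrals of $\rho_0(\partial\theta(x)(\cdot)(\cdot),e)$ taken along the geodesic. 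Finally a mass\-/transport computation — reorganizing the sum over geodesic steps into a sum over tree edges and re\-/rooting via $\Gamma$\-/invariance of $\omega$ and equivariance of $\theta$ — controls each of these finitely many terms by $\|\partial\theta\|$ up to a constant depending only on $N$, $\Gamma$, $\tau$ and the metric. Summing over $\beta$ with the weights $W(\beta)$ yields $\Psi$.

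The main obstacle is precisely this last step: the retraction $r_T$ is \emph{not} uniformly close to the identity near $Z^1$ over all trees $T\in\cal T(\Gamma)$ (errors accumulate along geodesics whose length between fixed vertices is unbounded as $T$ varies), so the estimate genuinely cannot be proved pointwise in $T$. The tension is resolved by the two ingredients above working together: the clopen, $\tau$\-/determined bound on tree distances gives a truncation level $N$ that is \emph{uniform over all actions $a$ weakly containing $\Gamma\acts^d(\cal T(\Gamma),\tau)$}, and the $\Gamma$\-/invariance of the coupling $\omega$ furnished by Lemma \ref{lem:weaklift} makes the mass\-/transport bookkeeping of the $\le N$ accumulated cocycle defects go through. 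Care is also needed to verify the hypotheses of Lemma \ref{lem:reltop} implicitly when invoking Lemma \ref{lem:weaklift}, but those are exactly the finitely\-/determined\-/in\-/$T$ conditions that the partition property in Lemma \ref{lem:retract}(\ref{item:retract2}) supplies.
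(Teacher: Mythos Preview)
Your proposal is correct and takes a genuinely different route from the paper's proof. Both arguments share the same iteration scheme (repeatedly apply Lemma~\ref{lem:weaklift} to drive $\|\partial\theta_k\|\to 0$ while keeping $\sum_k d(\theta_k,\theta_{k+1})<\epsilon$, then pass to the limit), but they diverge in how they obtain the key uniform estimate $\int\rho_1(\theta(x),r_T(\theta(x)))\,d\omega<\Psi(\|\partial\theta\|)$. The paper argues softly by compactness: since $G$ is compact, the set $P$ of invariant probability measures on $\Pow{\Gamma\times\Gamma}\times C^1(\Gamma,G^\Gamma)$ with $\cal T(\Gamma)$-marginal $\tau$ is weak$^*$-compact; the closed sets $U_\delta=\{\omega\in P:\int\rho_2(\partial c,e)\,d\omega\le\delta\}$ decrease to a subset of the open set $V_\epsilon=\{\omega\in P:\int\rho_1(c,r_T(c))\,d\omega<\epsilon\}$ (openness via Lemma~\ref{lem:reltop}), so some $U_{\delta(\epsilon)}\subseteq V_{\epsilon'}$ with $\epsilon'<\epsilon$. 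Your approach instead opens up $r_T$ explicitly: telescoping along the geodesic from $e$ to $\beta$ gives $r_T(c)(\beta)(e)=c(\beta)(e)\,(\prod_iE_i)\,c(e)(e)^{-1}$ with each $E_i$ and $c(e)(e)$ equal to some $\partial c(\alpha,\gamma)(e)^{\pm 1}$; bi-invariance of $\rho_0$ then bounds the integrand by a finite sum of such terms, the bad-tree tail $\tau(\{d_T(e,\beta)>N\})$ is controlled solely by $\tau$, and the $\rho_1$-weights $W(\beta)$ make the sum over $\beta$ converge. This is more hands-on and in principle yields an explicit $\delta(\epsilon)$, whereas the paper's compactness argument is slicker but non-effective. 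Two small remarks: your ``mass-transport'' step is really just the crude bound $\int_{X\times V}f(x)\,d\omega\le\int_X f(x)\,d\mu$ for $f\ge 0$ together with the weight inequality $\rho_0(e,\partial c(\alpha,\gamma)(e))\le 2^{n(\alpha)+m(\gamma)+k(e)}\rho_2(\partial c,e)$ coming from the explicit form of $\rho_2$ --- no genuine mass-transport identity is needed; and, as the paper does, you should first pass to a standard factor of $X$ before invoking Lemma~\ref{lem:weaklift}.
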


\begin{proof}
    If needed, we can replace $\Gamma \acts^a (X,\mu)$ with a factor that is an action on a standard Borel space such that $\theta$ descends to this factor and this factor still weakly contains $\Gamma \acts^d (\cal T(\Gamma), \tau)$. So without loss of generality, throughout the proof we always assume that $X$ is a standard Borel space.
    
    We first show that under the stated assumptions there is $\delta(\epsilon) > 0$ so that whenever $\Gamma \acts^a (X, \mu)$ is a {\pmp} action that weakly contains $\Gamma \acts^d (\cal T(\Gamma), \tau)$ and $\theta : X \rightarrow C^1(\Gamma, G^\Gamma)$ is a measurable equivariant map with
	\[\int \rho_2(\partial \theta(x), e_{C^2(\Gamma, G^\Gamma)}) \ d \mu < \delta(\epsilon),\]
	then there is an $\epsilon' \in (0, \epsilon)$ so that for every $\delta' > 0$ there is an equivariant measurable map $\theta' : X \rightarrow C^1(\Gamma, G^\Gamma)$ satisfying
	\[\int \rho_1(\theta(x), \theta'(x)) \ d \mu < \epsilon' \quad \text{and} \quad \int \rho_2(\partial \theta'(x), e_{C^2(\Gamma, G^\Gamma)}) \ d \mu < \delta'.\]
	
	Let $P$ be the set of all invariant Borel probability measures on $\Pow{\Gamma \times \Gamma} \times C^1(\Gamma, G^\Gamma)$ whose pushforward under the projection map to $\Pow{\Gamma \times \Gamma}$ is equal to $\tau$. Since $G$ is compact and the pushforward map associated with the projection map is continuous, it follows that $P$ is a compact subset of $\Prob(\Pow{\Gamma \times \Gamma} \times C^1(\Gamma, G^\Gamma))$.
	
	For each $T \in \cal T(\Gamma)$ let $r_T : C^1(\Gamma, G^\Gamma) \rightarrow Z^1(\Gamma, G^\Gamma)$ be the retraction described in Lemma \ref{lem:retract}, and note that for every $\epsilon > 0$ there is a clopen partition $\cal W$ of $\cal T(\Gamma)$ with
    \[\sup_{c \in C^1(\Gamma, G^\Gamma)} \big|\rho_1(c,r_T(c))-\rho_1(c,r_{T'}(c)) \big| < \epsilon\]
    for all $W \in \cal W$ and $T, T' \in W$. For $\epsilon, \delta > 0$ set
	\[V_\epsilon = \left\{\omega \in P : \int \rho_1(c, r_T(c)) \ d \omega < \epsilon\right\}\]
	and
	\[U_\delta = \left\{\omega \in P : \int \rho_2(\partial c, e_{C^2(\Gamma, G^\Gamma)}) \ d \omega \leq \delta\right\}.\]
	Since each $r_T$ is a retraction to $Z^1(\Gamma, G^\Gamma)$, we have that $\bigcap_{\delta > 0} U_\delta \subseteq V_\epsilon$. Additionally, using $C^1(\Gamma, G^\Gamma)$, $\Pow{\Gamma \times \Gamma}$, and $\cal T(\Gamma)$ for $X$, $Z$, $Y$ in Lemma \ref{lem:reltop}, respectively, we see that $V_\epsilon$ is an open subset of $P$. Also, each $U_\delta$ is compact since $\rho_2(\partial c, e_{C^2(\Gamma, G^\Gamma)})$ is a continuous function on $\Pow{\Gamma \times \Gamma} \times C^1(\Gamma, G^\Gamma)$. It follows that for every $\epsilon > 0$ there is some $\delta(\epsilon) > 0$ such that $U_{\delta(\epsilon)} \subseteq V_\epsilon$, and since $V_\epsilon$ is covered by the open sets $V_{\epsilon'}$, $\epsilon' < \epsilon$, there is an $\epsilon' < \epsilon$ with $U_{\delta(\epsilon)} \subseteq V_{\epsilon'}$.
	
	Now fix $\epsilon > 0$ and suppose that $\Gamma \acts^a (X, \mu)$ is a {\pmp} action that weakly contains $\Gamma \acts^d (\cal T(\Gamma), \tau)$ and that $\theta: X \rightarrow C^1(\Gamma, G^\Gamma)$ is a measurable equivariant map satisfying
	\[\int \rho_2(\partial \theta(x), e_{C^2(\Gamma, G^\Gamma)}) \ d \mu < \delta(\epsilon).\]
	Letting $\omega \in \Prob(X \times \cal T(\Gamma))$ be the measure obtained from Lemma \ref{lem:weaklift}, we have
	\[(\pi_{\cal T(\Gamma)} \times \theta)_* \omega \in U_{\delta(\epsilon)} \subseteq V_{\epsilon'}\]
	for some $\epsilon' < \epsilon$, where $\pi_{\cal T(\Gamma)}$ denotes the projection map. It follows from Lemma \ref{lem:weaklift} that for every $\delta' > 0$ there is a measurable equivariant map $\theta' : X \rightarrow C^1(\Gamma, G^\Gamma)$ satisfying
	\[\int \rho_1(\theta(x), \theta'(x)) \ d \mu < \epsilon' \quad \text{and} \quad \int \rho_2(\partial \theta'(x), e_{C^2(\Gamma, G^\Gamma)}) \ d \mu < \delta'.\]
	This establishes our claim. We will now use this claim to complete the proof.
	
	Let $\epsilon > 0$, let $\Gamma \acts^a (X, \mu)$ be a {\pmp} action that weakly contains $\Gamma \acts^d (\cal T(\Gamma), \tau)$ and let $\theta : X \rightarrow C^1(\Gamma, G^\Gamma)$ be a measurable equivariant map satisfying $$\int \rho_2(\partial \theta(x), e_{C^2(\Gamma, G^\Gamma)}) \ d \mu < \delta(\epsilon).$$ Set $\theta_0 = \theta$ and $\epsilon_0 = \epsilon$. Inductively suppose that $\theta_n$ and $\epsilon_n$ have been defined and satisfy
	\[\int \rho_2(\partial \theta_n(x), e_{C^2(\Gamma, G^\Gamma)}) \ d \mu < \min(\delta(\epsilon_n), 2^{-n+1})\]
	(this holds when $n = 0$ since $\rho_2$ is bounded by $1$). Applying our above claim gives us an $\epsilon' \in (0, \epsilon_n)$. Choose any $0 < \epsilon_{n+1} < \min(\epsilon_n - \epsilon', 2^{-n})$. Then by applying our above claim with $\delta' = \min(\delta(\epsilon_{n+1}), 2^{-(n+1)+1})$ we obtain a measurable equivariant map $\theta_{n+1} : X \rightarrow C^1(\Gamma, G^\Gamma)$ satisfying
	\[\int \rho_1(\theta_n(x), \theta_{n+1}(x)) \ d \mu < \epsilon' < \epsilon_n - \epsilon_{n+1} < \epsilon_n\]
	and
	\[\int \rho_2(\partial \theta_{n+1}(x), e_{C^2(\Gamma, G^\Gamma)}) \ d \mu < \delta' = \min(\delta(\epsilon_{n+1}), 2^{-(n+1)+1}).\]
	This allows the inductive construction to proceed.
	
	Since $\sum_n \epsilon_n < \infty$, the sequence $(\theta_n)$ is Cauchy with respect to the metric $\int \rho_1(\cdot, \cdot) \ d \mu$. Since $G$ is Polish, $C^1(\Gamma, G^\Gamma)$ is Polish as well and therefore both $\rho_1$ and $\int \rho_1(\cdot, \cdot) \ d \mu$ are complete. So $(\theta_n)$ converges $\mu$-almost-everywhere to a measurable function $\phi : X \rightarrow C^1(\Gamma, G^\Gamma)$. Since each $\theta_n$ is equivariant, $\phi$ is as well (if needed, we can redefine $\phi$ to have value $e_{C^1(\Gamma, G^\Gamma)}$ on the null set of orbits for which it fails to be equivariant). Also, since the coboundary map $\partial$ is continuous we have that $(\partial \theta_n)$ converges to $\partial \phi$ almost-everywhere. So $\int \rho_2(\partial \phi(x), e_{C^2(\Gamma, G^\Gamma)}) \ d \mu = 0$ and by redefining $\phi$ on a $\Gamma$-invariant null set if necessary we have that $\phi$ is a measurable equivariant map from $X$ to $Z^1(\Gamma, G^\Gamma)$. Finally,
	\[\int \rho_1(\theta(x), \phi(x)) \ d \mu \leq \sum_{n=0}^\infty \int \rho_1(\theta_n(x), \theta_{n+1}(x)) \ d \mu < \sum_{n=0}^\infty \epsilon_n-\epsilon_{n+1} = \epsilon_0 = \epsilon.\qedhere\]
\end{proof}

We can now conclude the following, generalizing Lemma \ref{freedefcocycle}:

\begin{thm}\label{treeabledefcocyle} \
	\begin{enumerate}
		\item If $\Gamma$ is strongly treeable, then $T_{\Gamma,free}$ has the definable cocycle property.
		\item If $\Gamma$ is treeable, then $T_{\Gamma,max}$ has the definable cocycle property.
	\end{enumerate}
\end{thm}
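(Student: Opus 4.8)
The plan is to deduce both statements from Proposition \ref{prop:weaktree} together with the characterization of the definable cocycle property in Lemma \ref{lem:defcocycle}. Recall that by Lemma \ref{lem:defcocycle}, saying that $\Cocy_K$ is a $T$-definable set (for $T = T_{\Gamma,free}$ or $T = T_{\Gamma,max}$) amounts to condition (\ref{item:defcocycle2}): for every $\epsilon > 0$ there is $\delta > 0$ so that, for every action $\Gamma \acts^a (X,\mu)$ with $\cal M_a \models T$ and every equivariant measurable $c : X \rightarrow C^1(\Gamma, K^\Gamma)$ with $\int \rho_2(\partial c(x), e_{C^2(\Gamma,K^\Gamma)}) \, d\mu < \delta$, there is an equivariant measurable $z : X \rightarrow Z^1(\Gamma, K^\Gamma)$ with $\int \rho_1(c(x), z(x)) \, d\mu < \epsilon$. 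So the task is to verify this uniform ``almost-cocycles are near cocycles'' statement for the two relevant classes of actions.

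First I would handle (2). Suppose $\Gamma$ is treeable, so by the proposition characterizing treeability there is a $\Gamma$-invariant Borel probability measure $\tau$ on $\cal T(\Gamma)$; fix it once and for all. The key point is that if $\cal M_a \models T_{\Gamma,max}$, i.e. $\Gamma \acts^a (X,\mu)$ is maximal for weak containment (locally universal), then in particular $a$ weakly contains the treeable action $\Gamma \acts^d (\cal T(\Gamma), \tau)$. Now fix a finite group $K$; since $K$ is finite it is a compact metrizable group, so Proposition \ref{prop:weaktree} applies with $G = K$ (choosing $\rho_1, \rho_2$ compatible $[0,1]$-valued metrics on $C^1(\Gamma, K^\Gamma)$, $C^2(\Gamma, K^\Gamma)$ — note these are compact, so the uniform condition needed in Proposition \ref{prop:weaktree} is automatic). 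Given $\epsilon > 0$, take $\delta = \delta(\epsilon)$ from Proposition \ref{prop:weaktree}. Then for any $\cal M_a \models T_{\Gamma,max}$ and any equivariant measurable $\theta : X \rightarrow C^1(\Gamma, K^\Gamma)$ with $\int \rho_2(\partial\theta(x), e) \, d\mu < \delta$, since $a$ weakly contains $\Gamma \acts^d (\cal T(\Gamma),\tau)$ the proposition produces an equivariant measurable $\phi : X \rightarrow Z^1(\Gamma, K^\Gamma)$ with $\int \rho_1(\theta(x), \phi(x)) \, d\mu < \epsilon$. This is exactly condition (\ref{item:defcocycle2}) of Lemma \ref{lem:defcocycle} for $T = T_{\Gamma,max}$, so $\Cocy_K$ is a $T_{\Gamma,max}$-definable set; since $K$ was arbitrary, $T_{\Gamma,max}$ has the definable cocycle property.

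For (1), suppose $\Gamma$ is strongly treeable. The same argument works but with a twist concerning which invariant measure on $\cal T(\Gamma)$ to use. By the proposition characterizing strong treeability, for every free {\pmp} action $\Gamma \acts^a (X,\mu)$ there is an invariant Borel probability measure $\nu_a$ on $\cal T(\Gamma)$ such that $a$ factors onto — hence in particular weakly contains — $\Gamma \acts^d (\cal T(\Gamma), \nu_a)$. The subtlety (this is exactly the ``limitation'' flagged at the end of Subsection \ref{subs:groupcoh}) is that $\nu_a$ depends on $a$, so the value $\delta(\epsilon)$ coming from Proposition \ref{prop:weaktree} a priori depends on $a$ too, which would destroy the required uniformity over all models of $T_{\Gamma,free}$. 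The fix is the standard compactness trick: for each $\tau \in \Prob_\Gamma(\cal T(\Gamma))$ Proposition \ref{prop:weaktree} gives a function $\epsilon \mapsto \delta_\tau(\epsilon)$, but inspecting its proof — which runs through the compact space $P$ of invariant measures on $\Pow{\Gamma \times \Gamma} \times C^1(\Gamma, K^\Gamma)$ lying over measures supported on $\cal T(\Gamma)$, and the fact that $\bigcap_{\delta>0} U_\delta \subseteq V_\epsilon$ with $U_\delta$ compact and $V_\epsilon$ open — the argument can be run \emph{uniformly} over all $\tau \in \Prob_\Gamma(\cal T(\Gamma))$ at once, since $\Prob_\Gamma(\cal T(\Gamma))$ is weak$^*$ compact. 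Concretely, one replaces ``pushforward equal to $\tau$'' by ``pushforward supported on $\cal T(\Gamma)$'' in the definition of $P$; then $P$ is still compact, the retractions $r_T$ are defined for all $T \in \cal T(\Gamma)$, Lemma \ref{lem:reltop} still yields that $V_\epsilon$ is open, $U_\delta$ is still compact, and one again gets a single $\delta(\epsilon) > 0$ with $U_{\delta(\epsilon)} \subseteq V_\epsilon$ and then $U_{\delta(\epsilon)} \subseteq V_{\epsilon'}$ for some $\epsilon' < \epsilon$. The remainder of the proof of Proposition \ref{prop:weaktree} — the inductive improvement $\theta_n \to \theta_{n+1}$ using Lemma \ref{lem:weaklift} (which only needs that $a$ weakly contains $\Gamma \acts^d (\cal T(\Gamma),\tau)$ for the relevant $\tau = \nu_a$) and passing to the Cauchy limit — then produces, for any free $\Gamma \acts^a (X,\mu)$ with $\cal M_a \models T_{\Gamma,free}$, the desired $\phi$. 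So the uniform $\delta(\epsilon)$ exists, condition (\ref{item:defcocycle2}) holds for $T_{\Gamma,free}$, and $T_{\Gamma,free}$ has the definable cocycle property. (One small point: models of $T_{\Gamma,free}$ correspond to free actions by the axiomatization in Subsection \ref{pmpstructures}, so strong treeability indeed applies to all of them.)

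The main obstacle I expect is precisely making the uniformity in (1) airtight: one must either re-examine the proof of Proposition \ref{prop:weaktree} to confirm that $\delta(\epsilon)$ can be chosen independently of $\tau \in \Prob_\Gamma(\cal T(\Gamma))$ — which I believe works because every step (compactness of $P$, openness of $V_\epsilon$ via Lemma \ref{lem:reltop}, compactness of $U_\delta$, the nested intersection argument) goes through verbatim after enlarging $P$ to allow any invariant measure supported on $\cal T(\Gamma)$ — or else state a mild strengthening of Proposition \ref{prop:weaktree} with $\tau$ allowed to range over all of $\Prob_\Gamma(\cal T(\Gamma))$ and quote it. For (2) there is no such issue since a single fixed $\tau$ suffices. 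Everything else is a routine matching of the ergodic-theoretic statement with condition (\ref{item:defcocycle2}) of Lemma \ref{lem:defcocycle}.
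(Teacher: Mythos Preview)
Your argument for (2) is correct and matches the paper's: fix one invariant $\tau$ on $\cal T(\Gamma)$ witnessing treeability, observe that any locally universal action weakly contains $\Gamma \acts^d (\cal T(\Gamma),\tau)$, and apply Proposition \ref{prop:weaktree} with $G = K$.

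Your argument for (1), however, has a genuine gap. You propose to make $\delta(\epsilon)$ uniform over all $\tau$ by enlarging the set $P$ in the proof of Proposition \ref{prop:weaktree} to consist of invariant measures on $\Pow{\Gamma\times\Gamma}\times C^1(\Gamma,K^\Gamma)$ whose first marginal is merely supported on $\cal T(\Gamma)$, and then rerun the compactness argument. But $\cal T(\Gamma)$ is not closed in $\Pow{\Gamma\times\Gamma}$: a sequence of spanning trees in which the path from $e$ to some fixed $\gamma$ grows without bound converges to a disconnected forest. Consequently the condition ``first marginal supported on $\cal T(\Gamma)$'' is not weak$^*$ closed, the enlarged $P$ is not compact, and the sets $U_\delta$ need not be compact either, so the finite-intersection step $\bigcap_{\delta>0}U_\delta\subseteq V_\epsilon \Rightarrow U_{\delta(\epsilon)}\subseteq V_\epsilon$ breaks down. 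Nor can you cheaply repair this by passing to the closure $\cal F(\Gamma)$, since the retraction $r_T$ of Lemma \ref{lem:retract} genuinely requires a path between $\alpha$ and $\alpha\beta$ and does not extend to forests while remaining a retraction onto $Z^1(\Gamma,K^\Gamma)$.

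The paper sidesteps this entirely with a one-line observation you are missing: by the Ab\'ert--Weiss theorem, every free {\pmp} action of $\Gamma$ weakly contains the Bernoulli shift $\Gamma\acts^s([0,1]^\Gamma,\lambda^\Gamma)$; since $\Gamma$ is strongly treeable, the Bernoulli shift is itself treeable and hence factors onto $\Gamma\acts^d(\cal T(\Gamma),\tau)$ for a \emph{single fixed} $\tau$. By transitivity of weak containment, every model of $T_{\Gamma,free}$ weakly contains this one $(\cal T(\Gamma),\tau)$, and Proposition \ref{prop:weaktree} then gives a uniform $\delta(\epsilon)$ with no need to vary $\tau$ at all. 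This is both simpler and avoids the non-compactness issue.
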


\begin{proof}
	The theorem follows immediately from the previous proposition by noting that, in either case, a model of the respective theory weakly contains $(\cal T(\Gamma),\tau)$ for some invariant Borel probability measure $\tau$ on $\cal T(\Gamma)$.  Indeed, this is immediate in the second case by assumption.  In the first case, by a result of Abert-Weiss \cite{AW13}, every free {\pmp} action of $\Gamma$ weakly contains the Bernoulli shift action $\Gamma \acts^s ([0,1], \lambda)$. Since $\Gamma$ is strongly treeable, the action $\Gamma \acts^s ([0,1], \lambda)$ must be treeable, meaning it admits a factor map to $\Gamma \acts^d (\cal T(\Gamma), \tau)$ for some invariant Borel probability measure $\tau$. It follows that $\Gamma \acts^s ([0,1], \lambda)$ weakly contains $\Gamma \acts^d (\cal T(\Gamma), \tau)$, and since weak containment is transitive, every free {\pmp} action of $\Gamma$ weakly contains $\Gamma \acts (\cal T(\Gamma), \tau)$.
\end{proof}

We similarly generalize Lemma \ref{freeweakMD}:

\begin{thm}\label{stronglytreeableweakMD}
	If $\Gamma$ is strongly treeable, then $\Gamma$ has the extension-MD property.  More generally, if $\Gamma$ is a treeable group, and $\Gamma \acts^a (X, \mu)$ is a {\pmp} action that weakly contains a free treeable {\pmp} action of $\Gamma$, then $\Gamma \acts^a (X, \mu)$ is extension-MD.
\end{thm}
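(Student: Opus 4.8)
The plan is to reduce, via Lemma~\ref{lem:exMD}, to showing that $Z^1(a, G)_{fin}$ is dense in $Z^1(a, G)$ when $G = \Aut([0,1],\lambda)$; by that lemma this density forces $\Gamma \acts^a (X,\mu)$ to be extension-MD. The first assertion of the theorem then follows from the second: when $\Gamma$ is strongly treeable, the Ab\'ert--Weiss theorem \cite{AW13} (as recalled in the proof of Theorem~\ref{treeabledefcocyle}) shows every free {\pmp} action of $\Gamma$ weakly contains the Bernoulli shift $\Gamma \acts^s ([0,1],\lambda)$, which is free and, by strong treeability, treeable, so every free {\pmp} action of $\Gamma$ weakly contains a free treeable action. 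Finally, if $\Gamma \acts^a (X,\mu)$ weakly contains a free treeable action $\Gamma \acts^b (Z,\zeta)$, a treeing of $\cal R_b$ yields a factor map from $b$ onto $\Gamma \acts^d (\cal T(\Gamma), \tau)$ with $\tau$ its push-forward, so since weak containment is transitive and every factor of an action is weakly contained in it, $a$ weakly contains $\Gamma \acts^d (\cal T(\Gamma), \tau)$. We may assume $X$ is standard Borel, passing to a factor if necessary, and we let $\omega$ be the measure on $X \times \cal T(\Gamma)$ furnished by Lemma~\ref{lem:weaklift}; note that its construction uses only $a$ and $\tau$, and that the approximation clause of that lemma is valid with this same $\omega$ for every Polish target group.

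Now fix an equivariant measurable $z : X \to Z^1(\Gamma, G^\Gamma)$ (corresponding via Lemma~\ref{lem:CocycleCorr} to a cocycle $\sigma : \Gamma \times X \to G$) and $\epsilon > 0$. Since $z$ takes values in $Z^1(\Gamma, G^\Gamma)$ and each $r_T$ of Lemma~\ref{lem:retract} restricts to the identity there, $\int \rho_1(z(x), r_T(z(x))) \, d\omega = 0$. The crucial step is to round $z$ to an equivariant \emph{honest cochain} $\theta_0 : X \to C^1(\Gamma, H^\Gamma)$ valued in some fixed finite subgroup $H \leq G$ (from the increasing sequence of finite dyadic-interval groups with dense union used in the proof of Lemma~\ref{freeweakMD}) so that both $\int \rho_1(z(x), \theta_0(x)) \, d\mu$ and $\int \rho_1(\theta_0(x), r_T(\theta_0(x))) \, d\omega$ are as small as we wish. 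The first is routine: round the maps $x \mapsto \sigma(\delta, x)$ in measure for $\delta$ in a large finite subset of $\Gamma$ (value $e_G$ otherwise). For the second, the point is that $r_T(\theta_0(x))$ is a product of edge-values of $\theta_0(x)$ along the geodesic of $T$ joining two fixed group elements, a geodesic whose length is a priori unbounded over $T$; one controls this using (i) tightness of the push-forwards of $\mu$ under the finitely many maps $x \mapsto \sigma(\delta, \gamma^a x)$ that arise as edge-values for the (finitely many) coordinates relevant to $\rho_1$, so that these values lie in a fixed compact $K \subseteq G$ off a set of small measure; (ii) the fact that the relevant $T$-geodesics have length $\leq N$ with edge labels in a fixed finite set off an $\omega$-small set (both are $\Gamma$-valued measurable functions of $(x,T)$, finite a.e.); and (iii) joint continuity of the $N$-fold product map together with $\zeta$-density of some $H_n$ in $K$, which let a sufficiently fine rounding make $r_T(\theta_0(x))$ within $\epsilon/8$ of $r_T(z(x)) = z(x)$ for $\omega$-almost every $(x,T)$. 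Combining the two estimates gives $\int \rho_1(\theta_0, r_T(\theta_0)) \, d\omega < \epsilon/4$.

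With $H$ now fixed, apply Lemma~\ref{lem:weaklift} with target group $H$ and input $\theta_0$, noting that $r_T$ and the operator $\hat r_y$ in its proof preserve being $H$-valued: this yields an equivariant $\theta_1 : X \to C^1(\Gamma, H^\Gamma)$ with $\int \rho_1(\theta_0, \theta_1) \, d\mu < \int \rho_1(\theta_0, r_T(\theta_0)) \, d\omega + \epsilon'$ and $\int \rho_2(\partial \theta_1(x), e_{C^2(\Gamma, H^\Gamma)}) \, d\mu < \epsilon'$, where $\epsilon' > 0$ is at our disposal. Choosing $\epsilon' < \min(\epsilon/4, \delta(\epsilon/4))$, where $\delta(\cdot)$ is the function supplied by Proposition~\ref{prop:weaktree} for the action $a$ and the \emph{finite} (hence compact) group $H$, that proposition converts the near-cocycle $\theta_1$ into an equivariant honest cocycle $z' : X \to Z^1(\Gamma, H^\Gamma)$ with $\int \rho_1(\theta_1, z') \, d\mu < \epsilon/4$. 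Tracking the three estimates gives $\int \rho_1(z(x), z'(x)) \, d\mu < \epsilon$, and since $z'$ corresponds to a cocycle valued in the finite group $H$, this establishes the required density and hence, via Lemma~\ref{lem:exMD}, the theorem.

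The main obstacle is precisely the non-compactness of $\Aut([0,1],\lambda)$. It blocks any direct appeal to Proposition~\ref{prop:weaktree}, and, more insidiously, it means the retraction $r_T$ need not be uniformly continuous, so blindly rounding a cocycle into $\Aut([0,1],\lambda)$ to a finite subgroup can, after passing through $r_T$, drift far from the original. The way around this is the two-layer construction above: one first descends to a fixed finite subgroup $H$ via a rounding careful enough — using tightness and continuity/compactness arguments — that the $\omega$-average distance to the $r_T$-retraction stays small, temporarily paying only with the loss of the exact cocycle identity; one then uses Lemma~\ref{lem:weaklift} to drive that error down while keeping the cochain $H$-valued and close to $z$; and finally one invokes Proposition~\ref{prop:weaktree} over the now-compact group $H$ to restore an honest cocycle. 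A secondary point worth stating carefully is the dictionary between $\rho_1$-closeness of the corresponding equivariant maps and the topology on $Z^1(a, G)$ used in Lemma~\ref{lem:exMD}, which is routine given that both are controlled coordinatewise and $a$ is measure-preserving.
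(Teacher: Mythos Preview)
Your proof is correct and follows essentially the same three-step strategy as the paper: round $z$ to an $H$-valued equivariant cochain $\theta$ with both $\int \rho_1(z,\theta)\,d\mu$ and $\int \rho_1(\theta, r_T(\theta))\,d\omega$ small, apply Lemma~\ref{lem:weaklift} (which preserves $H$-valuedness) to get a near-cocycle, then apply Proposition~\ref{prop:weaktree} over the finite (hence compact) group $H$ to obtain an honest $H$-valued cocycle $z'$. The only substantive difference is in your justification of the rounding step: you control $\int \rho_1(\theta, r_T(\theta))\,d\omega$ by an explicit tightness/compactness argument bounding geodesic lengths and edge labels, whereas the paper simply observes that for each fixed $T$ the map $c \mapsto \rho_1(c, r_T(c))$ is continuous on $C^1(\Gamma,G^\Gamma)$ (since $r_T$ is continuous), so that $\rho_1(\theta(x), r_T(\theta(x))) \to \rho_1(z(x), r_T(z(x))) = 0$ for $\omega$-a.e.\ $(x,T)$ as $\theta \to z$, and then dominated convergence (using $\rho_1 \leq 1$) gives the integral bound. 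This cleaner route sidesteps the uniform-continuity worry you flag, and makes your steps (i)--(iii) unnecessary.
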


\begin{proof}
	Recalling that every free {\pmp} action of a strongly treeable group weakly contains a free treeable {\pmp} action (see the proof of Theorem \ref{treeabledefcocyle}), it suffices to prove the second statement. So let $\tau$ be an invariant Borel probability measure on $\cal T(\Gamma)$ and suppose that $\Gamma \acts^a (X, \mu)$ is a {\pmp} action that weakly contains $\Gamma \acts^d (\cal T(\Gamma), \tau)$. It will suffice to verify condition (\ref{item:exMD4}) of Lemma \ref{lem:exMD}. So let $z : X \rightarrow Z^1(\Gamma, G^\Gamma)$ be a measurable equivariant map, where $G = \Aut([0,1], \lambda)$ equipped with the weak topology, and let $\epsilon > 0$. Also let $\rho_1$ and $\rho_2$ be metrics on $C^1(\Gamma, G^\Gamma)$ and $C^2(\Gamma, G^\Gamma)$ satisfying the conditions stated prior to Lemma \ref{lem:weaklift}. As in the proof of Proposition \ref{prop:weaktree}, without loss of generality we will assume that $X$ is a standard Borel space.
	
	For any subgroup $H \leq G$ we can build an equivariant map $\theta : X \rightarrow C^1(\Gamma, H^\Gamma)$ approximating $z$ by choosing, independently for each $\beta \in \Gamma$, a measurable map $x \in X \mapsto \theta(x)(\beta)(e) \in H$ that approximates $x \mapsto z(x)(\beta)(e)$, and then define $\theta(x)(\beta)(\alpha) = \theta((\alpha^{-1})^a \cdot x)(\beta)(e)$ for $\alpha, \beta \in \Gamma$. It is easily checked that any such $\theta$ will indeed be equivariant. Upon noting that the map $c \in C^1(\Gamma, G^\Gamma) \mapsto \rho_1(c, r_T(c))$ is continuous for every $T \in \cal T(\Gamma)$, that $\rho_1(z(x), r_T(z(x))) = 0$ for $\omega$-almost-every $(x,T)$, and that $G$ admits a dense subset that is an increasing union of finite subgroups, we see that we can find a finite subgroup $H$ and construct a measurable equivariant map $\theta : X \rightarrow C^1(\Gamma, H^\Gamma)$ that approximates $z$ sufficiently well so that
	\[\int \rho_1(z(x), \theta(x)) \ d \mu < \epsilon / 3 \quad \text{and} \quad \int \rho_1(\theta(x), r_T(\theta(x))) \ d \omega < \epsilon / 3.\]
	Let $\delta(\epsilon/3) > 0$ be as given by Proposition \ref{prop:weaktree} for $\tau$ and the compact group $H$. Now apply first Lemma \ref{lem:weaklift} to get a measurable equivariant map $\theta' : X \rightarrow C^1(\Gamma, H^\Gamma)$ satisfying
	\[\int \rho_1(\theta(x), \theta'(x)) \ d \mu < \epsilon/3\]
	and
	\[\int \rho_2(\partial \theta'(x), e_{C^2(\Gamma, G^\Gamma)}) \ d \mu < \delta(\epsilon/3),\]
	and next apply Proposition \ref{prop:weaktree} to $\theta'$ to obtain a measurable equivariant map $z' : X \rightarrow Z^1(\Gamma, H^\Gamma)$ with $\int \rho_1(\theta'(x), z'(x)) \ d \mu < \epsilon / 3$. Since
	\[\rho_1(z(x),z'(x)) \leq \rho_1(z(x), \theta(x)) + \rho_1(\theta(x), \theta'(x)) + \rho_1(\theta'(x), z'(x)),\]
	we have $\int \rho_1(z(x), z'(x)) \ d \mu < \epsilon$.
\end{proof}

The following is the main conclusion of this section:

\begin{thm}\label{treeableecchar}
    If $\Gamma$ is treeable, then $T_\Gamma^*$ exists and the axioms for $T_\Gamma^*$ are given by the axioms for $T_{\Gamma,\max}$ together with the sentences $\theta_{k,q,F}$ from the proof of Theorem \ref{mainaxioms1}. If, in addition, $\Gamma$ is strongly treeable, then we may replace the axioms for $T_{\Gamma,\max}$ by the axioms for $T_{\Gamma,free}$. 
\end{thm}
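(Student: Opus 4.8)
The plan is to deduce both assertions by feeding the results of the preceding three subsections into the abstract axiomatization theorems of Subsection \ref{subs:definablecocycle}--\ref{subs:weakMD}, namely Theorems \ref{mainaxioms1} and \ref{mainaxioms2}. No new argument is needed beyond assembling these pieces.

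First I would treat the strongly treeable case. Here Theorem \ref{stronglytreeableweakMD} tells us that $\Gamma$ has the extension-MD property, and Theorem \ref{treeabledefcocyle}(1) tells us that $T_{\Gamma,free}$ has the definable cocycle property. These are precisely the two hypotheses of Theorem \ref{mainaxioms1}, whose proof shows that $T_\Gamma^*$ exists and is axiomatized by $T_{\Gamma,free}$ together with the sentences $\theta_{k,q,F}$. This gives the second assertion of the theorem.

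Next I would treat the general treeable case via Theorem \ref{mainaxioms2}, which requires two inputs: that $T_{\Gamma,\max}$ has the definable cocycle property, and that every action of $\Gamma$ maximal for weak containment is extension-MD. The first is exactly Theorem \ref{treeabledefcocyle}(2). For the second, recall that since $\Gamma$ is treeable it admits, by definition, a free {\pmp} action $\Gamma \acts (Z,\eta)$ that is treeable; any action $\Gamma \acts^a (X,\mu)$ that is maximal for weak containment weakly contains $\Gamma \acts (Z,\eta)$, so the second (``more general'') statement of Theorem \ref{stronglytreeableweakMD} applies to show that $\Gamma \acts^a (X,\mu)$ is extension-MD. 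Thus both hypotheses of Theorem \ref{mainaxioms2} hold, and that theorem yields that $T_\Gamma^*$ exists and is axiomatized by the axioms for $T_{\Gamma,\max}$ together with the sentences $\theta_{k,q,F}$ from the proof of Theorem \ref{mainaxioms1}, as claimed.

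There is no real obstacle here; the content of the theorem is entirely carried by the cited results, and the only point requiring a moment's care is the observation in the treeable case that a locally universal action weakly contains a free treeable action (so that Theorem \ref{stronglytreeableweakMD} can be invoked), which is immediate from the definition of treeability and transitivity of weak containment.
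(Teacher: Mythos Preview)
Your proposal is correct and matches the paper's approach exactly: the theorem is stated in the paper as ``the main conclusion of this section'' with no separate proof, and is meant to follow precisely by combining Theorems \ref{treeabledefcocyle} and \ref{stronglytreeableweakMD} with Theorems \ref{mainaxioms1} and \ref{mainaxioms2}, just as you do. The only point the paper leaves implicit and that you correctly make explicit is that, in the treeable case, any locally universal action weakly contains a free treeable action, so the general clause of Theorem \ref{stronglytreeableweakMD} applies.
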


Furthermore, we obtain a natural-to-state characterization of the e.c.\ actions of treeable groups.

\begin{thm} \label{thm:simple}
	Let $\Gamma$ be a treeable group. Then a {\pmp} action $\Gamma \acts^a (X, \mu)$ is e.c.\ if and only if all of the following hold:
	\begin{enumerate}
		\item \label{item:simple1} $\Gamma \acts^a (X, \mu)$ weakly contains a free treeable action of $\Gamma$;
		\item \label{item:simple2} $\Gamma \acts^{a \times \id} (X \times [0,1], \mu \times \lambda)$ is an e.c.\ extension of $\Gamma \acts^a (X, \mu)$;
		\item \label{item:simple3} $B^1(a, \operatorname{Sym}(k))$ is dense in $Z^1(a, \operatorname{Sym}(k))$ for every $k \in \bb N$.
	\end{enumerate}
\end{thm}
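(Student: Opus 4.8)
The plan is to assemble the theorem directly from the machinery of the preceding subsections; both implications amount to citing the right results in the right order, and the genuine content has already been placed in Theorem \ref{stronglytreeableweakMD}, Corollary \ref{cor:simple}, and Lemma \ref{eccocycle}.

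For the forward direction, I would assume $\Gamma \acts^a (X, \mu)$ is e.c.\ and verify the three conditions in turn. Condition (\ref{item:simple2}) is immediate from the definition of e.c.: the trivial extension $X \times [0,1] \to X$ induces an inclusion $\cal M_a \subseteq \cal M_{a \times \id}$ of models of $T_\Gamma$, and an e.c.\ model is by definition e.c.\ in every model extending it, so this factor map is e.c. Condition (\ref{item:simple3}) is exactly Lemma \ref{eccocycle} applied to the finite group $K = \operatorname{Sym}(k)$. For condition (\ref{item:simple1}), Lemma \ref{lem:locuniv} gives that $a$ is locally universal; since $\Gamma$ is a treeable group it admits at least one free treeable {\pmp} action, and local universality of $a$ means precisely that this action — indeed every {\pmp} action of $\Gamma$ — is weakly contained in $a$.

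For the converse, I would assume (\ref{item:simple1}), (\ref{item:simple2}), and (\ref{item:simple3}) and argue as follows. By (\ref{item:simple1}) together with the general form of Theorem \ref{stronglytreeableweakMD} (which only requires that $a$ weakly contain a free treeable action of $\Gamma$), the action $\Gamma \acts^a (X, \mu)$ is extension-MD. By Lemma \ref{weakMDlemma}, an extension-MD action is e.c.\ if and only if it is e.c.\ for finite-to-one extensions, so it suffices to establish the latter. This is where Corollary \ref{cor:simple} enters: condition (\ref{item:simple2}) is precisely the hypothesis of that corollary (that $\Gamma \acts^{a \times \id}(X \times [0,1], \mu \times \lambda)$ is an e.c.\ extension of $a$), and under this hypothesis the corollary asserts that $a$ is e.c.\ for finite-to-one extensions if and only if $B^1(a, \operatorname{Sym}(k))$ is dense in $Z^1(a, \operatorname{Sym}(k))$ for all $k$, which is exactly condition (\ref{item:simple3}). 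Hence $a$ is e.c.\ for finite-to-one extensions, and combining this with the extension-MD property yields that $a$ is e.c.

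The only points requiring care are purely formal: checking that condition (\ref{item:simple1}) is verbatim the hypothesis needed for the general clause of Theorem \ref{stronglytreeableweakMD}, and observing that no standardness assumption on $(X,\mu)$ is needed anywhere, since the extension-MD property and all cited statements (via Proposition \ref{weakMDcriterion}) are formulated for actions on arbitrary probability spaces. I do not anticipate any substantial obstacle beyond correctly chaining these citations; the theorem is essentially the payoff of the work already done in Section \ref{concrete}.
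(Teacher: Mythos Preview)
Your proposal is correct and follows essentially the same approach as the paper's proof, which likewise derives the forward implication from Lemma \ref{lem:locuniv}, Lemma \ref{eccocycle}, and the definition of e.c., and the reverse implication from Theorem \ref{stronglytreeableweakMD}, Lemma \ref{weakMDlemma}, and Corollary \ref{cor:simple}. Your additional remark about standardness is a reasonable sanity check but not something the paper pauses over.
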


Notice that when $\Gamma$ is strongly treeable (\ref{item:simple1}) can be replaced with the requirement that the action $a$ is free.

\begin{proof}[Proof of Theorem \ref{thm:simple}]
	The forward implication follows from Lemmas \ref{lem:locuniv} and \ref{eccocycle} and the definition of being e.c., and the reverse implication follows from Corollary \ref{cor:simple}, Lemma \ref{weakMDlemma} and Theorem \ref{stronglytreeableweakMD}.
\end{proof}

Since conditions (\ref{item:simple1}) and (\ref{item:simple2}) are obviously necessary but not sufficient, the key condition in the above characterization is (\ref{item:simple3}). This reveals that being e.c.\ (for treeable groups) is closely tied to cohomological properties of the action. We also remark that when $\Gamma \acts^a (X, \mu)$ is ergodic, condition (\ref{item:simple2}) is equivalent to $\Gamma \acts^a (X, \mu)$ not being strongly ergodic (the forward implication holds since $a$ weakly contains the non-ergodic action $a \times \id$, and the reverse implication can be seen from the (very short) proof of \cite[Theorem 3]{AW13}).

\section{Approximately treeability and existence of the model companion}\label{approximatelytreeable}

In this final section, we seek to establish the existence of $T_\Gamma^*$ for as large a class of groups as we are able, but without insisting on finding explicit axioms for $T_\Gamma^*$ (though the general set of axioms described in Subsection \ref{openmappingsub} will always apply). Ultimately, we use the open mapping characterization of the existence of the model companion (Theorem \ref{prop:openmap}) to show that $T_\Gamma^*$ exists whenever $\Gamma$ is an approximately treeable group.

\subsection{Approximately treeable groups}

We first give some constructions for and examples of approximately treeable groups. Recall that every treeable group is necessarily approximately treeable.

\begin{lem} \label{lem:union_approx_tree}
    If $\Gamma$ is an increasing union of approximately treeable groups, then $\Gamma$ is approximately treeable.
\end{lem}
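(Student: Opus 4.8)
The plan is to use the characterization of approximate treeability recorded in the Proposition of Subsection~\ref{treeabletypes}: a countable group $\Delta$ is approximately treeable if and only if for every finite $H \subseteq \Delta$ and every $\epsilon > 0$ there is a $\Delta$-invariant Borel probability measure $\mu$ on $\cal F(\Delta)$ with $\mu(\{F \in \cal F(\Delta) : H \times H \not\subseteq E_F\}) < \epsilon$. Write $\Gamma = \bigcup_n \Gamma_n$ as an increasing union of approximately treeable subgroups, and fix a finite $H \subseteq \Gamma$ and $\epsilon > 0$. Since $H$ is finite, $H \subseteq \Lambda := \Gamma_n$ for some $n$, and approximate treeability of $\Lambda$ provides a $\Lambda$-invariant measure $\mu_\Lambda$ on $\cal F(\Lambda)$ with $\mu_\Lambda(\{F' \in \cal F(\Lambda) : H \times H \not\subseteq E_{F'}\}) < \epsilon$. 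The task is then to manufacture from $\mu_\Lambda$ a $\Gamma$-invariant measure $\mu_\Gamma$ on $\cal F(\Gamma)$ enjoying the analogous smallness property for $H$.

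The construction I would use is coinduction along the coset space $\Gamma/\Lambda$. Fix a transversal $r : \Gamma/\Lambda \to \Gamma$ with $r(\Lambda) = e$, let $\rho(\gamma, a\Lambda) = r(\gamma a \Lambda)^{-1}\gamma r(a\Lambda) \in \Lambda$ be the associated cocycle, and let $\Gamma$ act on $\cal F(\Lambda)^{\Gamma/\Lambda}$ by the coinduced action $(\gamma \cdot \bar F)(a\Lambda) = \rho(\gamma, \gamma^{-1} a\Lambda)^d \cdot \bar F(\gamma^{-1} a\Lambda)$, exactly as in the proofs of Lemmas~\ref{expgroups} and \ref{relexpgroups}; since $\mu_\Lambda$ is $d$-invariant, the product measure $\mu_\Lambda^{\otimes \Gamma/\Lambda}$ is invariant under this action. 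Next I would define a map $\iota : \cal F(\Lambda)^{\Gamma/\Lambda} \to \Pow{\Gamma \times \Gamma}$ by declaring $(g, h) \in \iota(\bar F)$ precisely when $g$ and $h$ lie in a common coset $a\Lambda$ and $(r(a\Lambda)^{-1}g, r(a\Lambda)^{-1}h) \in \bar F(a\Lambda)$; in words, $\iota(\bar F)$ is the disjoint union, over the cosets $a\Lambda$, of the $r(a\Lambda)$-translate of the forest $\bar F(a\Lambda)$ placed on the vertex set $a\Lambda$. One checks readily that $\iota(\bar F)$ is irreflexive, antisymmetric, and acyclic — every connected component of $\iota(\bar F) \cup \overline{\iota(\bar F)}$ is confined to a single coset, where it is a left-translate of a component of $\bar F(a\Lambda)\cup\overline{\bar F(a\Lambda)}$ — so $\iota$ takes values in $\cal F(\Gamma)$ and is continuous. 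I would then verify that $\iota$ is $\Gamma$-equivariant, intertwining the coinduced action with the $d$-action on $\cal F(\Gamma)$, via the identity $\rho(\gamma, \gamma^{-1}a\Lambda)^{-1} r(a\Lambda)^{-1} = r(\gamma^{-1}a\Lambda)^{-1}\gamma^{-1}$. Setting $\mu_\Gamma := \iota_*\bigl(\mu_\Lambda^{\otimes \Gamma/\Lambda}\bigr)$ produces a $\Gamma$-invariant Borel probability measure on $\cal F(\Gamma)$ (noting that $\cal F(\Gamma)$ is a closed, hence compact metrizable, subset of $\Pow{\Gamma \times \Gamma}$, so this is unproblematic measure-theoretically).

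Finally I would estimate $\mu_\Gamma(\{F : H \times H \not\subseteq E_F\})$. Because $H \subseteq \Lambda = e\Lambda$ and $r(\Lambda) = e$, the edges of $\iota(\bar F)$ lying within the vertex set $\Lambda$ are exactly the edges of $\bar F(\Lambda)$, and no edge of $\iota(\bar F)$ joins two distinct cosets; hence $E_{\iota(\bar F)} \cap (\Lambda \times \Lambda) = E_{\bar F(\Lambda)}$, so for every $\bar F \in \cal F(\Lambda)^{\Gamma/\Lambda}$ we have $H \times H \subseteq E_{\iota(\bar F)}$ if and only if $H \times H \subseteq E_{\bar F(\Lambda)}$. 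Since the $\Lambda$-coordinate marginal of the product measure $\mu_\Lambda^{\otimes \Gamma/\Lambda}$ is $\mu_\Lambda$, this yields
$$\mu_\Gamma(\{F \in \cal F(\Gamma): H \times H \not\subseteq E_F\}) = \mu_\Lambda(\{F' \in \cal F(\Lambda) : H \times H \not\subseteq E_{F'}\}) < \epsilon.$$
As $H$ and $\epsilon$ were arbitrary, the characterization above shows that $\Gamma$ is approximately treeable. The one genuinely delicate point — and the step I expect to require the most care to write out — is the equivariance of $\iota$ with respect to the coinduced $\Gamma$-action and the $d$-action on $\cal F(\Gamma)$; the remainder is bookkeeping with cosets and with the definition of $E_F$. (Alternatively one could run the same argument at the level of p.m.p.\ actions, applying the coinduction functor $\mathrm{CoInd}_\Lambda^\Gamma$ to a treeable-up-to-$\epsilon$ action of $\Lambda$ together with its associated equivariant map into $\cal F(\Lambda)$, but the measure-level formulation above seems the most economical.)
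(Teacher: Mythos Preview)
Your proposal is correct and follows essentially the same construction as the paper: both take the $\Lambda$-invariant forest measure $\mu_\Lambda$, translate it to each left coset of $\Lambda$, and form the independent product over $\Gamma/\Lambda$ to obtain a $\Gamma$-invariant measure on $\cal F(\Gamma)$. The only difference is presentational: the paper observes directly that $\gamma^d_*\nu$ depends only on the coset $\gamma\Lambda$ (since $\nu$ is $\Lambda$-invariant) and then declares $\mu$ to be the product of these coset measures, whereas you route the same idea through the coinduction formalism with an explicit transversal, cocycle, and the map $\iota$; your ``delicate'' equivariance check for $\iota$ is exactly the paper's one-line observation about coset-independence.
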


\begin{proof}
   Say $\Gamma = \bigcup_n \Gamma_n$, with $\Gamma_n \leq \Gamma_{n+1}$ and $\Gamma_n$ approximately treeable for every $n$. Let $H \subseteq \Gamma$ be finite and let $\epsilon > 0$. Pick any $n$ with $H \subseteq \Gamma_n$. Let $\nu$ be a $\Gamma_n$-invariant Borel probability measure on $\cal F(\Gamma_n)$ with $\nu(\{F \in \cal F(\Gamma_n) : H \times H \not\subseteq E_F\}) < \epsilon$. Viewing $\cal F(\Gamma_n)$ as a subset of $\cal F(\Gamma)$, the pushforward measure $\gamma^d_* \nu$ depends only on the coset $\gamma \Gamma_n \in \Gamma / \Gamma_n$ and not on the particular representative $\gamma$. We can view the product $\prod_{\gamma \Gamma_n \in \Gamma / \Gamma_n} \gamma^d \cdot \cal F(\Gamma_n)$ as a subset of $\cal F(\Gamma)$, and by letting $\mu$ be the measure on $\cal F(\Gamma)$ obtained as the product of the measures $\gamma^d_* \nu$ for $\gamma \Gamma_n \in \Gamma / \Gamma_n$, we see that $\mu$ is a $\Gamma$-invariant measure satisfying $\mu(\{F \in \cal F(\Gamma) : H \times H \not\subseteq E_F\}) < \epsilon$. Thus $\Gamma$ is approximately treeable.
\end{proof}

\begin{lem} \label{lem:some_approx_tree}
	If $\Lambda \lhd \Gamma$, $\Lambda$ is approximately treeable, and $\Gamma / \Lambda$ is amenable, then $\Gamma$ is approximately treeable.  
\end{lem}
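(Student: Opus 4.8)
The plan is to verify the characterization of approximate treeability from Subsection~\ref{treeabletypes} (the part stating that $\Gamma$ is approximately treeable iff for every finite $H\subseteq\Gamma$ and $\epsilon>0$ there is a $\Gamma$-invariant Borel probability measure $\mu$ on $\cal F(\Gamma)$ with $\mu(\{F:H\times H\not\subseteq E_F\})<\epsilon$). Fix such an $H$ (we may assume $e\in H=H^{-1}$) and $\epsilon>0$. I will produce the required $\mu$ as the push-forward, along the equivariant correspondence between measurable acyclic directed graphs $\cal G\subseteq\cal R_a$ and equivariant maps $X\to\cal F(\Gamma)$ from that subsection, of an invariant acyclic measurable graph on a suitable p.m.p.\ action $\Gamma\acts^a(X,\mu)$ for which $H$ lies in a single $\cal R_{\cal G}$-class with probability $>1-\epsilon$.

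First I would assemble $X$ as a product of three $\Gamma$-factors: (i) the co-induction $\mathrm{CInd}_\Lambda^\Gamma$ of a fixed free approximately treeable action $\Lambda\acts^{c}(W,\omega)$ (which exists by hypothesis on $\Lambda$), built as in the proof of Lemma~\ref{relexpgroups} with transversal $r:\Gamma/\Lambda\to\Gamma$ and cocycle $\rho$; (ii) an amenable p.m.p.\ action of $\Gamma/\Lambda$ pulled back to $\Gamma$, carrying — via the Ornstein--Weiss machinery, exactly as in the proof of Lemma~\ref{OWlemma} — a $\Gamma/\Lambda$-invariant random tiling of $\Gamma/\Lambda$ by translates of finitely many finite ``shapes,'' each shape fixed once and for all with a marked center vertex and a spanning tree rooted at that center, the whole structure arranged so that the image $\bar H$ of $H$ in $\Gamma/\Lambda$ lies inside one tile with probability $>1-\epsilon/2$; and (iii) a Bernoulli factor used only for measurable tie-breaking. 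Over this action define $\cal G=\cal G_{\mathrm{in}}\cup\cal G_{\mathrm{br}}$. On the (equivariantly defined) coset which is the center of its tile, $\cal G_{\mathrm{in}}$ places the co-induced transport of an acyclic measurable $\Lambda$-graphing $\cal G_{\Lambda'}\subseteq\cal R_c$ for a fixed finite $\Lambda'\subseteq\Lambda$, chosen large enough that for all $h_1,h_2\in H$ the element $h_1 t_1(h_2 t_2)^{-1}$ lies in $\Lambda'$ whenever $t_1,t_2$ are the products of the fixed shape-direction representatives $s_{\bar d}$ by which the tile-tree pushes the cosets of $h_1,h_2$ to the center coset; this $\Lambda'$ is finite because $H$ is finite and only finitely many shapes of bounded size occur, and by approximate treeability of $c$ we may take $\cal G_{\Lambda'}$ to connect the relevant $\Lambda'$-clusters off a set of $\omega$-measure $<\epsilon/2$. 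On every non-center coset $\cal G_{\mathrm{in}}$ places no edges. For $\cal G_{\mathrm{br}}$: for each edge of a tile-tree, oriented $c\to c^+$ toward the center with $c^+=cs_{\bar d}$, add the single matching $\{(\gamma,\gamma s_{\bar d}):\gamma\in c\}\subseteq\cal R_a$, which is manifestly $\Gamma$-equivariant.

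The two points requiring real work — and the crux of the argument — are (a) acyclicity of $\cal G$, and (b) the bookkeeping making the tiling with canonical centers and rooted trees genuinely $\Gamma$-equivariant. For (a): contracting each coset to a point sends $\cal G$ onto the disjoint union of the acyclic tile-trees, so a hypothetical cycle $C$ in $\cal G$ must either lie inside one center coset, where $\cal G_{\Lambda'}$ is acyclic — impossible — or ``backtrack'' at some tile-tree edge $e=\{c,c^+\}$ with $c$ on the leaf side; choosing $e$ so that the subtree $T_c$ below $c$ is minimal, one shows by induction on the depth of $T_c$ that $\cal G$ restricted to the cosets of $T_c$ is a forest in which no two distinct points of $c$ lie in a common component (base case: a leaf coset is non-central, hence edgeless; inductive step: $c$ is non-central hence edgeless, and the only way out of $T_c$ to a child subtree is through $c$ itself, so simplicity of the path forces no progress). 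Consequently the excursions of $C$ into $T_c$ all enter and leave $c$ at a single point via the $e$-matching edge, so $C$ reuses that edge — contradicting $C$ being a cycle. For connectivity: when $H$ lies in one tile, each $h\in H$ is carried by the matchings along its tile-tree path to a point $\tilde h$ of the center coset, and by the choice of $\Lambda'$ all the $\tilde h$ lie in one $\Lambda'$-cluster there (using normality of $\Lambda$ to see $\tilde h_1\tilde h_2^{-1}\in\Lambda$), hence are joined by $\cal G_{\Lambda'}$ off a set of measure $<\epsilon/2$; a union bound then gives $\mu(\{x:H\times H\subseteq E_{\phi_{\cal G}(x)}\})>1-\epsilon$, and pushing forward along $\phi_{\cal G}:X\to\cal F(\Gamma)$ produces the desired invariant measure. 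I expect (b), together with writing the induction in (a) cleanly, to be the only genuinely delicate part; everything else is routine given Lemmas~\ref{OWlemma} and~\ref{relexpgroups} and the correspondences set up in Subsection~\ref{treeabletypes}.
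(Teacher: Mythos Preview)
Your proposal is correct and follows the same blueprint as the paper: build a free $\Gamma$-action as a product of a coinduced $\Lambda$-action and an amenable-quotient factor, use amenability of $\Gamma/\Lambda$ to identify ``centers,'' lay the approximate $\Lambda$-forest at the centers, and connect non-center points inward.

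The paper's execution is noticeably simpler than yours, however. Rather than an Ornstein--Weiss tiling equipped with rooted spanning trees, the paper just invokes hyperfiniteness of the $\Gamma/\Lambda$-orbit relation on the Bernoulli factor $[0,1]^{\Gamma/\Lambda}$ to get a finite Borel subequivalence relation $F$ and a Borel transversal $D$, and then connects each point with $z\notin D$ to its $D$-representative by a \emph{single} directed edge $((\bar y,z),\,d(z)^a\cdot(\bar y,z))$. Such a point then has exactly one $\cal G$-edge, outgoing, and its target lies over $D$; so acyclicity of $\cal G$ is immediate --- a star glued onto an acyclic $\cal H'$ --- with no need for your inductive tree-contraction argument in (a). The connectivity check collapses to the single computation $h_1=d(h_0^c\cdot z)\,h_0\,d(z)^{-1}\in H_1H_0H_1^{-1}\cap\Lambda$. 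Your worry (b) about equivariance of the canonical centers/trees also disappears: $D$ is just a Borel set, and the graph is visibly Borel. Your tree-path scheme works, but it is strictly more elaborate and buys nothing extra here.
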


\begin{proof}
    Fix a choice of representatives $r : \Gamma / \Lambda \rightarrow \Gamma$ for the cosets of $\Lambda$ in $\Gamma$ with $r(\Lambda) = e$. Define the cocycle $\rho : \Gamma \times (\Gamma / \Lambda) \rightarrow \Lambda$ by $\rho(\gamma, a \Lambda) = r(\gamma a \Lambda)^{-1} \gamma r(a \Lambda)$. Let $\Lambda \acts^b (Y, \nu)$ be a free approximately treeable {\pmp} action. Define the action $\Gamma \acts^{b'} (Y^{\Gamma / \Lambda}, \nu^{\Gamma / \Lambda})$ by
    \[(\gamma^{b'} \cdot \bar y)(a \Lambda) = (\rho(\gamma^{-1}, a \Lambda)^{-1})^b \cdot \bar y(\gamma^{-1} a \Lambda).\]
    Since $\Lambda \acts^b (Y, \nu)$ is free and measure-preserving, it is easily seen that $\Gamma \acts^{b'} (Y^{\Gamma / \Lambda}, \nu^{\Gamma / \Lambda})$ is free and measure-preserving as well.

    Let $m$ denote Lebesgue measure on $[0,1]$ and let $\Gamma \acts^c ([0,1]^{\Gamma / \Lambda}, m^{\Gamma / \Lambda})$ be the left-shift action given by
    \[(\gamma^c \cdot z)(a \Lambda) = z(\gamma^{-1} a \Lambda).\]
    Let $(X, \mu)$ be the product $(Y^{\Gamma / \Lambda} \times [0,1]^{\Gamma / \Lambda}, \nu^{\Gamma / \Lambda} \times m^{\Gamma / \Lambda})$ and let $\Gamma \acts^a (X, \mu)$ be the direct product action $a = b' \times c$. Since the action $\Gamma \acts^a (X, \mu)$ is free and measure-preserving, it suffices to show that it is approximately treeable. Towards this end, fix a finite set $H_0 \subseteq \Gamma$ and an $\epsilon > 0$.

    Since $\Lambda$ is normal in $\Gamma$, almost-every $z \in [0,1]^{\Gamma / \Lambda}$ has stabilizer equal to $\Lambda$. This means that $\Gamma \acts^c ([0,1]^{\Gamma / \Lambda}, m^{\Gamma / \Lambda})$ descends to a free action of $\Gamma / \Lambda$. Since $\Gamma / \Lambda$ is amenable, the orbit equivalence relation $\cal R_c = \{(z, \gamma^c \cdot z) : z \in [0,1]^{\Gamma / \Lambda}, \ \gamma \in \Gamma\}$ must be $m^{\Gamma / \Lambda}$-hyperfinite \cite{OW80}, meaning there is a sequence of measurable equivalence relations $E_n \subseteq \cal R_c$ such that each class of each $E_n$ is finite, $E_n \subseteq E_{n+1}$ for all $n$, and $\bigcup_n E_n$ coincides with $\cal R_c$ on a $\Gamma / \Lambda$-invariant conull set. This implies that we can pick $n$ large enough so that $F = E_n$ satisfies $m^{\Gamma / \Lambda}(Z_0) > 1 - \epsilon/3$ where
    \[Z_0 = \{z \in Z : \forall h \in H_0 \ (h^c \cdot z, z) \in F\}.\]
    Since $F$ is a Borel equivalence relation whose classes are all finite, there exists a Borel set $D \subseteq [0,1]^{\Gamma / \Lambda}$ that contains precisely one point from every $F$-class \cite[Theorem 12.16]{K95}. Let $d : [0,1]^{\Gamma / \Lambda} \rightarrow \Gamma$ be a function satisfying $d(z)^c \cdot z \in D$ and $(d(z)^c \cdot z, z) \in F$ for all $z \in Z$. By fixing an enumeration of the elements of $\Gamma$ and letting $d(z)$ be the least element satisfying these conditions, we have that $d$ is measurable. Pick a finite set $H_1 \subseteq \Gamma$ large enough that $m^{\Gamma / \Lambda}(Z_1) > 1 - \epsilon/3$ where
    \[Z_1 = \{z \in [0,1]^{\Gamma / \Lambda} : \forall h \in H_0 \ d(h^c \cdot z) \in H_1\}.\]

    Using the fact that $\Lambda$ is approximately treeable, pick a measurable directed graph $\cal H \subseteq \cal R_b$ having no cycles such that the equivalence relation $\cal R_\cal H$ given by the $\cal H$-connected components satisfies $\nu(Y_1) > 1 - \epsilon / (3|H_1|)$ where
    \[Y_1 = \{y \in Y : \forall h \in H_1 H_0 H_1^{-1} \cap \Lambda \ (h^b \cdot y, y) \in \cal R_\cal H\}.\]
    Now define a measurable directed graph $\cal H' \subseteq \{((\bar y, z), (\lambda^a \cdot \bar y, z)) : \bar y \in Y^{\Gamma / \Lambda}, \ z \in D, \ \lambda \in \Lambda\}$ via the rule
    \[((\bar y_1, z), (\bar y_2,z)) \in \cal H' \Leftrightarrow z \in D \wedge (\bar y_1(e), \bar y_2(e)) \in \cal H.\]
    Since the map $\pi : Y^{\Gamma / \Lambda} \rightarrow Y$ given by $\pi(\bar y) = y(e)$ is $\Lambda$-equivariant and since the action $\Lambda \acts^b (Y, \nu)$ is free, we see that $\cal H'$ has no cycles.
    
    Finally, define
    \[\cal G = \cal H' \cup \{((\bar y, z), d(z)^a \cdot (\bar y, z)) : \bar y \in Y^{\Gamma / \Lambda}, \ z \in [0,1]^{\Gamma / \Lambda} \setminus D\}.\]
    When $(\bar y, z) \in Y^{\Gamma / \Lambda} \times ([0,1]^{\Gamma / \Lambda} \setminus D)$ we have that $((\bar y, z), d(z)^a \cdot (\bar y, z))$ is the only edge in $\cal G$ leaving $(\bar y, z)$ and there are no edges in $\cal G$ pointing towards $(\bar y, z)$. Therefore $\cal G$ remains acyclic.

    Now suppose $(\bar y, z) \in X_0$ where
    \[X_0 = \left( \bigcap_{h \in H_1} (h^{-1})^{b'} \cdot \pi^{-1}(Y_1) \right) \times (Z_0 \cap Z_1),\]
    and let $h_0 \in H_0$. Then $(h_0^c \cdot z, z) \in F$ which implies that $z' = d(z)^c \cdot z \in D$ is equal to $(d(h_0^c \cdot z) h_0)^c \cdot z$. Also note that
    \[((\bar y, z), d(z)^a \cdot (\bar y, z)) \in \cal G \quad \text{and} \quad (h_0^a \cdot (\bar y, z), (d(h_0^c \cdot z) h_0)^a \cdot (\bar y, z) \in \cal G.\]
    Since $z \in Z_1$ we have that $h_1 = d(h_0^c \cdot z) h_0 d(z)^{-1}$ belongs to $H_1 H_0 H_1^{-1}$. Moreover, $h_1^c \cdot z' = z'$. Since the stabilizer of $z'$ must be $\Lambda$ (excluding a null set), we have $h_1 \in H_1 H_0 H_1^{-1} \cap \Lambda$. Finally, since $\pi(d(z)^{b'} \cdot \bar y) \in Y_1$ we have that the points
    \[d(z)^a \cdot (\bar y, z) = (d(z)^{b'} \cdot \bar y, z')\]
    and
    \[(d(h_0^c \cdot z) h_0)^a \cdot (\bar y, z) = (h_1 d(z))^a \cdot (\bar y, z) = h_1^a \cdot (d(z)^{b'} \cdot \bar y, z')\]
    belong to the same connected component of $\cal H'$. Thus $h_0^a \cdot (\bar y, z)$ and $(\bar y, z)$ belong to the same connected component of $\cal G$. This completes the proof since $\mu(X_0) > 1 - \epsilon$.
\end{proof}

\begin{cor}
    Universally free groups are approximately treeable.
\end{cor}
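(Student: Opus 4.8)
\emph{Proof proposal.} The plan is to reduce the statement, by means of the two lemmas just proved, to the treeability of free groups together with the known structure theory of limit groups.

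First I would record two elementary closure facts. The class of universally free groups is closed under passing to subgroups: if $\Gamma$ embeds into an ultrapower $\mathbb{F}^{\mathcal{U}}$ of a free group and $H \leq \Gamma$, then $H$ embeds into $\mathbb{F}^{\mathcal{U}}$ as well. Moreover, by definition a finitely generated universally free group is precisely a limit group. Since every countable group is the increasing union of its finitely generated subgroups, Lemma \ref{lem:union_approx_tree} reduces the corollary to the assertion that every limit group is approximately treeable.

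Next I would invoke the result of Kochloukova \cite{Kochloukova} already used above (in the proof that $T_\Gamma^*$ exists when $\Gamma$ is universally free): every limit group $\Gamma$ admits a free normal subgroup $\Lambda \lhd \Gamma$ with $\Gamma/\Lambda$ torsion-free nilpotent, in particular amenable. Free groups of any countable rank are strongly treeable, hence treeable, hence approximately treeable; thus $\Lambda$ is approximately treeable. Lemma \ref{lem:some_approx_tree}, applied to the pair $\Lambda \lhd \Gamma$, then yields that $\Gamma$ is approximately treeable, completing the reduction and hence the proof.

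I do not anticipate a serious obstacle here: the argument is essentially bookkeeping, combining Lemmas \ref{lem:union_approx_tree} and \ref{lem:some_approx_tree} with two black boxes (treeability of free groups and Kochloukova's structure theorem). The only point deserving a word of care is that the normal subgroup $\Lambda$ produced by Kochloukova's theorem need not be finitely generated; but free groups of arbitrary countable rank are still treeable, so this causes no difficulty when applying Lemma \ref{lem:some_approx_tree}.
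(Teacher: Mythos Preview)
Your proposal is correct and follows essentially the same route as the paper's proof: reduce to limit groups via Lemma \ref{lem:union_approx_tree}, invoke Kochloukova's theorem to obtain a free normal coamenable subgroup, and then apply Lemma \ref{lem:some_approx_tree}. The extra remarks you include (closure of universally free groups under subgroups, and that the free normal subgroup need not be finitely generated) are accurate and do not alter the argument.
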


\begin{proof}
    By Lemma \ref{lem:union_approx_tree} we may assume that $\Gamma$ is finitely generated, that is, that $\Gamma$ is a limit group.  By the aforementioned theorem of Kochloukova \cite{Kochloukova}, $\Gamma$ has a normal coamenable free subgroup $\Lambda$. Since $\Lambda$ is a free group, it is strongly treeable. Therefore $\Gamma$ is approximately treeable by Lemma \ref{lem:some_approx_tree}.
\end{proof}

\subsection{Outline of the proof}

Recall from Subsection \ref{treeabletypes} that $\Gamma$ is approximately treeable if for every weak$^*$ open neighborhood $U$ of the point-mass $\delta_{\Gamma \times \Gamma} \in \Prob(\cal E(\Gamma))$, there is an invariant Borel probability measure $\mu$ on $\cal F(\Gamma)$ so that the pushforward of $\mu$ under the map $F \in \cal F(\Gamma) \mapsto E_F \in \cal E(\Gamma)$ belongs to $U$.

For any set $A$, we let $s$ denote the left-shift action of $\Gamma$ on $A^\Gamma$ given by the formula $(\gamma^s \cdot x)(\delta) = x(\gamma^{-1} \delta)$ for $\gamma, \delta \in \Gamma$ and $x \in A^\Gamma$. When $C \subseteq \Gamma$ and $\gamma \in \Gamma$, we also write $\gamma^s$ to denote the map from $A^C$ to $A^{\gamma C}$ given by the same formula (where now $\delta \in \gamma C$).

The following is the main theorem of this section.

\begin{thm} \label{thm:approxtree}
	If $\Gamma$ is an approximately treeable group, then $T_\Gamma^*$ exists.
\end{thm}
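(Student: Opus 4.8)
The plan is to verify the open mapping criterion of Theorem \ref{prop:openmap}: we must show that for every $p, q \in \bN$ the pushforward map $\pi_* : \Prob_\Gamma(q^\Gamma \times p^\Gamma) \to \Prob_\Gamma(q^\Gamma)$ is open. So fix $\lambda \in \Prob_\Gamma(q^\Gamma \times p^\Gamma)$, set $\nu = \pi_*(\lambda)$, fix a basic weak$^*$ neighborhood $W$ of $\lambda$, and take a sequence $\nu_n \to \nu$ in $\Prob_\Gamma(q^\Gamma)$; following the reduction at the start of the proof of Theorem \ref{prop:openmap} it suffices, given a nonprincipal ultrafilter $\u$ on $\bN$, to show $\{n : \nu_n \in \pi_*(W)\} \in \u$. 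First I would pass to an e.c.\ action $\Gamma \acts^b (Z, \eta)$ factoring onto $\Gamma \acts^d (q^\Gamma, \nu_n)$ for all $n$ (Lemma \ref{eccover}), pick $\alpha^n : Z \to q$ with $(\alpha^n_\Gamma)_*(\eta) = \nu_n$, and form the ultrapower action $\Gamma \acts^{b_\u} (Z_\u, \eta_\u)$ with $\alpha = \lim_\u \alpha^n$, so $(\alpha_\Gamma)_*(\eta_\u) = \nu$. Using Lemma \ref{lem:amalgam} I amalgamate over $q^\Gamma$ to get a $\Gamma$-invariant measure $\omega$ on $Z_\u \times q^\Gamma \times p^\Gamma$ with marginals $\eta_\u$ and $\lambda$ and with $\alpha_\Gamma(z) = \pi(y,w)$ $\omega$-a.e. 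The reduction thus shows it is enough to produce a \emph{single} measurable $\beta : Z_\u \to p$ with $(\alpha_\Gamma \times \beta_\Gamma)_*(\eta_\u) \in W$; because a representing sequence $\beta^n$ then has $\lim_\u (\alpha^n_\Gamma \times \beta^n_\Gamma)_*(\eta) \in W$ (open!), giving $\{n : \nu_n \in \pi_*(W)\} \in \u$.

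The real content is therefore a structural statement about a single e.c.\ action $\Gamma \acts^a (X, \mu)$ (here $X = Z_\u$): whenever $\phi : (Y, \nu) \to (X, \mu)$ is a factor map and $\gamma : Y \to p$ is measurable, one can find $\tilde\gamma : X \to p$ whose joint distribution with a fixed $\beta : X \to q$ over $S$-translates approximates that of $\gamma$ with $\beta \circ \phi$ — equivalently, using Lemma \ref{eclemma}, that the open-mapping condition holds for $a$. But $a$ is e.c., so this is automatic \emph{provided} we already knew $T_\Gamma^*$ exists; instead we must exploit approximate treeability directly. The strategy is to reduce to a finite-to-one extension by replacing $\gamma : Y \to p$ with a cocycle-type object and then transporting it along an approximate treeing of $a$. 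Concretely: since $a$ is e.c.\ it is locally universal (Lemma \ref{lem:locuniv}), hence it weakly contains the diagonal action $\Gamma \acts^d(\cal F(\Gamma), m)$ for any invariant $m$ witnessing approximate treeability, and in fact (by the corollary after the treeability Proposition) weakly contains actions whose image under $F \mapsto E_F$ is arbitrarily close to $\delta_{\Gamma \times \Gamma}$. The extension $Y$ over $X$ corresponds, after passing to a skew-product form, to a cocycle-like datum; one uses the forest structure to build an almost-equivariant transversal and then the retraction machinery of Lemma \ref{lem:retract} — but now applied to forests rather than trees, so it is only \emph{approximately} a retraction, valid on the $\mu$-large set where $H \times H \subseteq E_F$.

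The key steps in order are: (1) establish the approximately-treeable analogue of Lemma \ref{lem:weaklift} — given an e.c.\ action $a$ weakly containing $\Gamma \acts^d(\cal F(\Gamma), m)$ with $m$ close to $\delta_{\Gamma\times\Gamma}$, produce a joining $\omega$ of $(X,\mu)$ with $(\cal F(\Gamma), m)$ and, for a given equivariant cochain $\theta : X \to C^1(\Gamma, G^\Gamma)$, an equivariant $\theta'$ with $\int \rho_1(\theta,\theta')\,d\mu$ controlled by $\int \rho_1(\theta, r_F(\theta))\,d\omega$ plus error, where $r_F$ is the partial retraction built from a forest; (2) observe that $r_F(\theta)$ is an honest cocycle on the part of each $F$-tree it touches, so on a $(1-\epsilon)$-measure set $\theta'$ satisfies the cocycle identity over any prescribed finite $S$; (3) combine with Proposition \ref{criterion} / Theorem \ref{concretetheorem}: given $\gamma : Y \to p$, encode it via a finite-valued cochain, apply (1)–(2) with $G$ a suitable finite group, and conclude that $a$ is e.c.\ for finite-to-one extensions "up to $\epsilon$", which together with the trivially-satisfied extension-MD direction (e.c.\ implies extension-MD) and Lemma \ref{weakMDlemma} gives exactly the approximation in Lemma \ref{eclemma}; (4) feed this back through the ultrafilter argument above. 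I expect step (1) — making the forest version of Lemma \ref{lem:weaklift} work, in particular controlling the error introduced because a forest is not connected and the "geodesic path" formula \eqref{eqn:treecoh} defining $r_T$ breaks down for vertices in different components — to be the main obstacle; the fix should be to choose $m$ with $E_F$ so close to $\Gamma \times \Gamma$ that, on an $S$-thickened set, all relevant vertices lie in one component, so the tree-cocycle formula applies verbatim there and the discrepancy elsewhere is absorbed into the $\epsilon$. Everything else is a transcription of the machinery already developed in Subsections \ref{subs:groupcoh}–\ref{subs:cocytree} together with the ultraproduct bookkeeping from the proof of Theorem \ref{prop:openmap}.
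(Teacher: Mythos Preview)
Your proposal has a genuine gap at step (3), and the overall route diverges from the paper's argument. The circularity: you write that showing $a = Z_\u$ is e.c.\ for finite-to-one extensions, ``together with the trivially-satisfied extension-MD direction (e.c.\ implies extension-MD) and Lemma \ref{weakMDlemma}'', gives the needed approximation. But Lemma \ref{weakMDlemma} requires $Z_\u$ itself to be extension-MD, and the implication ``e.c.\ $\Rightarrow$ extension-MD'' applies only to actions already known to be e.c.\ --- precisely what you are trying to establish for $Z_\u$. Extension-MD does not pass to ultrapowers automatically, and verifying it for $Z_\u$ via the Theorem \ref{stronglytreeableweakMD} machinery would require an invariant measure on $\cal T(\Gamma)$, not merely on $\cal F(\Gamma)$. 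Relatedly, ``encode $\gamma : Y \to p$ via a finite-valued cochain'' conflates two different reductions: a map from an \emph{arbitrary} extension $Y$ is not a cochain on $X$, and Theorem \ref{concretetheorem} only handles extensions that are \emph{already} finite-to-one. Your forest-retraction idea is in the right spirit, but on a disconnected forest $r_F$ produces nothing like a cocycle across components, and you give no mechanism for gluing the component-wise data into a single measurable map on $X$.

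The paper sidesteps all of this by working directly with measures on $q^\Gamma \times p^\Gamma$, never invoking an e.c.\ action or an ultrapower. Given $\lambda \in \Lambda$ with marginal $\nu$, it disintegrates $\lambda = \int \delta_y \times \lambda_y\, d\nu$, replaces the Borel map $y \mapsto \lambda_y$ by a \emph{continuous} approximation $\kappa_n$, and then for each forest $F$ builds a measure $\theta(\omega_{n,y}, F)$ on $p^\Gamma$ by coupling the measures $\kappa_n((\gamma^{-1})^s \cdot y)$ along the edges of $F$ (via canonically chosen almost-shift isomorphisms, Lemmas \ref{lem:pairmap} and \ref{lem:fmap}) and taking independent products over the components of $F$ (Lemma \ref{lem:srmap}). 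Averaging over an invariant $\mu \in \Prob(\cal F(\Gamma))$ whose pushforward to $\cal E(\Gamma)$ is close to $\delta_{\Gamma \times \Gamma}$ yields an invariant measure close to $\lambda$; crucially, because $\kappa_n$ is continuous the map $\nu' \mapsto \int\!\!\int \delta_y \times \theta(\omega_{n,y}, F)\, d\nu'\, d\mu$ is weak$^*$-continuous in $\nu'$, so its preimage of $\Lambda$ is an open neighborhood of $\nu$ contained in $\pi_*(\Lambda)$. That verifies the open mapping criterion of Theorem \ref{prop:openmap} directly. The missing idea in your sketch is this explicit construction of a continuous near-section $\Prob_\Gamma(q^\Gamma) \to \Prob_\Gamma(q^\Gamma \times p^\Gamma)$; the cocycle and retraction machinery of Subsections \ref{subs:groupcoh}--\ref{subs:cocytree} is not used at all in the approximately treeable case.
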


We provide a brief sketch of the proof. As mentioned earlier, our proof of this theorem will use the open mapping characterization of the existence of the model companion given in Theorem \ref{prop:openmap}. Towards that end, consider $p,q\in \bN$, nonempty open $\Lambda \subseteq \Prob_\Gamma(q^\Gamma \times p^\Gamma)$, and $\lambda \in \Lambda$. Let $\nu \in \Prob_\Gamma(q^\Gamma)$ be the pushforward of $\lambda$ under the canonical projection map, and let $y \in q^\Gamma \mapsto \lambda_y \in \Prob(p^\Gamma)$ be the disintegration of $\lambda$ over $\nu$, so that $\lambda = \int \delta_y \times \lambda_y \ d \nu$ where $\delta_y$ is the point-mass measure at $y$.

For $y \in q^\Gamma$ and $F \in \cal F(\Gamma)$, we can construct a Borel probability measure $\phi(\lambda_y, E_F)$ on $p^\Gamma$ by taking the independent product over the classes $C \in \Gamma / E_F$ of the pushforward of $\lambda_y$ with respect to the projection $p^\Gamma \rightarrow p^C$. In other words, $\phi(\lambda_y, E_F)$ is obtained from $\lambda_y$ by independently re-randomizing $\lambda_y$ over each of the classes $C \in \Gamma / E_F$. A simple but important observation is that if $E_F = \Gamma \times \Gamma$ is the indiscrete equivalence relation then $\phi(\lambda_y, E_F) = \lambda_y$, and moreover if $E_F$ is sufficiently close to $\Gamma \times \Gamma$ then $\phi(\lambda_y, E_F)$ will be close to $\lambda_y$. The fact that $\Gamma$ is approximately treeable allows us to pick an invariant measure $\mu \in \Prob(\cal F(\Gamma))$ so that $E_F$ is close to $\Gamma \times \Gamma$ with probability close to $1$. This can be done so that when we average the measures $\delta_y \times \phi(\lambda_y, E_F)$ over $\nu$ and $\mu$ we obtain an invariant probability measure contained in $\Lambda$.

In the next step in the proof, which is the more technically challenging portion, we choose a continuous function $\kappa : q^\Gamma \rightarrow \Prob(p^\Gamma)$ that approximates the function $y \in q^\Gamma \mapsto \lambda_y$ sufficiently well in $\nu$-measure. For $y \in q^\Gamma$ and $F \in \cal F(\Gamma)$ we build a measure on $(p^\Gamma)^\Gamma$ whose projection to $(p^\Gamma)^{\{\gamma\}}$ is $\kappa((\gamma^{-1})^s \cdot y) \approx \lambda_{(\gamma^{-1})^s \cdot y}$ for each $\gamma \in \Gamma$. When $(\delta, \gamma) \in F$ the projections of the measure to $(p^\Gamma)^{\{\delta\}}$ and $(p^\Gamma)^{\{\gamma\}}$ will be coupled via an isomorphism from $(p^\Gamma, \kappa((\gamma^{-1})^s \cdot y))$ to $(p^\Gamma, \kappa((\delta^{-1})^s \cdot y))$ that is close to the shift map $(\delta^{-1} \gamma)^s$ with probability close to $1$. Additionally, as in the previous paragraph the measure will have independent projections to the sets $(p^\Gamma)^C$ for $C \in \Gamma / E_F$. Using the map $z \in (p^\Gamma)^\Gamma \mapsto z(\cdot)(e) \in p^\Gamma$ we pushforward this measure to $p^\Gamma$ and call the resulting measure $\theta(\kappa_y, F)$. The fact that the couplings associated with the edges in $F$ are close to the respective shift maps, together with the fact that $E_F$ is close to $\Gamma \times \Gamma$ with probability close to $1$, will imply that $\theta(\kappa_y, F)$ is close to $\kappa(y)$ with probability close to $1$. Finally, after taking the average of $\theta(\kappa_y, F)$ with respect to $\nu$ and $\mu$ we obtain an invariant measure close to the one we constructed before and therefore still belonging to $\Lambda$.

The final step is to observe that the construction of the previous paragraph can be repeated for any invariant probability measure $\nu'$ on $q^\Gamma$. Moreover, since $\kappa$ and the overall construction are continuous, the measure obtained will depend continuously on $\nu'$. Therefore if $\nu'$ is sufficiently close to $\nu$ the construction will yield an invariant measure in $\Lambda$ whose projection to $q^\Gamma$ is $\nu'$, completing the proof.

We now proceed to develop all of the necessary details. Throughout the discussion below, fix $p \in \bb N$. 

\subsection{Map-measure pairs}

Let $M$ be the set of all pairs $(h, \omega)$ where $h : p^\Gamma \rightarrow p^\Gamma$ is Borel measurable and $\omega$ is a Borel probability measure on $p^\Gamma$. We equip $M$ with the weakest topology making the map $(h, \omega) \in M \mapsto (h \times \id)_* \omega \in \Prob(p^\Gamma \times p^\Gamma)$ continuous. Equivalently, since $p^\Gamma \times p^\Gamma$ is compact and zero-dimensional, the topology on $M$ is the weakest topology making the maps $(h, \omega) \mapsto \omega(A \cap h^{-1}(B))$ continuous for all pairs of clopen sets $A, B \subseteq p^\Gamma$.

We call a pair $((h_1, \omega_1), (h_0, \omega_0))$ of elements of $M$ \emph{composable} if $\omega_1 = (h_0)_* \omega_0$. We write $M_2$ for the set of all composable pairs and define a composition operation from $M_2$ to $M$, which we denote simply by $\cdot$, by the rule $(h_1, \omega_1) \cdot (h_0, \omega_0) = (h_1 \circ h_0, \omega_0)$.

\begin{lem} \label{lem:comp}
	The composition function $((h_1, \omega_1),(h_0,\omega_0)) \in M_2 \mapsto (h_1 \circ h_0, \omega_0) \in M$ is continuous.
\end{lem}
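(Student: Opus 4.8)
The plan is to verify continuity of the composition map by unwinding the definition of the topology on $M$ in terms of the generating sets $A, B \subseteq p^\Gamma$ clopen, and reducing everything to continuity properties that are already built into the definition of the topology on $M$. Recall that a net $((h_1^{(i)}, \omega_1^{(i)}), (h_0^{(i)}, \omega_0^{(i)}))$ in $M_2$ converges to $((h_1, \omega_1), (h_0, \omega_0))$ precisely when $(h_1^{(i)}, \omega_1^{(i)}) \to (h_1, \omega_1)$ and $(h_0^{(i)}, \omega_0^{(i)}) \to (h_0, \omega_0)$ in $M$; our goal is to show $(h_1^{(i)} \circ h_0^{(i)}, \omega_0^{(i)}) \to (h_1 \circ h_0, \omega_0)$, i.e., that for all clopen $A, B \subseteq p^\Gamma$,
\[\omega_0^{(i)}\big(A \cap h_0^{(i),-1}(h_1^{(i),-1}(B))\big) \longrightarrow \omega_0\big(A \cap h_0^{-1}(h_1^{-1}(B))\big).\]
The key point is that $h_0^{(i),-1}(h_1^{(i),-1}(B)) = h_0^{(i),-1}(C_i)$ where $C_i := h_1^{(i),-1}(B)$, and that convergence $(h_1^{(i)}, \omega_1^{(i)}) \to (h_1, \omega_1)$ together with $\omega_1 = (h_0)_*\omega_0$ gives control on $\omega_1(C_i \symd h_1^{-1}(B))$, while convergence $(h_0^{(i)}, \omega_0^{(i)}) \to (h_0, \omega_0)$ gives control relating $\omega_0^{(i)} \circ h_0^{(i),-1}$ to $\omega_0 \circ h_0^{-1}$ on the clopen algebra.

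First I would observe that since the maps $h_j$ need not be continuous, one cannot work directly with clopen-set convergence of $h_1^{(i)} \circ h_0^{(i)}$; instead one works at the level of the pushed-forward measures on the product space. Concretely, let $\lambda_j^{(i)} = (h_j^{(i)} \times \id)_*\omega_j^{(i)} \in \Prob(p^\Gamma \times p^\Gamma)$ and similarly $\lambda_j$; by hypothesis $\lambda_j^{(i)} \to \lambda_j$ weak$^*$. One then notes that the composition $((h_1,\omega_1),(h_0,\omega_0)) \mapsto (h_1 \circ h_0 \times \id)_*\omega_0$ can be described purely in terms of $\lambda_1$ and $\lambda_0$ via a ``gluing along the middle coordinate'' operation: $(h_1 \circ h_0 \times \id)_*\omega_0 = \pi_{13,*}(\lambda_1 \times_{\mathrm{mid}} \lambda_0)$, where the coupling $\lambda_1 \times_{\mathrm{mid}} \lambda_0$ on $p^\Gamma \times p^\Gamma \times p^\Gamma$ is the one with first-two-coordinate marginal $\lambda_1$, last-two-coordinate marginal $(\mathrm{id} \times h_0)_*\omega_0$ reversed appropriately, glued over the shared coordinate. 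The composability condition $\omega_1 = (h_0)_*\omega_0$ is exactly what makes this gluing well-defined and makes the middle-coordinate disintegrations compatible. Since $p^\Gamma$ is compact metrizable and zero-dimensional, weak$^*$ convergence is detected on the (countable) clopen algebra, so it suffices to prove $\omega_0^{(i)}(A \cap h_0^{(i),-1}(h_1^{(i),-1}(B))) \to \omega_0(A \cap h_0^{-1}(h_1^{-1}(B)))$ for $A, B$ ranging over a basis.

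For that estimate I would proceed in two steps with a triangle-inequality argument. Step one: approximate $h_1^{(i),-1}(B)$ by a clopen set. Since $p^\Gamma$ has a clopen basis, fix $\varepsilon > 0$ and, using $\lambda_1 = (h_1 \times \id)_*\omega_1$ and regularity, choose a clopen $D \subseteq p^\Gamma$ with $\omega_1(D \symd h_1^{-1}(B)) < \varepsilon$; then for $i$ large, convergence $(h_1^{(i)}, \omega_1^{(i)}) \to (h_1, \omega_1)$ in $M$ — applied with the clopen pair $(D, B)$ and its complements — yields $\omega_1^{(i)}(D \symd h_1^{(i),-1}(B)) < 2\varepsilon$. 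Step two: transport this through $h_0$. Since $\omega_1^{(i)} = (h_0^{(i)})_*\omega_0^{(i)}$ and $\omega_1 = (h_0)_*\omega_0$, we get $\omega_0^{(i)}(h_0^{(i),-1}(D) \symd h_0^{(i),-1}(h_1^{(i),-1}(B))) < 2\varepsilon$ and $\omega_0(h_0^{-1}(D) \symd h_0^{-1}(h_1^{-1}(B))) < \varepsilon$. Now $D$ is clopen, so convergence $(h_0^{(i)}, \omega_0^{(i)}) \to (h_0, \omega_0)$ in $M$, applied to the clopen pair $(A, D)$, gives $\omega_0^{(i)}(A \cap h_0^{(i),-1}(D)) \to \omega_0(A \cap h_0^{-1}(D))$. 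Combining the three facts via the triangle inequality for symmetric differences bounds $\limsup_i |\omega_0^{(i)}(A \cap h_0^{(i),-1}(h_1^{(i),-1}(B))) - \omega_0(A \cap h_0^{-1}(h_1^{-1}(B)))|$ by $O(\varepsilon)$, and letting $\varepsilon \to 0$ finishes the proof.

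\textbf{Main obstacle.} The delicate point is Step one: producing the clopen approximation $D$ to the non-clopen set $h_1^{-1}(B)$ and then ensuring the approximation survives being pulled back along the varying maps $h_0^{(i)}$. This works because composability forces $(h_0^{(i)})_*\omega_0^{(i)} = \omega_1^{(i)}$ exactly (not just approximately), so the $\omega_1^{(i)}$-measure of the symmetric difference $D \symd h_1^{(i),-1}(B)$ equals the $\omega_0^{(i)}$-measure of its $h_0^{(i)}$-preimage — there is no loss. Without the composability hypothesis the map would genuinely fail to be continuous, so the proof must visibly use $\omega_1 = (h_0)_*\omega_0$, and locating exactly where it enters (namely, converting $\omega_1^{(i)}$-estimates into $\omega_0^{(i)}$-estimates with no slack) is the crux.
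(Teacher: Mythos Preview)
Your proposal is correct and follows essentially the same approach as the paper: approximate $h_1^{-1}(B)$ by a clopen set $D$ (the paper calls it $B_1$), use composability to transport the $\omega_1$-estimate on $D \symd h_1^{-1}(B)$ through $h_0$ to an $\omega_0$-estimate, and finish with the triangle inequality. The digression about gluing measures along a middle coordinate is unnecessary, but the two-step argument you actually carry out matches the paper's proof line by line.
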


\begin{proof}
	Let $((h_1, \omega_1), (h_0, \omega_0)) \in M_2$, let $A, B \subseteq p^\Gamma$ be clopen, and let $\epsilon > 0$. Pick a clopen set $B_1 \subseteq p^\Gamma$ such that $\omega_1(B_1 \symd h_1^{-1}(B)) < \epsilon$. Now consider any $((g_1, \zeta_1),(g_0,\zeta_0)) \in M_2$ that is close to $((h_1, \omega_1), (h_0, \omega_0))$ in the sense that
	\[\zeta_1(B_1 \symd g_1^{-1}(B)) < \epsilon \text{ and } |\zeta_0(A \cap g_0^{-1}(B_1)) - \omega_0(A \cap h_0^{-1}(B_1))| < \epsilon.\]
	Since each pair is composable, we have that $\zeta_1 = (g_0)_* \zeta_0$ and $\omega_1 = (h_0)_* \omega_0$. Therefore
	\begin{align*}
		& |\zeta_0(A \cap g_0^{-1}(g_1^{-1}(B))) - \omega_0(A \cap h_0^{-1}(h_1^{-1}(B)))|\\
		& \leq |\zeta_0(A \cap g_0^{-1}(B_1))-\omega_0(A \cap h_0^{-1}(B_1))|\\
		& \qquad + \zeta_0(g_0^{-1}(B_1 \symd g_1^{-1}(B)))+\omega_0(h_0^{-1}(B_1 \symd h_1^{-1}(B)))\\
		& = |\zeta_0(A \cap g_0^{-1}(B_1))-\omega_0(A \cap h_0^{-1}(B_1))|+\zeta_1(B_1 \symd g_1^{-1}(B)) + \omega_1(B_1 \symd h_1^{-1}(B))\\
		& < 3 \epsilon.\qedhere
	\end{align*}
\end{proof}

Let $C \subseteq \Gamma$ and write $M^{(C)}$ for the set of all pairs $((h_\gamma)_{\gamma \in C}, \omega)$ where $h_\gamma : p^\Gamma \rightarrow p^\Gamma$ is a Borel measurable function for every $\gamma \in C$ and $\omega$ is a Borel probability measure on $p^\Gamma$. We give $M^{(C)}$ the weakest topology so that the map $((h_\gamma)_{\gamma \in C}, \omega) \in M^{(C)} \mapsto (h_\gamma, \omega) \in M$ is continuous for every $\gamma \in C$. For $((h_\gamma)_{\gamma \in C}, \omega) \in M^{(C)}$ we write $\prod_{\gamma \in C} h_\gamma$ for the function from $p^\Gamma$ to $(p^\Gamma)^C$ given by $$(\prod_{\gamma \in C} h_\gamma)(x)(\delta)(\beta) = h_\delta(x)(\beta)$$ for $x \in p^\Gamma$, $\delta \in C$, and $\beta \in \Gamma$.

\begin{lem} \label{lem:mfold}
	The map
	\[((h_\gamma)_{\gamma \in C}, \omega) \in M^{(C)} \mapsto \left(\prod_{\gamma \in C} h_\gamma \right)_* \omega \in \Prob((p^\Gamma)^C)\]
	is continuous.
\end{lem}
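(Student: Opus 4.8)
\textbf{Proof proposal for Lemma \ref{lem:mfold}.}

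The plan is to reduce everything to the definition of the topology on $M^{(C)}$ and the weak$^*$ topology on $\Prob((p^\Gamma)^C)$. Since $(p^\Gamma)^C$ is compact and zero-dimensional, the weak$^*$ topology on $\Prob((p^\Gamma)^C)$ is generated by the maps $\mu \mapsto \mu(W)$ as $W$ ranges over clopen subsets of $(p^\Gamma)^C$, and in fact it suffices to take $W$ in a basis of clopen sets. A convenient such basis consists of finite intersections $W = \bigcap_{\delta \in D} \pi_\delta^{-1}(B_\delta)$, where $D \subseteq C$ is finite, each $B_\delta \subseteq p^\Gamma$ is clopen, and $\pi_\delta : (p^\Gamma)^C \to p^\Gamma$ is the coordinate projection at $\delta$. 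So it is enough to show that, for each such $D$ and each such family $(B_\delta)_{\delta \in D}$, the map
\[((h_\gamma)_{\gamma \in C}, \omega) \in M^{(C)} \mapsto \left(\prod_{\gamma \in C} h_\gamma \right)_* \omega \,(W)\]
is continuous.

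First I would unwind the pushforward: by the defining formula for $\prod_{\gamma \in C} h_\gamma$, we have $x \in \left(\prod_{\gamma \in C} h_\gamma\right)^{-1}(W)$ if and only if $h_\delta(x) \in B_\delta$ for every $\delta \in D$, that is, $x \in \bigcap_{\delta \in D} h_\delta^{-1}(B_\delta)$. Therefore
\[\left(\prod_{\gamma \in C} h_\gamma \right)_* \omega\,(W) = \omega\!\left( \bigcap_{\delta \in D} h_\delta^{-1}(B_\delta) \right).\]
Now I would argue continuity by an $\epsilon$-approximation. Fix a point $((h_\gamma)_{\gamma \in C}, \omega) \in M^{(C)}$ and $\epsilon > 0$. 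For each $\delta \in D$, pick a clopen set $B_\delta' \subseteq p^\Gamma$ with $\omega(B_\delta' \symd h_\delta^{-1}(B_\delta)) < \epsilon/(2|D|)$; this is possible since $\omega$ is a Borel probability measure on the zero-dimensional compact space $p^\Gamma$ and $h_\delta^{-1}(B_\delta)$ is measurable, so it is approximable by clopen sets. Consider the relatively open neighborhood of $((h_\gamma)_{\gamma \in C}, \omega)$ in $M^{(C)}$ consisting of those $((g_\gamma)_{\gamma \in C}, \zeta)$ with $\zeta(B_\delta' \symd g_\delta^{-1}(B_\delta)) < \epsilon/(2|D|)$ for every $\delta \in D$ and $\bigl|\zeta(B') - \omega(B')\bigr| < \epsilon$ for every $B'$ in the (finite) algebra generated by $\{B_\delta' : \delta \in D\}$; these conditions are indeed open in $M^{(C)}$ by its definition (the first is the condition that $(g_\delta, \zeta)$ is close to $(h_\delta, \omega)$ in $M$ in the sense controlled by the clopen sets $B_\delta', B_\delta$, and the second only involves the marginal $\zeta$ via finitely many clopen sets). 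For any such $((g_\gamma)_{\gamma \in C}, \zeta)$,
\[\left| \zeta\!\left(\bigcap_{\delta \in D} g_\delta^{-1}(B_\delta)\right) - \omega\!\left(\bigcap_{\delta \in D} h_\delta^{-1}(B_\delta)\right) \right| \le \left| \zeta\!\left(\bigcap_{\delta \in D} B_\delta'\right) - \omega\!\left(\bigcap_{\delta \in D} B_\delta'\right) \right| + \sum_{\delta \in D} \Bigl( \zeta(B_\delta' \symd g_\delta^{-1}(B_\delta)) + \omega(B_\delta' \symd h_\delta^{-1}(B_\delta)) \Bigr) < \epsilon + 2|D| \cdot \frac{\epsilon}{2|D|} = 2\epsilon,\]
using the elementary fact that replacing each factor of an intersection by a set symmetric-difference-close to it changes the measure of the intersection by at most the sum of the symmetric differences. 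Since $\epsilon$ was arbitrary, this proves continuity at the chosen point, and hence continuity everywhere.

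I do not expect a genuine obstacle here — this is essentially the same bookkeeping as in the proof of Lemma \ref{lem:comp}, just with an intersection of $|D|$ preimages rather than a single composition. The only point requiring mild care is making sure the neighborhood used is genuinely open in the product-style topology on $M^{(C)}$, which follows because that topology is by definition the coarsest making each coordinate map $((h_\gamma)_{\gamma \in C}, \omega) \mapsto (h_\delta, \omega) \in M$ continuous, and the topology on $M$ is in turn generated exactly by the functions $(h,\omega) \mapsto \omega(A \cap h^{-1}(B))$ for clopen $A, B$ (taking $A = p^\Gamma$ recovers control of the marginal on clopen sets, which also handles the second family of conditions above).
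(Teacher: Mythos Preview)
Your proof is correct and follows essentially the same approach as the paper: approximate each preimage $h_\delta^{-1}(B_\delta)$ by a clopen set $B_\delta'$, impose open conditions on $(g_\delta,\zeta)$ controlling $\zeta(B_\delta' \symd g_\delta^{-1}(B_\delta))$ and $\zeta(\bigcap_\delta B_\delta')$, and conclude via the triangle inequality. The only differences are cosmetic (your constants give $2\epsilon$ instead of $\epsilon$, and you require closeness on the full finite algebra generated by the $B_\delta'$ rather than just on $\bigcap_\delta B_\delta'$).
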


\begin{proof}
	Choose any cylinder set of $(p^\Gamma)^C$, say $A = \prod_{\gamma \in C} B_\gamma$ where $B_\gamma \subseteq p^\Gamma$ is nonempty and clopen for every $\gamma \in C$ and where, for some finite set $D \subseteq C$, $B_\gamma = p^\Gamma$ for all $\gamma \in C \setminus D$. Fix any $((h_\gamma)_{\gamma \in C}, \omega) \in M^{(C)}$ and $\epsilon > 0$. For each $\gamma \in D$ pick a clopen set $B_\gamma' \subseteq p^\Gamma$ satisfying $\omega(B_\gamma' \symd h_\gamma^{-1}(B_\gamma)) < \epsilon / (2|D|+2)$. Then, for any $((g_\gamma)_{\gamma \in C}, \zeta) \in M^{(C)}$ satisfying $|\zeta(\bigcap_{\gamma \in D} B_\gamma') - \omega(\bigcap_{\gamma \in D} B_\gamma')| < \epsilon/(|D|+1)$ and $\zeta(B_\gamma' \symd g_\gamma^{-1}(B_\gamma)) < \epsilon / (2|D|+2)$ for every $\gamma \in D$, we have
	\begin{align*}
		& \left| \left(\prod_{\gamma \in C} g_\gamma\right)_*\zeta(A) - \left(\prod_{\gamma \in C} h_\gamma\right)_*\omega(A) \right|\\
		& = \left| \zeta \left(\bigcap_{\gamma \in D} g_\gamma^{-1}(B_\gamma) \right) - \omega \left( \bigcap_{\gamma \in D} h_\gamma^{-1}(B_\gamma) \right)\right|\\
		& \leq \left| \zeta \left(\bigcap_{\gamma \in D} B_\gamma' \right) - \omega \left(\bigcap_{\gamma \in D} B_\gamma'\right)\right| + |D| \frac{\epsilon}{|D|+1} < \epsilon.\qedhere
	\end{align*}
\end{proof}

Let $P^*$ denote the set of all pairs $(\kappa_1, \kappa_0)$ of Borel probability measures on $p^\Gamma$ having the property that either $\kappa_1 = \kappa_0$ or else both $\kappa_0$ and $\kappa_1$ are non-atomic. We equip $P^*$ with the subspace topology inherited from the product space $\Prob(p^\Gamma) \times \Prob(p^\Gamma)$.

\begin{lem} \label{lem:pairmap}
	There exists a function $\psi$ assigning to each pair $(\kappa_1, \kappa_0) \in P^*$ a Borel measurable function $\psi(\kappa_1, \kappa_0) : p^\Gamma \rightarrow p^\Gamma$ such that
	\begin{enumerate}
		\item \label{item:pmap1} $\kappa_1 = \psi(\kappa_1, \kappa_0)_* \kappa_0$;
		\item \label{item:pmap2} $\psi(\kappa_0, \kappa_1) \circ \psi(\kappa_1, \kappa_0)$ is equal to the identity $\kappa_0$-almost-everywhere;
		\item \label{item:pmap3} the map $(\kappa_1, \kappa_0) \in P^* \mapsto (\psi(\kappa_1, \kappa_0), \kappa_0) \in M$ is continuous; and
		\item \label{item:pmap4} $(\psi(\kappa_1, \kappa_0), \kappa_0) \rightarrow (\id, \kappa)$ as $(\kappa_1, \kappa_0) \rightarrow (\kappa, \kappa)$.
	\end{enumerate}
\end{lem}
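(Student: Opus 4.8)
\textbf{Proof proposal for Lemma \ref{lem:pairmap}.}

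The plan is to build $\psi$ using a \emph{canonical} parametrization of the measures in question together with a standard Borel–isomorphism machinery. First I would fix, once and for all, a Borel isomorphism $\iota : p^\Gamma \to [0,1]$ (using that $p^\Gamma$ is an uncountable standard Borel space) so that questions about maps $p^\Gamma \to p^\Gamma$ become questions about maps $[0,1] \to [0,1]$. For a non-atomic Borel probability measure $\kappa$ on $p^\Gamma$, transport it to a non-atomic measure $\iota_*\kappa$ on $[0,1]$ and let $T_\kappa : [0,1] \to [0,1]$ be the (Borel, essentially unique) monotone map pushing Lebesgue measure $\lambda$ forward to $\iota_*\kappa$; concretely $T_\kappa$ is the quantile function of $\iota_*\kappa$, which depends Borel-measurably — in fact, continuously in an appropriate sense — on $\kappa$. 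Then for a pair of non-atomic measures set
\[
\psi(\kappa_1,\kappa_0) = \iota^{-1}\circ T_{\kappa_1} \circ T_{\kappa_0}^{-1} \circ \iota,
\]
where $T_{\kappa_0}^{-1}$ is a Borel right inverse of $T_{\kappa_0}$ chosen so that $T_{\kappa_0}\circ T_{\kappa_0}^{-1}$ and $T_{\kappa_0}^{-1}\circ T_{\kappa_0}$ are the identity off a Lebesgue-null (hence $\iota_*\kappa_0$-null) set; and in the remaining case $\kappa_1=\kappa_0$ set $\psi(\kappa_1,\kappa_0)=\id$. This forces $\psi(\kappa,\kappa)=\id$ by construction, which will matter for (\ref{item:pmap4}) at the ``diagonal'' points where the two measures are equal but the limiting pair consists of equal non-atomic measures — I will need to check the topology is arranged so that no discontinuity is introduced there.

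Granting this definition, the four clauses are verified as follows. For (\ref{item:pmap1}): $(T_{\kappa_0}^{-1})_*(\iota_*\kappa_0)=\lambda$ and $(T_{\kappa_1})_*\lambda=\iota_*\kappa_1$, so $\psi(\kappa_1,\kappa_0)_*\kappa_0=\kappa_1$ (the $\kappa_1=\kappa_0$ case being trivial). For (\ref{item:pmap2}): off a $\kappa_0$-null set, $\psi(\kappa_0,\kappa_1)\circ\psi(\kappa_1,\kappa_0)$ unwinds to $\iota^{-1}\circ T_{\kappa_0}\circ T_{\kappa_1}^{-1}\circ T_{\kappa_1}\circ T_{\kappa_0}^{-1}\circ\iota = \iota^{-1}\circ T_{\kappa_0}\circ T_{\kappa_0}^{-1}\circ\iota=\id$, using that $T_{\kappa_1}^{-1}\circ T_{\kappa_1}=\id$ off a $\lambda$-null set and that $T_{\kappa_0}^{-1}\circ\iota$ pushes $\kappa_0$ to $\lambda$, so that the exceptional $\lambda$-null set pulls back to a $\kappa_0$-null set. (Here I am using the monotone/quantile structure precisely to get the two-sided inverse on conull sets, which an arbitrary measure isomorphism would not automatically give.) For (\ref{item:pmap3}) and (\ref{item:pmap4}): I would show that $\kappa \mapsto T_\kappa$ and $\kappa \mapsto T_\kappa^{-1}$ are continuous from $\{$non-atomic measures$\}$ into $L^1(\lambda)$ (quantile functions of weak$^*$-convergent non-atomic measures converge in $L^1$ — this is a classical fact), hence $\psi(\kappa_1,\kappa_0)$, viewed through $\iota$ as an $L^1(\lambda)$-valued object, is continuous in $(\kappa_1,\kappa_0)$; then since the topology on $M$ is generated by $(h,\omega)\mapsto \omega(A\cap h^{-1}(B))$ for clopen $A,B$, and convergence of $T$-maps in $L^1(\lambda)$ plus weak$^*$ convergence of $\kappa_0$ controls these quantities (approximating indicators of clopen sets and using that $T_{\kappa_0,n}^{-1}\to T_{\kappa_0}^{-1}$ in $\lambda$-measure), continuity of $(\kappa_1,\kappa_0)\mapsto(\psi(\kappa_1,\kappa_0),\kappa_0)\in M$ follows; clause (\ref{item:pmap4}) is the special case where the limit is $(\kappa,\kappa)$, giving $T_{\kappa_1}\circ T_{\kappa_0}^{-1}\to \id$ in $\lambda$-measure, i.e. $(\psi(\kappa_1,\kappa_0),\kappa_0)\to(\id,\kappa)$.

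The main obstacle will be clause (\ref{item:pmap3}) (with (\ref{item:pmap4})): making ``$\psi$ depends continuously on the pair'' precise and correct when one term of the pair can be a non-atomic measure limiting onto the diagonal where $\kappa_1=\kappa_0$. The definition above handles the literal diagonal by fiat ($\psi(\kappa,\kappa)=\id$), and the point is exactly that the non-atomic construction \emph{also} tends to $\id$ there, so the two definitions glue continuously; the space $P^*$ was defined to exclude the genuinely problematic pairs (one atomic, one not, with $\kappa_1\ne\kappa_0$) precisely so that this gluing works. I would organize this by first proving the $L^1(\lambda)$-continuity of $\kappa\mapsto (T_\kappa,T_\kappa^{-1})$ on non-atomic measures as a self-contained lemma, then proving that on the diagonal $\{(\kappa,\kappa)\}$ one has $T_\kappa\circ T_\kappa^{-1}=\id$ $\lambda$-a.e. uniformly, and finally checking sequential continuity of $\psi$ at every point of $P^*$ by splitting into the cases according to whether the limiting pair is on the diagonal or not. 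The remaining verifications are routine manipulations with pushforwards and the definition of the topology on $M$.
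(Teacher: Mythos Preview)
Your strategy is conceptually the same as the paper's --- build $\psi$ as a monotone rearrangement (quantile transform) --- but the implementation via an \emph{arbitrary} Borel isomorphism $\iota : p^\Gamma \to [0,1]$ creates a genuine gap in the continuity argument for (\ref{item:pmap3}). The point is that $p^\Gamma$ (a Cantor set) and $[0,1]$ are not homeomorphic, so $\iota$ cannot be continuous, and therefore $\iota_* : \Prob(p^\Gamma) \to \Prob([0,1])$ is not continuous for the weak$^*$ topologies. Thus when $\kappa_n \to \kappa$ weak$^*$ in $\Prob(p^\Gamma)$ you have no reason for $\iota_*\kappa_n \to \iota_*\kappa$ weak$^*$ in $\Prob([0,1])$, and your ``classical fact'' about $L^1$-convergence of quantile functions does not apply. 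There is a second problem downstream: even if you had $T_{\kappa_1^n}\circ T_{\kappa_0^n}^{-1} \to T_{\kappa_1}\circ T_{\kappa_0}^{-1}$ in $\lambda$-measure, the quantities you must control are $\kappa_0(A\cap\psi^{-1}(B))$ for \emph{clopen} $A,B\subseteq p^\Gamma$; after transport these become $\iota_*\kappa_0(\iota(A)\cap\cdot)$ evaluated on the preimage of $\iota(B)$, and $\iota(A),\iota(B)$ are arbitrary Borel sets of $[0,1]$, not intervals --- convergence in measure of the transfer maps gives no handle on measures of preimages of such sets.

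The paper avoids this by running the quantile construction \emph{intrinsically} on $p^\Gamma$: it fixes the lexicographic order $\preceq$, sets $L_z=\{z':z'\preceq z\}$, and defines $\psi(\kappa_1,\kappa_0)(z)=\inf_\prec\{z':\kappa_1(L_{z'})\geq\kappa_0(L_z)\}$. The key structural fact is that those $z$ which are eventually equal to $p-1$ yield \emph{clopen} $L_z$, and such clopen initial segments generate the topology; every clopen $A,B\subseteq p^\Gamma$ is a finite disjoint union of ``intervals'' $L_{z_{2k+1}}\setminus L_{z_{2k}}$ with clopen endpoints. By monotonicity of $\psi(\kappa_1,\kappa_0)$ one then gets the explicit formula
\[
\kappa_0\big(A\cap\psi(\kappa_1,\kappa_0)^{-1}(B)\big)=\sum_{k,\ell,i,j}(-1)^{i+j}\min\big(\kappa_0(L_{z_{2k+i}}),\,\kappa_1(L_{z'_{2\ell+j}})\big),
\]
which is manifestly weak$^*$-continuous in $(\kappa_1,\kappa_0)$ since all the $L$'s appearing are clopen. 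This is exactly the step that an arbitrary Borel identification with $[0,1]$ cannot replicate. If you want to salvage your approach, you could replace $\iota$ by the continuous order-preserving surjection $p^\Gamma\to[0,1]$ given by $p$-ary expansion along a fixed enumeration of $\Gamma$; but once you track which sets remain clopen you will find yourself reproducing the paper's argument.
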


\begin{proof}
	Let $\preceq$ denote the (non-strict) lexicographical ordering on $p^\Gamma$ obtained from some fixed enumeration of the elements of $\Gamma$, and for $z \in p^\Gamma$ set $L_z = \{z' \in p^\Gamma : z' \preceq z\}$. For $(\kappa_1, \kappa_0) \in P^*$ define $\psi(\kappa_1, \kappa_0) : p^\Gamma \rightarrow p^\Gamma$ by
	\[\psi(\kappa_1, \kappa_0)(z) = \inf_\prec \{z' \in p^\Gamma : \kappa_1(L_{z'}) \geq \kappa_0(L_z)\}.\]
	The above set will always be nonempty since the function on $\Gamma$ having constant value $p-1$ is $\preceq$-maximal. Consequently $\psi$ is well-defined.
	
	The function $\psi(\kappa_1, \kappa_0) : p^\Gamma \rightarrow p^\Gamma$ is Borel measurable since it is $\prec$-monotone increasing. Specfically, the sets $L_{z'}$, $z' \in p^\Gamma$, generate the Borel $\sigma$-algebra of $p^\Gamma$ and each preimage $\psi(\kappa_1,\kappa_0)^{-1}(L_{z'})$ is necessarily Borel (it is either empty or else equal to $L_z$ or $L_z \setminus \{z\}$ where $z = \sup_\prec \psi(\kappa_1,\kappa_0)^{-1}(L_{z'})$).
	
	For $\kappa \in \Prob(p^\Gamma)$ set
	\[Z_\kappa = \{z \in p^\Gamma : \forall z' \prec z \ \kappa(L_z \setminus L_{z'}) > 0\}.\]
	Since $p^\Gamma$ contains a countable set that is dense in the $\prec$-ordering, we have that $\kappa(Z_\kappa) = 1$. From these definitions it is clear that $\psi(\kappa, \kappa)$ restricts to the identity map on $Z_\kappa$. Thus conditions (\ref{item:pmap1}) and (\ref{item:pmap2}) hold when $(\kappa_1, \kappa_0) = (\kappa, \kappa)$. Additionally, since $\psi(\kappa,\kappa)$ is $\kappa$-almost-everywhere equal to $\id$, no open set in $M$ can separate the points $(\psi(\kappa,\kappa), \kappa)$ and $(\id, \kappa)$ and therefore condition (\ref{item:pmap4}) will be implied by condition (\ref{item:pmap3}). When $\kappa_1 \neq \kappa_0$ are non-atomic it is easy to see from the definitions that $\psi(\kappa_1, \kappa_0)$ maps $Z_{\kappa_0}$ bijectively onto $Z_{\kappa_1}$ and $\psi(\kappa_0, \kappa_1) \circ \psi(\kappa_1, \kappa_0)$ restricts to the identity on $Z_{\kappa_0}$ (in fullfillment of condition (\ref{item:pmap2})). Finally, since Borel probability measures on $p^\Gamma$ are uniquely determined from their values on the sets $L_z$, $z \in p^\Gamma$, condition (\ref{item:pmap1}) easily follows.
	
	To conclude we verify condition (\ref{item:pmap3}). First observe that, while $L_z$ is always closed, it is open precisely for those countably many $z \in p^\Gamma$ which evaluate to $p-1$ at all but finitely many elements of $\Gamma$. Write $L_\varnothing = \varnothing$ and define $Z$ to be the set of all $z \in p^\Gamma \cup \{\varnothing\}$ for which $L_z$ is clopen. Extend $\prec$ to $Z$ by declaring $\varnothing$ to be $\prec$-minimum. For any nonempty clopen sets $A, B \subseteq p^\Gamma$ we can choose $\prec$-increasing sequences $z_0, \ldots, z_{2n-1} \in Z$ and $z_0', \ldots, z_{2m-1}' \in Z$ so that $A$ and $B$ are the disjoint unions
	\[A = \bigsqcup_{k \in n} L_{z_{2k+1}} \setminus L_{z_{2k}} \quad \text{and} \quad B = \bigsqcup_{\ell \in m} L_{z'_{2\ell+1}} \setminus L_{z'_{2\ell}}.\]
	Then, by a simple inclusion-exclusion computation, $\kappa_0(A \cap \psi(\kappa_1,\kappa_0)^{-1}(B))$ is equal to
	\[\sum_{(k, \ell) \in n \times m} \sum_{i,j \in 2} (-1)^{i+j} \kappa_0 \left(L_{z_{2k+i}} \cap \psi(\kappa_1,\kappa_0)^{-1}(L_{z'_{2\ell+j}}) \right).\]
	Since $\psi(\kappa_1, \kappa_0)$ is $\prec$-monotone increasing, one of the two sets $\psi(\kappa_1,\kappa_0)^{-1}(L_{z'_{2\ell+j}})$ and $L_{z_{2k+i}}$ must contain the other, meaning the measure of their intersection is the smaller of their two measures. Combining this observation with condition (\ref{item:pmap1}), we obtain
	\[\kappa_0(A \cap \psi(\kappa_1,\kappa_0)^{-1}(B)) = \sum_{(k, \ell) \in n \times m} \sum_{i,j \in 2} (-1)^{i+j} \min \left(\kappa_0(L_{z_{2k+i}}), \kappa_1(L_{z'_{2\ell+j}})\right).\]
	Since each of the sets $L_{z_{2k+i}}$ and $L_{z'_{2\ell+j}}$ are clopen, the above is a continuous function of $(\kappa_0, \kappa_1) \in P^*$.
\end{proof}

\subsection{Measure constructions}

Recall the space $\cal E(\Gamma)$ of equivalence relations on $\Gamma$ and the space $\cal F(\Gamma)$ of directed forests on $\Gamma$ from Subsection \ref{treeabletypes}.

\begin{lem} \label{lem:srmap}
	There is a map $\phi : \Prob(p^\Gamma) \times \cal E(\Gamma) \rightarrow \Prob(p^\Gamma)$ such that:
	\begin{enumerate}
		\item \label{item:srmap1} $\phi$ is continuous;
		\item \label{item:srmap3} $\phi(\omega, \Gamma \times \Gamma) = \omega$ for all $\omega \in \Prob(p^\Gamma)$;
		\item \label{item:srmap2} $\gamma^s_* \phi(\omega, E) = \phi(\gamma^s_* \omega, \gamma^d \cdot E)$ for all $\gamma \in \Gamma$, $\omega \in \Prob(p^\Gamma)$ and $E \in \cal E(\Gamma)$.
	\end{enumerate}
\end{lem}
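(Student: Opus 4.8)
The plan is to construct $\phi(\omega, E)$ as the "independent re-randomization of $\omega$ over the classes of $E$": given $\omega \in \Prob(p^\Gamma)$ and $E \in \cal E(\Gamma)$, for each class $C \in \Gamma/E$ let $\omega_C$ denote the pushforward of $\omega$ under the coordinate projection $\pi_C : p^\Gamma \rightarrow p^C$, and then set $\phi(\omega, E)$ to be the image under the canonical identification $\prod_{C \in \Gamma/E} p^C \cong p^\Gamma$ of the product measure $\bigotimes_{C \in \Gamma/E} \omega_C$. The three required properties are then checked one at a time. Property (\ref{item:srmap3}) is immediate: when $E = \Gamma \times \Gamma$ there is a single class $C = \Gamma$, the projection $\pi_\Gamma$ is the identity, and the "product" over a one-element index set is just $\omega$ itself. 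Property (\ref{item:srmap2}) is a bookkeeping computation: the shift $\gamma^s : p^\Gamma \rightarrow p^\Gamma$ carries the coordinate at $\delta$ to the coordinate at $\gamma\delta$, hence carries the block $p^C$ to the block $p^{\gamma C}$, and the classes of $\gamma^d \cdot E$ are exactly the sets $\gamma C$ for $C \in \Gamma/E$; pushing the product measure forward through $\gamma^s$ therefore yields the product over the blocks $\gamma C$ of the pushforwards of $\omega_C$, which is precisely $\phi(\gamma^s_*\omega, \gamma^d \cdot E)$, since $(\gamma^s_*\omega)_{\gamma C} = \gamma^s_*(\omega_C)$ as measures on $p^{\gamma C}$.

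The real work is property (\ref{item:srmap1}), continuity. Here the subtlety is that the index set $\Gamma/E$ varies with $E$, so one cannot naively view $\phi$ as a continuous function of a fixed-shape product. The approach I would take is to test against cylinder sets: to show $\phi(\omega, E)$ depends continuously on $(\omega, E)$ in the weak$^*$ topology, it suffices to show that for each clopen cylinder set $A \subseteq p^\Gamma$ depending only on coordinates in a finite set $D \subseteq \Gamma$, the map $(\omega, E) \mapsto \phi(\omega, E)(A)$ is continuous. Fix such a $D$ and $A$. Because $A$ is determined by coordinates in $D$, one has a clean formula: $\phi(\omega, E)(A)$ is a finite product, over the classes $C$ of $E$ that meet $D$, of the $\omega$-measures of the corresponding coordinate events; more precisely, writing $A = \pi_D^{-1}(A_D)$ with $A_D \subseteq p^D$, and decomposing $A_D$ as a disjoint union of "rectangles" $\prod_{C} R_{C}$ indexed by $C \in \Gamma/E$ restricted to $D \cap C$, one gets $\phi(\omega, E)(A) = \sum \prod_{C} \omega(\pi_{D\cap C}^{-1}(R_C))$. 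The key finiteness observation is that the partition $\{D \cap C : C \in \Gamma/E\}$ of $D$ only depends on the restriction of $E$ to $D \times D$, which takes finitely many values; and the set of $E \in \cal E(\Gamma)$ inducing a given partition of $D$ is clopen (it is defined by finitely many coordinate conditions on $E \subseteq \Pow{\Gamma \times \Gamma}$). So $\cal E(\Gamma)$ is covered by finitely many clopen sets on each of which the combinatorial shape of the formula is constant, and on each such piece $(\omega, E) \mapsto \phi(\omega, E)(A)$ is a fixed polynomial in finitely many of the numbers $\omega(\text{clopen set})$, hence continuous in $\omega$; combining over the finite clopen cover gives joint continuity in $(\omega, E)$.

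I expect the main obstacle to be purely organizational rather than mathematical: setting up the inclusion-exclusion / rectangle decomposition of a cylinder event $A_D \subseteq p^D$ relative to the $E$-induced partition of $D$ cleanly enough that the "locally constant combinatorial shape" argument reads smoothly, and making sure the identification $\prod_{C \in \Gamma/E} p^C \cong p^\Gamma$ and the induced measurability of $\phi(\omega, E)$ as a genuine Borel probability measure are justified (this last point is routine: product measures on countable products of standard Borel spaces exist and the identification is a Borel isomorphism). There is also a minor point worth flagging, that one should check $\phi(\omega, E)$ really is a well-defined probability measure even when $\Gamma/E$ is infinite, which is just the standard existence of infinite product measures. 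None of these require new ideas beyond what is already in the preliminaries, so the proof will be short once the notation for the rectangle decomposition is fixed.
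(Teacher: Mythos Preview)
Your proposal is correct and follows essentially the same approach as the paper: both define $\phi(\omega,E)=\prod_{C\in\Gamma/E}(\pi_C)_*\omega$, verify (\ref{item:srmap3}) trivially and (\ref{item:srmap2}) by the same shift-bookkeeping, and prove continuity by testing on cylinder sets supported on a finite $D\subseteq\Gamma$, observing that the induced partition $\{D\cap C:C\in\Gamma/E\}$ is a locally constant (clopen-valued) function of $E$ and that on each piece the cylinder measure is a fixed finite product of $\omega$-measures of clopen sets. The only minor simplification the paper makes is to test directly on basic cylinders $A=\prod_{\gamma\in\Gamma}B_\gamma$ with $B_\gamma\subseteq p$, which already separate measures and avoid your rectangle-decomposition step.
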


\begin{proof}
	For $C \subseteq \Gamma$ let $\pi_C : p^\Gamma \rightarrow p^C$ be the projection map. Notice that $\beta^s \circ \pi_C = \pi_{\beta C} \circ \beta^s$ for $\beta \in \Gamma$.
	
	For $E \in \cal E(\Gamma)$ and $\omega \in \Prob(p^\Gamma)$ define
	\[\phi(\omega, E) = \prod_{C \in \Gamma / E} (\pi_C)_* \omega.\]
	While the above is formally a product measure indexed by the $E$-classes in $\Gamma$, since $(\pi_C)_* \omega$ is a probability measure on $p^C$ and the sets $C \in \Gamma / E$ partition $\Gamma$ we will identify $\phi(\omega, E)$ as a Borel probability measure on $p^\Gamma$.
	
	Clause (\ref{item:srmap3}) follows immediately from the definition. Also (\ref{item:srmap2}) holds since for $\beta \in \Gamma$ we have
	\begin{align*}
		\beta^s_* \phi(\omega, E) & = \beta^s_* \prod_{C \in \Gamma / E} (\pi_C)_* \omega\\
		& = \prod_{\beta C \in \Gamma / \beta^d \cdot E} \beta^s_* \circ (\pi_C)_* \omega\\
		& = \prod_{\beta C \in \Gamma / \beta^d \cdot E} (\pi_{\beta C})_* \beta^s_* \omega = \phi(\beta^s_* \omega, \beta^d \cdot E).\\
	\end{align*}
	
	Lastly, suppose that $\varnothing \neq B_\gamma \subseteq p$ for $\gamma \in \Gamma$ and that $B_\gamma = p$ for all $\gamma \in \Gamma \setminus D$ where $D \subseteq \Gamma$ is finite. Consider the cylinder set $A = \prod_{\gamma \in \Gamma} B_\gamma$. Then $\phi(\omega, E)(A)$ is a continuous function of $(\omega, E)$ since the family of sets $\cal D_E = \{D \cap C : C \in \Gamma / E\}$ is a locally constant function of $E$ and when $\cal D_E = \cal D$
	\[\phi(\omega, E)(A) = \prod_{C' \in \cal D} \omega \left( \prod_{\gamma \in C'} B_\gamma \times p^{\Gamma \setminus C'} \right)\]
	is a continuous function of $\omega$. It follows that $\phi$ is continuous, in fulfillment of (\ref{item:srmap1}).
\end{proof}

Recall that $P^*$ denotes the set of all pairs $(\kappa_1, \kappa_0) \in \Prob(p^\Gamma) \times \Prob(p^\Gamma)$ such that either $\kappa_1 = \kappa_0$ or else both $\kappa_1$ and $\kappa_0$ are non-atomic. Let $P^{(\Gamma)}$ be the set of all $\omega \in \Prob(p^\Gamma)^\Gamma$ satisfying:
\begin{enumerate}
	\item $\omega(\gamma)$ is non-atomic for every $\gamma \in \Gamma$; or
	\item $\gamma^s_* \omega(\gamma) = \omega(e)$ for every $\gamma \in \Gamma$.
\end{enumerate}
In other words, $\omega \in \Prob(p^\Gamma)^\Gamma$ belongs to $P^{(\Gamma)}$ if and only if $((\delta^{-1} \gamma)^s_* \omega(\gamma), \omega(\delta)) \in P^*$ for all $\delta, \gamma \in \Gamma$.

\begin{lem} \label{lem:fmap}
	There is a map $\theta : P^{(\Gamma)} \times \cal F(\Gamma) \rightarrow \Prob(p^\Gamma)$ such that:
	\begin{enumerate}
		\item[(a)] $\theta(\omega, F)$ is a Borel function of $(\omega, F)$ and is a continuous function of $\omega$;
		\item[(b)] $\theta(\omega, F) = \phi(\omega(e), E_F)$ whenever $\omega \in P^{(\Gamma)}$ satisfies $\gamma^s_* \omega(\gamma) = \omega(e)$ for all $\gamma \in \Gamma$;
		\item[(c)] $\gamma^s_* \theta(\omega, F) = \theta(\gamma^s \cdot \omega, \gamma^d \cdot F)$ for all $\gamma \in \Gamma$, $\omega \in P^{(\Gamma)}$ and $F \in \cal F(\Gamma)$.
	\end{enumerate}
\end{lem}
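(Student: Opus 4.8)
The plan is to build $\theta(\omega,F)$ one $E_F$-class at a time and then push forward under the evaluation map $\operatorname{ev}:(p^\Gamma)^\Gamma\to p^\Gamma$ given by $\operatorname{ev}(z)(\gamma)=z(\gamma)(e)$. Fix $\omega\in P^{(\Gamma)}$ and $F\in\cal F(\Gamma)$ and recall that $F$ partitions $\Gamma$ into the classes $C\in\Gamma/E_F$, each of which carries the structure of a tree with edge set coming from $F$. For an oriented edge $(\delta,\gamma)\in F$ I would set
\[t^\omega_{\delta\leftarrow\gamma}=\psi\big(\omega(\delta),\,(\delta^{-1}\gamma)^s_*\omega(\gamma)\big)\circ(\delta^{-1}\gamma)^s:p^\Gamma\to p^\Gamma,\]
which is legitimate because the characterization of $P^{(\Gamma)}$ says exactly that $\big(\omega(\delta),(\delta^{-1}\gamma)^s_*\omega(\gamma)\big)\in P^*$, and which by Lemma~\ref{lem:pairmap}(\ref{item:pmap1}) transports $\omega(\gamma)$ onto $\omega(\delta)$ and by Lemma~\ref{lem:pairmap}(\ref{item:pmap2}) is invertible $\omega(\gamma)$-almost everywhere. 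Using these transports as the edge couplings of a tree-indexed construction — sample $z(r)\sim\omega(r)$ at any root $r$ of $C$ and propagate outward along the edges, using the a.e.\ inverses to cross edges against their $F$-orientation — one obtains for each $C$ a Borel probability measure $\mu^\omega_C$ on $(p^\Gamma)^C$ whose $\gamma$-marginal is $\omega(\gamma)$; since the transports are measure isomorphisms, any single coordinate of the field determines the rest almost everywhere, so $\mu^\omega_C$ is independent of the chosen root. I would then set $\widehat\mu^\omega_F=\bigotimes_{C\in\Gamma/E_F}\mu^\omega_C$, a Borel probability measure on $(p^\Gamma)^\Gamma=\prod_C(p^\Gamma)^C$, and define $\theta(\omega,F)=\operatorname{ev}_*\widehat\mu^\omega_F$.

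Properties (b) and (c) should then follow cleanly from the root-independent characterization of $\mu^\omega_C$. For (b): if $\gamma^s_*\omega(\gamma)=\omega(e)$ for all $\gamma$, then $(\delta^{-1}\gamma)^s_*\omega(\gamma)=\omega(\delta)$ whenever $(\delta,\gamma)\in F$, so the two arguments of $\psi$ agree and $t^\omega_{\delta\leftarrow\gamma}=(\delta^{-1}\gamma)^s$ almost surely; propagating from a root $r$ of $C$ shows that under $\mu^\omega_C$ one has $z(\gamma)=(\gamma^{-1})^sw$ for a single $w\sim\omega(e)$, so $\operatorname{ev}(z)(\gamma)=w(\gamma)$ for $\gamma\in C$ and therefore $\theta(\omega,F)=\prod_C(\pi_C)_*\omega(e)=\phi(\omega(e),E_F)$ with $\phi$ as in Lemma~\ref{lem:srmap}. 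For (c): the transport formula makes plain that the transport attached to the edge $(\gamma_0\delta,\gamma_0\gamma)\in\gamma_0^d\cdot F$ computed relative to $\gamma_0^s\cdot\omega$ equals $t^\omega_{\delta\leftarrow\gamma}$ verbatim (the shift element $\delta^{-1}\gamma$ and the two measures $(\gamma_0^s\cdot\omega)(\gamma_0\gamma)=\omega(\gamma)$, $(\gamma_0^s\cdot\omega)(\gamma_0\delta)=\omega(\delta)$ are unchanged), so the coordinate relabelling $z\mapsto(\delta\mapsto z(\gamma_0^{-1}\delta))$ of $(p^\Gamma)^\Gamma$ carries $\mu^\omega_C$ to $\mu^{\gamma_0^s\cdot\omega}_{\gamma_0 C}$ and hence $\widehat\mu^\omega_F$ to $\widehat\mu^{\gamma_0^s\cdot\omega}_{\gamma_0^d F}$; since $\operatorname{ev}$ intertwines this relabelling with the shift $\gamma_0^s$ on $p^\Gamma$, property (c) drops out.

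The real work is in (a), and this is where I expect the main obstacle to lie: one must reduce the construction — which a priori involves an infinite product over classes and, within a class, propagation along potentially long paths — to finitely many continuously varying ingredients so that Lemmas~\ref{lem:comp}, \ref{lem:mfold} and \ref{lem:pairmap}(\ref{item:pmap3}) can be applied. For fixed $F$, to see that $\omega\mapsto\theta(\omega,F)(A)$ is continuous on a cylinder $A\subseteq p^\Gamma$ one observes that $A$ constrains only finitely many coordinates, hence only finitely many classes $C_1,\dots,C_m$, and that evaluating $\mu^\omega_{C_i}$ on the relevant cylinder uses only propagation along the finitely many edges lying on the geodesics from a root to those coordinates. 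Each such transport, viewed as an element of $M$ paired with the appropriate measure, varies continuously with $\omega$ by Lemma~\ref{lem:pairmap}(\ref{item:pmap3}); composing transports along a path is continuous by Lemma~\ref{lem:comp}; and assembling the finitely many propagated coordinates into a measure on a finite power of $p^\Gamma$ is continuous by Lemma~\ref{lem:mfold}. Joint Borel measurability in $(\omega,F)$ then follows because, for fixed $\omega$, all of the combinatorial data used above — the $E_F$-class of a vertex, the choice of a root in it, and the geodesic in $F$ between two vertices — depend Borel-measurably on $F\in\cal F(\Gamma)$, and a map continuous in one variable and Borel in the other is jointly Borel. Once this bookkeeping is in place, (b) and (c) are essentially formal consequences of the uniqueness of $\mu^\omega_C$.
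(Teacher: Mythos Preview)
Your approach is essentially the paper's: the transport $t^\omega_{\delta\leftarrow\gamma}$ is exactly the paper's $g^\omega_{\delta,\gamma}$, your a.e.\ inverses are the paper's $\bar g^\omega_{\gamma,\delta}$, the root-propagation along tree paths is the paper's $\sigma^\omega_F$, and the independent product over $E_F$-classes followed by pushforward under $\operatorname{ev}$ is the paper's $f_*\prod_C(\prod_\delta\sigma^\omega_F(\delta,\gamma_C))_*\omega(\gamma_C)$. Your checks of (b) and (c), and the reduction of (a) to Lemmas~\ref{lem:comp}, \ref{lem:mfold}, \ref{lem:pairmap}(\ref{item:pmap3}) over clopen pieces where the relevant geodesics are fixed, are also the paper's argument.

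The one place you diverge is the final sentence on joint Borel measurability: the paper does not invoke the general principle ``continuous in one variable and Borel in the other implies jointly Borel'' (which does hold here since $P^{(\Gamma)}$ is separable metric, but which you should justify), and instead shows something slightly stronger, namely that $(\omega,F,E_F)\mapsto\theta(\omega,F)$ is actually continuous on $P^{(\Gamma)}\times\{(F,E_F):F\in\cal F(\Gamma)\}$ by partitioning into clopen pieces on which both the $E_F$-partition of the relevant finite set and the geodesics are constant. Either route suffices for the lemma as stated.
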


\begin{proof}
	Let $\psi$ be the function described in Lemma \ref{lem:pairmap}. For $\delta, \gamma \in \Gamma$ and $\omega \in P^{(\Gamma)}$ define Borel functions $g_{\delta, \gamma}^\omega, \bar g_{\delta, \gamma}^\omega : p^\Gamma \rightarrow p^\Gamma$ by
	\[g_{\delta, \gamma}^\omega = \psi(\omega(\delta), (\delta^{-1} \gamma)^s_* \omega(\gamma)) \circ (\delta^{-1} \gamma)^s\]
	and
	\[\bar g_{\delta, \gamma}^\omega = (\delta^{-1} \gamma)^s \circ \psi((\gamma^{-1} \delta)^s_* \omega(\delta), \omega(\gamma)).\]
	The definition of $P^{(\Gamma)}$ ensures that $\psi$ is defined in both of the expressions above. With these definitions, $g_{\delta, \gamma}^\omega$ first performs the shift on $p^\Gamma$ by $\delta^{-1} \gamma$ and then, according to $\psi$, applies a map from $p^\Gamma$ to itself that pushes $(\delta \gamma^{-1})_* \omega(\gamma)$ forward to $\omega(\delta)$. Similarly, $\bar g_{\delta, \gamma}^\omega$ first applies a map from $p^\Gamma$ to itself that pushes $\omega(\gamma)$ forward to $(\gamma^{-1} \delta)^s_* \omega(\delta)$ and then performs the shift on $p^\Gamma$ by $\delta^{-1} \gamma$.
	
	We observe that the following statements hold for all $\delta, \gamma \in \Gamma$:
	\begin{enumerate}
		\item \label{proofitem:fmap1} Each map $g_{\delta,\gamma}^\omega$ and $\bar g_{\delta, \gamma}^\omega$ pushes $\omega(\gamma)$ forward to $\omega(\delta)$. This is clear from the definitions together with clause (\ref{item:pmap1}) of Lemma \ref{lem:pairmap}.
		\item \label{proofitem:fmap2} \indent Both compositions $\bar g_{\gamma, \delta}^\omega \circ g_{\delta, \gamma}^\omega$ and $g_{\gamma, \delta}^\omega \circ \bar g_{\delta, \gamma}^\omega$ are equal to the identity $\omega(\gamma)$-almost-everywhere. This follows from the definitions and clause (\ref{item:pmap2}) of Lemma \ref{lem:pairmap}.
		\item \label{proofitem:fmap3} The pairs $(g_{\delta, \gamma}^\omega, \omega(\gamma)), (\bar g_{\delta, \gamma}^\omega, \omega(\gamma)) \in M$ are continuous functions of $\omega$. Indeed each pair can be expressed as the result of a composition $M_2 \rightarrow M$:
		\[(g_{\delta, \gamma}^\omega, \omega(\gamma)) = (\psi(\omega(\delta), (\delta^{-1} \gamma)^s_* \omega(\gamma)), (\delta^{-1} \gamma)^s_* \omega(\gamma)) \cdot ((\delta^{-1} \gamma)^s, \omega(\gamma))\]
		\[(\bar g_{\delta, \gamma}^\omega, \omega(\gamma)) = ((\delta^{-1} \gamma)^s, (\gamma^{-1} \delta)^s_* \omega(\delta)) \cdot (\psi((\gamma^{-1} \delta)^s_* \omega(\delta), \omega(\gamma)), \omega(\gamma)).\]
		Continuity is a consequence of Lemma \ref{lem:comp}, Lemma \ref{lem:pairmap}.(\ref{item:pmap3}), and the continuity of the push-forward maps $\gamma^s_* : \Prob(p^\Gamma) \rightarrow \Prob(p^\Gamma)$ for $\gamma \in \Gamma$.
		\item \label{proofitem:fmap4} When $\delta^s_* \omega(\delta) = \gamma^s_* \omega(\gamma)$ both $g_{\delta, \gamma}^\omega$ and $\bar g_{\delta, \gamma}^\omega$ are equal to $(\delta^{-1} \gamma)^s$.
		\item \label{proofitem:fmapa} $g_{\beta \delta, \beta \gamma}^{\beta^s \cdot \omega} = g_{\delta, \gamma}^\omega$ and $\bar g_{\beta \delta, \beta \gamma}^{\beta^s \cdot \omega} = \bar g_{\delta, \gamma}^\omega$ for all $\beta \in \Gamma$. This is immediate from the definitions together with the facts that $(\beta^s \cdot \omega)(\beta \delta) = \omega(\delta)$, $(\beta^s \cdot \omega)(\beta \gamma) = \omega(\gamma)$, and $(\beta \delta)^{-1} (\beta \gamma) = \delta^{-1} \gamma$.
	\end{enumerate}
	
	Now for $\omega \in P^{(\Gamma)}$ and $F \in \cal F(\Gamma)$ we define a function $\sigma_F^\omega$ assigning to each pair $(\delta, \gamma) \in E_F$ a Borel measurable function $\sigma_F^\omega(\delta, \gamma) : p^\Gamma \rightarrow p^\Gamma$ as follows. First, when $\delta = \gamma$ we set $\sigma_F^\omega(\delta, \gamma)$ equal to the identity function. Next, if $(\delta, \gamma) \in F \cup \bar F$ then we set
	\[\sigma_F^\omega(\delta, \gamma) = \begin{cases}
		g_{\delta, \gamma}^\omega & \text{if } (\delta, \gamma) \in F\\
		\bar g_{\delta, \gamma}^\omega & \text{if } (\delta, \gamma) \in \bar F.
	\end{cases}\]
	In general for $(\delta, \gamma) \in E_F$, consider the unique sequence of vertices $\gamma_0, \gamma_1, \ldots, \gamma_n$ forming a path in $F \cup \bar F$ from $\gamma$ to $\delta$, specifically $\gamma_0 = \gamma$, $\gamma_n = \delta$, and $(\gamma_{i+1}, \gamma_i) \in F \cup \bar F$ for all $0 \leq i < n$, and set
	\[\sigma_F^\omega(\delta, \gamma) = \sigma_F^\omega(\gamma_n, \gamma_{n-1}) \circ \cdots \circ \sigma_F^\omega(\gamma_1, \gamma_0).\]
	
	The following properties hold for all $\omega \in P^{(\Gamma)}$, $F \in \cal F(\Gamma)$, and $(\delta, \gamma) \in E_F$.
	\begin{enumerate}\setcounter{enumi}{5}
		\item \label{proofitem:fmap5} $\sigma_F^\omega(\delta, \gamma)$ pushes $\omega(\gamma)$ forward to $\omega(\delta)$. This is an immediate consequence of (\ref{proofitem:fmap1}) above.
		\item \label{proofitem:fmap6} If $\beta$ lies on the path from $\gamma$ to $\delta$ then $\sigma_F^\omega(\delta, \gamma) = \sigma_F^\omega(\delta, \beta) \circ \sigma_F^\omega(\beta, \gamma)$. This is immediate from the definition.
		\item \label{proofitem:fmap7} $\sigma_F^\omega(\gamma, \delta) \circ \sigma_F^\omega(\delta, \gamma)$ is equal to the identity $\omega(\gamma)$-almost-everywhere. When $(\delta, \gamma) \in F \cup \bar F$ this fact follows immediately from (\ref{proofitem:fmap2}) above. The general case follows by induction on the length of the path from $\gamma$ to $\delta$. Specifically, if $\beta$ lies on the path from $\gamma$ to $\delta$ then by (\ref{proofitem:fmap6})
		\[\sigma_F^\omega(\gamma, \delta) \circ \sigma_F^\omega(\delta, \gamma) = \sigma_F^\omega(\gamma, \beta) \circ \sigma_F^\omega(\beta, \delta) \circ \sigma_F^\omega(\delta, \beta) \circ \sigma_F^\omega(\beta, \gamma).\]
		By induction we can assume $\sigma_F^\omega(\beta, \delta) \circ \sigma_F^\omega(\delta, \beta)$ is equal to the identity $\omega(\beta)$-almost-everywhere. Since $\sigma_F^\omega(\beta, \gamma)$ pushes $\omega(\gamma)$ forward to $\omega(\beta)$ by (\ref{proofitem:fmap5}), it follows that $\omega(\gamma)$-almost-everywhere $\sigma_F^\omega(\gamma, \delta) \circ \sigma_F^\omega(\delta, \gamma)$ is equal to $\sigma_F^\omega(\gamma, \beta) \circ \sigma_F^\omega(\beta, \gamma)$, and by induction this latter function is equal to the identity $\omega(\gamma)$-almost-everywhere.
		\item \label{proofitem:fmap8} Whenever $\beta$ is in the same connected component as $\delta$ and $\gamma$, $\sigma_F^\omega(\delta, \gamma)$ is $\omega(\gamma)$-almost-everywhere equal to $\sigma_F^\omega(\delta, \beta) \circ \sigma_F^\omega(\beta, \gamma)$. To see this, let $\beta'$ be the unique point belonging to all three of the paths between the points $\delta$, $\gamma$, and $\beta$. Two applications of (\ref{proofitem:fmap6}) yield the two equations:
		\begin{align*}
			\sigma_F^\omega(\delta, \beta) \circ \sigma_F^\omega(\beta, \gamma) & = \sigma_F^\omega(\delta, \beta') \circ \sigma_F^\omega(\beta', \beta) \circ \sigma_F^\omega(\beta, \beta') \circ \sigma_F^\omega(\beta', \gamma)\\
			\sigma_F^\omega(\delta, \gamma) & = \sigma_F^\omega(\delta, \beta') \circ \sigma_F^\omega(\beta', \gamma).
		\end{align*}
		The right-sides of these two equations are $\omega(\gamma)$-almost-everywhere equal since $\sigma_F^\omega(\beta', \gamma)$ pushes $\omega(\gamma)$ forward to $\omega(\beta')$ by (\ref{proofitem:fmap5}) and $\sigma_F^\omega(\beta', \beta) \circ \sigma_F^\omega(\beta, \beta')$ is $\omega(\beta')$-almost-everywhere equal to the identity by (\ref{proofitem:fmap7}). This establishes the claim.
		\item \label{proofitem:fmap10} If $\gamma^s_* \omega(\gamma) = \omega(e)$ for all $\gamma \in \Gamma$ then $\sigma_F^\omega(\delta, \gamma) = (\delta^{-1} \gamma)^s$ for all $(\delta, \gamma) \in E_F$. This follows from (\ref{proofitem:fmap4}).
		\item \label{proofitem:fmapb} $\sigma_{\beta^d \cdot F}^{\beta^s \cdot \omega}(\beta \delta, \beta \gamma) = \sigma_F^\omega(\delta, \gamma)$ for all $\beta \in \Gamma$. This is a consequence of (\ref{proofitem:fmapa}) together with the fact that if $\gamma_0, \ldots, \gamma_n$ is the sequence of vertices forming a path in $F$ from $\gamma$ to $\delta$ then $\beta \gamma_0, \ldots, \beta \gamma_n$ is the sequence of vertices for the path from $\beta \gamma$ to $\beta \delta$ in $\beta^d \cdot F$.
	\end{enumerate}
	Additionally, we observe the following facts:
	\begin{enumerate}\setcounter{enumi}{10}
		\item \label{proofitem:fmapc} The function $(\omega, F) \in P^{(\Gamma)} \times U_{\delta, \gamma} \mapsto (\sigma^\omega_F(\delta, \gamma), \omega(\gamma)) \in M$ is continuous, where $U_{\delta, \gamma}$ is the open set $\{F \in \cal F(\Gamma) : (\delta, \gamma) \in E_F\}$. Specifically, we can partition $U_{\delta, \gamma}$ into a collection $\cal U$ of clopen sets so that for every $U \in \cal U$ the path from $\delta$ to $\gamma$ in $F$ is constant over all $F \in U$. It's enough to observe that for each $U \in \cal U$ the map $(\omega, F) \mapsto (\sigma^\omega_F(\delta, \gamma), \omega(\gamma))$ is continuous on $P^{(\Gamma)} \times U$. Say $\gamma = \gamma_0, \gamma_1, \ldots, \gamma_n = \delta$ is the path from $\gamma$ to $\delta$ in $F$ for all $F \in U$. Then from the definition of $\sigma^\omega_F$ and by (\ref{proofitem:fmap5}) we see that $(\sigma^\omega_F(\delta, \gamma), \omega(\gamma)) \in M$ is the composition of the elements $(\sigma^\omega_F(\gamma_{i+1}, \gamma_i), \omega(\gamma_i)) \in M$ when $F \in U$. It then follows from (\ref{proofitem:fmap3}) and Lemma \ref{lem:comp} that $(\sigma^\omega_F(\delta, \gamma), \omega(\gamma))$ is a continuous function of $(\omega, F) \in P^{(\Gamma)} \times U$.
		\item \label{proofitem:fmap9} If $C \subseteq \Gamma$ is nonempty, $\gamma \in \Gamma$, and $C \cup \{\gamma\}$ is contained in a single $E_F$-class for all $F$ in a set $H \subseteq \cal F(\Gamma)$, then the function $(\omega, F) \in P^{(\Gamma)} \times H \mapsto ((\sigma_F^\omega(\delta, \gamma))_{\delta \in C}, \omega(\gamma)) \in M^{(C)}$ is continuous. Indeed, for every $\delta \in C$ the map $(\omega, F) \in P^{(\Gamma)} \times H \mapsto (\sigma_F^\omega(\delta, \gamma), \omega(\gamma)) \in M$ is continuous by (\ref{proofitem:fmapc}) since $H \subseteq U_{\delta, \gamma}$.
	\end{enumerate}
	
	Let $C \in \Gamma / E_F$ be a connected component and fix any $\gamma_C \in C$. From $\omega$ we define a Borel probability measure on $(p^\Gamma)^C$ by the formula
	\[\left( \prod_{\delta \in C} \sigma_F^\omega(\delta, \gamma_C) \right)_* \omega(\gamma_C).\]
	Notice that this measure depends continuously on $\omega$ by (\ref{proofitem:fmap9}) and Lemma \ref{lem:mfold}. Additionally, the measure we obtain does not depend upon the choice of $\gamma_C \in C$ since for any other element $\gamma_C' \in C$ properties (\ref{proofitem:fmap5}) and (\ref{proofitem:fmap8}) imply
	\begin{align*}
		\left(\prod_{\delta \in C} \sigma_F^\omega(\delta, \gamma_C) \right)_* \omega(\gamma_C) & = \left(\prod_{\delta \in C} \sigma_F^\omega(\delta, \gamma_C) \right)_* \sigma_F^\omega(\gamma_C, \gamma_C')_* \omega(\gamma_C')\\
		& = \left( \prod_{\delta \in C} \sigma_F^\omega(\delta, \gamma_C) \circ \sigma_F^\omega(\gamma_C, \gamma_C') \right)_* \omega(\gamma_C')\\
		& = \left( \prod_{\delta \in C} \sigma_F^\omega(\delta, \gamma_C') \right)_* \omega(\gamma_C').
	\end{align*}
	
	For each $C \in \Gamma / E_F$ pick some $\gamma_C \in C$. Also let $f : (p^\Gamma)^\Gamma \rightarrow p^\Gamma$ be the flattening map given by the formula $f(z)(\gamma) = z(\gamma)(e)$. We define the Borel probability measure $\theta(\omega, F) \in \Prob(p^\Gamma)$ by
	\[\theta(\omega, F) = f_* \prod_{C \in \Gamma / E_F} \left( \prod_{\delta \in C} \sigma_F^\omega(\delta, \gamma_C) \right)_* \omega(\gamma_C).\]
	Formally the expression above to the right of $f_*$ is a product measure on a product space indexed by the set $\Gamma / E_F$, however since the measure associated to the coordinate $C \in \Gamma / E_F$ is a measure on $(p^\Gamma)^C$, we identify the expression to the right of $f_*$ as a Borel probability measure on $(p^\Gamma)^\Gamma$ (and therefore the application of $f_*$ is defined).
	
	We first check (b). Denote by $\pi_C : p^\Gamma \rightarrow p^C$ the projection map for $C \subseteq \Gamma$, and recall that
	\[\phi(\omega(e), E_F) = \prod_{C \in \Gamma / E_F} (\pi_C)_* \omega(e).\]
	Let us also write $f$ for the map from $(p^\Gamma)^C$ to $p^C$ given by the same formula as before: $f(z)(\gamma) = z(\gamma)(e)$ for $\gamma \in C$. Since $\prod_{\gamma \in C} (\gamma^{-1})^s$ maps $p^\Gamma$ to $(p^\Gamma)^C$, it is easily checked that $\pi_C = f \circ \prod_{\gamma \in C} (\gamma^{-1})^s$. If $\gamma^s_* \omega(\gamma) = \omega(e)$ for all $\gamma \in \Gamma$ then using (\ref{proofitem:fmap10}) we obtain
	\begin{align*}
		\prod_{C \in \Gamma / E_F} (\pi_C)_* \omega(e) & = \prod_{C \in \Gamma / E_F} f_* \left( \prod_{\delta \in C} (\delta^{-1})^s \right)_* \omega(e)\\
		& = \prod_{C \in \Gamma / E_F} f_* \left( \prod_{\delta \in C} (\delta^{-1})^s \right)_* (\gamma_C)^s_* \omega(\gamma_C)\\
		& = \prod_{C \in \Gamma / E_F} f_* \left( \prod_{\delta \in C} (\delta^{-1} \gamma_C)^s \right)_* \omega(\gamma_C)\\
		& = f_* \prod_{C \in \Gamma / E_F} \left( \prod_{\delta \in C} \sigma^\omega_F(\delta, \gamma_C) \right)_* \omega(\gamma_C) = \theta(\omega, F),\\
	\end{align*}
	confirming (b).
	
	Next we check (a). Let $\Delta \subseteq \cal F(\Gamma) \times \cal E(\Gamma)$ be the Borel set $\Delta = \{(F, E_F) : F \in \cal F(\Gamma)\}$. Since the map $F \in \cal F(\Gamma) \mapsto (F, E_F) \in \Delta$ is Borel, it suffices to show that $\theta(F, \omega)$ is a continuous function of $(\omega, F, E_F) \in P^{(\Gamma)} \times \Delta$. In fact it is enough to show that $\theta(\omega, F)(A)$ is a continuous function of $(\omega, F, E_F)$ when $A$ is a cylinder set. Say $A = \prod_{\gamma \in \Gamma} B_\gamma$ where $\varnothing \neq B_\gamma \subseteq p$ for all $\gamma \in \Gamma$ and $B_\gamma = p$ for all $\gamma \in \Gamma \setminus D$ where $D \subseteq \Gamma$ is finite. Partition $\Delta$ into a collection $\cal V$ of clopen sets so that for each $V \in \cal V$ the restriction of $E_F$ to $D$ is constant over all $(F, E_F) \in V$. It will be enough to check that for each $V \in \cal V$ the map $(\omega, F, E_F) \mapsto \theta(\omega, F)(A)$ is continuous on $P^{(\Gamma)} \times V$. Say every $(F, E_F) \in V$ partitions $D$ into the family of sets $\cal D$. When $(F, E_F) \in V$ and $C \in \Gamma / E_F$ we have that
	\[\left(f_* \left( \prod_{\delta \in C} \sigma^\omega_F(\delta, \gamma_C) \right)_* \omega(\gamma_C) \right) \left(\prod_{\gamma \in C} B_\gamma\right)\]
	is equal to $1$ if $C \cap D = \varnothing$ and is equal to
	\[\left(f_* \left( \prod_{\delta \in C'} \sigma^\omega_F(\delta, \gamma_{C'}) \right)_* \omega(\gamma_{C'}) \right) \left(\prod_{\gamma \in C'} B_\gamma \right)\]
	when $C \cap D = C' \in \cal D$ and $\gamma_{C'}$ is any element of $C'$ (changing $\gamma_C$ to $\gamma_{C'}$ does not change the resulting value, as explained three paragraphs prior). Now notice that the map
	\[(\omega, F, E_F) \in P^{(\Gamma)} \times V \mapsto f_* \left( \prod_{\delta \in C'} \sigma^\omega_F(\delta, \gamma_{C'}) \right)_* \omega(\gamma_{C'}) \in \Prob(p^{C'})\]
	is continuous by (\ref{proofitem:fmap9}), Lemma \ref{lem:mfold}, and the fact that $f_*$ is continuous. Therefore when $(\omega, F, E_F) \in P^{(\Gamma)} \times V$ we have that
	\[\theta(\omega, F)(A) = \prod_{C' \in \cal D} \left(f_* \left( \prod_{\delta \in C'} \sigma^\omega_F(\delta, \gamma_{C'}) \right)_* \omega(\gamma_{C'}) \right) \left(\prod_{\gamma \in C'} B_\gamma \right)\]
	is a continuous function of $(\omega, F, E_F)$ as claimed.
	
	Lastly, to check (c) consider any $\beta \in \Gamma$. Since left-multiplication by $\beta$ provides a bijection from the $E_F$-classes to the $E_{\beta^d \cdot F}$-classes, for each $C \in \Gamma / E_F$ we can use the point $\widehat \gamma_{\beta C} = \beta \gamma_C$ as the representative for $\beta C$ in computing $\theta(\beta^s \cdot \omega, \beta^d \cdot F)$. Then by (\ref{proofitem:fmapb}) and the fact that the map $f$ is $\Gamma^s$-equivariant we have
	\begin{align*}
		\beta^s_* \theta(\omega, F) & = \beta^s_* f_* \prod_{C \in \Gamma / E_F} \left( \prod_{\delta \in C} \sigma_F^\omega(\delta, \gamma_C) \right)_* \omega(\gamma_C)\\
		& = f_* \prod_{\beta C \in \Gamma / E_{\beta^d \cdot F}} \left( \prod_{\beta \delta \in \beta C} \sigma_F^\omega(\delta, \gamma_C) \right)_* \omega(\gamma_C)\\
		& = f_* \prod_{\beta C \in \Gamma / E_{\beta^d \cdot F}} \left( \prod_{\beta \delta \in \beta C} \sigma_{\beta^d \cdot F}^{\beta^s \cdot \omega}(\beta \delta, \beta \gamma_C) \right)_* \omega(\gamma_C)\\
		& = f_* \prod_{\beta C \in \Gamma / E_{\beta^d \cdot F}} \left( \prod_{\delta \in \beta C} \sigma_{\beta^d \cdot F}^{\beta^s \cdot \omega}(\delta, \hat \gamma_{\beta C}) \right)_* \omega(\beta^{-1} \hat \gamma_{\beta C})\\
		& = f_* \prod_{C \in \Gamma / E_{\beta^d \cdot F}} \left( \prod_{\delta \in C} \sigma_{\beta^d \cdot F}^{\beta^s \cdot \omega}(\delta, \hat \gamma_C) \right)_* (\beta^s \cdot \omega)(\hat \gamma_C) = \theta(\beta^s \cdot \omega, \beta^d \cdot F).\qedhere
	\end{align*}
\end{proof}

\subsection{Existence of the model companion}

We are now ready to prove that $T_\Gamma^*$ exists when $\Gamma$ is approximately treeable:

\begin{proof}[Proof of Theorem \ref{thm:approxtree}]
	We verify the criterion from Theorem \ref{prop:openmap}. Fix $p,q\in \bN$, nonempty open $\Lambda \subseteq \Prob_\Gamma(q^\Gamma \times p^\Gamma)$, and $\lambda \in \Lambda$. Let $\nu \in \Prob_\Gamma(q^\Gamma)$ be the pushforward of $\lambda$ under the canonical projection map, and let $y \in q^\Gamma \mapsto \lambda_y \in \Prob(p^\Gamma)$ be the disintegration of $\lambda$ over $\nu$, so that $\lambda = \int \delta_y \times \lambda_y \ d \nu$ where $\delta_y$ is the point-mass measure at $y$.
	
	Let $\phi : \Prob(p^\Gamma) \times \cal E(\Gamma) \rightarrow \Prob(p^\Gamma)$ be the map from Lemma \ref{lem:srmap}. Since $\phi$ is continuous, a routine calculation shows that the map $E \in \cal E(\Gamma) \mapsto \int \delta_y \times \phi(\lambda_y, E) \ d \nu \in \Prob(q^\Gamma \times p^\Gamma)$ is continuous. It follows from this that the function $\zeta : \Prob(\cal E(\Gamma)) \rightarrow \Prob(q^\Gamma \times p^\Gamma)$ defined by the formula
	\[\zeta(\mu) = \int \int \delta_y \times \phi(\lambda_y, E) \ d \nu(y) \ d \mu(E)\]
	is continuous. Moreover, if $\mu$ is an invariant Borel probability measure on $\cal F(\Gamma)$ and $\bar \mu$ is the pushforward of $\mu$ with respect to the map $F \in \cal F(\Gamma) \mapsto E_F \in \cal E(\Gamma)$, then $\zeta(\bar \mu)$ is also an invariant measure since $\phi$ as well as each of the maps $y \mapsto \delta_y$, $y \mapsto \lambda_y$, and $F \mapsto E_F$ are $\Gamma$-equivariant. By Lemma \ref{lem:srmap} we have that $\zeta(\delta_{\Gamma \times \Gamma}) = \lambda$ belongs to the open set $\Lambda$, so by continuity and the fact that $\Gamma$ is approximately treeable there is an invariant Borel probability measure $\mu \in \Prob(\cal F(\Gamma))$ satisfying $\zeta(\bar \mu) \in \Lambda$.
	
	Recall the sets $P^* \subseteq \Prob(p^\Gamma) \times \Prob(p^\Gamma)$ and $P^{(\Gamma)} \subseteq \Prob(p^\Gamma)^\Gamma$ defined earlier. Define $\kappa_\infty : q^\Gamma \rightarrow \Prob(p^\Gamma)$ by $\kappa_\infty(y) = \lambda_y$. Pick a sequence of continuous functions $\kappa_n$, $n \in \bN$, mapping $q^\Gamma$ to the space of non-atomic Borel probability measures on $p^\Gamma$ such that $(\kappa_n)$ converges $\nu$-almost-everywhere to $\kappa_\infty$. For $n \in \bN \cup \{\infty\}$ and $y \in q^\Gamma$ define $\omega_{n,y} \in \Prob(p^\Gamma)^\Gamma$ by
	\[\omega_{n,y}(\gamma) = \kappa_n((\gamma^{-1})^s \cdot y).\]
	Observe that for $\beta \in \Gamma$ we have $\beta^s \cdot \omega_{n,y} = \omega_{n, \beta^s \cdot y}$ since for all $\gamma \in \Gamma$
	\[(\beta^s \cdot \omega_{n,y})(\gamma) = \omega_{n,y}(\beta^{-1} \gamma) = \kappa_n((\gamma^{-1} \beta)^s \cdot y) = \omega_{n, \beta^s \cdot y}(\gamma).\]
	Our definitions ensure that $\omega_{n,y} \in P^{(\Gamma)}$ for all $n \in \bN$ and all $y \in q^\Gamma$, and that $\omega_{\infty,y} \in P^{(\Gamma)}$ for $\nu$-almost-every $y \in q^\Gamma$. This fact allows us to define for each $n \in \bN \cup \{\infty\}$ a Borel probability measure on $q^\Gamma \times p^\Gamma$ by the formula
	\[\int \int \delta_y \times \theta(\omega_{n,y}, F) \ d \nu(y) \ d \mu(F).\]
	Notice that the above measure is in fact $\Gamma$-invariant since the measures $\nu$ and $\mu$ are $\Gamma$-invariant and the maps $y \mapsto \omega_{n,y}$, $y \mapsto \delta_y$, and $\theta$ are $\Gamma$-equivariant. Now observe that for every $\gamma \in \Gamma$
	\[\gamma^s_* \omega_{\infty,y}(\gamma) =  \gamma^s_* \kappa_\infty((\gamma^{-1})^s \cdot y) = \gamma^s_* \lambda_{(\gamma^{-1})^s \cdot y}\]
	is equal to
	\[\gamma^s_* (\gamma^{-1})^s_* \lambda_y = \lambda_y = \omega_{\infty, y}(e)\]
	for $\nu$-almost-every $y \in q^\Gamma$. Additionally, $\omega_{n,y}$ converges to $\omega_{\infty, y}$ as $n \rightarrow \infty$ for $\nu$-almost-every $y \in q^\Gamma$. Combining these two facts with Lemma \ref{lem:fmap} yields
	\begin{align*}
		& \lim_{n \rightarrow \infty} \int \int \delta_y \times \theta(\omega_{n,y}, F) \ d \nu(y) \ d \mu(F)\\
		& = \int \int \delta_y \times \theta(\omega_{\infty,y}, F) \ d \nu(y) \ d \mu(F)\\
		& = \int \int \delta_y \times \phi(\lambda_y, E_F) \ d \nu(y) \ d \mu(F) = \zeta(\bar \mu) \in \Lambda.
	\end{align*}
	We fix from this point forward an $n \in \bN$ satisfying $\int \int \delta_y \times \theta(\omega_{n,y}, F) \ d \nu(y) \ d \mu(F) \in \Lambda$.
	
	Since $\omega_{n,y} \in P^{(\Gamma)}$ for all $y \in q^\Gamma$, we may define a function $\xi$ taking invariant Borel probability measures on $q^\Gamma$ to invariant Borel probability measures on $q^\Gamma \times p^\Gamma$ by the formula
	\[\xi(\nu') = \int \int \delta_y \times \theta(\omega_{n,y}, F) \ d \nu'(y) \ d \mu(F).\]
	Of course, by construction we have $\xi(\nu) \in \Lambda$. On the other hand, we chose $\kappa_n$, and hence the map $y \mapsto \omega_{n,y}$, to be continuous. Consequently the map $y \in q^\Gamma \mapsto \int \delta_y \times \theta(\omega_{n,y}, F) \ d \mu(F)$ is continuous as well. It follows that $\xi$ is continuous and therefore $\xi^{-1}(\Lambda)$ is an open set of invariant Borel probability measures containing $\nu$. Every $\nu' \in \xi^{-1}(\Lambda)$ is immediately seen from the definition to be the pushforward of $\xi(\nu')$ with respect to the projection map to $q^\Gamma$. We conclude that the pushforward of $\Lambda$ with respect to the projection map is open and thus $T_\Gamma^*$ exists.
\end{proof}

\end{document}